\DeclareFontFamily{U}{tipa}{}
\DeclareFontShape{U}{tipa}{m}{sl}{
  <-8.5> tipasl8
  <8.5-9.5> tipasl9
  <9.5-11> tipasl10
  <11-> tipasl12
}{}
\DeclareSymbolFont{tipa}{U}{tipa}{m}{sl}
\DeclareMathSymbol{\mathglotstop}{\mathord}{tipa}{80}
\newcommand{\udq}{\mathbin{\rotatebox[origin=c]{180}{${\mathglotstop}$}}}
\DeclareMathSymbol{\hw}{\mathord}{tipa}{255}
\numberwithin{equation}{section}
\newcolumntype{C}[1]{>{\centering\arraybackslash}p{#1}}
\theoremstyle{plain}
\newtheorem{thm}{Theorem}[section]
\newtheorem{lem}[thm]{Lemma}
\newtheorem{cor}[thm]{Corollary}
\newtheorem{prop}[thm]{Proposition}
 \theoremstyle{definition}
\newtheorem{defn}[thm]{Definition}
\newtheorem{rem}[thm]{Remark}
\newtheorem{ex}[thm]{Example}
\newtheorem{notn}[thm]{Notation}
\newtheorem{setup}[thm]{Setup}
\newtheorem{prob}[thm]{Problem}
\newtheorem{ques}[thm]{Question}
\newcommand{\N}{\mathbb{N}}
\newcommand{\C}{\mathbb{C}}
\newcommand{\A}{\mathcal{A}}
\newcommand{\Part}{\mathcal{P}}
\newcommand{\Sph}{\mathbb{S}}
\newcommand{\mb}[1]{\mathbb{#1}}
\newcommand{\mr}[1]{\mathrm{#1}}
\newcommand{\eps}{\varepsilon}
\newcommand{\vphi}{\varphi}
\newcommand{\rank}{\operatorname{rank}}
\newcommand{\Spin}{\mathrm{Spin}}
\newcommand{\Spinc}{\mathrm{Spin}^c}
\newcommand{\Spinh}{\mathrm{Spin}^h}
\newcommand{\MSpin}{\mathrm{MSpin}}
\newcommand{\MSpinc}{\mathrm{MSpin}^c}
\newcommand{\MSpinh}{\mathrm{MSpin}^h}
\newcommand{\BSpin}{\mathrm{BSpin}}
\newcommand{\BSpinc}{\mathrm{BSpin}^c}
\newcommand{\BSpinh}{\mathrm{BSpin}^h}
\newcommand{\KU}{\mathrm{KU}}
\newcommand{\KO}{\mathrm{KO}}
\newcommand{\KSp}{\mathrm{KSp}}
\newcommand{\BU}{\mathrm{BU}}
\newcommand{\BO}{\mathrm{BO}}
\newcommand{\BSO}{\mathrm{BSO}}
\newcommand{\BSp}{\mathrm{BSp}}
\newcommand{\ku}{\mathrm{ku}}
\newcommand{\ko}{\mathrm{ko}}
\newcommand{\ksp}{\mathrm{ksp}}
\newcommand{\U}{\mathrm{U}}
\newcommand{\Sp}{\mathrm{Sp}}
\renewcommand{\O}{\mathrm{O}}
\newcommand{\SO}{\mathrm{SO}}
\newcommand{\SU}{\mathrm{SU}}
\newcommand{\Z}{\mathbb{Z}}
\newcommand{\MSO}{\mathrm{MSO}}
\newcommand{\Sq}{\mathrm{Sq}}
\newcommand{\CP}{\mathbb{CP}}
\newcommand{\Pin}{\mathrm{Pin}}
\newcommand{\Th}{\operatorname{Th}}
\newcommand{\id}{\mathrm{id}}
\newenvironment{talign*}
 {\let\displaystyle\textstyle\csname align*\endcsname}
 {\endalign}
\begin{document}
\title{KSp-characteristic classes determine Spin$^h$ cobordism}

\author{Jonathan Buchanan}
\address{Department of Mathematics \\ Massachusetts Institute of Technology}
\email{jbuch333@mit.edu}

\author{Stephen McKean}
\address{Department of Mathematics \\ Brigham Young University} 
\email{mckean@math.byu.edu}
\urladdr{shmckean.github.io}

\subjclass[2020]{Primary: connective $K$-theory, cobordism (19L41). Secondary: Spin and Spin$^c$ geometry (53C27).}

\begin{abstract}
A classic result of Anderson, Brown, and Peterson states that the cobordism spectrum MSpin (respectively, MSpin$^c$) splits as a sum of Eilenberg--Mac Lane spectra and connective covers of real K-theory (respectively, complex K-theory) at 2. We develop a theory of symplectic K-theory classes and use these to build an explicit splitting for MSpin$^h$ in terms of Eilenberg--Mac Lane spectra and spectra related to symplectic K-theory. This allows us to determine the Spin$^h$ cobordism groups systematically. We also prove that two Spin$^h$-manifolds are cobordant if and only if their underlying unoriented manifolds are cobordant and their KSp-characteristic numbers agree.
\end{abstract}

\maketitle

\setcounter{tocdepth}{1}
\tableofcontents

\section{Introduction}
There is an intimate connection, brought to the fore by Atiyah--Bott--Shapiro~\cite{ABS64}, between topological $K$-theory and spin geometry. This connection was further strengthened in the work of Hopkins--Hovey~\cite{HH92}. A crucial bridge between these two results is built in the work of Anderson--Brown--Peterson, who gave a 2-local splitting of the cobordism spectra $\MSpin$ and $\MSpinc$ \cite{ABP67}. The Anderson--Brown--Peterson splitting of $\MSpin$ and $\MSpinc$ also yields combinatorial formulas for the $\Spin$ and $\Spinc$ cobordism groups.

The goal of this work is to give an explicit splitting for the cobordism spectrum $\MSpinh$ (analogous to the Anderson--Brown--Peterson splittings of $\MSpin$ and $\MSpinc$) in terms of ordinary cohomology classes and $\KSp$-characteristic classes. Here, $\Spinh$ is the \textit{quaternionic spin group}, defined as the colimit of the double covers $\Spinh(n)$ of $\SO(n)\times\Sp(1)$. Quaternionic spin theory was first studied systematically by Nagase~\cite{Nag95} and subsequently by Okonek--Teleman~\cite{OT96} and B\"ar~\cite{Bar99}, although $\Spinh(4)$ appeared even earlier~\cite{BFF78,HP78}. There has been a recent resurgence of interest in quaternionic spin theory, in part due to its role in physics \cite{Ch17,SSGR17,LS19,FH21,AM21,Law23,Hu23}.

Let $\Part_\mr{even}$ and $\Part_\mr{odd}$ denote the sets of even and odd partitions, respectively (see Notation~\ref{notn:partitions}). Given a spectrum $E$ and an integer $n$, let $E\langle n\rangle$ denote the $n$-connected cover of $E$. Our main result is an explicit analog of the Anderson--Brown--Peterson splitting.

\begin{thm}\label{thm:main}
Let $F$ be the fiber of the map $\ko \to H\Z / 2\Z$ classifying the non-trivial element of $H^0(\ko;\Z/2\Z)$. Then there are cohomology classes $Z \subset H^*(\MSpinh;\Z/2\Z)$ and a map of spectra
\[
    \MSpinh \to \bigvee_{I \in \Part_{\mathrm{even}}} \ksp \langle 4 | I | \rangle \vee \bigvee_{I \in \Part_{\mathrm{odd}}} \Sigma^{4 | I |} F \vee \bigvee_{z \in Z} \Sigma^{\deg z} H\Z / 2\Z
\]
that is a $2$-local equivalence.
\end{thm}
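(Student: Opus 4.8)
The plan is to run the Anderson--Brown--Peterson argument in the quaternionic setting: reduce the statement to a mod $2$ cohomology computation, build the map one wedge summand at a time, and then verify that the assembled map is a cohomology isomorphism by analyzing $H^*(\MSpinh;\Z/2\Z)$ as a module over the Steenrod algebra $\A$. Both $\MSpinh$ and every summand $\ksp\langle 4|I|\rangle$, $\Sigma^{4|I|}F$, $\Sigma^{\deg z}H\Z/2\Z$ are connective and of finite type, and below any fixed degree only finitely many summands are nonzero, so a map inducing an isomorphism on $H^*(-;\Z/2\Z)$ is automatically a $2$-local equivalence. Thus the theorem reduces to (i) producing the map and (ii) checking it is a mod $2$ cohomology isomorphism.

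For (i) I would define the map componentwise. A map $\MSpinh \to \Sigma^{\deg z}H\Z/2\Z$ is the same datum as a class $z \in H^{\deg z}(\MSpinh;\Z/2\Z)$, and $Z$ will be chosen to be a set of classes that freely generate the free part of $H^*(\MSpinh;\Z/2\Z)$ over $\A$ (a choice that can only be made once the module structure in (ii) is understood). The $\ksp$-summands come from the symplectic $K$-theory orientation of $\MSpinh$ (the quaternionic analog of the Atiyah--Bott--Shapiro map $\MSpin \to \ko$) together with the symplectic Pontryagin classes $\pi_I$: for an even partition $I$ one multiplies the $\ksp$-valued Thom class by $\pi_I$, using the $\ko$-module structure on $\ksp$ and the diagonal $\MSpinh \to \MSpinh \wedge \BSpinh_+$, obtaining a map whose connectivity forces it to factor through $\ksp\langle 4|I|\rangle$. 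The summands $\Sigma^{4|I|}F$ for odd $I$ are produced the same way, using that $F = \mathrm{fib}(\ko \to H\Z/2\Z)$ and that the corresponding $\ko$-valued class on $\MSpinh$ lifts across this fibre, i.e.\ its composite to $H\Z/2\Z$ vanishes---this vanishing being exactly what "$I$ odd" encodes.

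The heart of the argument is (ii). First compute $H^*(\BSpinh;\Z/2\Z)$ and the Steenrod action on the Thom class $U$ (via the Wu formula and the Stiefel--Whitney classes of the tautological bundle); then, by the Thom isomorphism $H^*(\MSpinh;\Z/2\Z)\cong H^*(\BSpinh;\Z/2\Z)\!\cdot\!U$, identify $H^*(\MSpinh;\Z/2\Z)$, as an $\A$-module, with $H^*(-;\Z/2\Z)$ of the wedge in the statement, i.e.\ with the direct sum of shifted copies of $H^*(\ksp\langle 4|I|\rangle)$, $\Sigma^{4|I|}H^*(F)$, and free modules $\Sigma^{\deg z}\A$. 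The tool for this identification is restriction to $\A(1)=\langle\Sq^1,\Sq^2\rangle$: over $\A(1)$ each of these modules is a sum of free modules and small indecomposables, and by the Adams--Margolis structure theory for bounded-below finite-type $\A(1)$-modules this decomposition is governed by the Margolis homologies for $Q_0$ and $Q_1$; one computes $H(\MSpinh;Q_0)$ and $H(\MSpinh;Q_1)$ from the cell structure of $H^*(\BSpinh;\Z/2\Z)\!\cdot\!U$ and matches them against the wedge, the even/odd dichotomy among partitions being precisely the dichotomy in whether $Q_1$ (equivalently $\Sq^2$) annihilates the corresponding generator. Since $\A$ is free over $\A(1)$, the $\A(1)$-level decomposition, once its generators are located, upgrades to the stated $\A$-module isomorphism. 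Finally one verifies that the classes $Z$ and the images of the symplectic Thom class and Pontryagin classes realize these generators, so that the map of (i) is surjective on $H^*(-;\Z/2\Z)$; a Poincar\'e series count---the series of $H^*(\MSpinh;\Z/2\Z)$ from the Thom isomorphism against the evident series of the wedge---then yields injectivity, hence the isomorphism and the theorem. I expect the main obstacle to be exactly this module analysis: organizing the Steenrod action on $H^*(\BSpinh;\Z/2\Z)\!\cdot\!U$ so that the partition-indexed summands are manifest, ruling out any exotic indecomposable $\A(1)$-summand, and matching the partition combinatorics with the $Q_1$-action, so that the abstract splitting is realized by the explicitly constructed symplectic characteristic classes rather than merely by some splitting with the same Poincar\'e series.
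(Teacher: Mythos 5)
Your overall strategy is aligned with the paper's --- characteristic classes built from $\KO$-Pontryagin classes and the quaternionic ABS map, a Margolis homology comparison, and a final cohomology isomorphism --- but there are genuine gaps at two points.

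First, the injectivity step. You close with ``a Poincar\'e series count \dots then yields injectivity,'' but the Poincar\'e series of the wedge is not ``evident'': it depends on the cardinality of $Z$ in each degree, and determining that cardinality is essentially equivalent to proving injectivity. Choosing $Z$ so that the map is surjective (lifting a basis of the cokernel modulo $\A_+$) is straightforward, but nothing then forces the source and target to have the same Hilbert series. The paper replaces the Poincar\'e series count by a filtration argument (the filtered modules $N^{[n]}$ and $M_h^{[n]}$, and the induced maps $\lambda_n$), and the key technical input there is the $\MSpin$-module structure on $\MSpinh$: $H^*\MSpinh$ is a comodule over $H^*\MSpin$, and the comultiplication is used to show that a kernel element would contradict $\A/(\A\Sq^1+\A\Sq^2)$-freeness of the submodule generated by the Thom class. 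This is precisely where the argument diverges from the original ABP proof, because $\MSpinh$ is not a ring spectrum; it is the main new difficulty, and your sketch does not engage with it. Relatedly, invoking ``Adams--Margolis structure theory for $\A(1)$-modules'' and then asserting that ``the $\A(1)$-level decomposition \dots upgrades to the stated $\A$-module isomorphism'' hides the same issue: a stable $\A(1)$-decomposition of $H^*\MSpinh$ is not automatically compatible with the $\A$-action, and producing an $\A$-module map that realizes the decomposition and proving it injective is the hard part.

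Second, you write that the map $\MSpinh\to\ksp$ obtained from the Pontryagin class ``has connectivity that forces it to factor through $\ksp\langle 4|I|\rangle$.'' This is false as stated: the $\KO$-Pontryagin class $\pi^I_h$ lives in $\KO^0(\BSpinh)$, and showing that the corresponding map $\BSpinh\to\ko$ factors through $\ko\langle 4|I|\rangle$ (resp.\ $\ko\langle 4|I|-2\rangle$ for odd $|I|$) requires a nontrivial argument about the integral Pontryagin class $p_I$ --- specifically that it is non-torsion and that its mod $2$ reduction $w_{2i_1}^2\cdots w_{2i_s}^2$ is not a $\Sq^1$-boundary in $H^*\BSpinh$, which is a Margolis-homology computation. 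Similarly, the lift of the multiplication $\ko\langle 8k+2\rangle\wedge\ksp\to\ksp\langle 8k+2\rangle$ to $\Sigma^{8k+4}F$ exists but is not unique, and the proof depends on choosing a lift with the correct behavior on the generator $e_1$ (the paper corrects an arbitrary lift by adding a map factoring through $\Sigma^{8k+3}H\Z/2\Z$). If that lift is chosen carelessly the assembled map fails to be a cohomology isomorphism. Your sketch does not account for either of these lifting subtleties.
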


We prove Theorem~\ref{thm:main} by studying the mod 2 cohomology and homotopy groups of each summand, as well as describing the behavior of the map from $\MSpinh$ to each summand in cohomology. We then show that the map
\[\MSpinh\to\bigvee_{I\in\Part_\mr{even}}\ksp\langle4|I|\rangle\vee\bigvee_{I\in\Part_\mr{odd}}\Sigma^{4|I|}F\]
induces an isomorphism on certain associated Margolis homology groups. We conclude by taking the cokernel of the induced map on cohomology to construct the necessary Eilenberg--Mac Lane summands.

A key input to our approach is the construction of characteristic classes
\begin{align*}
    \kappa^I&\in\ksp\langle 4|I|\rangle^0(\MSpinh),\\ \eps^I&\in\Sigma^{4|I|}F^0(\MSpinh), 
\end{align*}
which we call \textit{$\KSp$-Pontryagin classes} and \textit{elephant classes}, respectively. These have associated \textit{$\KSp$-characteristic numbers}, which can be used to detect cobordisms between $\Spinh$-manifolds.

\begin{thm}\label{thm:cnumber condition}
Two $\Spinh$-manifolds are cobordant if and only if their $\KSp$-characteristic numbers and $\Z / 2\Z$-characteristic numbers are equal.
\end{thm}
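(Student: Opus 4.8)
The plan is to deduce Theorem~\ref{thm:cnumber condition} from the $2$-local splitting of Theorem~\ref{thm:main}, just as the Anderson--Brown--Peterson splittings produce the classical characteristic-number criteria for $\Spin$ and $\Spinc$ bordism. One implication is formal: each $\KSp$-characteristic number (the numbers attached both to the $\kappa^I$ and to the $\eps^I$) and each $\Z/2\Z$-characteristic number of a closed $\Spinh$-manifold $M^n$ is the value on the bordism class $[M]\in\pi_n\MSpinh=\Omega^{\Spinh}_n$ of a homomorphism out of $\Omega^{\Spinh}_n$ --- namely $[M]\mapsto\kappa^I_*[M]$, $[M]\mapsto\eps^I_*[M]$, or $[M]\mapsto\langle z,h[M]\rangle$ with $h$ the mod $2$ Hurewicz map and $z$ a mod $2$ cohomology class of $\MSpinh$ --- so cobordant manifolds, representing the same class, have equal characteristic numbers.

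For the converse, I would first reduce --- using additivity of these invariants under disjoint union and the fact that $-[M]\in\Omega^{\Spinh}_n$ is again represented by a closed $\Spinh$-manifold --- to showing that a closed $\Spinh$-manifold $M^n$ whose $\KSp$- and $\Z/2\Z$-characteristic numbers all vanish is null-bordant. Let $\phi\colon\MSpinh\to W$ be the map of Theorem~\ref{thm:main}, with $W$ the displayed wedge; the way $\phi$ is built there, its components are exactly the classes $\kappa^I$, $\eps^I$, and $z\in Z$. Since $W$ is bounded below with only finitely many summands contributing to any single homotopy group, $\pi_nW$ is the evident direct sum, and under this identification the components of $\phi_*\colon\Omega^{\Spinh}_n\to\pi_nW$ are precisely the $\KSp$-Pontryagin numbers $\kappa^I_*$, the elephant numbers $\eps^I_*$, and --- as $\pi_n\Sigma^{\deg z}H\Z/2\Z$ is $\Z/2\Z$ in degree $\deg z$ and $0$ otherwise --- the mod $2$ numbers $[M]\mapsto\langle z,h[M]\rangle$. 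Thus the vanishing hypothesis says exactly that $\phi_*[M]=0$. As $\phi$ is a $2$-local equivalence, $\phi_*$ becomes an isomorphism after tensoring with $\Z_{(2)}$, so $[M]$ dies in $\Omega^{\Spinh}_n\otimes\Z_{(2)}$; and since $\Omega^{\Spinh}_*$ has no odd torsion --- which one sees from the $p$-local equivalence $\MSpinh_{(p)}\simeq(\MSO\wedge\BSp(1)_+)_{(p)}$ for odd $p$ (coming from the $\Z/2$-covering $\Spinh(n)\to\SO(n)\times\mathrm{SO}(3)$) together with the torsion-free homotopy of $\MSO_{(p)}$ --- the map $\Omega^{\Spinh}_n\to\Omega^{\Spinh}_n\otimes\Z_{(2)}$ is injective, forcing $[M]=0$, i.e.\ $M$ bounds.

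I expect the real content to be the identification used in the second paragraph: that the \emph{homotopy-theoretic} components of $\phi$ agree with the \emph{geometrically defined} $\KSp$- and $\Z/2\Z$-characteristic numbers, and in particular that $\phi$ can be (and has been) arranged with the $\kappa^I$, $\eps^I$, and $z$ as its components and that pushing a fundamental class forward along $\kappa^I$ or $\eps^I$ recovers the corresponding number. All of this is part of the proof of Theorem~\ref{thm:main}; the only additional input, the absence of odd torsion in $\Omega^{\Spinh}_*$, is routine but is what lets us upgrade the $2$-local conclusion to an integral one.
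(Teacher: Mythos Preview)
Your outline is correct and close in spirit to the paper's, but with one difference worth flagging. You interpret ``$\KSp$-characteristic numbers'' as the values of both the $\kappa^I$ (for even $I$) and the $\eps^I$ (for odd $I$) on a bordism class, which makes the deduction from Theorem~\ref{thm:main} immediate. The paper, however, \emph{defines} the $I^{\text{th}}$ $\KSp$-characteristic number for \emph{every} partition $I$ as $\langle \pi^I_h(\nu),\,\theta_*1\frown a\rangle\in\KSp_n$ (Section~\ref{sec:characteristic classes})---so all of them land in $\KSp_*$, are computed uniformly from the $\KO$-Pontryagin classes, and make no reference to the elephant spectrum $F$. To match this definition the paper inserts one extra step: it postcomposes the splitting $\phi$ with a map $\psi$ that is the identity on the $\ksp\langle 4|I|\rangle$ and $H\Z/2\Z$ summands but sends each $\Sigma^{4|I|}F$ to $\ksp\langle 4|I|\rangle$ via the canonical map induced by $F\to\ko$. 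Since $\psi_*$ is a direct sum of identities and of inclusions $2\Z\hookrightarrow\Z$, the composite $(\psi\circ\phi)_*$ remains injective; Lemma~\ref{lem:char numbers} then identifies its $\ksp$-components with the $\KSp$-characteristic numbers, and the rest is as you describe. Your route is more direct from the splitting; the paper's buys a cleaner, $\KSp_*$-valued formulation of the invariants that is closer to index theory. (Your odd-torsion argument via $\MSpinh_{(p)}\simeq(\MSO\wedge\BSp(1)_+)_{(p)}$ is valid and is a mild variant of the paper's Proposition~\ref{prop:no odd torsion}, which instead uses the shearing equivalence with $\MSpin$ and an Atiyah--Hirzebruch argument.)
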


We also discuss the asymptotic growth of $\Spinh$ cobordism groups, explicitly calculate the cobordism groups through degree 19999 (and provide the code used in this calculation), compute a $\KSp$-characteristic number of the Wu manifold, and list a few problems of interest in $\Spinh$ geometry.

\begin{rem}
During the preparation of this article, Mills released independent work that obtains some of the same results as us~\cite{Mil23}. In \textit{loc.~cit.} and this paper, we both derive a splitting at 2 of $\MSpinh$ and use it to calculate $\Spinh$ cobordism groups. However, in \textit{loc.~cit.}, the splitting is derived formally from the cohomology of $\MSpinh$, while our splitting is constructed explicitly from $\KSp$-Pontryagin classes and the quaternionic Atiyah--Bott--Shapiro map $\varphi^h : \MSpinh \to \ksp$. As a result of this explicit approach, Theorem~\ref{thm:main} is a strengthening of \cite[Theorem 1.1]{Mil23}.
\end{rem}

\subsection{Outline}
The layout of our article is as follows.
\begin{itemize}
\item In Section \ref{sec:quick facts} we summarize basic facts and constructions involving $\KSp$ and $\MSpinh$.
\item We give an overview of Anderson, Brown, and Peterson's approach to splitting $\MSpin$ in Section \ref{sec:abp}. We then discuss how this inspires our approach to splitting $\MSpinh$.
\item In Sections \ref{sec:cohomology} and \ref{sec:ksp classes}, we explore the cohomology of relevant spaces and spectra and discuss the maps of the splitting in cohomology.
\item In Section \ref{sec:margolis homology} we study the Margolis homology of the relevant Steenrod modules and show that the map from $\MSpinh$ to the sum of the $\ksp \langle 4|I| \rangle$ and $\Sigma^{4|I|} F$ is an isomorphism on Margolis homology.
\item In Section \ref{sec:abp splitting}, we define the ordinary cohomology classes involved in the splitting. We then prove Theorem \ref{thm:main} using the isomorphism on Margolis homology and a filtering procedure. This filtering procedure is inspired by one used in \cite{ABP67}, although some modifications are necessary due to $\MSpinh$ not being a ring spectrum.
\item We discuss the computation of $\Spinh$ cobordism groups in Section \ref{sec:computing groups}, as well as their asymptotic growth. Tables~\ref{table:spin bordism groups}, \ref{table:spinc bordism groups}, and \ref{table:spinh bordism groups} allow the reader to compare the $\Spin$, $\Spinc$, and $\Spinh$ cobordism groups through degree 99.
\item In Section \ref{sec:characteristic classes} we define the $\KSp$-characteristic numbers of a $\Spinh$ manifold and prove Theorem \ref{thm:cnumber condition}.
\item We outline some potential applications and related questions in Section \ref{sec:applications}.
\end{itemize}

\subsection*{Acknowledgements}
We thank Mike Hopkins for explaining the shearing map to us, and Leon Liu for helpful conversations about the degree four shifts relating the real and quaternionic settings. We also thank Jiahao Hu and Keith Mills for helpful remarks. We are grateful to the anonymous referee for their suggestions. JB received support from the Harvard College Research Program. SM received support from an NSF MSPRF grant (DMS-2202825).

\section{Quick facts about $\KSp$ and $\MSpinh$}\label{sec:quick facts}
In this section, we will recall some relevant background material. To begin, we will discuss symplectic $K$-theory. We will then give a brief introduction to Spin$^h$ geometry and gather some useful results from throughout the literature. See~\cite{Law23} for a nice survey of recent developments on Spin$^h$ manifolds.

\subsection{$\KSp$}
Topologically, Bott periodicity manifests as a repeating pattern in the loop spaces $\Omega^n\BO$, $\Omega^n\BU$, and $\Omega^n\BSp$. One can then define the $K$-theory spectra $\KO$, $\KU$, and $\KSp$ as the $\Omega$-spectra associated to $\BO$, $\BU$, and $\BSp$, respectively. It follows that these topological $K$-theory groups will repeat periodically (see Table~\ref{table:bott periodicity}).

\begin{table}
\caption{Bott periodicity in topological $K$-theory}\label{table:bott periodicity}
\begin{tabular}{c|C{2em}C{2em}C{2em}C{2em}C{2em}C{2em}C{2em}C{2em}}
    $n\pmod{8}$ & 0 & 1 & 2 & 3 & 4 & 5 & 6 & 7\\
    \hline
    $\pi_n\KU$ & $\mb{Z}$ & 0 & $\mb{Z}$ & 0 & $\mb{Z}$ & 0 & $\mb{Z}$ & 0\\
    $\pi_n\KO$ & $\mb{Z}$ & $\mb{Z}/2$ & $\mb{Z}/2$ & 0 & $\mb{Z}$ & 0 & 0 & 0\\
    $\pi_n\KSp$ & $\mb{Z}$ & 0 & 0 & 0 & $\mb{Z}$ & $\mb{Z}/2$ & $\mb{Z}/2$ & 0
\end{tabular}
\end{table}

In the process of proving Bott periodicity for $\BO$, one encounters the homotopy equivalences $\Omega^4\BO\simeq\BSp\times\mb{Z}$ and $\Omega^4\BSp\simeq\BO\times\mb{Z}$ (which are visible in Table~\ref{table:bott periodicity}). This means that we get a homotopy equivlence of $\Omega$-spectra $\Sigma^4\KO\to\KSp$, which is simply the identity map in each degree. In fact, the equivalence $\Sigma^4\KO\simeq\KSp$ is more than just an equivalence of spectra: it is an equivalence of $\KO$-modules.

\begin{prop}\label{prop:ko module}
The homotopy equivalence $\Sigma^4\KO\simeq\KSp$ is an equivalence of $\KO$-modules. 
\end{prop}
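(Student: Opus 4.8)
The plan is to exhibit the equivalence $\Sigma^4\KO \simeq \KSp$ as a map of $\KO$-module spectra by identifying it with multiplication by a specific class in $\pi_4\KSp$. First I would recall that $\pi_4\KO \cong \Z$ is generated by a class usually denoted $\beta_{\R}$ (coming from the Bott class, or equivalently from the real reduced regular representation of $\mathrm{Sp}(1)$, i.e. the quaternionic line bundle over $S^4$ minus its rank). Since $\KSp$ is a $\KO$-module — because symplectic bundles can be tensored with real bundles, so $\BSp$ is a module over the $E_\infty$-ring $\KO$ — there is a canonical $\KO$-linear map $\Sigma^4\KO \to \KSp$ determined by the image of the unit, which is a generator $u$ of $\pi_4\KSp \cong \Z$. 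The content of the proposition is then that this particular $\KO$-linear map is an equivalence, which reduces to checking that $u$ generates $\pi_*\KSp$ as a module over $\pi_*\KO$.

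Concretely, I would argue as follows. Let $f\colon \Sigma^4\KO \to \KSp$ be the $\KO$-module map classified by a generator $u \in \pi_4\KSp \cong \KSp^0(S^4)$; equivalently $f$ is the composite $\Sigma^4\KO = \Sigma^4\KO \wedge_{\KO} \KO \xrightarrow{u \wedge 1} \KSp \wedge_{\KO} \KO = \KSp$. On homotopy groups, $f$ sends $x \in \pi_n\KO$ to $u\cdot x \in \pi_{n+4}\KSp$. Comparing the two rows of Table~\ref{table:bott periodicity} (shifted by $4$) degree by degree, it suffices to know that the $\pi_*\KO$-module structure on $\pi_*\KSp$ makes $u$ a generator: in degrees $\equiv 0 \pmod 8$ relative to the shift we need $u$ times the appropriate power of the degree-$8$ Bott class $\beta_{\R}^{\pm}$ to generate $\Z$, and similarly $u\cdot\eta$ and $u\cdot\eta^2$ must generate the two $\Z/2$'s (in degrees $\equiv 1,2 \pmod 8$ after the shift), while the remaining groups vanish on both sides. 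This is exactly the classical statement that the forgetful/complexification-type maps relating $\KO$, $\KU$, $\KSp$ realize $\KSp$ as a free rank-one $\KO$-module on a degree-$4$ generator — a fact one can extract from Bott's original computation, or from the fact that $\Omega^4\BO \simeq \Z\times\BSp$ is an equivalence of infinite loop spaces, which already shows the underlying spectra agree and upgrades to $\KO$-linearity once one observes the equivalence is compatible with the $\KO$-action (the action being induced by real tensor product, under which $\Omega^4$ corresponds to smashing with $\Sigma^{-4}$ of the $\KO$-line).

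For the cleanest writeup I would phrase it via the $\KO$-module structure directly: $\KSp$ is the $\KO$-module of symplectic $K$-theory, and quaternionic Bott periodicity $\Omega^8\KSp \simeq \KSp$ together with the $4$-fold periodicity identification $\Omega^4\KO \simeq \KSp$, $\Omega^4\KSp \simeq \KO$ are compatible (their composite is the $8$-fold Bott periodicity of $\KO$, which is $\KO$-linear since it is multiplication by $\beta_{\R}$). Hence the $4$-fold identification is $\KO$-linear. The main obstacle is purely expository rather than mathematical: one must be careful that the "identity map in each degree" description of $\Sigma^4\KO \to \KSp$ is genuinely the map induced by the $\KO$-action on a generator, and not merely an abstract equivalence of underlying spectra; making this precise is cleanest in the language of structured ring spectra, viewing $\KO$ as an $E_\infty$-ring and $\KSp$ as an $\KO$-module via the symplectic-times-real tensor pairing, so that any equivalence $\Sigma^4\KO \to \KSp$ of underlying spectra carrying the unit to a $\KO$-module generator is automatically $\KO$-linear (both sides being free rank-one $\KO$-modules, the space of $\KO$-linear maps between them with a prescribed generator image is contractible).
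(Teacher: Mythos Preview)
Your approach is valid and takes a somewhat different route from the paper. The paper treats the statement as essentially a citation: it observes that the $\KO$-module structure on $\KSp$ comes from tensoring quaternionic bundles with real bundles, states that one must check this action is a degree-$4$ shift of the $\KO$-multiplication, and points to \cite[\S 7]{Str92} for the verification (the relevant quaternionic bundle is denoted $\theta$ there). You instead construct a $\KO$-linear map $\Sigma^4\KO\to\KSp$ abstractly as the free-module map classified by a generator $u\in\pi_4\KSp$, and then argue it is an equivalence on homotopy groups. Both approaches ultimately rest on the same classical input---that $\pi_*\KSp$ is free of rank one over $\pi_*\KO$ on a degree-$4$ class---but yours makes the logical structure more explicit, while the paper's is more efficient as a pointer to the literature.

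Two comments on your writeup. First, the verification that multiplication by $u$ gives an isomorphism $\pi_n\KO\to\pi_{n+4}\KSp$ in every degree is the entire content of the proposition, and you essentially defer it (``a fact one can extract from Bott's original computation''); this is fine, but be aware it is the same deferral the paper makes, just without a precise citation. Second, your final parenthetical---that ``any equivalence $\Sigma^4\KO\to\KSp$ of underlying spectra carrying the unit to a $\KO$-module generator is automatically $\KO$-linear''---is not correct as stated. What is true is that there is a \emph{unique} $\KO$-linear map with that prescribed image of the generator (by freeness), and that map happens to be an equivalence; but an arbitrary spectrum-level equivalence with the same effect on $\pi_4$ need not be $\KO$-linear. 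This does not affect your main argument, since you only need the existence of a $\KO$-linear equivalence, which is exactly what your free-module construction provides.
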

\begin{proof}
This is a standard fact, but we will point to a reference for the reader's convenience. The $\KO$-module structure on $\KSp$ is induced by taking the tensor product of a quaternionic bundle with a real bundle, which yields a quaternionic bundle. One has to show that this module map is a degree 4 shift of the tensor product of two real bundles, since the $\KO$-module structure on $\Sigma^4\KO$ is given by
\[\KO\wedge\Sigma^4\KO\simeq\Sigma^4(\KO\wedge\KO)\xrightarrow{\Sigma^4\mu}\KO.\]
(Here, $\mu:\KO\wedge\KO\to\KO$ is the ring structure induced by the tensor product of real bundles.) That the $\KO$-module map on $\KSp$ is indeed a degree 4 shift of the ring map on $\KO$ is worked out in \cite[\S 7]{Str92}. The relevant quaternionic bundle is denoted by $\theta$ in \textit{loc.~cit.}
\end{proof}

\subsection{$\Spinh(n)$}
We begin by introducing the Spin$^h$ groups. Write $\{\pm 1\}$ to denote the matrix group consisting of the identity matrix and its negative. Recall that $\Spin(n)$ is the universal cover of $\SO(n)$ for $n\geq 3$. Since $\Spin(n)\to\SO(n)$ is a double cover, we get a short exact sequence
\[1\to\{\pm 1\}\to\Spin(n)\to\SO(n)\to 1.\]
Analogously, $\Spinc(n)$ is defined as the double (not universal) cover of $\SO(n)\times\U(1)$, giving us the exact sequence
\[1\to\{\pm 1\}\to\Spinc(n)\to\SO(n)\times\U(1)\to 1.\]
We may thus realize $\Spinc(n)$ as the quotient $(\Spin(n)\times\U(1))/\{\pm 1\}\cong\Spin(n)\times_{\{\pm 1\}}\U(1)$. Regarding the unitary factor in $\Spinc(n)$ as carrying complex structure, we are inclined to rewrite $\Spin(n)$ as $\Spin(n)\cong\Spin(n)\times_{\{\pm 1\}}\O(1)$. This indicates how quaternionic (i.e.~symplectic) structure should be introduced.

\begin{defn}
Let $n\geq 3$. The \textit{quaternionic spin group} $\Spinh(n)$ is defined to be the double cover of $\SO(n)\times\SO(3)$. Equivalently, define
\[\Spinh(n):=\Spin(n)\times_{\{\pm 1\}}\Sp(1).\]
\end{defn}

\begin{rem}
The universal cover of $\SO(n,\mb{C})$ is often called complex spin, but this is different from $\Spinc$. We will never work with $\SO(n,\mb{C})$ in this article, so by \textit{complex spin} we always mean $\Spinc$.
\end{rem}

There is a commutative diagram
\[
    \begin{tikzcd}
        \arrow[from=1-1, to=1-2]
        \arrow[from=1-2, to=1-3]
        \arrow[from=1-1, to=2-2]
        \arrow[from=1-2, to=2-2]
        \arrow[from=1-3, to=2-2]
        \Spin (n) & \Spinc (n) & \Spinh (n) \\
        & \SO (n)
    \end{tikzcd}
\]
The map $\Spin (n) \to \Spinc (n)$ is the composition of the inclusion of $\Spin (n)$ into $\Spin (n) \times U (1)$ followed by the quotient map $\Spin (n) \times U (1) \to \Spinc (n)$, and the map $\Spinc (n) \to \Spinh (n)$ is induced by the inclusion $U (1) \to \Sp (1)$ and passage to quotients. The maps $\Spin (n) \times_{\left\{ \pm 1 \right\}} G \to \SO (n)$ are induced by the composition of the projection $\Spin (n) \times_{\left\{ \pm 1 \right\}} G \to \Spin (n)$ and the double cover $\Spin (n) \to \SO (n)$ and passage to the quotient group.

We now recall the definition of a Spin$^h$ structure, which was first introduced by Nagase \cite[p.~94]{Nag95}.

\begin{defn}
A \textit{Spin$^h$ structure} on a principal $\SO(n)$-bundle $P_{\SO(n)}$ consists of
\begin{enumerate}[(i)]
\item a principal $\SO(3)$-bundle $P_{\SO(3)}$,
\item a principal $\Spinh(n)$-bundle $P_{\Spinh(n)}$,
\item and a double cover $P_{\Spinh(n)}\to P_{\SO(n)}\times P_{\SO(3)}$ that is equivariant with respect to $\Spinh(n)\to\SO(n)\times\SO(3)$.
\end{enumerate}
A \textit{Spin$^h$ manifold} is a manifold whose tangent bundle admits a Spin$^h$ structure.
\end{defn}

\subsection{$\Spinh$-cobordism}\label{sec:spinh cobordism}
Now that we have a sequence of topological groups $\Spinh(n)$, we can speak of cobordisms of manifolds with stable Spin$^h$ structure. The resulting cobordism groups are encoded as the homotopy groups of the $\Spinh$-cobordism spectrum. $\Spinh$-cobordism and the quaternionic Atiyah--Bott--Shapiro map were developed independently by Hu \cite{Hu22} and the seminal work of Freed and Hopkins on invertible topological phases~\cite{FH21}.

\begin{defn}
Let $\BSpinh$ be the classifying space of stable $\Spinh$-vector bundles. Then the \textit{$\Spinh$-cobordism spectrum} is the Thom spectrum $\MSpinh$, whose $n^\text{th}$ space is the Thom space of the universal bundle over $\BSpinh(n)$.
\end{defn}

The maps between the $\Spin$, $\Spinc$, and $\Spinh$ groups induce a homotopy commutative diagram of classifying spaces
\[
    \begin{tikzcd}
        \arrow[from=1-1, to=1-2]
        \arrow[from=1-2, to=1-3]
        \arrow[from=1-1, to=2-2]
        \arrow[from=1-2, to=2-2]
        \arrow[from=1-3, to=2-2]
        \BSpin & \BSpinc & \BSpinh \\
        & \BSO
    \end{tikzcd}
\]
and therefore a diagram of Thom spectra
\[
    \begin{tikzcd}
        \arrow[from=1-1, to=1-2]
        \arrow[from=1-2, to=1-3]
        \MSpin & \MSpinc & \MSpinh.
    \end{tikzcd}
\]

In contrast to $\MSpin$ and $\MSpinc$, the spectrum $\MSpinh$ does not admit a ring structure. This comes from the fact that there is no ``quaternionic tensor product'' of vector spaces. That is, the tensor product of two quaternionic vector spaces need not be quaternionic, so the product of two Spin$^h$ manifolds need not be Spin$^h$. However, the tensor product of a real vector space and a quaternionic vector space is again quaternionic, which suggests that $\MSpinh$ might be an $\MSpin$-module. This was proved by Freed--Hopkins using an explicit \textit{shearing map} \cite[Equation (10.20)]{FH21}, but we will recall the relevant details.

\begin{setup}\label{setup:shearing}
Note that the data of a $\Spinh(n)$-bundle is equivalent to a pair $(E_n,E_3)$, where $E_n$ is a principal $\SO(n)$-bundle and $E_3$ is a principal $\SO(3)$-bundle such that $w_2(E_n)=w_2(E_3)$, where $w_i$ denotes the $i^\text{th}$ mod 2 Stiefel--Whitney class. Recall that $w_1(P)=w_2(P)=0$ for any principal $\Spin(n)$-bundle $P$. Indeed, $w_1$ vanishes on all $\SO(n)$-bundles. For $w_2$, the short exact sequence $1\to\mb{Z}/2\to\Spin(n)\to\SO(n)\to 1$ induces an exact sequence on cohomology 
\[H^1(-;\Spin(n))\to H^1(-;\SO(n))\xrightarrow{w_2} H^2(-;\mb{Z}/2),\]
so an $\SO(n)$-bundle lifts to a $\Spin(n)$-bundle if and only if $w_2$ vanishes. It follows that $(P\oplus E_3,E_3)$ corresponds to a $\Spinh(n+3)$-bundle, since $P\oplus E_3$ is a principal $\SO(n+3)$-bundle and
\[w_2(P\oplus E_3)=w_2(P)+w_1(P)w_1(E_3)+w_2(E_3)=w_2(E_3).\]
This gives us the shearing map on classifying spaces:
\begin{align*}
\BSpin(n)\times\BSO(3)&\to\BSpinh(n+3)\\
(P,E_3)&\mapsto(P\oplus E_3,E_3).
\end{align*}
Applying (homotopy) colimits, we get a map $\BSpin\times\BSO(3)\to\BSpin^h$. This map admits a homotopy inverse $(R,E_3)\mapsto(R\oplus(-E_3),E_3)$, where $-E_3$ is the virtual bundle associated to $E_3$ (which exists since we are working stably).
\end{setup}

\begin{lem}[Freed--Hopkins]\label{lem:shearing}
The map $\BSpin(n)\times\BSO(3)\to\BSpinh(n+3)$ of classifying spaces over $\BO$ given in Setup~\ref{setup:shearing} induces a homotopy equivalence $\Sigma^{-3}\MSpin\wedge\MSO(3)\to\MSpinh$.
\end{lem}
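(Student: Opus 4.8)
The plan is to deduce the equivalence by applying the Thom spectrum functor to the homotopy equivalence of classifying spaces over $\BO$ recorded in Setup~\ref{setup:shearing}. Recall that a map $f\colon X\to Y$ of spaces over $\BO$ — equivalently, a map covered by an isomorphism of the pulled-back (virtual) bundles — induces a map of Thom spectra $X^{f^*\xi}\to Y^{\xi}$, and that this assignment is homotopy invariant: a homotopy equivalence over $\BO$ induces an equivalence of Thom spectra. So the first step is to upgrade the observations of Setup~\ref{setup:shearing} to the statement that the stabilized shearing map $\BSpin\times\BSO(3)\to\BSpinh$ is a homotopy equivalence over $\BO$, with homotopy inverse $(R,E_3)\mapsto(R\oplus(-E_3),E_3)$; the key point, already checked in Setup~\ref{setup:shearing}, is that the underlying stable $\SO$-bundle of $(P\oplus E_3,E_3)$ is $P\oplus E_3$ on the nose, so both the shearing map and its inverse commute with the maps down to $\BO$.

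The second step is to identify the two Thom spectra. Over $\BSpinh$ the tautological stable $\SO$-bundle Thomifies to $\MSpinh$ by definition. Over $\BSpin(n)\times\BSO(3)$ the unstable shearing map $(P,E_3)\mapsto(P\oplus E_3,E_3)$ pulls back the rank-$(n+3)$ tautological bundle on $\BSpinh(n+3)$ to the external Whitney sum $\gamma_n^{\Spin}\boxplus\gamma_3$ of the tautological bundles on $\BSpin(n)$ and $\BSO(3)$. Since the Thom space of an external sum is the smash product of the factors' Thom spaces, $\Th(\gamma_n^{\Spin}\boxplus\gamma_3)\cong\Th(\gamma_n^{\Spin})\wedge\Th(\gamma_3)$, where $\Th(\gamma_n^{\Spin})$ is the $n$-th space of $\MSpin$ and $\Th(\gamma_3)=\MSO(3)$. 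These identifications are compatible with the structure maps of $\MSpin$ — adding a trivial line to $P$ corresponds to the suspension $\Sigma\MSpin(n)\to\MSpin(n+1)$ — so taking the colimit over $n$, which stabilizes the $\BSpin$ factor while leaving $\BSO(3)$ fixed, exhibits the source Thom spectrum as $\Sigma^{-3}\MSpin\wedge\MSO(3)$; the $\Sigma^{-3}$ appears precisely because the $n$-th Thom space of the source maps to the $(n+3)$-rd space of $\MSpinh$. Combining the two steps yields the claimed equivalence $\Sigma^{-3}\MSpin\wedge\MSO(3)\xrightarrow{\sim}\MSpinh$.

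The argument is essentially formal once one grants the homotopy invariance of the Thom spectrum functor, and I do not expect a serious obstacle; the only things demanding care are the suspension and stabilization bookkeeping above and the fact that $\BSO(3)$ is not stabilized in this picture. If one wishes to sidestep the abstract functoriality, one can instead Thomify the unstable shearing maps and their inverses by hand and pass to colimits, or observe that the constructed map of bounded-below, finite-type spectra is an isomorphism on mod $p$ homology for every prime $p$ — which follows from the Thom isomorphism together with the shearing map being a homology isomorphism — and then invoke a Whitehead argument.
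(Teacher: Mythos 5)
Your proposal is correct and takes essentially the same route as the paper: both argue that the shearing map is a homotopy equivalence over $\BO$, so Thomification yields an equivalence of Thom spectra, with the degree shift of $-3$ visible from $\Th(\gamma_n^{\Spin}\boxplus\gamma_3)=\MSpin(n)\wedge\MSO(3)$ mapping to $\MSpinh(n+3)$. You simply spell out the bookkeeping (external Whitney sum, stabilization only in the $\BSpin$ factor) that the paper leaves implicit.
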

\begin{proof}
Because $\BSpin\times\BSO(3)\to\BSpinh$ is a homotopy equivalence, the result follows by taking Thom spectra. The shift by $-3$ can be seen at the level of Thom spaces, since the Thom space $\MSpin(n)\wedge\MSO(3)$ maps to $\MSpinh(n+3)$.
\end{proof}

Anderson--Brown--Peterson prove a 2-local splitting of the Thom spectra $\MSpin$ and $\MSpinc$~\cite{ABP67}. Since the homotopy groups of $\MSpin$ and $\MSpinc$ have no odd torsion \cite[p.~336]{Sto68}, it follows that one can completely determine the additive structure of the $\Spin$- and $\Spinc$-cobordism groups from the Anderson--Brown--Peterson splitting. We will prove an analogous 2-local splitting for $\MSpinh$ in Section~\ref{sec:abp splitting}. In order to determine the additive structure of $\pi_*\MSpinh$, we need to show that $\Spinh$-cobordism groups are odd torsion-free.

\begin{prop}\label{prop:no odd torsion}
Let $p$ be an odd prime. Then $\pi_*\MSpinh$ is finitely generated in each degree and has no $p$-torsion.
\end{prop}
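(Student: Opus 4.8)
The plan is to use the Freed–Hopkins equivalence of Lemma~\ref{lem:shearing}, $\MSpinh \simeq \Sigma^{-3}\MSpin \wedge \MSO(3)$, to reduce the question to known facts about $\MSpin$ and $\MSO(3)$. First I would recall that $\pi_*\MSpin$ has no odd torsion (Anderson–Brown–Peterson, or Stong's reference already cited in the excerpt) and is finitely generated in each degree, since $\MSpin$ splits $2$-locally as a wedge of suspensions of $\ko$ and $H\Z/2$, and rationally $\pi_*\MSpin \otimes \Q$ is a polynomial ring. For $\MSO(3)$, I would use the Thom isomorphism $H_*(\MSO(3);\Z/p) \cong H_{*-3}(\BSO(3);\Z/p)$ together with the fact that $H^*(\BSO(3);\Z)$ has only $2$-torsion; more efficiently, $\MSO$ itself has no odd torsion (Wall/Thom), and $\MSO(3)$ sits in a cofiber sequence relating it to Thom spectra of bundles over $\BSO(3)$, so its homotopy is finitely generated with only $2$-torsion.

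Then I would invoke the Künneth spectral sequence (or the Atiyah–Hirzebruch/algebraic Künneth argument) localized at an odd prime $p$: since $\pi_*(\MSpin)_{(p)}$ is a finitely generated torsion-free $\Z_{(p)}$-module in each degree (hence flat), the natural map
\[
\pi_*(\MSpin)_{(p)} \otimes_{\Z_{(p)}} \pi_*(\MSO(3))_{(p)} \longrightarrow \pi_*(\MSpin \wedge \MSO(3))_{(p)}
\]
is an isomorphism, because the $\mathrm{Tor}$ terms vanish. A tensor product of a torsion-free finitely generated module with a finitely generated module with no $p$-torsion again has no $p$-torsion and is finitely generated in each degree. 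Applying this degreewise and using the degree shift from Lemma~\ref{lem:shearing} gives that $\pi_*\MSpinh$ is finitely generated in each degree with no $p$-torsion, for every odd prime $p$.

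The main obstacle is justifying the collapse of the Künneth spectral sequence at odd primes, i.e. confirming that $\MSpin$ is flat enough $p$-locally for the $\mathrm{Tor}$ contributions to vanish. This is where the input that $\pi_*(\MSpin)$ has no odd torsion — equivalently that the $p$-local Atiyah–Hirzebruch spectral sequence computing $\MSpin_*(\MSO(3))$ degenerates — must be cited carefully; one clean way is to note that $\MSpin_{(p)}$ is, for odd $p$, a wedge of suspensions of $\mathrm{BP}$ (or of $\ko_{(p)}$, which at odd primes is a wedge of suspensions of the Adams summand), so smashing with any connective spectrum of finite type produces finitely generated torsion-free-plus-no-$p$-torsion homotopy. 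Once that structural fact is in hand, the rest is a routine diagram chase, and the finite generation in each degree follows from connectivity of all spectra involved together with finiteness of their mod $p$ homology in each degree.
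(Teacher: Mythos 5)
You take the same reduction as the paper (the shearing equivalence of Lemma~\ref{lem:shearing}), and the high-level plan — study $\pi_*(\Sigma^{-3}\MSpin\wedge\MSO(3))$ using a spectral sequence and known facts about $\MSpin_*$ and $\BSO(3)$ — is the right one. But the spectral-sequence step has two real gaps.

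First, the displayed formula
\[
\pi_*(\MSpin)_{(p)} \otimes_{\Z_{(p)}} \pi_*(\MSO(3))_{(p)} \longrightarrow \pi_*(\MSpin \wedge \MSO(3))_{(p)}
\]
is not a theorem, and flatness of $\pi_*(\MSpin)_{(p)}$ as a $\Z_{(p)}$-module does not make it one: the K\"unneth spectral sequence for $\pi_*(X\wedge Y)$ lives over $\pi_*\Sph$, not over $\Z_{(p)}$. Already for $X=Y=\Sph_{(p)}$ the analogous map fails to be an isomorphism in any degree where $\pi_*^s$ has $p$-torsion. Worse, for $Y=\Sigma^{-3}\MSO(3)$ the stable homotopy $\pi_*^s(\Sigma^{-3}\MSO(3))_{(p)}$ \emph{does} contain $p$-torsion for every $p$ (the bottom cell of $\Sigma^{-3}\MSO(3)$ already carries a copy of $\pi_*^s_{(p)}$), so if your displayed isomorphism held, the left side would have $p$-torsion and you would have ``proved'' the opposite of the proposition. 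The correct input is the ordinary homology $H_*(\Sigma^{-3}\MSO(3);\Z_{(p)})\cong H_*(\BSO(3);\Z_{(p)})$, which is $p$-torsion-free; concretely one should run the Atiyah--Hirzebruch spectral sequence $E^2_{s,t}=H_s(\Sigma^{-3}\MSO(3);\pi_t\MSpin)\Rightarrow\MSpin_{s+t}(\Sigma^{-3}\MSO(3))$, which is exactly what the paper does.

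Second, even once you have a $p$-torsion-free $E^2$-page, you assert that ``$\pi_*\MSpin$ has no odd torsion'' is \emph{equivalent} to degeneration of this AHSS, which is not correct: a $p$-torsion-free $E^2$-page can still acquire $p$-torsion on $E^\infty$ through differentials between free summands. The missing observation (supplied in the paper) is that the free parts of both $\pi_*\MSpin$ and $H_*(\BSO(3);\Z)$ are concentrated in even degrees, so every potential differential between free summands has source or target in odd total degree and therefore vanishes. Without that, the degeneration claim is unjustified. (As a minor point, the structural claim that $\MSpin_{(p)}$ for odd $p$ splits as a wedge of suspensions of the Adams summand of $\ko_{(p)}$ is also incorrect — $\MSpin_{(p)}\simeq\MSO_{(p)}$ at odd $p$, and the ranks of $\pi_*\MSpin$ rule out a splitting into copies of $\ell$ — but this claim was not load-bearing once the argument is repaired along the lines above.)
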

\begin{proof}
By Lemma~\ref{lem:shearing}, it suffices to show that
\[\pi_*(\Sigma^{-3}\MSpin\wedge\MSO(3))\cong\MSpin_*\Sigma^{-3}\MSO(3)\]
has no $p$-torsion. We will argue via the Atiyah--Hirzebruch spectral sequence.\footnote{We learned this argument from Proposition~3.1 of Debray's lecture notes on Spin-$U_2$ bordism \cite{Deb21}.} In the present context, this has signature
\begin{equation}\label{eq:sseq for p torsion}
E^2_{s,t}=H_s(\Sigma^{-3}\MSO(3);\MSpin_t)\Longrightarrow\MSpin_{s+t}\Sigma^{-3}\MSO(3).
\end{equation}
We will show that there is no $p$-torsion on the $E^\infty$ page of this spectral sequence, which will imply that $\MSpin_*\Sigma^{-3}\MSO(3)$ has no $p$-torsion.
\begin{enumerate}[(i)]
\item $\MSpin_*$ is finitely generated and has no $p$-torsion by \cite[p.~336]{Sto68}.
\item Let $G$ be a finitely generated abelian group with no $p$-torsion. Since $\MSO(3)$ is defined as the Thom space of the universal bundle over $\BSO(3)$, the Thom isomorphism induces an isomorphism 
\[\tilde{H}_s(\Sigma^{-3}\MSO(3);G)\cong H_s(\BSO(3);G).\]
Since $H^*(\BSO(3);\mb{Z})$ has no $p$-torsion \cite[\S 30.5]{BH59}, the universal coefficient theorem implies that $H_*(\BSO(3);G)$ has no $p$-torsion.
\item The free summands of $\MSpin_*$ all lie in even degrees \cite[p.~340]{Sto68}. Similarly, the free summands of $H^*(\BSO(3);\mb{Z})$ all lie in even degrees \cite[Proposition~30.3]{BH59}. If $G$ is a finitely generated abelian group, the universal coefficient theorem thus implies that the free summands of $H_*(\BSO(3);G)$ all lie in even degrees. By the Thom isomorphism, the free summands of $H_*(\Sigma^{-3}\MSO(3);G)$ likewise lie in even degrees.
\end{enumerate}
Steps (i) and (ii) imply that there is no $p$-torsion on the $E^2$ page of Equation~\ref{eq:sseq for p torsion}. Any $p$-torsion on the $E^\infty$ page must therefore arise from a differential between free summands. Steps (i) and (iii) imply that no so such differentials exist, since either the source or target of any differential lies in odd degree.

Also, there are only finitely many nonzero groups on the $E^\infty$ page for a given total degree, and each group is finitely generated, so $\pi_* \MSpinh$ is finitely generated.
\end{proof}

\subsection{Atiyah--Bott--Shapiro map}\label{sec:abs map}
A critical aspect of Atiyah--Bott--Shapiro's work \cite{ABS64} on spin geometry are the Atiyah--Bott--Shapiro orientations
\begin{align*}
    \vphi^r&:\MSpin\to\KO,\\
    \vphi^c&:\MSpinc\to\KU.
\end{align*}

In analogy with $\vphi^r$ and $\vphi^c$, one might hope for an Atiyah--Bott--Shapiro orientation
\[\vphi^h:\MSpinh\to\KSp.\]
However, the lack of quaternionic tensor product prevents $\MSpinh$ and $\KSp$ from being ring spectra, so a map $\MSpinh\to\KSp$ cannot be an orientation. Nevertheless, Hu \cite[\S 1.3]{Hu22} and Freed--Hopkins \cite[\S 9.2.2]{FH21} independently constructed an Atiyah--Bott--Shapiro \textit{map} $\vphi^h$ that is a module map over the real Atiyah--Bott--Shapiro orientation $\vphi^r$.

\begin{prop}[Freed--Hopkins, Hu]\label{prop:abs map}
There is a \emph{quaternionic} Atiyah--Bott--Shapiro map $\vphi^h:\MSpinh\to\KSp$ that is a module map over $\vphi^r:\MSpin\to\KO$.
\end{prop}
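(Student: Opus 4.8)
The plan is to build $\vphi^h$ directly from the real Atiyah--Bott--Shapiro orientation $\vphi^r:\MSpin\to\KO$ together with the shearing equivalence of Lemma~\ref{lem:shearing} and the $\KO$-module equivalence $\Sigma^4\KO\simeq\KSp$ of Proposition~\ref{prop:ko module}. First I would recall that $\MSpinh\simeq\Sigma^{-3}\MSpin\wedge\MSO(3)$, so it suffices to produce a suitable map $\Sigma^{-3}\MSpin\wedge\MSO(3)\to\KSp$. The factor $\MSO(3)$ carries a Thom class; more precisely, the tautological rank-$3$ real bundle over $\BSO(3)$ acquires a quaternionic structure after tensoring with the quaternions (this is exactly the quaternionic line bundle $\theta$ appearing in Setup~\ref{setup:shearing} and in Strickland's computation cited in the proof of Proposition~\ref{prop:ko module}), which gives a $\KSp$-orientation of that bundle, i.e.\ a $\KSp$-Thom class $u\in\widetilde{\KSp}^3(\MSO(3))$. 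Equivalently, under $\Sigma^4\KO\simeq\KSp$ this is a $\KO$-Thom class living in $\widetilde{\KO}^{-1}(\MSO(3))$, which exists because $\MSO(3)$ is $\KO$-orientable (its structure group lifts, after one stabilization, through the relevant connective cover — concretely $w_1=0$ and the $\KO$-theoretic obstruction vanishes since $\pi_{-1}\KO=0$ in the relevant range); I would instead phrase it as: the composite $\MSO(3)\to\ksp\langle 3\rangle\to\Sigma^3\KSp$ coming from the quaternionification of the tautological bundle.

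Next I would assemble the map as the composite
\[
\MSpinh\;\simeq\;\Sigma^{-3}\MSpin\wedge\MSO(3)
\xrightarrow{\;\vphi^r\wedge u\;}\Sigma^{-3}\KO\wedge\Sigma^{3}\KSp
\;\simeq\;\KO\wedge\KSp\xrightarrow{\;\text{act}\;}\KSp,
\]
where the last arrow is the $\KO$-module structure on $\KSp$ from Proposition~\ref{prop:ko module}. This is manifestly a map of $\MSpin$-modules: the $\MSpin$-action on $\MSpinh$ is, under the shearing equivalence, the action on the first smash factor, and $\vphi^r$ intertwines this with the $\KO$-action on $\KO$ and hence (after applying $\KO$-module structure) on $\KSp$. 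In other words, $\vphi^h$ is $\vphi^r$-linear essentially by construction. Finally I would check that this $\vphi^h$ agrees with (or is at least homotopic to) the maps of Hu~\cite{Hu22} and Freed--Hopkins~\cite{FH21}; since both of those are also built from the shearing/module picture, the comparison amounts to matching Thom classes on $\MSO(3)$, which is pinned down by the quaternionification construction and Strickland's identification of $\theta$.

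The main obstacle, and the step deserving the most care, is the construction and naturality of the $\KSp$-Thom class on $\MSO(3)$ (equivalently, showing the quaternionification of the tautological $\SO(3)$-bundle is genuinely $\KSp$-orientable in a way compatible with stabilization), together with checking that the resulting composite is $\vphi^r$-linear on the nose rather than merely after passing to homotopy groups — i.e.\ that the shearing equivalence of Lemma~\ref{lem:shearing} can be taken to be a map of $\MSpin$-modules. Both points are addressed in the cited work of Freed--Hopkins (the shearing map is constructed there precisely as an $\MSpin$-module map) and Hu, so in the write-up I would lean on those references for the module-structure bookkeeping and give the explicit Thom-class description to make $\vphi^h$ concrete for later use in defining the $\KSp$-Pontryagin classes.
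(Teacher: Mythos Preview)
The paper's proof is simply a citation to Hu's thesis, so you are attempting to supply more than the paper does. Your overall strategy --- use the shearing equivalence $\MSpinh\simeq\Sigma^{-3}\MSpin\wedge\MSO(3)$, apply $\vphi^r$ on the $\MSpin$ factor, and feed in a $\KSp$-class on the $\MSO(3)$ factor via the $\KO$-module structure --- is the right shape and matches the Freed--Hopkins philosophy. The $\MSpin$-module compatibility then follows formally, as you observe.

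However, the construction of the crucial class $u\in\widetilde{\KSp}^3(\MSO(3))$ contains genuine errors. First, tensoring the tautological rank-$3$ real bundle over $\BSO(3)$ with $\mathbb{H}$ yields a rank-$3$ quaternionic bundle, not a quaternionic line bundle; this produces an element of $\KSp^0(\BSO(3))$, not a Thom class in $\widetilde{\KSp}^3(\MSO(3))$. Second, the bundle $\theta$ in Strickland's paper (cited in Proposition~\ref{prop:ko module}) lives over $\mathbb{HP}^\infty$, not over $\BSO(3)$, and is not mentioned in Setup~\ref{setup:shearing}. Third, the universal bundle over $\BSO(3)$ has $w_2\neq 0$, so it is \emph{not} $\KO$-orientable; the aside about $\pi_{-1}\KO$ does not repair this, since the obstruction to a $\KO$-Thom class is precisely the Spin obstruction $w_2$.

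The correct source of the class $u$ is the Atiyah--Bott--Shapiro construction itself: the Clifford algebra $\mathrm{Cl}_{0,3}\cong\mathbb{H}\oplus\mathbb{H}$ has irreducible modules that are quaternionic, and the ABS difference-bundle construction applied to an oriented $3$-plane bundle yields a class in $\widetilde{\KSp}^3$ of its Thom space. But this is exactly Hu's construction, so your sketch has not avoided the need for the cited reference --- it has relocated the essential content (the Clifford-module input) into an unjustified step.
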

\begin{proof}
    See \cite[p.~36]{Hu22}.
\end{proof}

\begin{rem}
    The spectrum maps $\vphi^r$, $\vphi^c$, and $\vphi^h$ are sometimes denoted in the literature by $\hat{\A}$, $\hat{\A}^c$, and $\hat{\A}^h$, since the real Atiyah--Bott--Shapiro orientation is the spectrum-level lift of the $\hat{\A}$-genus.
\end{rem}

Traditionally, the Atiyah--Bott--Shapiro orientations (or map in the quaternionic case) are constructed in terms of Clifford algebras. Joachim gave a purely homotopical construction of the real and complex ABS orientations \cite{Joa04}, which implies that the maps $\vphi^r$ and $\vphi^c$ are $\mr{E}_\infty$-ring maps. It would be interesting to give an analogous construction for $\vphi^h$.

\begin{prob}
    Give a homotopical construction of $\vphi^h:\MSpinh\to\KSp$, and prove that $\vphi^h$ is an $\mr{E}_\infty$-module map over the $\mr{E}_\infty$-ring map $\vphi^r:\MSpin\to\KO$.
\end{prob}

\section{Summary of the Anderson--Brown--Peterson splitting}\label{sec:abp}
Anderson, Brown, and Peterson's 2-local splittings of $\MSpin$ and $\MSpinc$ \cite{ABP67} involve extensive calculations, many of which are omitted from their write-up. In this section, we will attempt to summarize the proof strategy of Anderson--Brown--Peterson. Our proof of Theorem~\ref{thm:main} is largely inspired by the strategy outlined here, as we will discuss in Section~\ref{sec:proof outline}.

\begin{notn}\label{notn:partitions}
For $n \in \N$, let $\Part(n)$ denote the set of all partitions of $n$, and let $\Part_1(n)$ denote the set of all partitions of $n$ that do not have $1$ as a summand. Let $\Part:=\bigcup_{n = 0}^\infty \Part(n)$ be the set of all partitions, and let $\Part_1$ be the set of all partitions that do not have $1$ as a summand. Let $\Part_\text{even}:=\bigcup_{n=0}^\infty\Part(2n)$ be the set of all even partitions, and let $\Part_\text{odd}:=\bigcup_{n=0}^\infty\Part(2n+1)$ be the set of all odd partitions.

If $I =(i_1, \ldots, i_r)$ is a partition, we let $|I| = i_1 + \ldots + i_r$ denote the sum of $I$.
\end{notn}

\begin{notn}
    Unless otherwise specified, whenever we write $H^*$ in this article, we mean cohomology with $\mb{Z}/2\mb{Z}$-coefficients. Given a ring $R$, we write $HR$ to denote the associated Eilenberg--Mac Lane spectrum.
\end{notn}

\begin{defn}
    Given any spectrum $X$ and any integer $n$, there is a spectrum $X\langle n\rangle$ with $\pi_kX\langle n\rangle\cong 0$ for $k<n$ and a map $X\langle n\rangle\to X$ that induces an isomorphism $\pi_kX\langle n\rangle\cong\pi_k X$ for $k\geq n$. The spectrum $X\langle n\rangle$ equipped with the map $X\langle n\rangle\to X$ is called the \textit{$n$-connective cover} of $X$, and is unique up to unique isomorphism in the stable homotopy category.
\end{defn}

\begin{ex}
    The spectra $\ko$, $\ku$, and $\ksp$ are the 0-connective covers (or just \textit{connective covers}) $\KO\langle 0\rangle$, $\KU\langle 0\rangle$, and $\KSp\langle 0\rangle$, respectively.
\end{ex}

We can now state the Anderson--Brown--Peterson splitting of $\MSpin$.

\begin{thm}[Anderson--Brown--Peterson]\label{thm:spin splitting}
    There is a collection of (homogeneous) cohomology classes $Z \subset H^* \MSpin$ and a map of spectra
    \[
        \MSpin \to \bigvee_{k = 0}^\infty \left( \bigvee_{\Part_1(2k)} \ko\langle 8k\rangle \vee \bigvee_{\Part_1(2k+1)} \ko\langle 8k + 2\rangle \right) \vee \bigvee_{z \in Z} \Sigma^{\deg z} H\Z / 2\Z
    \]
    that is a $2$-local homotopy equivalence.
\end{thm}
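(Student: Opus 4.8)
The plan is to follow the Anderson--Brown--Peterson strategy in essentially the same way that we will later carry it out for $\MSpinh$, so I describe the blueprint rather than the original source's intricate bookkeeping. The starting point is the cohomology $H^*\MSpin$ as a module over the Steenrod algebra $\mathcal{A}$. One computes, via the Thom isomorphism, that $H^*\MSpin \cong H^*\BSpin$ as $\mathcal{A}$-modules, and then one needs a structural description of this as an $\mathcal{A}$-module. The key algebraic input is the behavior of $H^*\ko$ and $H^*(\ko\langle n\rangle)$ as $\mathcal{A}$-modules: Stong's computation identifies $H^*\ko \cong \mathcal{A}/\mathcal{A}(\Sq^1,\Sq^2) =: \mathcal{A}\!\sslash\!\mathcal{A}(1)$, and there is an analogous description of the connective covers $\ko\langle 8k\rangle$ and $\ko\langle 8k+2\rangle$ appearing above (these particular connectivities are exactly the ones for which the cohomology is a cyclic $\mathcal{A}$-module on a class killed by an appropriate sub-Hopf-algebra). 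So the first step is to assemble these facts and write the target's cohomology as a direct sum, indexed by $\Part_1$, of suspensions of $\mathcal{A}\!\sslash\!\mathcal{A}(1)$, plus a free $\mathcal{A}$-module coming from the Eilenberg--Mac Lane summands.

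Second, I would construct the map. For each partition $I \in \Part_1$, one wants a $\ko$-cohomology class $\pi^I \in \ko^0(\MSpin)$ built from $\ko$-theoretic Pontryagin classes (the $\ko$-valued analog of Pontryagin classes, pushed forward along $\varphi^r : \MSpin \to \KO$ and lifted to the connective cover using connectivity of $\MSpin$). Bundling these together with the projection to $\ko\langle 8k\rangle$ or $\ko\langle 8k+2\rangle$ determined by $|I|$, and separately choosing ordinary cohomology classes $z$, gives a map $\MSpin \to \bigvee \ko\langle\cdots\rangle \vee \bigvee \Sigma^{\deg z}H\Z/2$. The classes $Z$ should be chosen \emph{after} understanding the first map: one looks at the induced map on $\mathcal{A}$-module cohomology, and $Z$ is chosen to span a complement (over $\mathcal{A}$) of the image, so that on cohomology the total map is surjective with free kernel.

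Third comes the homotopy-theoretic heart: showing the constructed map is a $2$-local equivalence. Because both sides have cohomology that is (after the $H\Z/2$ summands are accounted for) a sum of copies of $\mathcal{A}\!\sslash\!\mathcal{A}(1)$, one cannot just check an isomorphism on $\mathcal{A}$-module cohomology directly, so the technique is to pass to Margolis homology with respect to the Milnor primitives $Q_i$ — crucially $Q_1$ and $Q_2$, whose Margolis homology detects exactly the $\mathcal{A}\!\sslash\!\mathcal{A}(1)$-type behavior — and show the map induces an isomorphism on $Q_1$- and $Q_2$-Margolis homology. Combined with surjectivity on ordinary $\mathbb{Z}/2$-cohomology (arranged by the choice of $Z$) and a connectivity/bounded-below argument, this forces the map to be a cohomology iso and hence, by the mod-$2$ Hurewicz/Whitehead theorem for bounded-below spectra, a $2$-local equivalence. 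The filtering procedure of \cite{ABP67} is the device that organizes the induction: one filters $H^*\MSpin$ so that each associated graded piece is a single $\mathcal{A}\!\sslash\!\mathcal{A}(1)$ or free summand, splits off the corresponding wedge summand, and repeats.

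The main obstacle is the middle-to-last step: verifying that the $\ko$-Pontryagin classes actually realize an isomorphism on $Q_1$- and $Q_2$-Margolis homology. This requires knowing $H^*\BSpin$ as an $\mathcal{A}(1)$-module (equivalently, its $Q_1$- and $Q_2$-Margolis homology) explicitly enough to match it against the Margolis homology of the sum of $\mathcal{A}\!\sslash\!\mathcal{A}(1)$'s indexed by $\Part_1$ — a dimension count that is where the partition indexing and the $8k$ versus $8k+2$ dichotomy are forced — and then checking that the map defined via characteristic classes is nonzero on each Margolis homology generator. A secondary subtlety is that the connective covers $\ko\langle 8k+2\rangle$ have slightly more complicated cohomology than $\ko$ itself (there is an extra $\Sq^2$ relation versus a $\Sq^1,\Sq^2$ relation), so one must be careful that the classes built land in, and generate, the correct connective cover; keeping track of this is exactly the part that \cite{ABP67} largely leaves to the reader.
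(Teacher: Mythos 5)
Your blueprint matches the strategy the paper attributes to \cite{ABP67} and outlines in Section~\ref{sec:abp} (and then adapts for $\MSpinh$): build the map from $\KO$-Pontryagin classes and Eilenberg--Mac Lane summands, show surjectivity on $\Z/2\Z$-cohomology, use Margolis homology as the key invariant, and upgrade a mod-$2$ cohomology isomorphism to a $2$-local equivalence via finite generation and a filtration argument. One substantive slip: the Margolis homologies that detect $\A(1)$-free behavior, and the ones the paper actually computes (Setup~\ref{setup:Q_i homology}), are with respect to $Q_0=\Sq^1$ and $Q_1=\Sq^3+\Sq^2\Sq^1$, not $Q_1$ and $Q_2$. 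A bounded-below, locally finite $\A(1)$-module is free if and only if its $Q_0$- and $Q_1$-Margolis homologies vanish, and all the summands in play ($\A/(\A\Sq^1+\A\Sq^2)$, $\A/\A\Sq^3$) are $\A//\A(1)$-type, so $Q_0$ and $Q_1$ are the right differentials. A secondary imprecision: $H^*\ko\langle 8k+2\rangle\cong\A/\A\Sq^3$, not a module with ``an extra $\Sq^2$ relation''; the relation is $\Sq^3$. Neither error derails the strategy, but both would matter when you actually carry out the Margolis computation.
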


Similarly, there is a splitting for $\MSpinc$:

\begin{thm}[Anderson--Brown--Peterson]\label{thm:spinc splitting}
    There is a set of (homogeneous) cohomology classes $Z \subset H^* \MSpinc$ and a map of spectra
    \begin{equation}
        \label{eq:spinc splitting}
        \MSpinc \to \bigvee_{I\in\Part} \ku\langle 4| I|\rangle \vee \bigvee_{z \in Z} \Sigma^{\deg z} H\Z / 2\Z
    \end{equation}
    that is a $2$-local homotopy equivalence.
\end{thm}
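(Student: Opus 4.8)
The plan is to imitate the Anderson--Brown--Peterson argument for $\MSpin$. The overall shape is: (1) build the ``$\ku$-part'' of the splitting map from the complex Atiyah--Bott--Shapiro orientation and $\ku$-theoretic characteristic classes; (2) understand $H^*(\MSpinc;\Z/2\Z)$ as a module over the Steenrod algebra $\A$, enough to see that the $\ku$-part carries all of its ``non-free'' content; (3) realize the remaining free $\A$-module by a wedge of suspensions of $H\Z/2\Z$; and (4) conclude, using that a mod $2$ cohomology isomorphism between bounded-below, finite-type spectra is a $2$-local homotopy equivalence.

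For step (1): since $\MSpinc$ is a ring spectrum, $\vphi^c\colon\MSpinc\to\KU$ is a multiplicative Thom class, and by connectivity of $\MSpinc$ it lifts to a multiplicative map $\MSpinc\to\ku$; in particular there are Thom isomorphisms $\ku^*(\MSpinc)\cong\ku^*(\BSpinc)$. For each partition $I=(i_1,\dots,i_r)$ I would take the $\ku$-Pontryagin class $\pi_I=\pi_{i_1}\cdots\pi_{i_r}\in\ku^{4|I|}(\BSpinc)$ --- the connective Conner--Floyd classes of the universal stable bundle, i.e.\ the symmetric functions in its $\ku$-theoretic Chern roots --- and transport it across the Thom isomorphism to get a class in $\ku^{4|I|}(\MSpinc)=[\MSpinc,\ku\langle 4|I|\rangle]$ (here $\ku\langle 4|I|\rangle\simeq\Sigma^{4|I|}\ku$ because $\KU$ is $2$-periodic). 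Wedging over all partitions gives $g\colon\MSpinc\to W$, where $W=\bigvee_{I\in\Part}\ku\langle 4|I|\rangle$.

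Step (2) is the main obstacle. Recall $H^*\ku=\A/\!\!/E(1)$, where $E(1)=E(Q_0,Q_1)$ is the exterior algebra on the first two Milnor primitives; this $\A$-module is cyclic on its bottom class, has nonzero $Q_0$- and $Q_1$-Margolis homology, and has vanishing Margolis homology for every higher $Q_i$. The goal is to prove that $H^*(\MSpinc;\Z/2\Z)$, as an $\A$-module, is isomorphic to $\bigl(\bigoplus_{I\in\Part}\Sigma^{4|I|}\,\A/\!\!/E(1)\bigr)\oplus(\text{a free }\A\text{-module})$ and that $g^*$ identifies $H^*W$ with the first summand. Because the $\MSpinc$ Thom class is annihilated by $Q_0$ and $Q_1$ (this is precisely $\ku$-orientability), the Thom isomorphism is $E(1)$-linear, so the $Q_0$- and $Q_1$-Margolis homology of $H^*\MSpinc$ coincides with that of $H^*(\BSpinc;\Z/2\Z)$, which one computes from the (known, if intricate) ring structure of $H^*\BSpinc$ and the action of $Q_0,Q_1$ on its generators --- the bookkeeping here must produce exactly one generator per partition $I$, in degree $4|I|$, for the $Q_1$-Margolis homology. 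One then checks that $g^*$ is an isomorphism on $Q_1$-Margolis homology: since $\A/\!\!/E(1)$ is generated over $\A$ by its bottom class and that class survives to $Q_1$-homology, this amounts to the $\ku$-Pontryagin classes $\pi_I$ being suitably independent, which is the combinatorial core of the argument and is where ABP use an inductive ``filtering'' procedure that corrects the map one degree at a time. Finally one argues that the complementary summand is free over $\A$: its $Q_0$- and $Q_1$-Margolis homology vanish by the long exact sequences once $g^*$ is an isomorphism there, and its higher Margolis homology vanishes as a consequence of the full $\A$-module structure of $H^*\MSpinc$ --- this last point is the one place the explicit cohomology of $\BSpinc$ is really needed beyond the primitives, since the Thom isomorphism is not $Q_i$-linear for $i\geq 2$.

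Step (3) is then formal: choose homogeneous $\A$-module generators of the free complement to form the set $Z$, each realized by the corresponding map $\MSpinc\to\Sigma^{\deg z}H\Z/2\Z$; the assembled map $\MSpinc\to W\vee\bigvee_{z\in Z}\Sigma^{\deg z}H\Z/2\Z$ is a mod $2$ cohomology isomorphism, and since both sides are bounded below and of finite type it is a $2$-local homotopy equivalence. I expect the complex case to be genuinely cleaner than $\MSpin$: the ring structure on $\MSpinc$ makes the characteristic-class construction in step (1) immediate, and the $2$-periodicity of $\KU$ eliminates the ``$\ko\langle 2\rangle$''-type exceptional summands of the real splitting, so that every partition --- including those with $1$'s as summands --- contributes an honest $\ku\langle 4|I|\rangle$.
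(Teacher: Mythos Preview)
Your overall architecture matches the paper's summary of the Anderson--Brown--Peterson strategy, but two of the pieces are placed in the wrong roles, and one of them is a genuine gap.

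First, the ``inductive filtering procedure'' is not used to verify the Margolis homology isomorphism. In the paper's outline (and in ABP), the $Q_0$- and $Q_1$-Margolis homology isomorphism for $g^*$ is proved by direct computation: one computes $H_*(H^*\MSpinc;Q_i)$ from the explicit polynomial description of $H^*\BSpinc$ and matches it against $\bigoplus_I\Sigma^{4|I|}H_*(\A/\!\!/E(1);Q_i)$. The filtration enters only \emph{after} one has chosen $Z$ and assembled the full map $\theta$: it is the device for proving $\theta^*$ is injective, degree by degree, using the Margolis isomorphism as input together with the coalgebra structure on $H^*\MSpinc$ coming from the ring-spectrum multiplication.

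Second, and more seriously, your plan to conclude ``the complementary summand is free over $\A$'' by checking that all $Q_i$-Margolis homologies vanish is not the argument and does not obviously work. Vanishing of $Q_0$- and $Q_1$-Margolis homology does not imply freeness over $\A$ (only over $\A_1$), and you acknowledge that the higher $Q_i$ are not accessible via the Thom isomorphism. The paper (following ABP) sidesteps this entirely: rather than proving an abstract splitting of $H^*\MSpinc$, one constructs $\theta$ explicitly, proves surjectivity by an easy descending-degree argument, and proves injectivity via the filtration. The coalgebra structure on $H^*\MSpinc$ (specifically, that $\mu(m)=U\otimes m+\text{lower}$) is what makes the induction go through---it plays exactly the role that your hoped-for higher Margolis vanishing would play, but it is actually available. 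So the fix is: drop the freeness claim, and move the filtering argument to where it belongs, namely the proof that the assembled map is a cohomology isomorphism.
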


The proof strategy for these theorems boils down to the following four steps.

\begin{enumerate}[(i)]
    \item Use characteristic classes to construct the maps of spectra
    \begin{talign*}
        \Phi&:\MSpin\to\bigvee_k(\bigvee_{\Part_1(2k)}\ko\langle 8k\rangle\vee\bigvee_{\Part_1(2k+1)}\ko\langle 8k+2\rangle),\\
        \Phi^c&:\MSpinc\to\bigvee_\Part\ku\langle 4|I|\rangle.
    \end{talign*}
    The maps $\MSpin\to\ko\langle d\rangle$ come from \textit{$\KO$-Pontryagin classes}, whose definition we will recall in a moment. The maps $\MSpinc\to\ku\langle d\rangle$ are not explicitly discussed in \cite{ABP67}, but these come from $\KU$-characteristic classes. Both $\KO$- and $\KU$-characteristic classes are indexed by integer partitions, which accounts for the role of partitions in the 2-local splitting theorems.
    \item Assuming that there are maps
    \begin{talign*}
        \Psi&:\MSpin\to\bigvee_k(\bigvee_{\Part_1(2k)}\ko\langle 8k\rangle\vee\bigvee_{\Part_1(2k+1)}\ko\langle 8k+2\rangle)\vee\bigvee_z\Sigma^{\deg{z}}H\Z/2\Z,\\
        \Psi^c&:\MSpinc\to\bigvee_\Part\ku\langle 4|I|\rangle\vee\bigvee_z\Sigma^{\deg{z}}H\Z/2\Z.
    \end{talign*}
    inducing isomorphisms on mod 2-cohomology, deduce that $\Psi$ and $\Psi^c$ are 2-local equivalences.
    
    A map of spectra $X\to Y$ that induces an isomorphism on mod 2 cohomology is a 2-complete equivalence. If the homotopy groups of $X$ and $Y$ are all finitely generated (as is the case for all spectra that we will consider), then a 2-complete equivalence is a 2-local equivalence.
    \item Prove that $\Phi$ and $\Phi^c$ induce isomorphisms on certain Margolis homologies.

    In general, Margolis homology is easier to compute than mod 2 cohomology. Knowing that $\Phi$ and $\Phi^c$ induce isomorphisms on Margolis homology acts as the base case of an induction argument to prove that $\Phi$ and $\Phi^c$ induce isomorphisms on mod 2 cohomology.
    \item By identifying a suitable collection of ordinary cohomology classes of $\MSpin$ and $\MSpinh$, form the maps $\Psi$ and $\Psi^c$ and prove that these induce isomorphisms on mod 2 cohomology.
    
    Surjectivity is the easier part of this step. For injectivity, filter the source and target cohomologies by degree and show that if $\Psi$ and $\Psi^c$ induce isomorphisms on cohomology in degrees at most $n$, then $\Psi$ and $\Psi^c$ are injective on cohomology in degrees at most $n+1$.
\end{enumerate}

\subsection{$\KO$-Pontryagin classes}
The key to splitting $\MSpin$ and $\MSpinc$ are $\KO$-Pontryagin classes, since these give us maps from $\MSpin$ and $\MSpinc$ to the various $K$-theoretic summands in the splitting. These were first introduced in \cite[\S 4]{ABP66}, but we will recall the definition here. 

\begin{defn}
    The $i^\text{th}$ \textit{$\KO$-Pontryagin class} of an oriented vector bundle $V$ on $X$ is the unique class $\pi^i(V)\in\KO^0(X)$ such that
    \begin{enumerate}[(i)]
        \item $\pi^i$ is natural in $V$ for all $i$;
        \item for each complex line bundle $L$, we have
        \begin{itemize}
            \item $\pi^0(L)=1$,
        \item $\pi^1(L)=L-2$, and
        \item $\pi^i(L)=0$ for $i\geq 2$;
        \end{itemize}
        \item for any oriented bundles $V$ and $W$, we have
        \[\sum_{i\geq 0}\pi^i(V\oplus W)t^i=\left(\sum_{j\geq 0}\pi^j(V)t^j\right)\left(\sum_{k\geq 0}\pi^k(W)t^k\right).\]
    \end{enumerate}
    Given a partition $I=(i_1,\ldots,i_n)$, the $I^\text{th}$ $\KO$-Pontryagin class is the product $\pi^I:=\pi^{i_1}\cdots\pi^{i_n}$.
\end{defn}

The fact these three properties characterize $\pi^i$ (and hence $\pi^I$) follows from \cite[Proposition 4.4]{ABP66}. The classes $\pi^I\in\KO^0(\BSpin)$ determine maps $\MSpin\to\KO\langle d\rangle$ by multiplication with $\varphi^r : \MSpin \to \KO$, where the degree $d$ of connectivity is determined by the degree of $\pi^I$ (which are given in \cite[Theorem 2.1]{ABP67}).

\subsection{Proof strategy for splitting $\MSpinh$}\label{sec:proof outline}
Here is our strategy for proving Theorem \ref{thm:main}:
\begin{enumerate}[(i)]
    \item Compute the homotopy groups and cohomology of the spectrum $F$. Then use $\KO$-Pontryagin classes to build maps $\MSpinh \to \ksp\langle 4|I|\rangle$ for each partition $I$. For odd partitions $I$, show that these maps lift to maps $\MSpinh \to \Sigma^{4|I|} F$. Using some spectral sequence and characteristic class computations, describe what each of these maps does in cohomology (after choosing the correct lifts with some obstruction theory). Then, take wedge sums to form the map
    \[
        \Phi^h : \MSpinh \to \bigvee_{k = 0}^\infty \left( \bigvee_{\Part(2k)} \ksp \langle 8k \rangle \vee \bigvee_{\Part(2k + 1)} \Sigma^{8k + 4} F \right).
    \]
    \item Prove that $\Phi^h$ induces isomorphisms on Margolis homology by computing the Margolis homology of the cohomology of each summand and of $H^* \MSpinh$. As in the $\MSpin$ and $\MSpinc$ cases, this is almost everything we need to get an isomorphism in cohomology.
    \item Find a set of cohomology classes $Z \subset H^* \MSpinh$ such that the map
    \[
        \MSpinh \to \bigvee_{k = 0}^\infty \left( \bigvee_{\Part(2k)} \ksp \langle 8k \rangle \vee \bigvee_{\Part(2k + 1)} \Sigma^{8k + 4} F \right) \vee \bigvee_{z \in Z} \Sigma^{\deg z} H\Z / 2\Z
    \]
    induces a surjection in cohomology. Then, using the fact that $\Phi^h$ gives isomorphisms on Margolis homology, filter the Steenrod modules on both sides of this map by the lowest degree in which summands are nonzero and use this to show that $\Phi^h$ induces an injection as well, giving us an isomorphism in mod 2 cohomology.
    \item The isomorphism on mod 2 cohomology gives an equivalence of spectra in the 2-complete category, and this is a 2-local equivalence due to finitely generated homotopy groups.
\end{enumerate}
The overall plan is analogous to the strategy used in \cite{ABP67}. In steps (ii) and (iii), we have to make a few adjustments to deal with the fact that $\MSpinh$ is not a ring spectrum, but instead a module spectrum over $\MSpin$.

\section{Cohomology of $\BSpinh$ and $\MSpinh$}\label{sec:cohomology}
In Section~\ref{sec:ksp classes}, we will construct characteristic classes that realize the non-Eilenberg--Mac Lane summands of our splitting map. To do this, we need a few cohomological computations, which we collect in this section.

First, we present the cohomology of the classifying spaces $\BSpin$, $\BSpinc$, and $\BSpinh$.

\begin{prop}\label{prop:bspin cohomology}
    The cohomology of $\BSpin$ is the ring
    \[
        H^* \BSpin \cong \Z / 2\Z\left[ w_i \ \middle| \ i \geq 2, i \neq 2^k + 1 \text{ for }k\geq 0\right],
    \]
    where $w_i$ is the $i^\text{th}$ Stiefel--Whitney class of the canonical oriented bundle $\BSpin \to \BSO$.
\end{prop}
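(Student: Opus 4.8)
The plan is to realize $\BSpin$ as the homotopy fiber of $w_2 \colon \BSO \to K(\Z/2\Z,2)$ — equivalently, as the pullback of the contractible path--loop fibration over $K(\Z/2\Z,2)$ along $w_2$, so that the fibration $B\Z/2\Z \to \BSpin \to \BSO$ is the one obtained by applying $B(-)$ to the central extension $1 \to \Z/2\Z \to \Spin \to \SO \to 1$ — and then to compute $H^*\BSpin$ via the Eilenberg--Moore spectral sequence of that pullback square. Since $K(\Z/2\Z,2)$ is simply connected and all spaces in sight are of finite type, the spectral sequence converges strongly,
\[
E_2^{*,*} = \operatorname{Tor}^{H^*K(\Z/2\Z,2)}_{*,*}\!\bigl(H^*\BSO,\ \Z/2\Z\bigr) \Longrightarrow H^*\BSpin,
\]
where $H^*\BSO$ is an $H^*K(\Z/2\Z,2)$-module via $w_2^*$. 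Hence the whole computation reduces to showing that $H^*\BSO$ is a \emph{free} $H^*K(\Z/2\Z,2)$-module: then $E_2$ is concentrated in homological degree $0$, the spectral sequence collapses, and $H^*\BSpin \cong H^*\BSO \otimes_{H^*K(\Z/2\Z,2)} \Z/2\Z$. (Alternatively one could run the Serre spectral sequence of $B\Z/2\Z \to \BSpin \to \BSO$ directly via Kudo's transgression theorem, but the Eilenberg--Moore route makes the collapse manifest.)

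For the freeness I would combine the classical computations $H^*\BSO \cong \Z/2\Z[w_2, w_3, w_4, \dots]$ and (Serre) the fact that $H^*K(\Z/2\Z,2)$ is polynomial on the iterated squares $\Sq^{2^{k-1}}\!\cdots\Sq^2\Sq^1\iota_2$, $k \geq 0$ (the $k=0$ class being $\iota_2$), which sit in degrees $2^k+1$. The key point is the congruence
\[
w_2^*\!\left(\Sq^{2^{k-1}}\!\cdots\Sq^2\Sq^1\iota_2\right) \equiv w_{2^k+1} \pmod{\text{decomposables in } H^*\BSO},
\]
proved by induction on $k$ (base cases $w_2^*\iota_2 = w_2$ and $\Sq^1 w_2 = w_3$, using $w_1=0$ on $\BSO$): applying $\Sq^{2^{k-1}}$ to a class of the form $w_{2^{k-1}+1} + (\text{decomposable})$, the Cartan formula sends the decomposable part to a decomposable, while the Wu formula shows $\Sq^{2^{k-1}} w_{2^{k-1}+1}$ has $w_{2^k+1}$ as its unique indecomposable summand. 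Granting this, the usual change-of-polynomial-generators argument applies: the graded algebra endomorphism of $\Z/2\Z[w_2,w_3,\dots]$ fixing $w_i$ for $i \neq 2^k+1$ and sending $w_{2^k+1} \mapsto w_2^*(\Sq^{2^{k-1}}\!\cdots\Sq^1\iota_2)$ induces the identity on indecomposables, hence is an automorphism. Thus the $w_i$ with $i \geq 2$, $i \neq 2^k+1$, together with the classes $w_2^*(\Sq^{2^{k-1}}\!\cdots\Sq^1\iota_2)$, form a polynomial basis of $H^*\BSO$; in particular $w_2^*$ is a monomorphism onto the polynomial subalgebra $P$ they generate, and $H^*\BSO$ is free over $P$ with basis the monomials in the $w_i$, $i \neq 2^k+1$.

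Plugging this back into the collapsed spectral sequence, $H^*\BSpin \cong H^*\BSO \otimes_P \Z/2\Z$, which is $\Z/2\Z[w_2,w_3,\dots]$ modulo the ideal generated by the images $w_2^*(\Sq^{2^{k-1}}\!\cdots\Sq^1\iota_2)$, i.e. $\Z/2\Z[w_i : i \geq 2,\ i \neq 2^k+1]$. Naturality identifies the generators: the canonical oriented bundle on $\BSpin$ is pulled back from the tautological bundle on $\BSO$ along $\BSpin \to \BSO$, so its Stiefel--Whitney classes are the images of the $w_i \in H^*\BSO$, and under the collapse these are precisely the stated polynomial generators.

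The load-bearing step is the freeness of $H^*\BSO$ over $H^*K(\Z/2\Z,2)$; that is where the real content sits, even though the congruence underpinning it is only a short induction with the Wu formula. Once freeness is in hand, the convergence and collapse of the Eilenberg--Moore spectral sequence, the change-of-generators lemma, and the naturality identification are all formal. As a sanity check on the answer, the smallest excluded degree is $2 = 2^0+1$, so the polynomial generators begin with $w_4$ — consistent with $\BSpin$ being $3$-connected.
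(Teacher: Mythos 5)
Your argument is mathematically correct, but it is worth noting that the paper does not actually prove this proposition---it simply cites Stong~\cite{Sto68}, p.~292. So you have supplied a genuine proof where the paper defers to a reference.

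Your route via the Eilenberg--Moore spectral sequence of the pullback square $\BSpin \to \mathrm{P}K(\Z/2,2)$, $\BSO \to K(\Z/2,2)$ differs in packaging from the proof in Stong (which goes back to Quillen and Thomas): the reference uses the Serre spectral sequence of the fibration $B\Z/2 \to \BSpin \to \BSO$ together with Borel's transgression theorem. Both routes bottleneck on exactly the same algebraic fact---that $w_2^*\colon H^*K(\Z/2,2) \to H^*\BSO$ carries Serre's polynomial generators $\Sq^{2^{k-1}}\cdots\Sq^1\iota_2$ to elements whose indecomposable parts are $w_{2^k+1}$, so that $H^*\BSO$ is a free module over the image $P$---and your induction with the Cartan and Wu formulas establishing the congruence $w_2^*(\Sq^{2^{k-1}}\cdots\Sq^1\iota_2) \equiv w_{2^k+1}$ modulo decomposables is correct (the Wu formula gives $\Sq^{2^{k-1}}w_{2^{k-1}+1} = \sum_{t=0}^{2^{k-1}} w_{2^{k-1}-t}w_{2^{k-1}+1+t}$, whose only indecomposable term is $w_{2^k+1}$, and the Cartan formula keeps decomposables decomposable). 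The Eilenberg--Moore packaging buys a cleaner justification of the collapse---freeness forces $\operatorname{Tor}$ to concentrate in homological degree zero, so there is no transgression argument to manage---at the modest cost of invoking strong convergence, which your hypotheses (simply connected base $K(\Z/2,2)$, finite-type cohomology) do license. The change-of-generators step, the identification of the quotient, and the naturality argument pinning down the generators as Stiefel--Whitney classes of the pulled-back bundle are all standard and correctly executed.
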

\begin{proof}
    See \cite[p.~292]{Sto68}.
\end{proof}

\begin{prop}\label{prop:bspinc cohomology}
    The cohomology of $\BSpinc$ is the ring
    \[
        H^* \BSpinc \cong \Z / 2\Z\left[ w_i \ \middle| \ i \geq 2, i \neq 2^{k + 1} + 1 \text{ for }k\geq 0\right],
    \]
    where $w_i$ is the $i^\text{th}$ Stiefel--Whitney class of the canonical oriented bundle $\BSpinc \to \BSO$.
\end{prop}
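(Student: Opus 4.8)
The plan is to use the short exact sequence of Lie groups $1 \to \Spin(n) \to \Spinc(n) \to \U(1) \to 1$, in which the surjection sends $[g,z]$ to $z^2$. Applying classifying spaces and then passing to the colimit over $n$ gives a fibration
\[
    \BSpin \longrightarrow \BSpinc \xrightarrow{\ \pi\ } \CP^\infty = \BU(1),
\]
and I would read off $H^*\BSpinc$ from its Serre spectral sequence with $\Z/2\Z$ coefficients. Since $\CP^\infty$ is simply connected the local system is trivial, so $E_2^{p,q} \cong H^p(\CP^\infty) \otimes H^q(\BSpin)$ as bigraded $\Z/2\Z$-algebras, where $H^*\CP^\infty = \Z/2\Z[c]$ with $|c| = 2$ and $H^*\BSpin$ is the polynomial algebra of Proposition~\ref{prop:bspin cohomology}.

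The main point is that this spectral sequence degenerates at $E_2$. The base class $c$ and its powers are permanent cycles. For the fiber, the composite $\BSpin \to \BSpinc \to \BSO$ is the canonical classifying map, so every polynomial generator $w_i \in H^i\BSpin$ is the restriction of $w_i \in H^i\BSO$ and hence lies in the image of the fiber-restriction (edge) map $H^*\BSpinc \to H^*\BSpin$; in particular each algebra generator of $H^*\BSpin$ is a permanent cycle. Since $E_2$ is generated as an algebra by $c$ together with these generators, the Leibniz rule forces all differentials to vanish. (Alternatively, one can run the Serre spectral sequence of the other fibration $\CP^\infty \to \BSpinc \to \BSO$, realizing $\BSpinc$ as the homotopy fiber of $\beta w_2 \colon \BSO \to K(\Z,3)$, and apply the Kudo transgression theorem: $\tau(x) = \Sq^1 w_2 = w_3$, the mod $2$ reduction of $\beta w_2$, using $w_1 = 0$ on $\BSO$, whence $\tau(x^{2^j}) = \Sq^{2^j}\cdots\Sq^2\Sq^1 w_2 = w_{2^{j+1}+1} + (\text{decomposables})$ by the Wu formula, making exactly $w_3, w_5, w_9, \dots$ decomposable. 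This is the exact analog of the classical computation of $H^*\BSpin$, where the fiber is $\mathbb{RP}^\infty$, $\tau(u) = w_2$, and the excluded classes come out to be $w_2, w_3, w_5, \dots$.)

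Granting the degeneration, $H^*\BSpinc$ has associated graded $\Z/2\Z[c] \otimes H^*\BSpin$, a polynomial algebra on $c$ in degree $2$ and on the $w_i \in H^i\BSpin$ with $i \geq 4$, $i \neq 2^k+1$. To finish I would identify $c$ with an honest characteristic class: by the definition of a $\Spinc$-structure the canonical $\SO$-bundle over $\BSpinc$ satisfies $w_2 \equiv c_1(L) \pmod 2$ with $L$ the universal complex line bundle, and $c_1(L) = \pi^*$ of the integral generator of $H^2\CP^\infty$, so $c$ equals the mod $2$ Stiefel--Whitney class $w_2$ of the canonical bundle $\BSpinc \to \BSO$. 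Thus the classes $w_i$ with $i \geq 2$ and $i \neq 2^{k+1}+1$ generate $H^*\BSpinc$ (they generate the associated graded, hence the ring), and comparing graded dimensions degree by degree shows the evident surjection from the polynomial algebra on these classes is an isomorphism.

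The hard part is the degeneration claim; once the fiber generators are recognized as restrictions of Stiefel--Whitney classes (hence permanent cycles), the rest is bookkeeping, including the elementary identity $\{2\} \cup \{\, i \geq 4 : i \neq 2^k+1 \,\} = \{\, i \geq 2 : i \neq 2^{k+1}+1,\ k \geq 0 \,\}$.
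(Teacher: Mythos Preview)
Your argument is correct. The paper itself gives no proof at all, simply citing Stong, so your self-contained treatment goes well beyond what the paper supplies. The Serre spectral sequence for $\BSpin \to \BSpinc \to \CP^\infty$ collapses exactly for the reason you give: each polynomial generator $w_i$ of $H^*\BSpin$ is the restriction of the class of the same name pulled back from $\BSO$ through $\BSpinc$, so every algebra generator of $E_2$ is a permanent cycle. The identification $\pi^*c = w_2$ via the determinant line bundle is standard, and the dimension count resolves the (trivial) multiplicative extension problem. Your index-set identity is correct as stated.

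Your parenthetical alternative---running the spectral sequence for $\CP^\infty \to \BSpinc \to \BSO$ and computing transgressions via Kudo's theorem---is in fact closer in spirit to how Stong (and the remark following Proposition~\ref{prop:bspinh cohomology}) organizes these computations: it makes visible that the ``missing'' generators $w_{2^{k+1}+1}$ are exactly the iterated squares $\Sq^{2^k}\cdots\Sq^2\Sq^1 w_2$ becoming decomposable. Your primary route is slightly cleaner for the bare ring statement since it sidesteps the Wu-formula bookkeeping, while the alternative has the advantage of exhibiting the relations explicitly and fitting into the uniform pattern for $\BSpin$, $\BSpinc$, $\BSpinh$.
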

\begin{proof}
    See \cite[p.~293]{Sto68}.
\end{proof}

\begin{prop}\label{prop:bspinh cohomology}
    The cohomology of $\BSpinh$ is the ring
    \[
        H^* \BSpinh \cong \Z / 2\Z\left[ w_i \ \middle| \ i \geq 2, i \neq 2^{k + 2} + 1 \text{ for }k\geq 0\right],
    \]
    where $w_i$ is the $i^\text{th}$ Stiefel--Whitney class of the canonical oriented bundle $\BSpinh \to \BSO$. The Stiefel--Whitney class $w_5$ vanishes.
\end{prop}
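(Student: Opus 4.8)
The plan is to bootstrap from the known cohomology of $\BSpin$ and $\BSO(3)$ via the shearing equivalence of Setup~\ref{setup:shearing}, and then perform a triangular change of polynomial generators.

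\textbf{Step 1: reduce to a product.} By Setup~\ref{setup:shearing}, the shearing map is a homotopy equivalence $\BSpin\times\BSO(3)\xrightarrow{\ \sim\ }\BSpinh$ under which the canonical oriented bundle over $\BSpinh$ pulls back to $P\oplus E_3$, where $P$ is the canonical oriented bundle over $\BSpin$ and $E_3$ is the canonical rank $3$ bundle over $\BSO(3)$ (it is $P\oplus E_3$, not $P$ alone, because a $\Spinh$-bundle is the pair $(E_n,E_3)$ and the underlying $\SO$-bundle is the first component). By the Künneth theorem,
\[
    H^*\BSpinh \cong H^*\BSpin \otimes_{\Z/2\Z} H^*\BSO(3) \cong \Z/2\Z\left[ b_i \ \middle|\ i \in S \right] \otimes \Z/2\Z[a_2,a_3],
\]
where $S = \{\, i \geq 2 \mid i \neq 2^k+1 \text{ for all } k \geq 0 \,\} = \{4,6,7,8,10,\dots\}$, the class $b_i = w_i(P)$ is the corresponding polynomial generator of $H^*\BSpin$ (Proposition~\ref{prop:bspin cohomology}), and $a_2 = w_2(E_3)$, $a_3 = w_3(E_3)$ generate $H^*\BSO(3)$. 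Extend this notation by setting $b_0 = 1$ and $b_j = 0$ for all $j \geq 1$ with $j \notin S$ (so $b_1 = b_2 = b_3 = b_5 = b_9 = \cdots = 0$).

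\textbf{Step 2: compute the $w_i$.} Since $w(P) = \sum_{i \geq 0} b_i$ and $w(E_3) = 1 + a_2 + a_3$, the Whitney sum formula gives $w_1 = 0$ and
\[
    w_i = b_i + b_{i-2}\, a_2 + b_{i-3}\, a_3 \qquad (i \geq 2).
\]
In particular $w_2 = a_2$, $w_3 = a_3$, and $w_5 = b_5 + b_3 a_2 + b_2 a_3 = 0$ because $b_2 = b_3 = b_5 = 0$; this is the last assertion of the proposition. (For $k \geq 1$ the same formula gives $w_{2^{k+2}+1} = b_{2^{k+2}-1}\,a_2 + b_{2^{k+2}-2}\,a_3$; since $2^{k+2}-1$ and $2^{k+2}-2$ both lie in $S$ for $k\geq1$, this is a nonzero decomposable class, which is why $w_{2^{k+2}+1}$ is omitted from the generators but — unlike $w_5$ — is not claimed to vanish.)

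\textbf{Step 3: change variables.} Observe that $\{2^k+1 \mid k \geq 0\} = \{2,3\} \sqcup \{2^{k+2}+1 \mid k \geq 0\}$, so $\{2,3\} \sqcup S = \{\, i \geq 2 \mid i \neq 2^{k+2}+1 \text{ for all } k \geq 0 \,\}$; thus $\{2,3\}\sqcup S$ is exactly the set of degrees of the polynomial generators $a_2, a_3, b_i$ of $H^*\BSpinh$. Consequently the graded polynomial ring $R := \Z/2\Z[x_2, x_3, y_i \mid i \in S]$, with $\deg x_j = j$ and $\deg y_i = i$, has the same Poincaré series as $H^*\BSpinh$. Let $\phi \colon R \to H^*\BSpinh$ be the ring homomorphism with $\phi(x_2) = w_2$, $\phi(x_3) = w_3$, and $\phi(y_i) = w_i$. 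Then $\phi$ is surjective: $a_2 = w_2$ and $a_3 = w_3$ are in the image, and for $i \in S$ the identity $b_i = w_i - b_{i-2} a_2 - b_{i-3} a_3$ shows, by induction on $i$, that each $b_i$ is in the image (each of $b_{i-2}, b_{i-3}$ is $0$, or equals $a_2$ or $a_3$, or is $b_j$ for some $j \in S$ with $j < i$). A degreewise surjection between graded $\Z/2\Z$-vector spaces that are finite-dimensional in each degree and have equal Poincaré series is an isomorphism, so $\phi$ is a ring isomorphism, identifying $H^*\BSpinh$ with the polynomial ring on $\{ w_i \mid i \geq 2,\ i \neq 2^{k+2}+1 \text{ for all } k \geq 0 \}$.

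\textbf{Main obstacle.} Conceptually the proof is light; the care goes into the bookkeeping. In Step 1 one must correctly identify the pullback of the canonical bundle as $P\oplus E_3$, and in Step 3 one must check that the excluded index sets for $\BSpin$ and $\BSpinh$ differ by exactly $\{2,3\}$ — this is precisely what makes $\phi$ ``unitriangular'' and what forces $w_5$, but none of $w_9, w_{17}, \dots$, to vanish. An alternative, mirroring Stong's computations of $H^*\BSpin$ and $H^*\BSpinc$, is to run the Serre spectral sequence of the fibration coming from the homotopy-pullback description of $\BSpinh$ along $w_2$; the route above is shorter because Setup~\ref{setup:shearing} is already at hand.
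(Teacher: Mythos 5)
Your route through the shearing equivalence (Setup~\ref{setup:shearing}, Lemma~\ref{lem:shearing}) plus K\"unneth and a triangular change of generators is a clean alternative to the paper's bare citation of Hu, and Step~1 and the Poincar\'e-series comparison in Step~3 are sound. However, there is a genuine error in the ``extend by zero'' convention: it is true that $w_1(P)=w_2(P)=w_3(P)=w_5(P)=w_9(P)=0$ in $H^*\BSpin$, but the Stiefel--Whitney classes $w_{2^k+1}(P)$ for $k\geq 4$ --- that is, $w_{17},w_{33},\ldots$ --- are \emph{nonzero} decomposable elements of $H^*\BSpin$. For instance, the transgression relation $\Sq^8\Sq^4\Sq^2\Sq^1w_2=0$, reduced modulo $(w_2,w_3,w_5,w_9)$ and the Wu formula, forces $w_{17}=w_4w_{13}+w_6w_{11}+w_7w_{10}$ in $H^*\BSpin$, which is a nonzero sum of monomials in the polynomial generators. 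The paper's remark immediately after this proposition warns about exactly this (``the classes $w_{2^k+1}$ do not vanish in general''), which your $b_j=0$ convention contradicts.

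Consequently $w(P)\neq\sum_{i\in S}b_i$ and the Whitney-sum identity $w_i=b_i+b_{i-2}a_2+b_{i-3}a_3$ fails whenever $i-2$ or $i-3$ has the form $2^k+1\geq 17$. The specific computation $w_5=0$ (and the $k=1$ case of the parenthetical, $w_9=b_7a_2+b_6a_3$) happen to be correct because the relevant $b_j$ with $j\leq 9$ really are zero, but the parenthetical formula for $w_{2^{k+2}+1}$ is wrong from $k=2$ onward, and the surjectivity induction in Step~3 invokes the false identity $b_i=w_i-b_{i-2}a_2-b_{i-3}a_3$. The induction is repairable: replace $b_{i-2},b_{i-3}$ by the actual classes $w_{i-2}(P),w_{i-3}(P)$, and note that when these indices fall outside $S$ the class is a decomposable polynomial in the $b_j$ with $j\in S$, $j<i$, hence in the image of $\phi$ by the inductive hypothesis together with $a_2=w_2$, $a_3=w_3$. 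As written, though, the proof is not correct, since it rests on the false vanishing claim.
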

\begin{proof}
    See \cite[Proposition~2.31]{Hu22}.
\end{proof}

\begin{rem}
Note that the classes $w_{2^k + 1}$ do not vanish in general, but are non-zero polynomials in lower Stiefel--Whitney classes. For $\BSpin$, one can find these relations by noting that $w_2 = 0$ for degree reasons, imposing the relation $\Sq^{2^{k - 1}} \cdots \Sq^2 \Sq^1 w_2 = 0$, and applying the Wu formula. For $\BSpinh$, the classes $w_{2^k+1}$ are determined by $\Sq^{2^{k - 1}} \cdots \Sq^4 w_5 = 0$.
\end{rem}

By pulling back the $\KO$-Pontryagin class $\pi^I$ under $\BSpinh\to\BSO$, we get a $\KO$-Pontryagin class $\pi^I_h$ for $\BSpinh$. A fact we will need later is that the associated map $\BSpinh\to\KO$ admits a lift to $\ko\langle 4|I|\rangle$ if $I\in\Part_\mr{even}$ or to $\ko\langle 4|I|-2\rangle$ if $I\in\Part_\mr{odd}$.

\begin{prop}\label{prop:lift of pontryagin class}
The map $\BSpinh\to\BSO\xrightarrow{\pi^I_h}\KO$ admits a lift to $\ko\langle 4|I|\rangle$ if $I\in\Part_\mr{even}$ or to $\ko\langle 4|I|-2\rangle$ if $I\in\Part_\mr{odd}$.
\end{prop}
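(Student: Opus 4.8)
The plan is to restate the claim as a lower bound on the skeletal filtration of $\pi^I_h$ in $\KO^0(\BSpinh)$, reduce that bound through the shearing splitting to the behaviour of $\KO$-Pontryagin classes over $\BSpin$ and over the small space $\BSO(3)$, and then recombine the pieces using the multiplicativity of the $\KO$-Atiyah--Hirzebruch spectral sequence together with the sparseness of $\pi_*\KO$. The reduction to a filtration statement is the elementary remark that, for a connected finite-type CW complex $X$ and $\alpha\in\KO^0(X)$, if $\alpha$ restricts to $0$ on the $(d-1)$-skeleton $X^{(d-1)}$ then $\alpha$ lifts through the connective cover $\ko\langle d\rangle$: writing $\ko\langle d\rangle=\mathrm{fib}(\KO\to\tau_{\le d-1}\KO)$, the obstruction lies in $(\tau_{\le d-1}\KO)^0(X)$, and this group injects into $(\tau_{\le d-1}\KO)^0(X^{(d-1)})$ because $X/X^{(d-1)}$ is $(d-1)$-connected while $\tau_{\le d-1}\KO$ is concentrated in degrees $<d$. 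So it is enough to show that $\pi^I_h$ has skeletal filtration at least $4|I|$ for $I\in\Part_\mr{even}$ and at least $4|I|-2$ for $I\in\Part_\mr{odd}$.

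Next I would invoke the shearing of Setup~\ref{setup:shearing}: the universal oriented bundle over $\BSpinh$ splits stably as $V'\oplus W$, with $W$ pulled back along a map $q\colon\BSpinh\to\BSO(3)$ and $V'=V\ominus W$ a $\Spin$ bundle classified by a map $s\colon\BSpinh\to\BSpin$. The Whitney formula (axiom (iii) for $\KO$-Pontryagin classes) then expresses $\pi^I_h=\sum s^*\pi^A\cdot q^*\pi^B_3$ as a finite sum over ways of writing $i_\ell=a_\ell+b_\ell$, where $\pi^A\in\KO^0(\BSpin)$, $\pi^B_3\in\KO^0(\BSO(3))$ is the $B$-th $\KO$-Pontryagin class of $\gamma_3$, and $|A|+|B|=|I|$. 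For the $\BSpin$ factors, \cite[Theorem~2.1]{ABP67} — together with the trivial observation that $\pi^1$ already has filtration $\ge 4$ over $\BSpin$, which extends the estimate to partitions containing $1$'s — gives filtration $\ge 4|A|$ for $|A|$ even and $\ge 4|A|-2$ for $|A|$ odd, with the leading term lying in a controlled line of the spectral sequence. For the $\BSO(3)$ factors, since $\gamma_3\oplus\gamma_3$ is a $\Spin$ bundle one gets the analogous bound for $\pi^B_3$ over $\BSO(3)$ by induction on $|B|$: expand $\pi^B(\gamma_3\oplus\gamma_3)$ by the Whitney formula, apply the $\BSpin$ estimate to the left side, and feed the inductive hypothesis (and the cancellation below) into the cross terms. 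Pulling back along $s$ and $q$ keeps all of these estimates.

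Finally I would recombine. Skeletal filtration is multiplicative, so each term $s^*\pi^A\cdot q^*\pi^B_3$ has filtration $\ge 4|I|-2(\varepsilon_A+\varepsilon_B)$, where $\varepsilon_A=1$ exactly when $|A|$ is odd; if at most one of $|A|,|B|$ is odd this already meets the required bound. The remaining and delicate case is $|A|$ and $|B|$ both odd, so that $|I|$ is even and the naive bound $4|I|-4$ falls two short: here the $E_\infty$-product of the leading terms of the two factors vanishes — a consequence of relations such as $\eta^4=0$ and $\alpha\eta^2=0$ in $\pi_*\KO$ — and then, since $\pi_k\KO=0$ for $k\equiv 3,5,6,7\pmod 8$, the potential detections in filtrations $4|I|-3,\,4|I|-2,\,4|I|-1$ all vanish, forcing filtration $\ge 4|I|$. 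Summing over the decompositions completes the proof. The hard part will be exactly this recombination, and above all the both-odd case: it is the interplay of $\eta^4=0$ with the triple gap $\pi_5\KO=\pi_6\KO=\pi_7\KO=0$ after each copy of $\Z$ that restores the parity-correct filtration. A secondary nuisance is that skeletal filtration cannot be bounded below by restriction to a convenient subspace, so the $\BSpin$ and $\BSO(3)$ estimates genuinely have to be carried out on those spaces — the $\BSO(3)$ piece being manageable only because $\KO^0(\BSO(3))$ is so small.
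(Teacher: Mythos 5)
Your proposal takes a genuinely different route from the paper's. The paper simply verifies the hypotheses of a proposition from \cite[pp.~303--304]{Sto68}: the relevant condition is that $p_I$ is not twice an integral class modulo torsion, and since all torsion in $H^*(\BSpinh;\Z)$ has order two and lies in the image of the Bockstein, this reduces to showing $w_{2i_1}^2\cdots w_{2i_r}^2\notin\operatorname{im}(\Sq^1)$ --- exactly Lemma~\ref{lem:pontryagin class not q0 boundary}. Your approach instead attacks the Atiyah--Hirzebruch skeletal filtration directly via the shearing decomposition, the Whitney formula, and multiplicativity; this is more geometric, but it distributes the work over $\BSpin$ and $\BSO(3)$ and demands a delicate recombination, whereas the paper's proof needs only a single cohomological input that it has already established in the course of the Margolis-homology computations.

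There is a genuine gap in your $\BSO(3)$ step. The Whitney expansion reads $\pi^B(\gamma_3\oplus\gamma_3)=2\pi^B(\gamma_3)+(\text{cross terms})$, so the induction you sketch bounds at best the filtration of $2\pi^B(\gamma_3)$ --- and division by $2$ does \emph{not} preserve skeletal filtration. Concretely, $v=\gamma_3-3$ is detected in filtration $2$ by the generator of $H^2(\BSO(3);\Z/2)$ paired with $\eta^2\in\pi_2\KO$, so $v$ has filtration exactly $2$ while $2v$ lies strictly deeper; the same obstruction recurs in every degree $\equiv 1,2\pmod 8$, where the $E_\infty$ quotients are $2$-torsion. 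One repair is to note that, by the splitting principle along $\CP^\infty=\BSO(2)\to\BSO(3)$ (a map that is injective on $\KO^0$, as one sees after complexification), $\pi^b(\gamma_3)=0$ for all $b\geq 2$, so the $\BSO(3)$ contributions are just the powers $v^k$, whose filtrations can then be bounded by the same $\eta^4=0$ plus $\pi_*\KO$-sparseness device you invoke for the both-odd case. Note too that this device is doing more work than the write-up acknowledges: the passage from $\Part_1$ to all of $\Part$ over $\BSpin$ already needs it when the number of $1$'s and the sum of the remaining parts are both odd, so essentially all of the proposal's weight rests on that one cancellation mechanism, and it would need to be made precise at each occurrence.
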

\begin{proof}
Since all torsion in the integral cohomology is order two (see \cite[Corollary 2.36]{Hu22}), we see that the Pontryagin class $p_I = p_{i_1} \ldots p_{i_r}$ corresponding to a partition $I$ is non-torsion, since its reduction modulo two is $w_{2i_1}^2 \ldots w_{2i_r}^2$ and we know this is not zero. So $p_I$ is nonzero after rationalization. Moreover, there is no integral class $x$ such that $2x = p_I$ after rationalization, since this would imply $p_I - 2x$ is a torsion class, which can then be written as $\delta y$ for some mod $2$ cohomology class $y$, where $\delta$ is the Bockstein homomorphism. Reducing mod 2, we see that $w_{2i_1}^2 \ldots w_{2i_r}^2 = \Sq^1 y$. This contradicts Lemma \ref{lem:pontryagin class not q0 boundary}, the proof of which we save for our discussion of Margolis homology.

Hence the hypotheses of the proposition of \cite[p. 303, 304]{Sto68} are met, so for $| I |$ even, $\pi_R^I$ admits a lift to $\ko \langle 4 | I | \rangle$ with $x_{4 | I |}$ mapping to $p_I + \Sq^3 \Sq^1 \alpha$ for some $\alpha$ after reduction mod 2, and for $| I |$ odd, $\pi_R^I$ admits a lift to $\ko \langle 4 | I | - 2 \rangle$ such that if $x$ is the image of $x_{4 | I | - 2}$, $\Sq^2 x = p_I$ (\cite[p. 314]{Sto68}).
\end{proof}

Propositions~\ref{prop:bspin cohomology}, \ref{prop:bspinc cohomology}, and \ref{prop:bspinh cohomology} immediately determine the cohomology of $\MSpin$, $\MSpinc$, and $\MSpinh$ via the Thom isomorphism. The action of the Steenrod algebra on each of these modules is determined by the rule $\Sq^i u = w_i u$, where $u$ is the Thom class of any bundle (see \cite[p. 91]{MS74}). The maps between the Thom spectra induce maps
\[
    \begin{tikzcd}
        \arrow[from=1-1, to=1-2]
        \arrow[from=1-2, to=1-3]
        H^*\MSpinh & H^*\MSpinc & H^*\MSpin
    \end{tikzcd}
\]
of cohomology, with Thom classes mapping to Thom classes. Since the maps of classifying spaces are maps over $\BSO$, Stiefel--Whitney classes map to the corresponding Stiefel--Whitney classes.

\subsection{Steenrod modules}
Modules over the Steenrod algebra are ubiquitous in \cite{ABP67}, as well as the present paper. Indeed, if a map of spectra $X\to Y$ is to be a 2-local equivalence, then one needs to show that the induced map $H^*Y\to H^*X$ is an isomorphism of modules over the mod 2 Steenrod algebra. In this section, we collect a few results about the cohomology of various connective covers of $\ko$, $\ku$, and $\ksp$ in terms of Steenrod modules.

\begin{notn}
    Throughout this article, $\A$ will denote the mod 2 Steenrod algebra.
\end{notn}

\begin{prop}\label{prop:cohomology of ko<k>}
    Suppose $k = 0, 1, 2, 4 \pmod{8}$. Then there is a class $x_k \in H^k \ko\langle k\rangle$ such that the map
    \begin{align*}
        \A&\to H^*\ko\langle k\rangle\\
        1&\mapsto x_k
    \end{align*}
    induces an isomorphism $\A / I_k \to H^* \ko\langle k\rangle$, where $I_k \subset \A$ is the left ideal
    \[
        I_k =
        \begin{cases}
            \A \Sq^1 + \A \Sq^2 & k = 0 \pmod{8}, \\
            \A \Sq^2 & k = 1 \pmod{8}, \\
            \A \Sq^3 & k = 2 \pmod{8}, \\
            \A \Sq^1 + \A \Sq^5 & k = 4 \pmod{8}.
        \end{cases}
    \]
\end{prop}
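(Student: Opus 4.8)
The plan is to obtain $H^*\ko\langle k\rangle$ by descending the connective-cover (Whitehead) tower of $\ko$ one stage at a time, using as the only nontrivial external input the classical computation $H^*\ko\cong\A/(\A\Sq^1+\A\Sq^2)$ (see, e.g., \cite{Sto68}). First I would reduce to $k\in\{0,1,2,4\}$: since $\ko=\tau_{\geq0}\KO$ and $\KO\simeq\Sigma^8\KO$ by Bott periodicity, we have $\ko\langle k+8\rangle\simeq\Sigma^8\ko\langle k\rangle$, and $I_k$ depends only on $k\bmod 8$. The case $k=0$ is the cited computation, with $x_0$ the fundamental class in degree $0$.

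For $k=1,2,4$ I would use the cofiber sequences $\ko\langle m+1\rangle\to\ko\langle m\rangle\xrightarrow{p_m}\Sigma^m H\pi_m\ko$ exhibiting the bottom Postnikov stage; in the relevant range these are $\ko\langle1\rangle\to\ko\to H\Z$, then $\ko\langle2\rangle\to\ko\langle1\rangle\to\Sigma H\Z/2\Z$, then $\ko\langle4\rangle\to\ko\langle2\rangle\to\Sigma^2 H\Z/2\Z$ (using $\pi_0\ko=\Z$, $\pi_1\ko=\pi_2\ko=\Z/2\Z$, $\pi_3\ko=0$, so $\ko\langle3\rangle\simeq\ko\langle4\rangle$). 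Each $p_m$ is an isomorphism on $\pi_m$, hence on $H^m(-;\Z/2\Z)$ by Hurewicz and universal coefficients, so $p_m^*$ sends the fundamental class of the Eilenberg--Mac Lane target to the bottom class of $H^*\ko\langle m\rangle$. Arguing inductively that $H^*\ko\langle m\rangle$ is cyclic over $\A$ on this bottom class (true for $m=0$), one gets that $p_m^*$ is surjective, that the long exact sequence of the cofiber sequence collapses, and that $H^*\ko\langle m+1\rangle\cong\Sigma^{-1}\ker p_m^*$, again cyclic on its bottom class. Since $H^* H\Z\cong\A/\A\Sq^1$ and $H^* H\Z/2\Z\cong\A$ (each cyclic on the fundamental class), the successive kernels are the cyclic $\A$-modules $(\A\Sq^1+\A\Sq^2)/\A\Sq^1$, $\Sigma\,\A\Sq^2$, $\Sigma^2\A\Sq^3$, generated in degrees $2,3,5$; writing each as $\A$ modulo the left-annihilator of its generator, the whole proposition reduces to the three identities $\{\theta:\theta\Sq^2\in\A\Sq^1\}=\A\Sq^2$, $\{\theta:\theta\Sq^2=0\}=\A\Sq^3$, $\{\theta:\theta\Sq^3=0\}=\A\Sq^1+\A\Sq^5$, which give $H^*\ko\langle1\rangle\cong\Sigma(\A/\A\Sq^2)$, $H^*\ko\langle2\rangle\cong\Sigma^2(\A/\A\Sq^3)$, and $H^*\ko\langle4\rangle\cong\Sigma^4(\A/(\A\Sq^1+\A\Sq^5))$.

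These annihilator identities are the one real computation and the step I expect to need the most care. The device is that $\A$ is free as a right module over the subalgebra $\A(1)\subset\A$ generated by $\Sq^1$ and $\Sq^2$ (standard, via the Milnor--Moore theorem), and $\Sq^1,\Sq^2,\Sq^3\in\A(1)$; so right multiplication by these preserves a basis decomposition $\A=\bigoplus_i\alpha_i\A(1)$, and each left-annihilator above equals $\A$ times the corresponding left-annihilator computed inside the $8$-dimensional algebra $\A(1)$. Inside $\A(1)$ one checks by hand, using $\Sq^1\Sq^1=0$, $\Sq^2\Sq^2=\Sq^3\Sq^1$, $\Sq^1\Sq^3=0$, $\Sq^3\Sq^2=0$ and the vanishing of $\A(1)$ above degree $6$, that $\{\beta\in\A(1):\beta\Sq^2\in\A(1)\Sq^1\}=\A(1)\Sq^2$, $\{\beta\in\A(1):\beta\Sq^2=0\}=\A(1)\Sq^3$, and $\{\beta\in\A(1):\beta\Sq^3=0\}$ is the left ideal generated by $\Sq^1$ and $\Sq^2\Sq^1\Sq^2$. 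Transporting the last statement to $\A$ is the only subtlety: by the Adem relation $\Sq^2\Sq^1\Sq^2=\Sq^2\Sq^3=\Sq^5+\Sq^4\Sq^1\equiv\Sq^5\pmod{\A\Sq^1}$, that left ideal generates $\A\Sq^1+\A\Sq^5$ in $\A$. (Alternatively, all four cohomology rings can be quoted from \cite{Sto68}; the outline above just makes that self-contained.)
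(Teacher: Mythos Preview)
Your argument is correct, but the paper does not actually prove this proposition: it simply cites \cite[p.~295]{Sto68}. What you have written is a self-contained derivation via the Postnikov tower of $\ko$, and it is essentially the standard route (and the one Stong follows). The reduction to $k\in\{0,1,2,4\}$ by Bott periodicity, the inductive use of the cofiber sequences $\ko\langle m{+}1\rangle\to\ko\langle m\rangle\to\Sigma^m H\pi_m\ko$, and the identification of each successive kernel as a cyclic $\A$-module are all fine. The device of computing the left annihilators inside $\A_1$ (your $\A(1)$) using the freeness of $\A$ as a right $\A_1$-module is clean and correctly handles the only nontrivial step; your treatment of the $k=4$ case via $\Sq^2\Sq^1\Sq^2=\Sq^5+\Sq^4\Sq^1$ is exactly right. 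So your proposal supplies a genuine proof where the paper defers to the literature---which is more than was required, but entirely sound. If you incorporate it, switch to the paper's notation $\A_1$ in place of $\A(1)$.
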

\begin{proof}
    See \cite[p.~295]{Sto68}.
\end{proof}

Using the Bott periodicity isomorphism $\KSp \cong \Sigma^4 \KO$ and the uniqueness of connective covers, we see that $\Sigma^4 \ko\langle k\rangle \cong \ksp\langle k + 4\rangle$, giving us the following result in cohomology:

\begin{cor}\label{cor:cohomology of ksp}
    If $k = 0, 4, 5, 6 \pmod{8}$, there is a class $y_k \in H^k \ksp\langle k\rangle$ such that the map
    \begin{align*}
    \A&\to H^*\ksp\langle k\rangle\\
    1&\mapsto y_k
    \end{align*}
    induces an isomorphism $\A / I_k \to H^*\ksp\langle k\rangle$, where $I_k \subset \A$ is the left ideal
    \[
        I_k =
        \begin{cases}
            \A \Sq^1 + \A \Sq^5 & k = 0 \pmod{8}, \\
            \A \Sq^1 + \A \Sq^2 & k = 4 \pmod{8}, \\
            \A \Sq^2 & k = 5 \pmod{8}, \\
            \A \Sq^3 & k = 6 \pmod{8}.
        \end{cases}
    \]
\end{cor}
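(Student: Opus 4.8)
The plan is to deduce this directly from Proposition~\ref{prop:cohomology of ko<k>} via the suspension equivalence $\Sigma^4\ko\langle k-4\rangle\simeq\ksp\langle k\rangle$ recorded just above the statement. First I would set $k' := k-4$, so that the hypothesis $k\equiv 0,4,5,6\pmod 8$ translates into $k'\equiv 4,0,1,2\pmod 8$, which is exactly the range covered by Proposition~\ref{prop:cohomology of ko<k>}. That proposition then supplies a generator $x_{k'}\in H^{k'}\ko\langle k'\rangle$ together with an isomorphism of $\A$-modules $\A/I_{k'}\xrightarrow{\ \sim\ }H^*\ko\langle k'\rangle$ sending $1\mapsto x_{k'}$, where $I_{k'}$ is the left ideal listed there.

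Next I would transport this along Bott periodicity. The equivalence $\Sigma^4\KO\simeq\KSp$ from Proposition~\ref{prop:ko module} carries the $k'$-connective cover of $\KO$ to a spectrum whose homotopy vanishes below degree $k'+4=k$ and agrees with $\pi_*\KSp$ above, hence by uniqueness of connective covers $\Sigma^4\ko\langle k'\rangle\simeq\ksp\langle k\rangle$. This gives a suspension isomorphism $\sigma\colon H^{*-4}\ko\langle k'\rangle\xrightarrow{\ \sim\ }H^*\ksp\langle k\rangle$, which is $\A$-linear because Steenrod squares commute with suspension. I would then define $y_k:=\sigma(x_{k'})\in H^k\ksp\langle k\rangle$; the map $\A\to H^*\ksp\langle k\rangle$ sending $1\mapsto y_k$ factors as $\A\twoheadrightarrow\A/I_{k'}\xrightarrow{\ \sim\ }H^*\ko\langle k'\rangle\xrightarrow{\ \sigma\ }H^*\ksp\langle k\rangle$, hence induces an isomorphism $\A/I_{k'}\xrightarrow{\ \sim\ }H^*\ksp\langle k\rangle$ of $\A$-modules.

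The remaining step is the index bookkeeping, namely that $I_{k'}=I_k$ as displayed: $k\equiv 0$ gives $k'\equiv 4$ and $I_{k'}=\A\Sq^1+\A\Sq^5$; $k\equiv 4$ gives $k'\equiv 0$ and $I_{k'}=\A\Sq^1+\A\Sq^2$; $k\equiv 5$ gives $k'\equiv 1$ and $I_{k'}=\A\Sq^2$; $k\equiv 6$ gives $k'\equiv 2$ and $I_{k'}=\A\Sq^3$, matching the four cases in the statement. There is no serious obstacle in this argument; the only point requiring a little care is confirming that the Bott equivalence $\Sigma^4\KO\simeq\KSp$ really does restrict to the asserted equivalence of connective covers, so that the $\A$-linear suspension isomorphism is available — and this is immediate from the uniqueness of connective covers once one observes that $\Sigma^4$ shifts connectivity by $4$.
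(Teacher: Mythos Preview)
Your proposal is correct and follows exactly the paper's approach: the paper's proof is the single sentence ``By Bott periodicity, this is a degree 4 shift of Proposition~\ref{prop:cohomology of ko<k>},'' and you have simply spelled out the details of that shift, including the connective-cover identification and the case-by-case matching of ideals.
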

\begin{proof}
    By Bott periodicity, this is a degree 4 shift of Proposition~\ref{prop:cohomology of ko<k>}.
\end{proof}

We also describe the cohomology of $\ku$ and its role in the splitting of $\MSpinc$, as it will be relevant later.

\begin{prop}
    For each $k$, there is a class $z_{2k} \in H^{2k} \ku \langle 2k \rangle$ such that the map
    \begin{align*}
    \A&\to H^* \ku \langle 2k \rangle\\
    1&\mapsto z_{2k}
    \end{align*}
    induces an isomorphism $\A /(\A \Sq^1 + \A \Sq^3)\to H^* \ku \langle 2k \rangle$.
\end{prop}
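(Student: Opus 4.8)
The plan is to reduce the statement to the case $k=0$ using complex Bott periodicity, and then to recognize the $k=0$ case as the classical computation of $H^*\ku$.

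First I would observe that each connective cover $\ku\langle 2k\rangle$ is simply a suspension of $\ku$. Indeed, complex Bott periodicity $\Sigma^2\KU\simeq\KU$ says that the periodicity class $v\in\pi_2\ku$ becomes invertible after passing to $\KU$. Multiplication by $v^k$ therefore gives a map $\Sigma^{2k}\ku\to\ku$ which is an isomorphism on $\pi_m$ for all $m\geq 2k$ (and is zero below that range). Since $\Sigma^{2k}\ku$ is $(2k-1)$-connected, this map factors through the connective cover $\ku\langle 2k\rangle\to\ku$, and the resulting map $\Sigma^{2k}\ku\to\ku\langle 2k\rangle$ is an isomorphism on all homotopy groups, hence an equivalence of spectra. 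Applying mod $2$ cohomology gives an isomorphism of $\A$-modules $H^*\ku\langle 2k\rangle\cong\Sigma^{2k}H^*\ku$, and I would define $z_{2k}\in H^{2k}\ku\langle 2k\rangle$ to be the image of the bottom class of $H^0\ku$ under this degree shift.

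It then remains to handle $k=0$, i.e.\ to show that $H^*\ku\cong\A/(\A\Sq^1+\A\Sq^3)$ with the bottom class $z_0$ corresponding to $1$. This is the classical fact that $H^*\ku$ is $\A$ modulo the left ideal generated by $Q_0=\Sq^1$ and $Q_1=\Sq^3+\Sq^2\Sq^1$ (equivalently, generated by $\Sq^1$ and $\Sq^3$, since $\A\Sq^2\Sq^1\subset\A\Sq^1$), entirely parallel to Proposition~\ref{prop:cohomology of ko<k>}; I would cite it from Stong's description of the Postnikov tower of $\ku$ \cite{Sto68}. If a self-contained argument is preferred, one checks that $\Sq^1 z_0=0$ because $z_0$ is the reduction of an integral class, and $\Sq^3 z_0=\Sq^1\Sq^2 z_0=0$ for the same reason, so $\A\Sq^1+\A\Sq^3$ annihilates $z_0$ and the induced map $\A/(\A\Sq^1+\A\Sq^3)\to H^*\ku$ is a well-defined surjection; a dimension count in each degree (via the Adams spectral sequence converging to $\pi_*\KU$, or via the standard cell structure of $\ku$) then shows it is an isomorphism. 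Combining this with the previous paragraph proves the proposition.

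There is no serious obstacle here. The only point needing a little care is the first step: verifying that the Bott periodicity map $v^k$ produces an \emph{equivalence of connective spectra} $\Sigma^{2k}\ku\simeq\ku\langle 2k\rangle$ (not merely an equivalence after inverting $v$) and that the resulting cohomology isomorphism shifts the generating degree from $0$ up to $2k$. Everything else is a routine citation or bookkeeping.
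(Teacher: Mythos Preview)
Your proposal is correct and essentially matches the paper: the paper's proof is simply a citation to Stong \cite[p.~295]{Sto68} for all $k$ at once, while you make the Bott periodicity reduction $\ku\langle 2k\rangle\simeq\Sigma^{2k}\ku$ explicit before invoking the same reference for $k=0$. One small caveat on your optional self-contained sketch: the claim that $\Sq^2 z_0$ is the mod $2$ reduction of an integral class (so that $\Sq^3 z_0=\Sq^1\Sq^2 z_0=0$) is not immediate from what you wrote and would need an extra argument---e.g.\ via the cofiber sequence $\Sigma^2\ku\to\ku\to H\Z$ or the first $k$-invariant of $\ku$---but since your main line defers to Stong this does not affect correctness.
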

\begin{proof}
    See \cite[p.~295]{Sto68}.
\end{proof}

The image of $z_{4k}$ under the map $H^*\ku\langle 4k\rangle\to H^*\MSpinc$ is particularly tractable.

\begin{lem}
    The splitting in Equation \ref{eq:spinc splitting} If $I \in \Part$ is a partition, then the map $\MSpinc \to \ku\langle 4|I|\rangle$ in Equation~\ref{eq:spinc splitting} induces $z_{4|I|} \mapsto p_I U_c$ in cohomology, where $U_c \in H^* \MSpinc$ is the Thom class and $p_I$ is the $I^\text{th}$ Pontryagin class.
\end{lem}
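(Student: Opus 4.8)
The plan is to unwind the construction of the summand map $g\colon\MSpinc\to\ku\langle 4|I|\rangle$ in Equation~\ref{eq:spinc splitting} and track the bottom cohomology class $z_{4|I|}$ through it. By construction, $g$ is a lift along $\ku\langle 4|I|\rangle\to\KU$ of the class $\pi^I\cdot\vphi^c\in\KU^0(\MSpinc)$, formed from the complex Atiyah--Bott--Shapiro orientation $\vphi^c$ and the $\KU$-Pontryagin class $\pi^I\in\KU^0(\BSpinc)$ via the Thom diagonal $\Delta\colon\MSpinc\to\MSpinc\wedge\Sigma^\infty_+\BSpinc$. The lift exists because $\pi^I$ has Atiyah--Hirzebruch filtration at least $4|I|$ — each factor $\pi^{i_j}$ having filtration $\geq 4i_j$, as one checks on line bundles and extends by the splitting principle — so that $\pi^I$, and hence $\pi^I\cdot\vphi^c$, restricts trivially to the $(4|I|-1)$-skeleton.

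First I would make the reduction to a statement about $\BSpinc$. The class $z_{4|I|}$ is pulled back from the fundamental class of $\Sigma^{4|I|}H\Z$ along the bottom Postnikov truncation $t\colon\ku\langle 4|I|\rangle\to\Sigma^{4|I|}H\Z$; this map is an isomorphism on $H^{4|I|}(-;\Z/2)$ since its fibre is $(4|I|+1)$-connected (using $\pi_{4|I|+1}\ku=0$). Hence $g^*(z_{4|I|})$ is the mod $2$ reduction of the integral class $t\circ g\in H^{4|I|}(\MSpinc;\Z)$. Rationally $t$ is the projection onto the bottom Chern-character summand, so $t\circ g$ rationalises to $\mathrm{ch}_{4|I|}(\pi^I\cdot\vphi^c)$. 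Now $\mathrm{ch}(\vphi^c)$, being the Chern character of a $\KU$-Thom class, has leading term $U_c$; the Thom diagonal satisfies $\Delta^*(U_c\otimes x)=xU_c$ under the Thom isomorphism; and $\mathrm{ch}(\pi^I)$ has leading term the rational Pontryagin class $p_I$ in cohomological degree $4|I|$. So the degree-$4|I|$ component of $\mathrm{ch}(\pi^I\cdot\vphi^c)$ is $p_I U_c$, i.e.\ $t\circ g=p_I U_c+(\text{torsion})$ in $H^{4|I|}(\MSpinc;\Z)$, and $g^*(z_{4|I|})=p_I U_c$ in $H^{4|I|}(\MSpinc;\Z/2)$ provided the torsion term reduces to zero mod $2$.

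Here $p_I$ denotes the integral Pontryagin class, whose mod $2$ reduction is $w_{2i_1}^2\cdots w_{2i_r}^2$, so that $p_I U_c$ reduces to $(\prod w_{2i_j}^2)U_c$ (using $w_1=0$ on $\BSpinc$). All torsion in $H^*\BSpinc$ has order two (the $\BSpinc$-analogue of \cite[Corollary~2.36]{Hu22}), so the torsion term is an integral Bockstein $\delta w$ and $g^*(z_{4|I|})=(\prod w_{2i_j}^2)U_c+\Sq^1 w$. The identities $\Sq^1 z_{4|I|}=\Sq^3 z_{4|I|}=0$ in $H^*\ku\langle 4|I|\rangle$ (the defining ideal being $\A\Sq^1+\A\Sq^3$) constrain $w$ — note $\Sq^3(p_I U_c)=0$, using $w_3=w_5=0$ on $\BSpinc$ — and one eliminates the remaining Bockstein term either by exploiting the torsion structure of $H^*\BSpinc$ in the relevant degree or by adjusting the chosen lift of $\pi^I$ to $\ku\langle 4|I|\rangle$.

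I expect this last point — showing the $\Sq^1$-correction vanishes, so that the coefficient is exactly $p_I U_c$ and not merely $p_I U_c$ modulo $\Sq^1$-exact classes — to be the only non-formal step, paralleling the analogous issue handled in the proof of Proposition~\ref{prop:lift of pontryagin class} via \cite{Sto68}; everything else is bookkeeping with the Thom diagonal, the Chern character, and the normalisation of $\vphi^c$.
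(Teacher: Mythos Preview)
Your approach is correct and is essentially the content of the Stong references the paper cites---indeed the paper's own proof consists of nothing but three page citations to \cite{Sto68}. You have correctly isolated the one non-formal step (eliminating the $\Sq^1$-correction by adjusting the chosen lift), which is precisely what Stong carries out on p.~304; the statement is really about a \emph{particular} splitting map, namely the one assembled with this choice of lifts, rather than an arbitrary one.
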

\begin{proof}
    It is shown that the complexification of the $\KO$-Pontryagin classes can be chosen so that $z_{4 | I |} \mapsto p_I$ in \cite[p.~304]{Sto68}. The fact that multiplying with the orientation $\MSpinc \to \KU$ induces $z_{4 | I |} \mapsto p_I U_c$ is shown in \cite[p.~317]{Sto68}. The splitting map with this property is assembled at \cite[p.~319]{Sto68}.
\end{proof}

We now define three Steenrod modules that will show up when we compute the cohomology of various spectra. Proposition~\ref{prop:cohomology of ko<k>} states that $H^*\ko$ is a module over the subalgebra in $\A$ generated by $\Sq^0$, $\Sq^1$, and $\Sq^2$.

\begin{defn}
    Let $\A_1$ denote the subalgebra of $\A$ generated by $\Sq^0,\Sq^1$, and $\Sq^2$. Note that $\A_1$ is often denoted $\A(1)$ in the literature. Our choice of notation is both an homage to the notation used in \cite{ABP67} and an effort to declutter many equations in the sequel.
\end{defn}

The following $\A_1$-module occurs as a summand of $H^*\MSpinh$ (as will be discussed in Section~\ref{sec:ksp classes}).

\begin{defn}\label{def:elephant}
    The \textit{elephant} $E$ is the $\A_1$-submodule of $\Sigma^{-1} \A_1$ generated by $\Sq^1$ and $\Sq^2$ (see Figure~\ref{fig:two-elephants}).\footnote{The elephant appears in \cite{BC18} under the name $R_2$.}
\end{defn}

The next $\A_1$-module is well-known.

\begin{defn}\label{def:upside down question mark}
    The \textit{upside-down question mark} is the $\A_1$-module
    \[\udq:=\A_1 /(\A_1 \Sq^1 + \A_1(\Sq^5 + \Sq^4 \Sq^1))\]
    (see Figure~\ref{fig:question mark}).
\end{defn}

Finally, we also define a module that generates the summands of $H^*\MSpinc$.

\begin{defn}\label{def:C}
    Let $C$ denote the $\A_1$-module $\A_1 /(\A_1 \Sq^1 + \A_1 \Sq^3)$ (see Figure~\ref{fig:C}).
\end{defn}

\begin{figure}
\centering
\begin{subfigure}{0.3\textwidth}
\centering
\begin{tikzpicture}[scale=0.6]
    \fill[white] (-1,-0.5) -- (-1,5) -- (4,5) -- (4,-0.5) -- cycle;
    \draw[fill=black] (0,0) circle (3pt);
    \draw[fill=black] (0,1) circle (3pt);
    \draw[fill=black] (0,2) circle (3pt);
    \draw[fill=black] (2,2) circle (3pt);
    \draw[fill=black] (2,3) circle (3pt);
    \draw[fill=black] (2,4) circle (3pt);
    \draw[fill=black] (2,5) circle (3pt);
    \draw[thick] (0,1) -- (0,2);
    \draw[thick] (2,2) -- (2,3);
    \draw[thick] (2,4) -- (2,5);
    \draw[thick] (0,0) .. controls (2,1) and (0,1) .. (2,2);
    \draw[thick] (0,1) .. controls (2,2) and (0,2) .. (2,3);
    \draw[thick] (0,2) .. controls (2,3) and (0,3) .. (2,4);
    \draw[thick] (2,3) .. controls (3.555,3) and (3.555,5) .. (2,5);
\end{tikzpicture}
\caption{The elephant $E$}
\label{fig:elephant-steenrod}
\end{subfigure}%
\begin{subfigure}{0.3\textwidth}
\centering
\includegraphics[width=.8\textwidth]{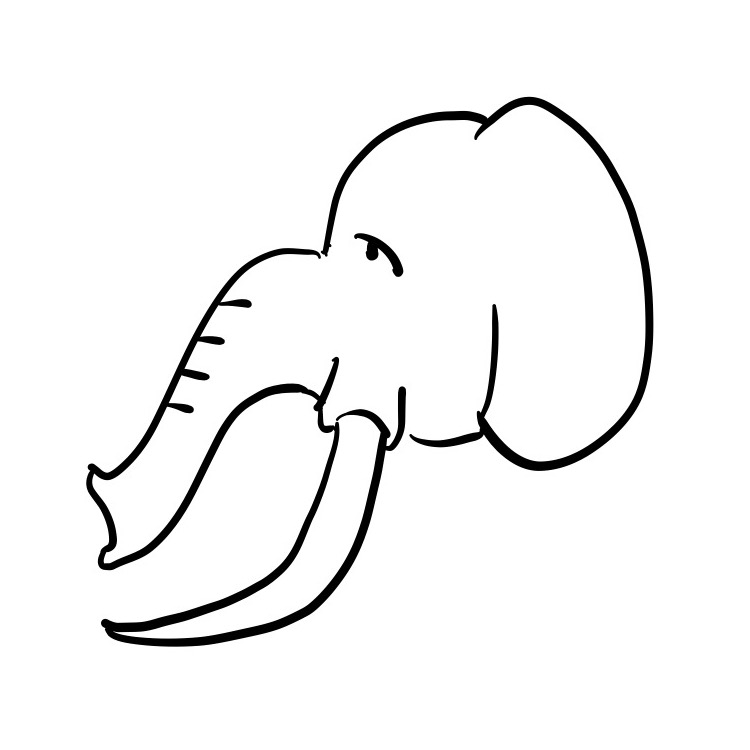}
\caption{An elephant, unnamed}
\label{fig:elephant-cartoon}
\end{subfigure}
\caption{The $\A_1$-module $E$ and its namesake}
\label{fig:two-elephants}
\end{figure}

\begin{figure}
\centering
\begin{subfigure}{0.3\textwidth}
\centering
\begin{tikzpicture}[scale=0.6]
    \fill[white] (-1,-0.5) -- (-1,3.5) -- (1,3.5) -- (1,-0.5) -- cycle;
    \draw[fill=black] (0,0) circle (3pt);
    \draw[fill=black] (0,2) circle (3pt);
    \draw[fill=black] (0,3) circle (3pt);
    \draw[thick] (0,0) .. controls (-1.555,0) and (-1.555,2) .. (0,2);
    \draw[thick] (0,2) -- (0,3);
\end{tikzpicture}
\caption{The module $\udq$}
\label{fig:question mark}
\end{subfigure}%
\begin{subfigure}{0.3\textwidth}
\centering
\begin{tikzpicture}[scale=0.6]
    \fill[white] (-1,-0.5) -- (-1,3.5) -- (1,3.5) -- (1,-0.5) -- cycle;
    \draw[fill=black] (0,0) circle (3pt);
    \draw[fill=black] (0,2) circle (3pt);
    \draw[thick] (0,0) .. controls (-1.555,0) and (-1.555,2) .. (0,2);
\end{tikzpicture}
\caption{The module $C$}
\label{fig:C}
\end{subfigure}
\caption{The $\A_1$-modules $\udq$ and $C$}
\end{figure}

\section{$\KSp$-Pontryagin and elephant classes}\label{sec:ksp classes}
We now begin constructing the map given in Theorem~\ref{thm:main}. In this section, we will give the maps to the summands $\ksp\langle 8n\rangle$ and $\Sigma^{8n+4}F$ (see Definition~\ref{def:elephant spectrum}) by defining characteristic classes for the cohomology theories defined by these spectra.

Since $\KSp$ is a $\KO$-module, there is a map $\KO \wedge \KSp \to \KSp$ satisfying the usual axioms in the homotopy category. The smash product $\ko\langle n\rangle \wedge \ksp\langle m\rangle$ is $(n + m - 1)$-connected, so there is a unique map $\ko\langle n\rangle \wedge \ksp\langle m\rangle \to \ksp\langle n + m\rangle$ fitting into the diagram
\begin{equation}\label{eq:truncated module structure}
    \begin{tikzcd}
        \arrow[from=1-1, to=1-2, dashed]
        \arrow[from=1-1, to=2-1]
        \arrow[from=1-2, to=2-2]
        \arrow[from=2-1, to=2-2]
        \ko\langle n\rangle \wedge \ksp\langle m \rangle & \ksp\langle n + m\rangle \\
        \KO \wedge \KSp & \KSp.
    \end{tikzcd}
\end{equation}
Recall that there is a map $\vphi^h: \MSpinh \to \ksp$ such that $y_0 \mapsto U_h$ in cohomology \cite[Remark~3.26]{Hu22}, where $y_0\in H^*\ksp$ is the class mentioned in Corollary~\ref{cor:cohomology of ksp} and $U_h\in H^*\MSpinh$ is the Thom class. For any partition $I$, the $\KO$-Pontryagin class $\pi^I_h\in\ko\langle n\rangle^0 \BSpinh$ determines a class on $\MSpinh$ through the composite
\begin{equation}\label{eq:ksp-pontryagin}
    \begin{tikzcd}
        \arrow[from=1-1, to=1-2]
        \arrow["{\pi^I_h\wedge\vphi^h}", from=1-2, to=1-3]
        \arrow[from=1-3, to=1-4]
        \MSpinh & \BSpinh \wedge \MSpinh & \ko\langle n\rangle \wedge \ksp & \ksp\langle n\rangle.
    \end{tikzcd}
\end{equation}
Here, the map $\MSpinh \to \BSpinh \wedge \MSpinh$ is the Thom diagonal. While $
\vphi^h$ is not a Thom class in the sense of an orientation with respect to a ring spectrum, the principle of transferring classes from a base space to the Thom spectrum via multiplication is the same. By looking for copies of $H^*\ksp\langle n\rangle$ in $H^*\MSpinh$, we get a sense of what classes $\pi^I_h\in\ko\langle n\rangle^0\BSpinh$ we need. This is the method we will use to generate all of the maps that decompose $\MSpinh$, besides those to Eilenberg--Mac Lane spectra, which originate in ordinary cohomology. 

\subsection{Module structure for $\ksp\langle n\rangle$}
The crux of understanding Equation~\ref{eq:ksp-pontryagin} is the behavior of the maps $\ko \langle n\rangle \wedge \ksp \to \ksp \langle n\rangle$ in cohomology. It turns out that we will only need the cases $n = 8k$ and $n = 8k + 2$ in order to prove Theorem~\ref{thm:main}. 

For the case $n=8k$, we can use the $\KO$-module structure of $\KSp$.

\begin{lem}\label{lem:8k multiplication}
    The map $\ko\langle 8k\rangle \wedge \ksp \to \ksp\langle 8k\rangle$ induces $y_{8k} \mapsto x_{8k} \otimes y_0$ in cohomology.
\end{lem}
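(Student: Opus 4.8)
The plan is to reduce the statement to a known computation in $H^*\ko\langle 8k\rangle \wedge \ksp$ via the Bott periodicity identification $\ksp \simeq \Sigma^4\ko$ and the $\KO$-module structure on $\KSp$. First I would set up the diagram~\eqref{eq:truncated module structure} with $n = 8k$, $m = 0$: the map $\ko\langle 8k\rangle \wedge \ksp \to \ksp\langle 8k\rangle$ is the unique lift of the composite $\ko\langle 8k\rangle \wedge \ksp \to \KO \wedge \KSp \to \KSp$ through the $8k$-connective cover. On cohomology, $H^*\ksp\langle 8k\rangle \cong \A/(\A\Sq^1 + \A\Sq^5)$ by Corollary~\ref{cor:cohomology of ksp}, generated by $y_{8k}$, and the class $y_{8k}$ pulls back from the fundamental class of $H^*\KSp$ along the connective cover map (this map is an iso in degrees $\geq 8k$ and $y_{8k}$ sits in the bottom degree $8k$). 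So it suffices to track where the fundamental class of $H^*\KSp$ goes under the module map $\ko\langle 8k\rangle \wedge \ksp \to \KSp$, and then identify the image in $H^{8k}(\ko\langle 8k\rangle \wedge \ksp) \cong \bigoplus_{i+j=8k} H^i\ko\langle 8k\rangle \otimes H^j\ksp$ (Künneth, with field coefficients). The only contribution in degree $8k$ from $\ko\langle 8k\rangle$ is $x_{8k}$ in degree $8k$ tensored with $y_0$ in degree $0$, plus possibly classes with a nonzero $\ksp$ factor — but $H^j\ksp = 0$ for $0 < j < 4$ and $H^i\ko\langle 8k\rangle = 0$ for $i < 8k$, so in total degree $8k$ the only possible basis element is $x_{8k}\otimes y_0$. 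Hence the image is either $x_{8k}\otimes y_0$ or $0$, and I need to rule out $0$.

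The key step is therefore showing nontriviality, i.e.~that the module map $\ko\langle 8k\rangle \wedge \ksp \to \ksp\langle 8k\rangle$ is nonzero on the bottom cohomology class. For this I would use Proposition~\ref{prop:ko module}: the $\KO$-module structure on $\KSp$ corresponds under $\ksp \simeq \Sigma^4\ko$ (and $\KSp \simeq \Sigma^4\KO$ as $\KO$-modules) to a degree-$4$ shift of the ring multiplication $\mu\colon \ko \wedge \ko \to \ko$. Restricting $\mu$ to $\ko\langle 8k\rangle \wedge \ko$ and composing with the connective cover, one gets $\ko\langle 8k\rangle \wedge \ko \to \ko\langle 8k\rangle$; on cohomology the fundamental class $\iota$ of $H^*\ko\langle 8k\rangle$ maps to $x_{8k}\otimes 1$ in the Künneth decomposition, because $\mu$ is unital — precomposing with $\ko\langle 8k\rangle \simeq \ko\langle 8k\rangle \wedge \Sph \to \ko\langle 8k\rangle \wedge \ko$ (smashing with the unit $\Sph \to \ko$) recovers the identity on $\ko\langle 8k\rangle$, and this precomposition picks out exactly the $\otimes 1$ summand. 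Transporting back through the degree-$4$ shift (which carries $x_{8k}$ to $y_{8k}$ up to the periodicity identification, and $1 \in H^0\ko$ to $y_0 \in H^0\ksp$), we conclude $y_{8k}\mapsto x_{8k}\otimes y_0$.

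The main obstacle I anticipate is bookkeeping around the two distinct roles of $\ksp$ here: in the diagram it appears once as $\ksp\langle 0\rangle = \ksp$ (the "$m=0$" slot, carrying $y_0$) and once implicitly inside $\KSp$, and one has to be careful that the degree-$4$ Bott shift applied to the $\KO$-module map of Proposition~\ref{prop:ko module} is compatible with the truncation defining~\eqref{eq:truncated module structure} — i.e.~that $\Sigma^4$ of the truncated ring map $\ko\langle 8k\rangle \wedge \ko \to \ko\langle 8k\rangle$ agrees with the truncated module map $\ko\langle 8k\rangle \wedge \ksp \to \ksp\langle 8k\rangle$. This follows from uniqueness of the lift through a connective cover (the smash $\ko\langle 8k\rangle \wedge \ksp$ is $(8k-1)$-connected, so the lift to $\ksp\langle 8k\rangle$ is unique), but stating it cleanly is the one place care is needed; everything else is a unitality argument plus a Künneth degree count.
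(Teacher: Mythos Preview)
Your approach is essentially the paper's---reduce to nontriviality and invoke unitality---but one step is not quite right. You write $\ksp \simeq \Sigma^4\ko$, and this is false at the connective level: $\pi_0\ksp \cong \Z$ while $\pi_0\Sigma^4\ko \cong 0$. Proposition~\ref{prop:ko module} only gives $\KSp \simeq \Sigma^4\KO$ for the periodic spectra, and transporting the truncated module map through that identification lands you in $\ko\langle -4\rangle$ rather than $\ko$, where your ring-unit argument no longer applies cleanly.

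The paper sidesteps this detour: it applies unitality directly to the $\ko$-module action $\ko \wedge \ksp \to \ksp$ (phrased dually as the comodule counit axiom on mod~$2$ cohomology), which forces $y_0 \mapsto x_0 \otimes y_0$ at $k=0$. It then suspends by $8k$ using the $8$-fold periodicities $\ko\langle 8k\rangle \simeq \Sigma^{8k}\ko$ and $\ksp\langle 8k\rangle \simeq \Sigma^{8k}\ksp$, and checks---exactly as you anticipated in your last paragraph---that the suspended map agrees with the one in Diagram~\ref{eq:truncated module structure} by uniqueness of the connective-cover lift. Your K\"unneth degree count (observing that $H^{8k}(\ko\langle 8k\rangle \wedge \ksp)$ is one-dimensional, spanned by $x_{8k}\otimes y_0$) is a valid shortcut the paper does not make explicit; pairing it with the direct module-unitality argument at $k=0$ and the $8k$-fold suspension gives a clean proof.
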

\begin{proof}
    Since $\ko$ is a ring spectrum, its cohomology $A = H^* \ko$ can be equipped with the structure of a coalgebra. In particular, the diagram
    \[
        \begin{tikzcd}
            \arrow["{\Delta}", from=1-1, to=1-2]
            \arrow["{\id_A}", from=1-1, to=2-1]
            \arrow["{\epsilon \otimes \id_A}", from=1-2, to=2-2]
            \arrow[from=2-2, to=2-1]
            A & A \otimes A \\
            A & \Z / 2\Z \otimes A
        \end{tikzcd}
    \]
    commutes, where $\Delta:A\to A\otimes A$ and $\epsilon:A\to\Z/2\Z$ are $\Z/2\Z$-linear maps and the bottom arrow is the canonical isomorphism. Since the only element of $A$ of degree zero is $x_0$, we can write $\Delta x_0 = a x_0 \otimes x_0$ for some coefficient $a \in \Z / 2\Z$. The diagram above says $x_0 = a \epsilon(x_0) x_0$, so $a = \epsilon( x_0 ) = 1$. Next, let $B = H^* \ksp$. Since $\ksp$ is a $\ko$-module spectrum, $B$ can be given the structure of an $A$-comodule. In particular, the diagram
    \[
        \begin{tikzcd}
            \arrow["{\mu}", from=1-1, to=1-2]
            \arrow["{\id_B}", from=1-1, to=2-1]
            \arrow["{\epsilon \otimes \id_B}", from=1-2, to=2-2]
            \arrow[from=2-2, to=2-1]
            B & A \otimes B \\
            B & \Z / 2\Z \otimes B
        \end{tikzcd}
    \]
    commutes. Since $y_0 \in \ksp$ is the only element of degree zero, we have $\mu y_0 = b x_0 \otimes y_0$ for some $b \in \Z / 2\Z$. The diagram above then says that $y_0 = b \epsilon(x_0) y_0$, so $b = 1$, and therefore the map $\ko \wedge \ksp \to \ksp$ has $y_0 \mapsto x_0 \otimes y_0$ in cohomology.

    Finally, using $\Sigma^{8k} \ko \cong \ko\langle 8k\rangle$ and $\Sigma^{8k} \ksp \cong \ksp\langle 8k\rangle$, taking suspensions of the map $\ko\wedge\ksp\to\ksp$ gives us a map $\ko\langle 8k\rangle \wedge \ksp \to \ksp\langle 8k\rangle$ with $y_{8k} \mapsto x_{8k} \otimes y_0$ in cohomology, since $\Sigma^{8k}( \ko \wedge \ksp) \cong \Sigma^{8k} \ko \wedge \ksp$. We just have to check that this is the original map we were concerned with. Recall that the desired map $\ko\langle 8k\rangle\wedge\ksp\to\ksp\langle 8k\rangle$ is the unique map making Diagram~\ref{eq:truncated module structure} commute. It thus suffices to show that the diagram
    \begin{equation}\label{eq:suspended module}
        \begin{tikzcd}
            \arrow[from=1-1, to=1-2]
            \arrow[from=1-1, to=2-1]
            \arrow[from=1-2, to=2-2]
            \arrow[from=2-1, to=2-2]
            \Sigma^{8k}\ko \wedge \ksp & \Sigma^{8k}\ksp \\
            \KO \wedge \KSp & \KSp
        \end{tikzcd}
    \end{equation}
    commutes. When $k=0$, Diagram~\ref{eq:suspended module} is a special case of Diagram~\ref{eq:truncated module structure} and hence commutes. Suspending $8k$ times gives us the desired diagram, except we must verify that the bottom edge is still the module multiplication map. But this is true because $\KSp \cong \Sigma^4 \KO$ and $\Sigma^8 \KO \cong \KO$ as $\KO$-module spectra (Proposition~\ref{prop:ko module}).
\end{proof}

\subsection{The elephant spectrum}
The case $n = 8k + 2$ is considerably more complicated. For $n=8k$, one can find $H^*\ksp\langle n\rangle$ summands in the cohomology of $\MSpinh$, but it appears that the cohomology of a different spectrum arises at $n=8k+2$. This leads us to the following definition.

\begin{defn}\label{def:elephant spectrum}
    Consider the map $\ko \to H\Z$ inducing an isomorphism on $\pi_0$. Composing with the quotient map $H\Z \to H\Z / 2\Z$, we get a map $\ko \to H\Z / 2\Z$. Define the \textit{elephant spectrum}\footnote{This name will be justified in Lemma~\ref{lem:cohomology of F}.} $F:=\mr{fib}(\ko\to H\Z/2\Z)$ as the fiber of this map.
\end{defn}

Shifting by $8k + 4$, we observe fiber sequences
\begin{equation}\label{eq:fiber sequence for F}
    \begin{tikzcd}
        \arrow[from=1-1, to=1-2]
        \arrow[from=1-2, to=1-3]
        \Sigma^{8k + 4} F & \ksp \langle 8k + 4 \rangle & \Sigma^{8k + 4} H\Z / 2\Z.
    \end{tikzcd}
\end{equation}

We can now readily compute the homotopy and cohomology of $F$.

\begin{lem}\label{lem:homotopy of F}
For $k < 0$, we have $\pi_k F \cong 0$. For $k \geq 0$, we have
\[
\pi_k F \cong
\begin{cases}
    \Z & k = 0, 4 \pmod{8}, \\
    \Z / 2\Z & k = 1, 2 \pmod{8}, \\
    0 & \text{otherwise}.
\end{cases}
\]
\end{lem}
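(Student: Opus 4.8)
The plan is to extract $\pi_\ast F$ directly from the long exact sequence in homotopy groups associated to the defining fiber sequence $F \to \ko \to H\Z/2\Z$. For each integer $k$ this yields an exact sequence
\[
    \pi_{k+1} H\Z/2\Z \to \pi_k F \to \pi_k \ko \to \pi_k H\Z/2\Z \to \pi_{k-1} F .
\]
The inputs are standard: $\pi_k H\Z/2\Z$ is $\Z/2\Z$ for $k = 0$ and vanishes otherwise, while $\ko = \KO\langle 0\rangle$ has $\pi_k \ko = 0$ for $k < 0$ and $\pi_k \ko = \pi_k \KO$ for $k \geq 0$, the latter being read off from Table~\ref{table:bott periodicity}.

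First I would dispose of the degrees in which the sequence degenerates. For $k \geq 1$, both $\pi_{k+1} H\Z/2\Z$ and $\pi_k H\Z/2\Z$ vanish, so $\pi_k F \cong \pi_k \ko = \pi_k \KO$, which is exactly the asserted value in those degrees (in particular $k \equiv 0 \pmod 8$ with $k \geq 1$ forces $k = 8, 16, \dots$, giving $\Z$). For $k \leq -2$ we have $\pi_{k+1} H\Z/2\Z = 0 = \pi_k \ko$, hence $\pi_k F = 0$. This leaves only the two degrees $k = 0$ and $k = -1$.

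The degree-$0$ range is the one place where the sequence does not split into trivialities, so this is where a little care is needed. Here I would use the explicit description of $\ko \to H\Z/2\Z$: it is the composite of $\ko \to H\Z$, which is an isomorphism on $\pi_0$, with the reduction $H\Z \to H\Z/2\Z$, which on $\pi_0$ is the surjection $\Z \to \Z/2\Z$ with kernel $2\Z \cong \Z$. Feeding this into
\[
    0 = \pi_1 H\Z/2\Z \to \pi_0 F \to \pi_0 \ko \to \pi_0 H\Z/2\Z \to \pi_{-1} F \to \pi_{-1}\ko = 0
\]
and using that the map $\pi_0 \ko \to \pi_0 H\Z/2\Z$ is surjective, we obtain $\pi_0 F \cong \ker(\Z \to \Z/2\Z) \cong \Z$ and $\pi_{-1} F = 0$. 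Assembling the three ranges gives the stated formula. I do not anticipate a genuine obstacle: the lemma is essentially bookkeeping with the long exact sequence, and the only substantive point is the surjectivity of $\pi_0 \ko \to \pi_0 H\Z/2\Z$, which is immediate from the construction of $F$ and is what produces the values in degrees $0$ and $-1$.
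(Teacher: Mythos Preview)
Your proof is correct and follows essentially the same approach as the paper: both arguments use the long exact sequence of the defining fiber sequence, observe that $\pi_k F \cong \pi_k \ko$ away from degrees $0$ and $-1$ because the Eilenberg--Mac Lane terms vanish, and handle the remaining two degrees using the surjectivity of $\pi_0 \ko \to \pi_0 H\Z/2\Z$. The only difference is cosmetic: you split the case analysis into $k \geq 1$, $k \leq -2$, and $k \in \{0,-1\}$, whereas the paper treats $k \notin \{0,-1\}$ uniformly and then does $k = 0$ and $k = -1$ separately.
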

\begin{proof}
    For all $k$, we have exact sequences
    \begin{equation}\label{eq:fibration of F}
        \begin{tikzcd}
            \arrow[from=1-1, to=1-2]
            \arrow[from=1-2, to=1-3]
            \arrow[from=1-3, to=1-4]
            \arrow[from=1-4, to=1-5]
            \pi_{k + 1} \ko & \pi_{k + 1} H\Z / 2\Z & \pi_k F & \pi_k \ko & \pi_k H\Z / 2\Z
        \end{tikzcd}
    \end{equation}
    from the long exact sequence of a fibration.
    \begin{enumerate}[(i)]
    \item If $k$ is not $0$ or $-1$, then $\pi_{k + 1} H\Z / 2\Z \cong \pi_k H\Z / 2\Z \cong 0$ and thus $\pi_k F \to \pi_k \ko$ is an isomorphism. 
    \item If $k = 0$, then $\pi_1 H\Z / 2\Z \cong 0$, so $\pi_0 F$ is the kernel of the quotient map $\Z\cong\pi_0 \ko \to \pi_0 H\Z / 2\Z\cong\Z/2\Z$. Since $\pi_0F$ is the kernel of the quotient map $\Z\to\Z/2\Z$, it follows that $\pi_0 F \to \pi_0 \ko$ can be identified with the inclusion $2\Z \to \Z$. 
    \item Finally, suppose $k = -1$. The map $\pi_0 \ko \to \pi_0  H\Z / 2\Z$ is an epimorphism, so $\pi_0 H\Z / 2\Z \to \pi_{-1} F$ is zero and $\pi_{-1} H\Z / 2\Z \cong 0$. Exactness of Equation~\ref{eq:fibration of F} implies that $\pi_{-1} F \to \pi_{-1} \ko$ is an isomorphism.\qedhere
    \end{enumerate}
\end{proof}

\begin{lem}\label{lem:cohomology of F}
    The cohomology of $F$ is given by $H^* F \cong \A \otimes_{\A_1} E$, where $E$ is the elephant (see Definition~\ref{def:elephant}).
\end{lem}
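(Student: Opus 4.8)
\emph{Proof proposal.} The plan is to feed the defining fiber sequence $F \to \ko \to H\Z/2\Z$ into the long exact sequence in mod $2$ cohomology and recognize every term as induced up from $\A_1$. First I would collect the two cohomology inputs. We have $H^*(H\Z/2\Z) \cong \A$ as an $\A$-module, and by Proposition~\ref{prop:cohomology of ko<k>} in the case $k = 0$, $H^*\ko \cong \A/(\A\Sq^1 + \A\Sq^2)$. Since $\Sq^1$ and $\Sq^2$ generate $\A_1$ as an algebra, the left ideal $\A\Sq^1 + \A\Sq^2$ equals $\A\cdot\overline{\A_1}$, where $\overline{\A_1}$ is the augmentation ideal of $\A_1$; hence $H^*\ko \cong \A\otimes_{\A_1}\Z/2\Z$, the Eilenberg--Mac Lane class being the image of $1$. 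The map $q\colon \ko \to H\Z/2\Z$ is nonzero on $\pi_0$ by construction, hence nonzero on $H^0(-;\Z/2\Z)$, so the induced $\A$-module map $q^*\colon \A = H^*(H\Z/2\Z) \to H^*\ko$ sends the generator to the generator. As $H^*\ko$ is a cyclic $\A$-module with annihilator $\A\Sq^1 + \A\Sq^2$, the map $q^*$ is precisely the quotient $\A \to \A/(\A\Sq^1+\A\Sq^2)$; in particular it is surjective in every degree.

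Next I would exploit this surjectivity. The long exact sequence in cohomology of the cofiber sequence $F \xrightarrow{i} \ko \xrightarrow{q} H\Z/2\Z \xrightarrow{\partial} \Sigma F$ reads
\[
\cdots \to H^n(H\Z/2\Z) \xrightarrow{q^*} H^n\ko \xrightarrow{i^*} H^n F \xrightarrow{\partial^*} H^{n+1}(H\Z/2\Z) \xrightarrow{q^*} \cdots,
\]
and every map is $\A$-linear because it is induced by a map of spectra. Since $q^*$ is surjective, $i^* = 0$ and the sequence breaks into short exact sequences $0 \to H^nF \xrightarrow{\partial^*} H^{n+1}(H\Z/2\Z) \xrightarrow{q^*} H^{n+1}\ko \to 0$. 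Therefore $H^*F \cong \Sigma^{-1}\ker(q^*) = \Sigma^{-1}(\A\Sq^1 + \A\Sq^2)$ as an $\A$-module.

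It remains to identify $\Sigma^{-1}(\A\Sq^1+\A\Sq^2)$ with $\A\otimes_{\A_1}E$. By the Milnor--Moore theorem, $\A$ is free as a right $\A_1$-module, so $\A\otimes_{\A_1}(-)$ is exact; applying it to the short exact sequence of $\A_1$-modules $0 \to \overline{\A_1} \to \A_1 \to \Z/2\Z \to 0$ gives a short exact sequence $0 \to \A\otimes_{\A_1}\overline{\A_1} \to \A \to \A\otimes_{\A_1}\Z/2\Z \to 0$ whose right-hand map is again the quotient $\A \to \A/(\A\Sq^1+\A\Sq^2)$, i.e. $q^*$. Hence $\A\otimes_{\A_1}\overline{\A_1} \cong \ker(q^*) = \A\Sq^1 + \A\Sq^2$. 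Finally, unwinding Definition~\ref{def:elephant}: the $\A_1$-submodule of $\Sigma^{-1}\A_1$ generated by $\Sq^1$ and $\Sq^2$ is exactly the shifted augmentation ideal, so $E = \Sigma^{-1}\overline{\A_1}$, and therefore $\A\otimes_{\A_1}E = \Sigma^{-1}\bigl(\A\otimes_{\A_1}\overline{\A_1}\bigr) \cong \Sigma^{-1}(\A\Sq^1+\A\Sq^2) \cong H^*F$, which is the claim (and justifies the name ``elephant spectrum'').

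The steps that need genuine care, rather than bookkeeping, are: verifying that $q^*$ is surjective, which is exactly why the behaviour of $q$ on $\pi_0$ matters; checking that the connecting homomorphism $\partial^*$ is $\A$-linear, so that the conclusion is an isomorphism of Steenrod modules and not merely of graded vector spaces; and invoking freeness of $\A$ over $\A_1$ to keep $\A\otimes_{\A_1}(-)$ exact. The main thing to be vigilant about is tracking the single suspension correctly through the connecting map and through the definition of $E$; there is no serious obstacle beyond that.
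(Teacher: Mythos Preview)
Your proof is correct and follows essentially the same approach as the paper's: both use surjectivity of $q^*\colon H^*(H\Z/2\Z)\to H^*\ko$ to identify $H^*F$ with $\Sigma^{-1}\ker(q^*)$, then recognize this kernel as $\A\otimes_{\A_1}\Sigma E$ by tensoring the exact sequence $0\to\overline{\A_1}\to\A_1\to\Z/2\Z\to 0$ with $\A$. The paper's argument is slightly terser but otherwise matches yours step for step.
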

\begin{proof}
    By the definition of $E$, we have a short exact sequence
    \[
        \begin{tikzcd}
            \arrow[from=1-1, to=1-2]
            \arrow[from=1-2, to=1-3]
            \arrow[from=1-3, to=1-4]
            \arrow[from=1-4, to=1-5]
            0 & \Sigma^1 E & \A_1 & \Z / 2\Z & 0.
        \end{tikzcd}
    \]
    Since $\A$ is flat (in fact, free) as a right $\A_1$-module, tensoring gives us a short exact sequence
    \begin{equation}\label{eq:tensored sequence}
        \begin{tikzcd}
            \arrow[from=1-1, to=1-2]
            \arrow[from=1-2, to=1-3]
            \arrow[from=1-3, to=1-4,"\phi"]
            \arrow[from=1-4, to=1-5]
            0 & \A \otimes_{\A_1} \Sigma^1 E & \A & \A \otimes_{\A_1} \Z / 2\Z & 0
        \end{tikzcd}
    \end{equation}
    of $\A$-modules. Recall that $H^* H\Z / 2\Z \cong \A$ and $H^* \ko \cong \A \otimes_{\A_1} \Z / 2\Z$. Since the map $\ko \to H\Z / 2\Z$ is non-trivial, it must represent the bottom class of $H^* \ko$ and therefore induces the map $\phi:\A \to \A \otimes_{\A_1} \Z / 2\Z$ in Equation~\ref{eq:tensored sequence}. 
    
    The fiber sequence defining $F$ gives us a long exact sequence
    \begin{equation}\label{eq:long exact cohomology}
        \begin{tikzcd}
            \arrow[from=1-1, to=1-2]
            \arrow[from=1-2, to=1-3]
            \arrow[from=1-3, to=1-4]
            \arrow[from=1-4, to=1-5]
            H^* H\Z / 2\Z & H^* \ko & H^* F & H^{* + 1} H\Z / 2\Z & H^{* + 1} \ko
        \end{tikzcd}
    \end{equation}
    in cohomology. Since $H^* H\Z / 2\Z \to H^* \ko$ is an epimorphism, Equation~\ref{eq:long exact cohomology} induces exact sequences
    \[
        \begin{tikzcd}
            \arrow[from=1-1, to=1-2]
            \arrow[from=1-2, to=1-3]
            \arrow[from=1-3, to=1-4]
            0 & H^* F & H^{* + 1} H\Z / 2\Z & H^{* + 1} \ko.
        \end{tikzcd}
    \]
    Thus $H^* F$ is the kernel of the map $H^* H\Z / 2\Z \to H^* \ko$ shifted by $-1$. That is, $H^*F\cong\Sigma^{-1}\ker\phi$, so Equation~\ref{eq:tensored sequence} implies $H^*F\cong\A \otimes_{\A_1} E$.
\end{proof}

\begin{rem}
Note that while the homotopy groups of $F$ are abstractly isomorphic to those of $\ko$, they have a different structure as a module over $\pi_* \mathbb{S}$. This can be seen in the Adams spectral sequence for $\A \otimes_{\A_1} E$ in \cite[Figure 29]{BC18} (note that $E$ is referred to as $R_2$ in \textit{loc.~cit.}).
\end{rem}

\subsection{Aside on integral cohomology}
The cohomology of $\Sigma^{8k+4}F$ arises in the cohomology of $\MSpinh$, which leads us to look for \textit{elephant classes} $\MSpinh\to\Sigma^{8k+4}F$. We will build these using $\KO$-Pontryagin classes once we know how to lift $\ko\langle 8k+2\rangle\wedge\ksp\to\ksp\langle 8k+2\rangle$ to $\ko\langle 8k+2\rangle\wedge\ksp\to\Sigma^{8k+4}F$. To do this, we need a few results in integral cohomology.

\begin{lem}\label{lem:c_4 of tensor}
Let $\mr{pr}_i:\BU\times\BU\to\BU$ denote projection onto the $i^\text{th}$ factor for $i=1,2$. Let $\gamma\to\BU$ be the classifying virtual bundle. Let $\alpha:=\mr{pr}_1^*\gamma$ and $\beta:=\mr{pr}_2^*\gamma$, so that the external tensor product $\alpha\otimes\beta$ is a virtual bundle on $\BU\times\BU$. Then in $H^*(\BU\times\BU;\Z)$, we have
    \begin{equation}
        \label{eq:c4}
        c_4(\alpha \otimes \beta) = -6 c_2(\alpha) c_2(\beta) \pmod{c_1(\alpha), c_1(\beta)}.
    \end{equation}
\end{lem}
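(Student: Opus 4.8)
The plan is to reduce the asserted congruence to a short manipulation of power sums via the splitting principle and the multiplicativity of the Chern character. Since $H^*(\BU\times\BU;\Z)$ is a polynomial ring on the classes $c_i(\alpha),c_j(\beta)$, it is torsion-free, and its quotient by the ideal $(c_1(\alpha),c_1(\beta))$ injects into the corresponding quotient with $\Q$-coefficients; hence it is enough to verify the identity rationally. Over $\Q$ I may use $\ch(\alpha\otimes\beta)=\ch(\alpha)\,\ch(\beta)$, writing $\ch_d$ for the component in $H^{2d}$. Because $\gamma$ (and thus $\alpha,\beta$) is a virtual bundle of rank zero, $\ch_0(\alpha)=\ch_0(\beta)=0$, while $\ch_1=c_1$ and, by Newton's identity $\ch_2=\tfrac12(c_1^2-2c_2)$, one has $\ch_2(\alpha)\equiv -c_2(\alpha)$ and $\ch_2(\beta)\equiv -c_2(\beta)$ modulo $c_1(\alpha),c_1(\beta)$.

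First I would record the bottom Chern classes of $\alpha\otimes\beta$ modulo the ideal $(c_1(\alpha),c_1(\beta))$. From $\ch_1(\alpha\otimes\beta)=\ch_1(\alpha)\ch_0(\beta)+\ch_0(\alpha)\ch_1(\beta)=0$ we get $c_1(\alpha\otimes\beta)=0$ exactly, and hence $p_1(\alpha\otimes\beta)=c_1(\alpha\otimes\beta)=0$. From $\ch_2(\alpha\otimes\beta)=\ch_1(\alpha)\ch_1(\beta)=c_1(\alpha)c_1(\beta)$, together with $\ch_2=\tfrac12(c_1^2-2c_2)$ and $c_1(\alpha\otimes\beta)=0$, we get $c_2(\alpha\otimes\beta)=-c_1(\alpha)c_1(\beta)$, which lies in the ideal.

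The main step is to compute $\ch_4(\alpha\otimes\beta)$ in two ways and compare. By multiplicativity, $\ch_4(\alpha\otimes\beta)=\sum_{i+j=4}\ch_i(\alpha)\ch_j(\beta)$; the terms with $i=0$ or $j=0$ vanish (rank zero), and the terms with $i=1$ or $j=1$ lie in $(c_1(\alpha),c_1(\beta))$ since $\ch_1=c_1$, so modulo the ideal only the $i=j=2$ term survives and $\ch_4(\alpha\otimes\beta)\equiv(-c_2(\alpha))(-c_2(\beta))=c_2(\alpha)c_2(\beta)$. On the other hand, Newton's identity $p_4=c_1p_3-c_2p_2+c_3p_1-4c_4$ applied to $\alpha\otimes\beta$, together with the vanishing (mod the ideal) of $c_1(\alpha\otimes\beta)$, $c_2(\alpha\otimes\beta)$ and $p_1(\alpha\otimes\beta)$ found above, gives $p_4(\alpha\otimes\beta)\equiv-4c_4(\alpha\otimes\beta)$, whence $\ch_4(\alpha\otimes\beta)=\tfrac1{24}p_4(\alpha\otimes\beta)\equiv-\tfrac16 c_4(\alpha\otimes\beta)$. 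Equating the two expressions yields $c_4(\alpha\otimes\beta)\equiv-6\,c_2(\alpha)c_2(\beta)\pmod{c_1(\alpha),c_1(\beta)}$.

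The one place to be careful is the virtual-rank convention: the clean equalities $c_1(\alpha\otimes\beta)=0$ and $c_2(\alpha\otimes\beta)=-c_1(\alpha)c_1(\beta)$ use that $\gamma$ has virtual rank $0$; for a genuine rank-$N$ representative the formulas acquire $N$-dependent corrections, but these all lie in $(c_1(\alpha),c_1(\beta))$, so the stated congruence is insensitive to this choice. Everything else is routine. An equivalent, slightly more computational route is to split $\alpha=\sum_a(L_a-1)$, $\beta=\sum_b(M_b-1)$, so that $c(\alpha\otimes\beta)=\prod_{a,b}(1+x_a+y_b)\big/\big(\prod_a(1+x_a)^{n}\prod_b(1+y_b)^{m}\big)$ with $x_a=c_1(L_a)$, $y_b=c_1(M_b)$; taking $\log$ one finds the degree-one part vanishes, the degree-two part equals $-c_1(\alpha)c_1(\beta)$, and the degree-four part is $\equiv-\tfrac32\big(\sum_a x_a^2\big)\big(\sum_b y_b^2\big)\equiv-6\,c_2(\alpha)c_2(\beta)$ modulo $c_1(\alpha),c_1(\beta)$, which (using $\ell_1=0$) is exactly $c_4(\alpha\otimes\beta)$ modulo that ideal.
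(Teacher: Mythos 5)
Your proof is correct and follows essentially the same route as the paper's: both compute $\mathrm{ch}(\alpha\otimes\beta)$ via multiplicativity, note that rank zero kills the low-degree terms modulo $(c_1(\alpha),c_1(\beta))$, and then invert the Newton identity expressing $\mathrm{ch}_4$ in terms of Chern classes to solve for $c_4(\alpha\otimes\beta)$. Your write-up is a bit more careful about the reduction to $\Q$-coefficients and the exact vanishing of $c_1(\alpha\otimes\beta)$, but no new idea is involved.
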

\begin{proof}
    This can be computed using the Chern character. Recall that the Chern character of a virtual bundle $\xi$ with rank $n$ is defined to be
    \[
        \mathrm{ch}(\xi) = n + \sum_{k = 1}^\infty \frac{s_k(c(\xi))}{k!},
    \]
    where the $s_k$ are polynomials of (cohomological) degree $k$ in the Chern classes \cite[pp.~188]{MS74}. In particular, the first four $s_k$ are
    \begin{align*}
        s_1(c(\xi)) & = c_1(\xi), \\
        s_2(c(\xi)) & = c_1(\xi)^2 - 2c_2(\xi), \\
        s_3(c(\xi)) & = c_1(\xi)^3 - 3c_1(\xi) c_2(\xi) + 3c_3(\xi), \\
        s_4(c(\xi)) & = c_1(\xi)^4 - 4c_1(\xi)^2 c_2(\xi) + 2c_2(\xi)^2 + 4c_1(\xi) c_3(\xi) - 4c_4(\xi).
    \end{align*}
    Working modulo the ideal generated by $c_1(\alpha)$ and $c_1(\beta)$, we have
    \[
        \mathrm{ch}(\alpha) = -c_2(\alpha) + \frac{1}{2} c_3(\alpha) + \cdots \pmod{c_1(\alpha),c_1(\beta)},
    \]
    and similarly for $\mathrm{ch}(\beta)$. Since the Chern character is multiplicative over tensor products, we see that
    \[
        \mathrm{ch}(\alpha \otimes \beta) = c_2(\alpha) c_2(\beta) + \text{higher degree terms} \pmod{c_1(\alpha),c_1(\beta)}.
    \]
    Thus $c_1(\alpha \otimes \beta) = c_2(\alpha \otimes \beta) = c_3(\alpha \otimes \beta) = 0 \pmod{c_1(\alpha),c_1(\beta)}$, and $c_4(\alpha \otimes \beta)$ is given by the equation
    \[
        -\frac{4 c_4(\alpha \otimes \beta )}{4!} = c_2(\alpha) c_2(\beta) \pmod{c_1(\alpha),c_1(\beta)}.
    \]
    Solving for $c_4(\alpha \otimes \beta)$ gives the desired result.
\end{proof}

Next, we compute $H^*(K(\Z,3);\Z)$ in a small range. We will do this with the Serre spectral sequence (see \cite[Example 5.20]{Hat04}), but one can alternatively apply the universal coefficient theorem to Cartan's computation of $H_*(K(G,n);\mb{Z})$ \cite{Car55}.

\begin{lem}\label{lem:cohomology of k(z,3)}
    In degrees at most 8, the integral cohomology of $K(\Z, 3)$ is
    \[
        H^i(K( \Z, 3) ; \Z) \cong
        \begin{cases}
            \Z & i = 0, 3, \\
            0 & i = 1, 2, 3, 4, 5, 7, \\
            \Z / 2\Z & i = 6, \\
            \Z / 3\Z & i = 8.
        \end{cases}
    \]
\end{lem}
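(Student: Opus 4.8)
The plan is to run the cohomology Serre spectral sequence for the path-loop fibration $K(\Z,2) \to PK(\Z,3) \to K(\Z,3)$ with $\Z$ coefficients, using that $PK(\Z,3)$ is contractible. Since $K(\Z,2) \simeq \CP^\infty$, the fiber has $H^*(K(\Z,2);\Z) \cong \Z[x]$ with $|x| = 2$, so $E_2^{p,q} = H^p(K(\Z,3);\Z) \otimes (\Z x^{q/2})$ for $q$ even and $0$ for $q$ odd (no Tor issues since everything is free abelian). First I would record what we already know in low degrees: $H^0 = \Z$, $H^1 = H^2 = 0$ (since $K(\Z,3)$ is $2$-connected), and $H^3 = \Z$ generated by the fundamental class $\iota$. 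The transgression $d_3$ must send the fiber generator $x \in E_3^{0,2}$ to $\iota \in E_3^{3,0}$ (up to sign), since the total space is contractible; this is the only way to kill $x$.

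Next I would propagate this differential multiplicatively. From $d_3(x) = \iota$ and the Leibniz rule, $d_3(x^k) = k x^{k-1} \iota$. On the column $p=0$: $E_4^{0,2k}$ is the kernel of multiplication by $k$ on $\Z$, which is $0$ for $k \geq 1$. On the column $p = 3$: $E_4^{3,2k}$ is the cokernel of $x^{k+1} \mapsto (k+1) x^k \iota$, i.e. $\Z/(k+1)\Z$ sitting in total degree $3 + 2k$. So after the $d_3$ differential, the surviving classes below degree $9$ are: degree $0$ ($\Z$), degree $3$: killed (cokernel $\Z/1 = 0$ — wait, $k=0$ gives $E_4^{3,0} = \Z/(1)\Z$? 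No: for $p=3, q=0$ the incoming differential is from $E_3^{0,2} \to E_3^{3,0}$, which is an isomorphism, so $E_4^{3,0} = 0$). The potentially-nonzero $E_4$ entries in total degree $\le 8$ are then $E_4^{3,2k} = \Z/(k+1)$ in degree $3+2k$: degree $5$ gives $\Z/2$, degree $7$ gives $\Z/3$, and higher columns $E_4^{p,q}$ with $p > 3$, which are zero in this range since $H^p(K(\Z,3);\Z)$ vanishes for $4 \le p \le 7$ (to be confirmed inductively) — but we need the base cohomology itself, which is what we are computing, so this must be done degree by degree.

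The honest way to organize the induction is: assume $H^i(K(\Z,3);\Z)$ is known for $i < n$ and read off $H^n$ from the requirement that everything in total degree $n$ on the fiber column and elsewhere dies. Concretely, after $d_3$, the class in $E_4^{3,2}$ (total degree $5$) is $\Z/2$; nothing can kill it except a differential hitting it or emanating from it, but $d_r$ for $r \ge 4$ out of $(3,2)$ lands in $(3+r, 3-r)$ which is zero, and into $(3,2)$ comes from $(3-r, 1+r)$ which is zero for $r \ge 4$ (the fiber is concentrated in even $q$). Hence $E_\infty^{3,2} = \Z/2$ must vanish because the total space is contractible — contradiction unless it is killed, so in fact there must be a class in $H^6(K(\Z,3);\Z)$ whose transgression-type differential $d_5: E_5^{6,0} \to$ ... hmm — rather, the surviving $\Z/2$ in bidegree $(3,2)$ must be cancelled by a $d_3$ differential we have not yet accounted for, coming from $E_3^{3,0} \otimes$... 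Actually the clean statement: the $\Z/2$ in $E_4^{3,2}$ must be killed by a differential originating from $E_r^{6-r+?, ...}$ — the only candidate is $d_3: E_3^{?}$... This forces $H^6(K(\Z,3);\Z) \cong \Z/2$, with the generator $y$ transgressing appropriately so that $d_3(xy)$ or a secondary differential cleans up bidegree $(3,2)$...

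Let me restate the key step more carefully as the main obstacle. \textbf{The main obstacle} is bookkeeping the multiplicative structure precisely enough to see that the leftover torsion in the $p=3$ column (namely $\Z/2$ in degree $5$, $\Z/3$ in degree $7$) is exactly cancelled by newly-appearing base classes $y_6 \in H^6$ and $y_8 \in H^8$, forcing $H^6 \cong \Z/2$, $H^8 \supseteq \Z/3$, and $H^i = 0$ for $i = 4,5,7$; and then checking no further classes appear in $H^8$. The mechanism: once $H^6 \cong \Z/2$ is in place, $E_4^{6,0} = \Z/2$, and the differential $d_3: E_3^{3,2} \to E_3^{6,0}$, given by $x\iota \mapsto$ (something), together with $d_3$ on $E_3^{0,4} \to E_3^{3,2}$ being multiplication by $2$, arranges $E_\infty^{3,2} = E_\infty^{6,0} = 0$. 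The $\Z/3$ in degree $7$ (from $E_4^{3,4} = \Z/3$) is similarly cancelled by $H^8 \cong \Z/3$ via $d_3: E_3^{3,4} \to E_3^{6,2}$ and $d_r: E_r^{8,0} \to \cdots$; tracking this requires care but is forced. I would write this up by tabulating the $E_2$, $E_3$, $E_4$ pages through total degree $8$ and invoking contractibility of $PK(\Z,3)$ column by column. An alternative, which I would mention as a remark, is to apply the universal coefficient theorem to Cartan's computation of $H_*(K(\Z,3);\Z)$ as in \cite{Car55}, which gives the answer directly without the spectral sequence chase.
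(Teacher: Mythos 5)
Your approach is the same as the paper's: run the integral Serre spectral sequence of the path--loop fibration $\CP^\infty \to * \to K(\Z,3)$, use contractibility of the total space to force everything to die, and propagate $d_3(x)=\iota$ via the Leibniz rule. The low-degree analysis through $H^6$ is essentially correct (and the paper presents it in the same order), though you should tighten the contradiction that pins down $H^6 \cong \Z/2\Z$: one argues that if $H^6 = 0$ then $E_4^{3,2} \cong \Z/2\Z$ would survive to $E_\infty$, so $y := d_3(x\iota)$ must be nonzero, and $2y = d_3(2x\iota) = d_3(d_3(x^2)) = 0$ then identifies the image.

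There is, however, a real error in your sketch of the degree-$8$ step. You propose that the $\Z/3\Z$ in $E_4^{3,4}$ (total degree $7$) is ``cancelled by $H^8 \cong \Z/3\Z$ via $d_3: E_3^{3,4} \to E_3^{6,2}$'' — but that $d_3$ is zero. One sees this from the Leibniz rule: $d_3(x^2\iota) = xy + x\iota^2$, and since $3x^2\iota = d_3(x^3)$ one gets $3\,d_3(x^2\iota) = 0$; combined with $E_3^{6,2} \cong \Z/2\Z$ generated by $xy$, this forces $x\iota^2 = xy$, hence $d_3(x^2\iota) = 0$. So $x^2\iota$ survives to page $5$ as a $\Z/3\Z$, and the differential that actually kills it is the transgression-type $d_5 : E_5^{3,4} \to E_5^{8,0}$, which is then forced to be an isomorphism — that is what produces $H^8(K(\Z,3);\Z) \cong \Z/3\Z$. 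Your second candidate ``$d_r: E_r^{8,0} \to \cdots$'' is also off: differentials out of $(8,0)$ land below the first quadrant; what you want are differentials into $(8,0)$, of which $d_5$ from $(3,4)$ is the only nonzero one in range. Since your stated plan is to tabulate pages through total degree $8$ degree by degree, carrying out that plan carefully would surface and repair this, but as written the mechanism for $H^8$ is misidentified and needs the small Leibniz computation above before the $d_5$ is seen to be the operative differential.
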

\begin{proof}
    We know that $H^0(K(\Z, 3) ; \Z) \cong \Z$ and $H^1(K( \Z, 3) ; \Z) \cong H^2(K(\Z, 3) ; \Z) \cong 0$ by the Hurewicz theorem and universal coefficient theorem. The path-loop fibration gives us a fiber sequence
    \[
        \begin{tikzcd}
            \arrow[from=1-1, to=1-2]
            \arrow[from=1-2, to=1-3]
            \CP^\infty\cong K(\Z,2) & * & K(\Z, 3).
        \end{tikzcd}
    \]
    The Serre spectral sequence associated with this fibration has signature
    \[
        E_2^{p, q}=H^p( K(\Z, 3) ; H^q( \CP^\infty ; \Z))\Longrightarrow H^{p+q}(*;\Z).
    \]
    Since $H^*(\CP^\infty ; \Z)$ is a polynomial ring generated by an element $x\in H^2(\CP^\infty;\Z)$, the row $E_2^{*, 2k}$ is given by $H^*(K(\Z, 3) ; \Z)$ times $x^k$, and the odd rows $E_2^{*, 2k + 1}$ vanish (see Figure~\ref{fig:k(z,3)}).
    \begin{enumerate}[(i)]
    \item Since the spectral sequence converges to the cohomology of a contractible space, the class $x \in E_2^{0, 2}$ must be nonzero under some differential. Moreover, $H^1(K(\Z,3);\Z)\cong H^2(K(\Z,3);\Z)\cong 0$ implies that $\ker{d_3}\cong\operatorname{coker}{d_3}\cong 0$, so $d_3:E_3^{0,2}\to E_3^{3,0}$ is an isomorphism. In particular, $H^3(K(\Z,3);\Z)\cong\Z$. Denote $\iota:=d_3(x)$, which generates $H^3(K(\Z,3);\Z)$.
    \item Note that $E_2^{4,0}\cong H^4(K(\Z,3);\Z)$ must be 0, because any differential hitting $E_r^{4,0}$ has domain 0 and $E^{4,0}$ converges to $H^4(*;\Z)=0$.
    \item Similarly, the only possible non-zero differential hitting $E_r^{5,0}$ is $d_5:E^{0,4}_5\to E^{5,0}_5$. But $E^{0,4}_2$ is generated by $x^2$, and $d_3(x^2)=2xd_3(x)=2x\iota$ is non-zero and non-torsion. Thus $E^{0,4}_{\geq 4}\cong\ker{d_3}\cong 0$, so $H^5(K(\Z,3);\Z)\cong E^{5,0}\cong 0$.
    \item If $H^6(K(\Z,3);\Z)\cong 0$, then we would have $\ker(d_3:E^{3,2}_3\to E^{6,0}_3)=E^{3,2}_3$. We have already seen that $\mr{im}(d_3:E^{0,4}_3\to E^{3,2}_3)=2x\iota$, so it would follow that $E^{3,2}_4$ is isomorphic to $\Z/2\Z$ (generated by $x\iota$). But now there are no other differentials hitting $E_r^{3,2}$, which would imply that this $\Z/2\Z$ survives to $H^5(*;\Z)$.
    
    By contradiction, we deduce that $y:=d_3(x\iota)\in H^6(K(\Z,3);\Z)$ is non-zero. But $2y=d_3(2x\iota)=0$, as $2x\iota=d_3(x^2)$. Thus $E^{6,0}_3$ contains a $\Z/2\Z$ subgroup, and $E^{6,0}_4$ is the quotient by this $\Z/2\Z$. There are no other differentials with non-zero domain hitting $E^{6,0}$, so we conclude that $E^{6,0}_4\cong 0$ and thus $E^{6,0}_3\cong H^6(K(\Z,3);\Z)\cong\Z/2\Z$.
    \item Degree 7 is analogous to degree 5. The only possible non-zero differential is $d_7:E^{0,6}_7\to E^{7,0}_7$, but $E^{0,6}_3$ is generated by $x^3$. Since $d_3(x^3)=3x^2\iota$ is non-zero and non-torsion, we find that $E^{0,6}_{\geq 4}\cong\ker{d_3}\cong 0$ and hence $H^7(K(\Z,3);\Z)\cong E^{7,0}\cong 0$.
    \item Consider $x^2\iota\in E^{3,4}_3$. Using the product rule, we have
    \begin{align*}
        d_3(x^2\iota)&=xd_3(x\iota)+d_3(x)x\iota\\
        &=xy+x\iota^2\in E^{6,2}_3.
    \end{align*}
    Since $3x^2\iota=d_3(x^3)$, we see that $3d_3(x^2\iota)=3xy+3x\iota^2=0$. Our previous computations and the ring structure on $E^{p,q}$ imply that $E^{6,2}_3$ is isomorphic to $\Z/2\Z$ with generator $xy$, so $x\iota^2=xy$ and $d_3(x^2\iota)=0$. It follows that $E^{3,4}_4\cong\Z/3\Z$ with generator $x^2\iota$.
    
    The only other possible non-zero differential out of $E^{3,4}_2$ is $d_5:E^{3,4}_5\to E^{8,0}_5$. There are no differentials into $E^{3,4}_5$, so $d_5$ is injective. Moreover, there are no other differentials into $E^{8,0}_5$, so $d_5$ is an isomorphism. Thus $E^{8,0}_5\cong H^8(K(\Z,3);\Z)\cong\Z/3\Z$ (whose generator is denoted by $z$ in Figure~\ref{fig:k(z,3)}).\qedhere
    \end{enumerate}
\end{proof}

\begin{figure}
\centering
\begin{sseqdata}[name = example1,title = Page \page,xscale=.5,yscale=.5,class labels = {font=\small},tick style={font = \small},differential labels={font=\tiny}]
    \class["1"](0, 0)
        \class["\iota"](3, 0)
        \class["y"](6, 0)
        \class["z"](8, 0)
        \class["x"](0, 2)
        \class["x \iota"](3, 2)
        \class["x y"](6, 2)
        \classoptions[page = 5, ""]
        \class["x^2"](0, 4)
        \class["x^2 \iota"](3, 4)
        \class["x^3"](0, 6)
        \d3(0, 2)(3, 0)
        \d["\cdot 2"' {pos=.3,yshift=-4pt}]3(0, 4)(3, 2)
        \d["\cdot 3"' {pos=.3,yshift=-4pt}]3(0, 6)(3, 4)
        \replacetarget["x\iota^2"]
        \d3(3, 2)(6, 0)
        \d5(3, 4)(8, 0)
\end{sseqdata}
\printpage[name = example1, page = 3] \quad
\printpage[name = example1, page = 5]
\caption{The $\CP^\infty\to *\to K(\Z,3)$ spectral sequence}
\label{fig:k(z,3)}
\end{figure}

The third fact we need is that the second Pontryagin class of the canonical bundle $\gamma\to\BO\langle 8\rangle$ is $\pm 6\in H^8(\BO\langle 8\rangle;\Z)$.

\begin{lem}\label{lem:p_2 is divisible by 6}
    Let $\gamma$ be the canonical bundle on $\BO\langle 8\rangle$. Then there exists a generator $a\in H^8(\BO\langle 8\rangle;\Z)\cong\Z$ such that $p_2(\gamma)=6a$, where $p_2$ is the second Pontryagin class.
\end{lem}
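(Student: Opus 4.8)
The plan is to reduce the statement to a computation about $H^*(\BO\langle 8\rangle;\Z)$ in degree $8$, using the well-understood cohomology of $\BO$ and $\BSpin$ together with the connective cover maps. First I would recall that $\BO\langle 8\rangle = \BString$ and that the map $\BO\langle 8\rangle \to \BO$ factors through $\BSpin$, so that the Pontryagin classes of $\gamma$ are pulled back from $\BSO$. In degree $8$, the rational (hence free) part of $H^*(\BSpin;\Z)$ is spanned by $p_1^2$ and $p_2$, and passing to $\BO\langle 8\rangle$ kills $p_1$ (since $p_1$ generates the relevant homotopy/cohomology that is being coned off: $\BO\langle 8\rangle \to \BSpin$ is, rationally, the fiber of a map detecting $p_1$, up to a scalar). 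Hence $H^8(\BO\langle 8\rangle;\Z)$ is cyclic (after checking there is no extra torsion in this degree — which follows from the known structure of $H^*(\BString;\Z)$, or from the Serre spectral sequence of $K(\Z,3) \to \BO\langle 8\rangle \to \BSpin$ together with Lemma~\ref{lem:cohomology of k(z,3)}), and $p_2(\gamma)$ lands in it. The remaining task is to pin down the exact divisibility: $p_2$ restricted to $\BO\langle 8\rangle$ is $6$ times a generator.

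The cleanest way to extract the factor of $6$ is via the Chern character on the complexification, which is precisely what Lemma~\ref{lem:c_4 of tensor} was set up to feed into. I would use the splitting-principle / restriction-to-a-lift argument: a generator of $H^8(\BO\langle 8\rangle;\Z)$ is detected by pairing with a suitable $8$-dimensional class, and on $\BO\langle 8\rangle$ the first non-trivial Chern-character-type characteristic number is governed by the denominator appearing in $\mathrm{ch}_4$, namely $1/4! = 1/24$, combined with the even/real constraint. Concretely, complexify to get $\gamma_\C$ on $\BO\langle 8\rangle$; then $p_2(\gamma) = c_4(\gamma_\C) \bmod c_1, c_3$-type terms, and since we are over $\BO\langle 8\rangle$ the odd Chern classes and $c_1, c_2$ restrictions are constrained. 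One then compares with the universal integrality statement (à la the $e_\C$-invariant / Adams' work, or the Bott integrality theorem) that the image of $\mathrm{ch}_4$ on a String bundle, when written in terms of a generator, carries exactly the factor needed to produce $6 = 24/4$ after accounting for the factor of $2$ from complexification of a real bundle ($p_2$ vs.\ $c_4$) and the factor from $\mathrm{ch}_4 = s_4/4!$. Alternatively — and this is probably the route I would actually write — I would invoke Stong's or Thomas's explicit determination of $H^*(\BO\langle 8\rangle;\Z)$ in low degrees, where this divisibility is recorded, citing \cite{Sto68}.

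The key steps, in order, are: (1) identify $H^8(\BO\langle 8\rangle;\Z) \cong \Z$, using that $p_1$ dies and that no $2$- or $3$-torsion survives in degree $8$ (here the Serre spectral sequence of $K(\Z,3)\to\BO\langle 8\rangle\to\BSpin$ and Lemma~\ref{lem:cohomology of k(z,3)} control the torsion, since $H^*(K(\Z,3);\Z)$ has a $\Z/2\Z$ in degree $6$ and $\Z/3\Z$ in degree $8$, and one checks these do not contribute a free-quotient obstruction); (2) observe $p_2(\gamma)$ is non-torsion (its mod $2$ reduction is $w_4^2 \ne 0$, or its rationalization is nonzero), so $p_2(\gamma) = 6a$ for some integer $6$ replaced by an unknown $m$ and a generator $a$; (3) compute $m$ by a Chern-character argument, feeding in Lemma~\ref{lem:c_4 of tensor} (which supplies the $-6c_2c_2$ cross-term that is the shadow of the same arithmetic) or by direct citation. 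The main obstacle is step (3): showing the divisibility is exactly $6$ and not merely a multiple of $6$ or a divisor of it. This requires either an explicit String bundle realizing a generator of $H_8$ with $\langle p_2, \cdot\rangle = \pm 6$, or an appeal to the sharp integrality theorem for String manifolds; I expect to handle it by citing the low-degree computation of $H^*(\BString;\Z)$ in \cite{Sto68} rather than reproving it, since the surrounding paper (see Proposition~\ref{prop:lift of pontryagin class}) is already leaning on Stong for analogous facts.
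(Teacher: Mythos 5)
Your overall framing — identify $H^8(\BO\langle 8\rangle;\Z)\cong\Z$ via the Serre spectral sequence of $K(\Z,3)\to\BO\langle 8\rangle\to\BSpin$, observe $p_2(\gamma)$ is non-torsion, then nail down the divisibility — is the right skeleton, and the first two steps parallel the paper. But there are two problems, one small and one substantive.

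\textbf{Small problem.} Your non-torsion argument via mod $2$ reduction does not work: on $\BO\langle 8\rangle$ the class $w_4$ is the mod $2$ reduction of $\lambda=p_1/2$, and $\lambda$ is killed by definition of the fiber $\BO\langle 8\rangle\to\BSpin\xrightarrow{\lambda}K(\Z,4)$, so $w_4=0$ and hence $w_4^2=0$ in $H^8(\BO\langle 8\rangle;\Z/2\Z)$. Your rational argument is fine and should be used instead.

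\textbf{Substantive problem.} You treat the torsion in $H^*(K(\Z,3);\Z)$ as something to ``check does not contribute a free-quotient obstruction,'' but the $\Z/3\Z$ in $H^8(K(\Z,3);\Z)$ is precisely the source of the factor of $3$. The paper's proof runs: (a) by Duan's explicit presentation of $H^*(\BSpin;\Z)$ in low degrees, there are classes $\sigma_1\in H^4$, $\sigma_2\in H^8$ with $2\sigma_1 = p_1$ and $\sigma_1^2 + 2\sigma_2 = p_2$; (b) the spectral sequence forces $d_4(\iota)=\pm\sigma_1$, killing $\sigma_1^2$ at $E_5^{8,0}$ and leaving $E_\infty^{8,0}$ generated by $\sigma_2$, so $p_2$ pulls back to $2\sigma_2$; (c) the $\Z/3\Z$ at $E_\infty^{0,8}$ survives (the only candidate differential lands in $H^9(\BSpin;\Z)$, which is $2$-torsion), and convergence gives a filtration $0\to E_\infty^{8,0}\to\Z\to E_\infty^{0,8}\cong\Z/3\Z\to 0$, i.e.\ $\sigma_2=3a$. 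Combining, $p_2(\gamma)=2\sigma_2=6a$. Your proposed step (3) — a Chern-character / integrality argument or a citation to Stong — is where all the real content lies, but you neither carry out the Chern-character computation (which, to be sharp, needs more than $\mathrm{ch}_4=s_4/4!$: you would have to justify why $p_2/6$ and not, say, $p_2/2$ or $p_2/12$ is integral, and the rational-$K$-theory map alone does not give this) nor verify that Stong actually records the degree-$8$ divisibility on $\BString$. The paper sidesteps these issues by doing the spectral sequence computation directly; if you want a self-contained proof, you should follow that route, reading off the factor of $2$ from Duan's relation $p_2=\sigma_1^2+2\sigma_2$ and the factor of $3$ from the surviving $\Z/3\Z$ at $E_\infty^{0,8}$.
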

\begin{proof}
    The space $\BO\langle 8\rangle$ can be obtained as the homotopy fiber of the map $\BSpin \to K( \Z, 4)$ inducing an isomorphism on $\pi_4$. Extending to the left, we get a fiber sequence of spaces
    \[
        \begin{tikzcd}
            \arrow[from=1-1, to=1-2]
            \arrow[from=1-2, to=1-3]
            K(\Z, 3) & \BO\langle 8\rangle & \BSpin.
        \end{tikzcd}
    \]
    The Serre spectral sequence for this fibration has signature
    \[
        E_2^{p, q} = H^p(\BSpin ; H^q(K( \Z, 3) ; \Z))\Longrightarrow H^{p+q}(\BO\langle 8\rangle;\Z)
    \]
    (see Figure~\ref{fig:bo<8>}, in which $\bullet=\Z$, $\circ=\Z/2\Z$, and $\triangleleft=\Z/3\Z$). We computed $H^*(K(\Z,3);\Z)$ for $*\leq 8$ in Lemma~\ref{lem:cohomology of k(z,3)}, which gives us $E^{0,q}_2$ for $q\leq 8$. Let $\iota\in H^3(K(\Z,3);\Z)$ be a generator.

    Next, we need to recall the integral cohomology $\BSpin$ in low degrees, which we can read out of \cite[Theorem 9.1]{Dua19}. These are given by
    \begin{equation}\label{eq:cohomology of bspin}
        H^i(\BSpin;\Z)\cong\begin{cases}
            \Z & i=0,4\\
            0 & i=1,2,3,5,6,\\
            \Z/2\Z & i=7,9,\\
            \Z^2 & i=8.
        \end{cases}
    \end{equation}
    In \textit{loc.~cit.}, it is also shown that there is a generator $\sigma_1\in H^4(\BSpin;\Z)$ such that $2\sigma_1=p_1(\beta)$ (the first Pontryagin class of the canonical bundle $\beta$ on $\BSpin$). Moreover, there is a class $\sigma_2\in H^8(\BSpin;\Z)$ such that $\sigma_1^2,\sigma_2$ freely generate $H^8(\BSpin;\Z)$ and $\sigma_1^2+2\sigma_2=p_2(\beta)$ (the second Pontryagin class).
    
    Because $\BO\langle 8\rangle$ is $7$-connected, its cohomology must vanish in degrees seven and below. The only way for $E_r^{0,3}$ and $E_r^{4,0}$ to die is if $d_4(\iota)=\pm\sigma_1$. Thus $d_4(\iota\sigma_1)=\pm\sigma_1^2$, so quotienting by this image leaves us with $E^{8,0}_5\cong\Z$, which is generated by $\sigma_2$. There are no other differentials into $E_r^{8,0}$, so we find that $H^8(\BO\langle 8\rangle;\Z)\cong\Z$ is generated by $\sigma_2$. Since $\sigma_1^2=0$ in this group, we have the relation $p_2(\gamma)=2\sigma_2$.

\begin{figure}[b]
\centering
\begin{sseqdata}[name = example2,title = Page \page,xscale=.5,yscale=.5,class labels = {font=\small},tick style={font = \small},differential labels={font=\tiny}]
    \class[fill](0, 0)
    \class["\sigma_1"](4, 0)
    \class["\circ"](7, 0)
    \class[fill](8, 0)
    \class[fill,name=\sigma_2,show name = { above, pin }](8, 0)
    \class["\iota"](0, 3)
    \class["\iota\sigma_1"](4, 3)
    \class["\circ"](0, 6)
    \class["\triangleleft"](0, 8)
    \d4(0, 3)(4, 0)
    \d4(4, 3)(8, 0)
    \d6(0, 6)(7, 0)
\end{sseqdata}
\printpage[name = example2, page = 4] \quad
\printpage[name = example2, page = 6]
\caption{The $K(\Z,3)\to\BO\langle 8\rangle\to\BSpin$ spectral sequence}
\label{fig:bo<8>}
\end{figure}

    By looking at the low degree cohomology groups of $K(\Z,3)$ and $\BSpin$, we see that the only other group of total degree 8 is $E^{0,8}_\infty\cong H^8(K(\Z,3);\Z)\cong\Z/3\Z$. The only possible non-zero differential out of $E^{0,8}$ is $d_9:E^{0,8}_9\to E^{9,0}_9\cong H^9(\BSpin;\Z)$. However, this differential must be zero, because all the previous differentials hitting $E^{9,0}$ have trivial domain, and all torsion in $H^*(\BSpin;\Z)$ has order 2 \cite[p.~316]{Sto68}. By convergence of this spectral sequence, there is a subgroup $A\subseteq H^8(\BO\langle 8\rangle;\Z)\cong\Z$ such that $E^{8,0}_\infty\cong A$ and $E^{0,8}_\infty\cong H^8(\BO\langle 8\rangle;\Z)/A$. Thus $A$ can be identified with $3\Z\subset\Z\cong H^8(\BO\langle 8\rangle;\Z)$. In particular, $\sigma_2=3a$ for some generator $a\in H^8(\BSpin;\Z)$, so $p_2(\gamma)=2\sigma_2=6a$. 
\end{proof}

Finally, we need to know a little bit about the cohomology of the tensor product maps $\BSO\wedge\BSpin\to\BO\langle 8\rangle$.

\begin{lem}\label{lem:c_4 on 0 spaces}
    Let $\gamma$ be the classifying bundle on $\BO\langle 8\rangle$. Under the product map $\BSO \wedge\BSpin \to\BO\langle 8\rangle$, the image of a generator of a generator of $H^8( \BO\langle 8\rangle ; \Z ) \cong \Z$ under the induced map
    \[H^8(\BO\langle 8\rangle;\Z)\to H^8(\BSO\wedge\BSpin;\Z)\]
    is of the form $2s+t$, where $t$ is a torsion class and the class $s$ has the property that $s+t'$ is not divisible by 2 for any torsion class $t'$.
\end{lem}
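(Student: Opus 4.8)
The plan is to determine $f^*(a)$ explicitly from Lemma~\ref{lem:p_2 is divisible by 6} and Lemma~\ref{lem:c_4 of tensor}, and then to prove the divisibility assertion by a mod‑$2$ Bockstein computation. Throughout I work inside $H^*(\BSO\times\BSpin;\Z)$, in which $\widetilde H^*(\BSO\wedge\BSpin;\Z)$ is the split summand of classes restricting to zero on $\BSO\vee\BSpin$; I call such classes \emph{cross classes} (they are the span of the tensor and $\operatorname{Tor}$ terms mixing both factors). Write $f\colon\BSO\wedge\BSpin\to\BO\langle8\rangle$ for the product map. By its construction, $f$ followed by $\BO\langle8\rangle\to\BO$ pulls the universal bundle back to the external tensor product $\alpha\otimes\beta$ of the universal virtual bundles $\alpha$ on $\BSO$ and $\beta$ on $\BSpin$; this is the only property of $f$ I use. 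Since $p_2(\gamma)=6a$ by Lemma~\ref{lem:p_2 is divisible by 6}, we get $6\,f^*(a)=f^*p_2(\gamma)=p_2(\alpha\otimes\beta)$.

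Next I would compute $p_2(\alpha\otimes\beta)$ by complexification. Since $p_2(V)=c_4(V_\C)$ and $(V\otimes_\R W)_\C\cong V_\C\otimes_\C W_\C$, pulling the identity of Lemma~\ref{lem:c_4 of tensor} back along the complexification map $\BSO\times\BSpin\to\BU\times\BU$ gives $p_2(\alpha\otimes\beta)=-6\,c_2(\alpha_\C)\,c_2(\beta_\C)$; the ideal $(c_1(\alpha_\C),c_1(\beta_\C))$ disappears because $H^2(\BSO;\Z)=H^2(\BSpin;\Z)=0$. Using $c_2(\alpha_\C)=-p_1(\alpha)$, $c_2(\beta_\C)=-p_1(\beta)$, and $p_1(\beta)=2\sigma_1$ with $\sigma_1$ a generator of $H^4(\BSpin;\Z)\cong\Z$ (from the structure of $H^*(\BSpin;\Z)$ recalled in the proof of Lemma~\ref{lem:p_2 is divisible by 6}), this becomes $p_2(\alpha\otimes\beta)=-12\,p_1(\alpha)\times\sigma_1$. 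As $p_1(\alpha)\times\sigma_1$ is a cross class it lifts to $\widetilde H^8(\BSO\wedge\BSpin;\Z)$, and $6\,f^*(a)=-12\,p_1(\alpha)\times\sigma_1$ then forces $f^*(a)=2s+t$ with $s=-p_1(\alpha)\times\sigma_1$ and $t$ a torsion class.

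Finally I would show $s+t'$ is not divisible by $2$ for any torsion $t'$; pushing forward to $H^*(\BSO\times\BSpin;\Z)$, it is enough to prove that $p_1(\alpha)\times\sigma_1$ is not twice a class plus a torsion class. Reducing mod $2$: $p_1(\alpha)\equiv w_2^2$, and $\sigma_1\equiv w_4$ because $H^5(\BSpin;\Z)=0$ forces the reduction $H^4(\BSpin;\Z)\to H^4(\BSpin;\Z/2)=\langle w_4\rangle$ onto, so the mod‑$2$ reduction of $p_1(\alpha)\times\sigma_1$ is $w_2^2\otimes w_4\neq0$. If $p_1(\alpha)\times\sigma_1=2w+t''$ with $t''$ torsion, then $w_2^2\otimes w_4$ is the reduction of $t''$; but all torsion in $H^*(\BSO;\Z)$ and $H^*(\BSpin;\Z)$, hence by the Künneth theorem in $H^*(\BSO\times\BSpin;\Z)$, has order $2$, so $t''$ lies in the image of the integral Bockstein and $w_2^2\otimes w_4=\Sq^1 v$ for some $v\in H^7(\BSO\times\BSpin;\Z/2)$. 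A short degree count rules this out: $w_2^2\otimes w_4$ has bidegree $(4,4)$, the only part of $v$ that can contribute is a multiple of $w_3\otimes w_4$ in bidegree $(3,4)$ (bidegree $(4,3)$ is zero since $H^3(\BSpin;\Z/2)=0$), and $\Sq^1(w_3\otimes w_4)=\Sq^1 w_3\otimes w_4+w_3\otimes\Sq^1 w_4=0$ because $\Sq^1 w_3=\Sq^1\Sq^1 w_2=0$ in $H^*(\BSO;\Z/2)$ and $\Sq^1 w_4=w_5=0$ in $H^*(\BSpin;\Z/2)$; this contradiction finishes the proof. The heart of the argument is this last paragraph — the whole content of the lemma is the divisibility‑by‑$2$‑modulo‑torsion statement about the cross class $p_1(\alpha)\times\sigma_1$, and establishing it requires both the fact that all relevant integral torsion has order $2$ (so it is detected by $\Sq^1$ after reduction) and the small Steenrod‑module bookkeeping in degree $7$; the first two paragraphs are routine given Lemmas~\ref{lem:c_4 of tensor} and~\ref{lem:p_2 is divisible by 6} and the known structure of $H^*(\BSpin;\Z)$.
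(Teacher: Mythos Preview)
Your argument is correct, and the first two paragraphs are essentially identical to the paper's proof: both compute $f^*(a)=2s+t$ with $s=-p_1(\alpha)\times\sigma_1$ by combining Lemma~\ref{lem:c_4 of tensor}, Lemma~\ref{lem:p_2 is divisible by 6}, and $p_1(\beta)=2\sigma_1$.

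The indivisibility step is where you and the paper diverge. The paper uses the K\"unneth splitting directly: $H^8(\BSO\wedge\BSpin;\Z)$ decomposes as $(H^4(\BSO;\Z)\otimes H^4(\BSpin;\Z))\oplus A$, and since the first summand is $\Z\otimes\Z\cong\Z$ (torsion-free), every torsion class $t'$ lives in $A$. Then $s$ is a generator of the $\Z$ summand, so $s+t'$ projects to a generator of $\Z$ and cannot be even. This is a two-line argument once the splitting is written down. Your route instead reduces mod~$2$, uses that all torsion in $H^*(\BSO\times\BSpin;\Z)$ has order~$2$ (so a torsion class is a Bockstein, hence its mod-$2$ reduction is a $\Sq^1$-boundary), and then checks by a bidegree count that $w_2^2\otimes w_4$ is not in the image of $\Sq^1$. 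This is longer but entirely valid; it has the mild advantage of being closer in spirit to how similar divisibility statements are handled elsewhere in the paper (compare the proof of Lemma~\ref{lem:pontryagin class not q0 boundary}), and it avoids having to argue that the full K\"unneth decomposition in degree~$8$ really does isolate $H^4\otimes H^4$ as a direct summand among all the other $H^p\otimes H^q$ pieces. The paper's approach, by contrast, buys brevity at the cost of that small bookkeeping step being left implicit.
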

\begin{proof}
    Let $\alpha$ and $\beta$ be the classifying bundles on $\BSO$, and $\BSpin$, respectively. Recall that $H^i(\BSO;\Z)\cong 0$ for $1\leq i\leq 3$ and $H^4(\BSO;\Z)\cong\Z$, generated by the first Pontryagin class $p_1(\alpha)$ \cite{Bro82}. Also $H^i(\BSpin;\Z)\cong 0$ for $1\leq i\leq 3$ and $H^4(\BSpin;\Z)\cong\Z$ with generator $\sigma_1$ satisfying $2\sigma_1=p_1(\beta)$ \cite[Theorem~9.1]{Dua19}. The product map $\BSO \wedge \BSpin \to \BO\langle 8\rangle$ has the class $c_4(\gamma_\C)$ mapping to $c_4(\alpha_\C\otimes\beta_\C)$ in cohomology because Chern classes are natural, and this is equal to $-6 c_2(\alpha_\C) c_2(\beta_\C)$ by Lemma~\ref{lem:c_4 of tensor}, as $H^2( \BSO ; \Z) \cong H^2( \BSpin ; \Z) \cong 0$ which forces the first Chern classes to vanish for $\alpha_\C$ and $\beta_\C$. Since $c_2(\alpha_\C)=-p_1(\alpha)$ and $c_2(\beta_\C)=-p_1(\beta)=-2\sigma_1$, we find that $c_4(\gamma_\C)$ maps to $-12p_1(\alpha)\sigma_1$. 
    
    Let $a\in H^8(\BO\langle 8\rangle;\Z)$ be the generator such that $c_4(\gamma_\C)=p_2(\gamma)$ is equal to $6a$, as given by Lemma~\ref{lem:p_2 is divisible by 6}. Then $a$ maps to $-2 p_1(\alpha)\sigma_1+t$ under $H^*(\BO\langle 8\rangle;\Z)\to H^*(\BSO\wedge\BSpin;\Z)$, where $t\in H^8(\BSO\wedge\BSpin;\Z)$ is some element satisfying $6t=0$.
    
    It thus remains to show that $s:=-p_1(\alpha)\sigma_1$ is such that $s+t'$ is not divisible by 2 for any torsion class $t'$. Using the K\"unneth formula for cohomology (see \cite[Proposition VI.12.16]{Dol72}), we get a split short exact sequence
    \[
        \begin{tikzcd}
            \arrow[from=1-1, to=1-2]
            \arrow[from=1-2, to=1-3]
            \arrow[from=1-3, to=1-4]
            \arrow[from=1-4, to=1-5]
            0 & H^4(\BSO;\Z) \otimes H^4(\BSpin;\Z) & H^8(\BSO\wedge\BSpin;\Z) & A & 0
        \end{tikzcd}
    \]
    where $A$ is some $\mathrm{Tor}$ term. So we have a direct sum decomposition
    \[
        H^8(\BSO\wedge\BSpin;\Z) \cong (H^4(\BSO;\Z) \otimes H^4(\BSpin;\Z)) \oplus A.
    \]
    We know that $s$ belongs to the first summand because it is a product of a class of $\BSO$ and a class of $\BSpin$. Also $t'$ must belong to $A$ since it is a torsion class and $H^4(\BSO;\Z) \otimes H^4(\BSpin;\Z) \cong \Z \otimes \Z \cong \Z$ is torsion-free. Since $s$ is a generator of $H^4(\BSO;\Z) \otimes H^4(\BSpin;\Z)$ and $t'$ lives in the other summand, $s+t'$ cannot be a multiple of $2$.
\end{proof}

\subsection{Module structure for suspended elephants}
We are now ready to return to our goal of lifting the multiplication
\begin{equation}\label{eq:mult on ko}
\Diamond:\ko\langle 8k+2\rangle\wedge\ksp\to\ksp\langle 8k+2\rangle
\end{equation}
to $\Sigma^{8k+4}F$. We will first show that such a lift exists, after which we will compute its effect on cohomology.

\begin{prop}\label{prop:mult lifts to F}
    The multiplication map $\Diamond$ of Equation~\ref{eq:mult on ko} lifts to $\Sigma^{8k+4}F$:
    \[\begin{tikzcd}
    & \Sigma^{8k+4}F\arrow[d]\\
    \ko\langle 8k+2\rangle\wedge\ksp\arrow[r,"\Diamond"]\arrow[ur,dashed,"\tilde{\Diamond}"] & \ksp\langle 8k+2\rangle.
    \end{tikzcd}\]
\end{prop}
\begin{proof}
Since $\pi_{8k+2}\KSp\cong\pi_{8k+3}\KSp\cong 0$, we have $\ksp\langle 8k+2\rangle\cong\ksp\langle 8k+4\rangle$. Thus $\Diamond$ induces a map $\ko\langle 8k+2\rangle\wedge\ksp\to\ksp\langle 8k+4\rangle$. On cohomology, this map is determined by the image of $y_{8k+4}$, which generates $H^*\ksp\langle 8k+4\rangle$ (see Corollary~\ref{cor:cohomology of ksp}). The action of $\A_1$ on $y_{8k+4}$ is trivial, since $\Sq^1y_{8k+4}=\Sq^2y_{8k+4}=0$. 

Proposition~\ref{prop:cohomology of ko<k>} and Corollary~\ref{cor:cohomology of ksp} imply that $H^{8k + 4}(\ko\langle 8k+2\rangle\wedge\ksp)$ is generated by $\Sq^2x_{8k+2}\otimes y_0$ and $x_{8k+2}\otimes\Sq^2y_0$, since the cohomology over a field of a smash product is the tensor product of the cohomology of its factors, and the Steenrod algebra acts by the Cartan formula. But the action of $\A_1$ on any non-zero combination of these generators is non-trivial, since
\begin{align*}
    \Sq^1( \Sq^2 x_{8k + 2} \otimes y_0) & = 0, \\
    \Sq^2( \Sq^2 x_{8k + 2} \otimes y_0) & = \Sq^3 \Sq^1 x_{8k + 2} \otimes y_0 + \Sq^2 x_{8k + 2} \otimes \Sq^2 y_0, \\
    \Sq^1( x_{8k + 2} \otimes \Sq^2 y_0) & = \Sq^1 x_{8k + 2} \otimes \Sq^2 y_0 + x_{8k + 2} \otimes \Sq^3 y_0, \\
    \Sq^2( x_{8k + 2} \otimes \Sq^2 y_0) & = \Sq^2 x_{8k + 2} \otimes \Sq^2 y_0 + \Sq^1 x_{8k + 2} \otimes \Sq^3 y_0.
\end{align*}
The action of $\A_1$ on the image of $y_{8k+4}$ must be trivial, so we deduce that $H^*\ksp\langle 8k+4\rangle\to H^*(\ko\langle 8k+2\rangle\wedge\ksp)$ is given by $y_{8k+4}\mapsto 0$.

Due to the fiber sequence given in Equation~\ref{eq:fiber sequence for F}, the map $\ko\langle 8k+2\rangle\wedge\ksp\to\ksp\langle 8k+4\rangle$ lifts to a map $\tilde{\Diamond}:\ko\langle 8k+2\rangle\wedge\ksp\to\Sigma^{8k+4}F$ if the composite
\[\ko\langle 8k + 2\rangle \wedge \ksp \to \ksp\langle 8k + 4\rangle\to \Sigma^{8k + 4} H\Z / 2\Z\]
is nullhomotopic. Since the map $\ksp \langle 8k + 4 \rangle \to \Sigma^{8k + 4} H\Z / 2\Z$ represents $y_{8k + 4}$ and $y_{8k + 4}$ maps to zero in the cohomology of $\ko \langle 8k + 2 \rangle \wedge \ksp$, it follows that this map is nullhomotopic and we get a lift.
\end{proof}

Our next objective is to understand what the map $\tilde{\Diamond}:\ko\langle 8k+2\rangle\wedge\ksp\to\Sigma^{8k+4}F$ does in cohomology. After introducing some notation, we will study this map for $k=0$.

\begin{notn}
    Recall from Lemma~\ref{lem:cohomology of F} that $H^*F\cong\A\otimes_{\A_1}E$ is generated by $\Sq^1$ and $\Sq^2$. We denote these by $e_0:=\Sq^1$ and $e_1:=\Sq^2$.
\end{notn}

\begin{lem}
    Given any lift
    \[\begin{tikzcd}
        & \Sigma^8F\arrow[d]\\
        \ko\langle 2\rangle\wedge\ko\langle 4\rangle\arrow[r,"\Diamond"]\arrow[ur,"\tilde{\Diamond}"] & \ko\langle 8\rangle,
    \end{tikzcd}\]
    the image of $e_0 \in H^8 \Sigma^8 F$ under the induced map $H^8\Sigma^8F\to H^8(\ko\langle 2\rangle\wedge\ko\langle 4\rangle)$ is non-zero.
\end{lem}
\begin{proof}
    Recall that if $\Phi\to E\to B$ is a fibration of spaces, where $\Phi$ is $m$-connected and $B$ is $n$-connected, then there is a Serre exact sequence
    \[0\to H^0(B;\Z)\to H^0(E;\Z)\to H^0(\Phi;\Z)\to\cdots\to H^{m+n+1}(B;\Z).\]
    In particular, if $B$ is $n$-connected, then $\Omega B$ is $(n-1)$-connected, so the path-loop fibration gives us maps $H^i(\Omega B;\Z)\to H^{i+1}(B;\Z)$ that are isomorphisms for $i<2n-1$ and an injection for $i=2n-1$.

    Now if $X$ is a CW spectrum, we can write $X$ as the union of the subspectra generated by each level $X_n$. By \cite[Part III, Proposition 8.1]{Ada74}, we can thus compute $H^*(X;\Z)$ via the Milnor exact sequence
    \[0\to\textstyle\lim^1 H^*(X_n;\Z)\to H^*(X;\Z)\to\lim H^*(X_n;\Z)\to 0.\]
    So if $X$ is a connective $\Omega$-spectrum with $n$-connected $X_0$, then $H^i(X_0;\Z)\to H^i(X;\Z)$ is an isomorphism for $i\leq 2n$ by the Serre exact sequence above, as the connectivity of each loop space will always be at least $n$, so the maps making up the limit are all isomorphisms in this range by the Serre exact sequence. The zeroth spaces of $\ko\langle 8\rangle$, $\ko\langle 4\rangle$, and $\ko\langle 2\rangle$ are $\BO\langle 8\rangle$, $\BSpin$ and $\BSO$, respectively. Since $\BO\langle 8\rangle$ is 7-connected (by construction) and $\BSpin$ is 3-connected (as $\Spin$ is 2-connected), we have isomorphisms
    \begin{align*}
        H^i(\ko\langle 8\rangle;\Z)&\cong H^i(\BO\langle 8\rangle;\Z) & (\text{for }i\leq 14),\\
        H^j(\ko\langle 4\rangle;\Z)&\cong H^j(\BSpin;\Z) &  (\text{for }j\leq 6).
    \end{align*}
    For $\BSO$ and $\ko\langle 2 \rangle$ we need more care because $\BSO$ is only 1-connected. However, since the integral cohomology of $\BSO$ is trivial in degrees less than four, if $\Phi$ is a delooping of $\BSO$, we have a fiber sequence $\Phi \to * \to \BSO$. The Serre spectral sequence of this fiber sequence implies that the integral cohomology of $\Phi$ is trivial for degrees less than five and the transgression $d_5 : E_5^{0, 4} \to E_5^{5, 0}$ is an isomorphism. Similarly, for any higher delooping of $\BSO$, the transgression must be an isomorphism on these bottom cohomology groups for the same reason, and therefore
    \begin{align*}
        H^j(\ko\langle 2\rangle;\Z)&\cong H^j(\BSO;\Z) &  (\text{for }j\leq 4).
    \end{align*}
    It follows that the generators
    \begin{align*}
        a&\in H^8(\BO\langle 8\rangle;\Z),\\
        \sigma_1&\in H^4(\BSpin;\Z),\\
        p_1(\alpha)&\in H^4(\BSO;\Z)
    \end{align*}
    determine generators in $H^8(\ko\langle 8\rangle;\Z)$, $H^4(\ko\langle 4\rangle;\Z)$, and $H^4(\ko\langle 2\rangle;\Z)$, respectively.

    Using one of Adams's models of spectra and smash products \cite{Ada74}, the zeroth space of $\ko\langle 2\rangle\wedge\ko\langle 4\rangle$ is $\BSO\wedge\BSpin$, which is at least $(1+3+1)$-connected. Thus $H^i(\ko\langle 2\rangle\wedge\ko\langle 4\rangle;\Z)\cong H^i(\BSO\wedge\BSpin;\Z)$ for $i\leq 10$. It now follows from Lemma~\ref{lem:c_4 on 0 spaces} that any generator of $H^*(\ko\langle 8\rangle;\Z)$ is sent to an element of the form $2s+t\in H^8(\ko\langle 2\rangle\wedge\ko\langle 4\rangle;\Z)$, where $t$ is a torsion class and $s$ has the property that $s+t'$ is not a multiple of two for any torsion class $t'$.

    Finally, either generator of $\pi_8\Sigma^8 F\cong\Z$ is sent to twice a generator of $\pi_8\ko\langle 8\rangle\cong\Z$ under the map induced by $\Sigma^8 F\to\ko\langle 8\rangle$ since we defined $F \to \ko$ to be the inclusion $2\Z \to \Z$. So the Hurewicz theorem implies that in homology, either generator of $H_8(\Sigma^8 F;\Z)\cong\Z$ maps to twice a generator of $H_8(\ko\langle 8\rangle;\Z)\cong\Z$. By the universal coefficient theorem, either generator of $H^8(\ko\langle 8\rangle;\Z)\cong\Z$ goes to twice a generator of $H^8(\Sigma^8 F;\Z)$. By assumption, the map $\ko\langle 2\rangle\wedge\ko\langle 4\rangle\to\ko\langle 8\rangle$ factors as
    \[
        \begin{tikzcd}
            \arrow[from=2-1, to=1-2]
            \arrow[from=1-2, to=2-2]
            \arrow[from=2-1, to=2-2]
            & \Sigma^8 F \\
            \ko\langle 2\rangle \wedge \ko\langle 4\rangle & \ko\langle 8\rangle,
        \end{tikzcd}
    \]
    so the map $H^8(\ko\langle 8\rangle;\Z)\to H^8(\ko\langle 2\rangle\wedge\ko\langle 4\rangle;\Z)$ must factor through $H^8(\Sigma^8 F;\Z)\to H^8(\ko\langle 2\rangle\wedge\ko\langle 4\rangle;\Z)$. So if $a\in H^8(\ko\langle 8\rangle;\Z)$ is a generator, then we have
    \[\begin{tikzcd}[row sep = tiny]
        H^8(\ko\langle 8\rangle;\Z)\arrow[r] & H^8(\Sigma^8 F;\Z)\arrow[r] & H^8(\ko\langle 2\rangle\wedge\ko\langle 4\rangle;\Z)\\
        a\arrow[r,maps to] & 2b\arrow[r,maps to] & 2s+t,
    \end{tikzcd}\]
    where $b\in H^8(\Sigma^8F;\Z)$ is a generator. Thus $b\mapsto s+t'$, where $s$ is not divisible by 2 and $t'$ is a torsion class. The mod 2 reduction of $b$ is $e_0\in H^8\Sigma^8F$, and the mod 2 reduction of $s+t'$ is a non-zero element of $H^8(\ko\langle 2\rangle\wedge\ko\langle 4\rangle)$ since it is not a multiple of two.
\end{proof}

Now that we know that the image of $e_0$ is non-zero, we can explicitly determine what value this image takes.

\begin{lem}\label{lem:image of e_0}
    Any lift $\ko\langle 2\rangle \wedge \ko\langle 4\rangle \to \Sigma^8 F$ induces $e_0 \mapsto \Sq^2 x_2 \otimes x_4$ in cohomology.
\end{lem}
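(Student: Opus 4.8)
The plan is to determine the image of $e_0$ by combining the preceding lemma, which already guarantees this image is nonzero, with the $\A$-linearity of the induced map on mod $2$ cohomology. First I would identify the target group: by the K\"unneth theorem over $\Z/2\Z$ together with Proposition~\ref{prop:cohomology of ko<k>}, the group $H^8(\ko\langle 2\rangle\wedge\ko\langle 4\rangle)$ is two-dimensional, spanned by $\Sq^2 x_2\otimes x_4$ and $x_2\otimes\Sq^2 x_4$ (the remaining bidegrees contribute nothing because $H^q\ko\langle 4\rangle = 0$ for $q<4$ and for $q=5$). Hence the image of $e_0\in H^8\Sigma^8 F$ has the form $c_1\,\Sq^2 x_2\otimes x_4 + c_2\, x_2\otimes\Sq^2 x_4$ for some $c_1,c_2\in\Z/2\Z$, and by the preceding lemma $(c_1,c_2)\neq(0,0)$.

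The crux is then the relation $\Sq^1 e_0 = 0$ in $H^*F$: under the identification $H^*F\cong\A\otimes_{\A_1}E$ of Lemma~\ref{lem:cohomology of F} the class $e_0$ is $1\otimes\Sq^1$, and $\Sq^1\Sq^1 = 0$ in $\A_1$. Since the induced map is $\A$-linear, applying $\Sq^1$ to the image of $e_0$ must vanish. Using the Cartan formula I would compute $\Sq^1(\Sq^2 x_2\otimes x_4) = \Sq^3 x_2\otimes x_4 + \Sq^2 x_2\otimes\Sq^1 x_4 = 0$, since $\Sq^3 x_2 = 0$ in $H^*\ko\langle 2\rangle = \A/\A\Sq^3$ and $\Sq^1 x_4 = 0$ in $H^*\ko\langle 4\rangle = \A/(\A\Sq^1+\A\Sq^5)$, whereas $\Sq^1(x_2\otimes\Sq^2 x_4) = \Sq^1 x_2\otimes\Sq^2 x_4 + x_2\otimes\Sq^3 x_4$ is nonzero (its two summands lie in distinct K\"unneth summands of $H^9(\ko\langle 2\rangle\wedge\ko\langle 4\rangle)$, and $\Sq^3 x_4\neq 0$ because $\Sq^3\notin\A\Sq^1+\A\Sq^5$). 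Thus $0 = c_2(\Sq^1 x_2\otimes\Sq^2 x_4 + x_2\otimes\Sq^3 x_4)$ forces $c_2 = 0$, and then $(c_1,c_2)\neq(0,0)$ gives $c_1 = 1$, so $e_0\mapsto\Sq^2 x_2\otimes x_4$. Since neither the preceding lemma nor the relation $\Sq^1 e_0 = 0$ depends on the choice of lift, this argument applies uniformly to every lift.

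I do not expect a genuine obstacle here. The only real content is recognizing that $\Sq^1 e_0 = 0$ is precisely the relation that distinguishes the two basis vectors of $H^8(\ko\langle 2\rangle\wedge\ko\langle 4\rangle)$; once that is in hand, the rest is routine K\"unneth bookkeeping for the degree-$8$ and degree-$9$ cohomology of the smash product and a short application of the Cartan formula.
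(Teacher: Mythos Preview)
Your proposal is correct and follows essentially the same approach as the paper: both identify the two-dimensional target $H^8(\ko\langle 2\rangle\wedge\ko\langle 4\rangle)$, invoke the preceding lemma for nonvanishing, and then use the relation $\Sq^1 e_0 = 0$ together with the Cartan formula to kill the coefficient of $x_2\otimes\Sq^2 x_4$. The paper's proof is slightly terser in that it does not spell out why $\Sq^1 e_0 = 0$, but the argument is otherwise identical.
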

\begin{proof}
    Since $H^* \ko\langle 2\rangle \cong \A / \A \Sq^3$ and $H^* \ko\langle 4\rangle \cong \A /(\A \Sq^1 + \A \Sq^5)$, we can read off the possible images that $e_0$ might have. The $\A$-module $H^* \ko\langle 2\rangle$ is generated by $x_2$ in degree two, $\Sq^1 x_2$ in degree three, and $\Sq^2 x_2$ in degree four, and $H^* \ko\langle 4\rangle$ has $x_4$ in degree four, nothing in degree five, and $\Sq^2 x_4$ in degree six.
    
    So we know that $e_0 \mapsto A \Sq^2 x_2 \otimes x_4 + B x_2 \otimes \Sq^2 x_4$ for some $A,B\in\Z/2\Z$ with $(A,B)\neq(0,0)$. However, note that $\Sq^1 e_0 = 0$ and
    \begin{align*}
        &\Sq^1(A \Sq^2 x_2 \otimes x_4 + B x_2 \otimes \Sq^2 x_4)\\
        =\ & A \Sq^3 x_2 \otimes x_4 + A \Sq^2 x_2 \otimes \Sq^1 x_4 + B \Sq^1 x_2 \otimes \Sq^2 x_4 + B x_2 \otimes \Sq^3 x_4 \\
        =\ & B \Sq^1 x_2 \otimes \Sq^2 x_4 + B x_2 \otimes \Sq^3 x_4,
    \end{align*}
    This is only zero if $B$ is zero, so $H^*\Sigma^8 F\to H^*(\ko\langle 2\rangle\wedge\ko\langle 4\rangle)$ is a homomorphism of $\A$-modules if and only if $A = 1$ and $B = 0$. Thus $e_0 \mapsto \Sq^2 x_2 \otimes x_4$.
\end{proof}

So far, we know that $\ko\langle 2\rangle\wedge\ko\langle 4\rangle\to\ko\langle 8\rangle$ lifts to $\ko\langle 2\rangle\wedge\ko\langle 4\rangle\to\Sigma^8 F$, and we know the image of $e_0\in H^8\Sigma^8F$ under any such lift. Next, we show that there is a lift such that $e_1\mapsto x_2\otimes\Sq^3x_4$. This is a key property that the maps in the splitting must have in order to get an isomorphism in cohomology with $\Z / 2\Z$ coefficients later.

\begin{lem}\label{lem:lift at k=0}
    There exists a lift $\ko\langle 2 \rangle \wedge \ko\langle 4\rangle \to \Sigma^8 F$ such that $e_1 \mapsto x_2 \otimes \Sq^3 x_4$.
\end{lem}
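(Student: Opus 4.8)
The plan is to exploit that the lifts of the multiplication map $g\colon\ko\langle 2\rangle\wedge\ko\langle 4\rangle\to\ko\langle 8\rangle$ through the projection $\Sigma^8F\to\ko\langle 8\rangle$ form a torsor, which reduces the lemma to an explicit computation in mod $2$ cohomology in degrees $\le 11$. By Definition~\ref{def:elephant spectrum} and the equivalence $\Sigma^8\ko\simeq\ko\langle 8\rangle$, the fiber of $\Sigma^8F\to\ko\langle 8\rangle$ is $\Sigma^7H\Z/2\Z$. Fixing a lift $\tilde g_0$ (one exists by Proposition~\ref{prop:mult lifts to F}), the set of homotopy classes of lifts is therefore a torsor under $[\ko\langle 2\rangle\wedge\ko\langle 4\rangle,\Sigma^7H\Z/2\Z]=H^7(\ko\langle 2\rangle\wedge\ko\langle 4\rangle)$, where $\eta$ acts by $\tilde g\mapsto\tilde g+i\circ\eta$ with $i\colon\Sigma^7H\Z/2\Z\to\Sigma^8F$ the fiber inclusion.

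I would first determine the effect of this action on cohomology. Using the description of $H^*F$ from the proof of Lemma~\ref{lem:cohomology of F} --- under which $H^*F$ is, up to a shift, the left ideal $\A\Sq^1+\A\Sq^2\subset\A$ with $e_0\leftrightarrow\Sq^1$ and $e_1\leftrightarrow\Sq^2$ --- one checks that $i^*\colon H^*\Sigma^8F\to H^*\Sigma^7H\Z/2\Z$ is injective with $e_0\mapsto\Sq^1\iota_7$ and $e_1\mapsto\Sq^2\iota_7$, where $\iota_7$ generates $H^7\Sigma^7H\Z/2\Z$. Hence changing a lift by $\eta$ changes $\tilde g^*(e_1)$ by $\Sq^2\eta$ (and $\tilde g^*(e_0)$ by $\Sq^1\eta$). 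Next, by the K\"unneth theorem together with the presentations of $H^*\ko\langle 2\rangle$ and $H^*\ko\langle 4\rangle$ from Proposition~\ref{prop:cohomology of ko<k>}, the group $H^7(\ko\langle 2\rangle\wedge\ko\langle 4\rangle)$ is one-dimensional, spanned by $\eta_0:=\Sq^1x_2\otimes x_4$; moreover $\Sq^1\eta_0=0$ (as it must be, since Lemma~\ref{lem:image of e_0} says $\tilde g^*(e_0)$ is independent of the lift) and, by the Cartan formula, $\Sq^2\eta_0=\Sq^2\Sq^1x_2\otimes x_4+\Sq^1x_2\otimes\Sq^2x_4$. So there are exactly two lifts: $\tilde g_0$ and $\tilde g_0+i\circ\eta_0$.

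It remains to constrain $\omega:=\tilde g_0^*(e_1)\in H^9(\ko\langle 2\rangle\wedge\ko\langle 4\rangle)$. By K\"unneth this group is three-dimensional, spanned by $x_2\otimes\Sq^3x_4$, $\Sq^1x_2\otimes\Sq^2x_4$, and $\Sq^2\Sq^1x_2\otimes x_4$; write $\omega=a\,x_2\otimes\Sq^3x_4+b\,\Sq^1x_2\otimes\Sq^2x_4+c\,\Sq^2\Sq^1x_2\otimes x_4$. In $H^*F$ there is the relation $\Sq^2e_1=\Sq^3e_0$ (immediate from $\Sq^2\Sq^2=\Sq^3\Sq^1$ and the ideal description of $H^*F$ above), so since $\tilde g_0^*$ is a map of $\A$-modules and $\tilde g_0^*(e_0)=\Sq^2x_2\otimes x_4$ by Lemma~\ref{lem:image of e_0}, a Cartan/Adem computation (using $\Sq^1x_4=0$, $\Sq^3x_2=0$, $\Sq^3\Sq^2=0$) gives
\[
\Sq^2\omega=\tilde g_0^*(\Sq^2e_1)=\tilde g_0^*(\Sq^3e_0)=\Sq^3(\Sq^2x_2\otimes x_4)=\Sq^2x_2\otimes\Sq^3x_4.
\]
On the other hand, applying $\Sq^2$ to the three basis vectors of $H^9$ (using $\Sq^2\Sq^2x_4=\Sq^3\Sq^1x_4=0$, $\Sq^2\Sq^3x_4=0$, and $\Sq^2\Sq^2\Sq^1x_2=0$) yields $\Sq^2\omega=a\,(\Sq^2x_2\otimes\Sq^3x_4)+(b+c)\,(\Sq^2\Sq^1x_2\otimes\Sq^2x_4)$, and these two classes lie in distinct K\"unneth summands of $H^{11}$, so they are linearly independent. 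Comparing the two expressions forces $a=1$ and $b=c$. Hence $\omega$ is either $x_2\otimes\Sq^3x_4$ (so $\tilde g_0$ already has the required property) or $x_2\otimes\Sq^3x_4+\Sq^1x_2\otimes\Sq^2x_4+\Sq^2\Sq^1x_2\otimes x_4$ (in which case $\tilde g_0+i\circ\eta_0$ does, since adding $\Sq^2\eta_0$ to $\omega$ cancels the last two terms).

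The routine part is the bookkeeping: identifying $i^*$ on $e_0$ and $e_1$, computing $H^7$, $H^9$, and $H^{11}$ of the smash product via K\"unneth and the $\A$-module structures of $H^*\ko\langle 2\rangle$ and $H^*\ko\langle 4\rangle$, and the Cartan/Adem manipulations. The step I expect to require the most care is recognizing and using the relation $\Sq^2e_1=\Sq^3e_0$ in $H^*F$: it is precisely this relation, via the computation $\Sq^3(\Sq^2x_2\otimes x_4)=\Sq^2x_2\otimes\Sq^3x_4$ (which uses the vanishing of $\Sq^1x_4$ and $\Sq^3x_2$), that forces $a=1$ --- i.e., that $x_2\otimes\Sq^3x_4$ occurs in $\tilde g_0^*(e_1)$ for every lift --- so that the single remaining degree of freedom $\eta_0$ suffices to arrange the stated value of $e_1$.
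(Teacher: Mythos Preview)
Your proof is correct and follows essentially the same approach as the paper: both use the relation $\Sq^2e_1=\Sq^3e_0$ together with Lemma~\ref{lem:image of e_0} to reduce the image of $e_1$ to two possibilities, and then correct via the class $\Sq^1x_2\otimes x_4\in H^7(\ko\langle 2\rangle\wedge\ko\langle 4\rangle)$. The only cosmetic difference is that you phrase the correction as the torsor action of $[\ko\langle 2\rangle\wedge\ko\langle 4\rangle,\Sigma^7H\Z/2\Z]$ on the set of lifts, whereas the paper builds the correcting map $g$ explicitly and checks that $f+g$ is still a lift.
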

\begin{proof}
    We use obstruction theory to obtain a lift with the desired properties. Since $H^* \ko\langle 2\rangle$ has $\Sq^2 \Sq^1 x_2$ in degree five and $H^* \ko\langle 4\rangle$ has $\Sq^3 x_4$ in degree seven, we have
    \[
        e_1 \mapsto C \Sq^2 \Sq^1 x_2 \otimes x_4 + D \Sq^1 x_2 \otimes \Sq^2 x_4 + E x_2 \otimes \Sq^3 x_4
    \]
    for $C,D,E\in\Z/2\Z$. We can eliminate some possibilities using the relation $\Sq^2 e_1 = \Sq^3 e_0$. On the right hand side, Lemma~\ref{lem:image of e_0} implies that
    \begin{align*}
        & \Sq^3( \Sq^2 x_2 \otimes x_4) \\
        =\ & \Sq^3 \Sq^2 x_2 \otimes x_4 + \Sq^2 \Sq^2 x_2 \otimes \Sq^1 x_4 + \Sq^1 \Sq^2 x_2 \otimes \Sq^2 x_4 + \Sq^2 x_2 \otimes \Sq^3 x_4 \\
        =\ & \Sq^3 x_2 \otimes \Sq^2 x_4 + \Sq^2 x_2 \otimes \Sq^3 x_4 \\
        =\ & \Sq^2 x_2 \otimes \Sq^3 x_4.
    \end{align*}
    For the left hand side, we compute
    \begin{align*}
        & \Sq^2(C \Sq^2 \Sq^1 x_2 \otimes x_4 + D \Sq^1 x_2 \otimes \Sq^2 x_4 + E x_2 \otimes \Sq^3 x_4) \\
        =\ & C \Sq^2 \Sq^2 \Sq^1 x_2 \otimes x_4 + C \Sq^1 \Sq^2 \Sq^1 x_2 \otimes \Sq^1 x_4 + C \Sq^2 \Sq^1 x_2 \otimes \Sq^2 x_4 \\
        & + D \Sq^2 \Sq^1 x_2 \otimes \Sq^2 x_4 + D \Sq^1 \Sq^1 x_2 \otimes \Sq^1 \Sq^2 x_4 + D \Sq^1 x_2 \otimes \Sq^2 \Sq^2 x_4 \\
        & + E \Sq^2 x_2 \otimes \Sq^3 x_4 + E \Sq^1 x_2 \otimes \Sq^1 \Sq^3 x_4 + E x_2 \otimes \Sq^2 \Sq^3 x_4 \\
        =\ & C \Sq^2 \Sq^1 x_2 \otimes \Sq^2 x_4 + D \Sq^2 \Sq^1 x_2 \otimes \Sq^2 x_4 + E \Sq^2 x_2 \otimes \Sq^3 x_4.
    \end{align*}
    In order for this to equal $\Sq^2x_2\otimes\Sq^3x_4$, we must have $E = 1$ and either $C = D = 0$ or $C = D = 1$. We are done if $C = D = 0$, so we may assume $C = D = 1$. Let $f : \ko\langle 2\rangle \wedge \ko\langle 4\rangle \to \Sigma^8 F$ be the lift inducing
    \[e_1\mapsto\Sq^2\Sq^1x_2\otimes x_4+\Sq^1x_2\otimes\Sq^2x_4+x_2\otimes\Sq^3x_4.\]
    Rotating the fiber sequence given in Equation~\ref{eq:fiber sequence for F}, we see that there is a fiber sequence
    \begin{equation}\label{eq:shifted fiber sequence}
        \begin{tikzcd}
            \arrow[from=1-1, to=1-2]
            \arrow[from=1-2, to=1-3]
            \Sigma^7 H\Z / 2\Z & \Sigma^8 F & \ko\langle 8\rangle.
        \end{tikzcd}
    \end{equation}
    The image of $H^* \ko\langle 8\rangle \to H^* \Sigma^8 F$ is zero by Proposition~\ref{prop:cohomology of ko<k>} and Lemma~\ref{lem:cohomology of F}, so the long exact sequence associated to Equation~\ref{eq:shifted fiber sequence} implies that the map $H^* \Sigma^8 F \to H^* \Sigma^7 H\Z / 2\Z$ is injective. This forces the generators $e_0$ and $e_1$ to map to $\Sq^1$ and $\Sq^2$, respectively, and hence completely determines the map $H^*\Sigma^8F\to H^*\Sigma^7 H\Z/2\Z$.
    
    Now consider the map $\ko\langle 2\rangle \wedge \ko\langle 4\rangle \to \Sigma^7 H\Z / 2\Z$ classifying $\Sq^1 x_2 \otimes x_4 \in H^7(\ko\langle 2\rangle \wedge \ko\langle 4\rangle)$. Composing with the map $\Sigma^7 H\Z / 2\Z \to \Sigma^8 F$ from Equation~\ref{eq:shifted fiber sequence}, we get a map $g : \ko\langle 2\rangle \wedge \ko\langle 4\rangle \to \Sigma^8 F$ such that
    \begin{align*}
        g^* e_0 & = \Sq^1(\Sq^1 x_2 \otimes x_4) \\
        & = \Sq^1 \Sq^1 x_2 \otimes x_4 + \Sq^1 x_2 \otimes \Sq^1 x_4 \\
        & = 0, \\
        g^* e_1 & = \Sq^2(\Sq^1 x_2 \otimes x_4) \\
        & = \Sq^2 \Sq^1 x_2 \otimes x_4 + \Sq^1 \Sq^1 x_2 \otimes \Sq^1 x_4 + \Sq^1 x_2 \otimes \Sq^2 x_4 \\
        & = \Sq^2 \Sq^1 x_2 \otimes x_4 + \Sq^1 x_2 \otimes \Sq^2 x_4.
    \end{align*}
    Since the composite
    \begin{equation}\label{eq:composite with g}
    \ko\langle 2\rangle\wedge\ko\langle 4\rangle\xrightarrow{g}\Sigma^8F\to\ko\langle 8\rangle
    \end{equation}
    factors through the fiber sequence given in Equation~\ref{eq:shifted fiber sequence} (by the definition of $g$),
    a nullhomotopy of the fiber sequence yields a nullhomotopy of Equation~\ref{eq:composite with g}. Since $f:\ko\langle 2\rangle\wedge\ko\langle 4\rangle\to\Sigma^8F$ is a lift of the product map $\ko\langle 2\rangle\wedge\ko\langle 4\rangle\to\ko\langle 8\rangle$, so is the sum $f+g:\ko\langle 2\rangle\wedge\ko\langle 4\rangle\to\Sigma^8F$. In cohomology, we compute
    \begin{align*}
        (f + g)^* e_0 & = f^* e_0 + g^* e_0 \\
        & = \Sq^2 x_2 \otimes x_4 + 0 \\
        & = \Sq^2 x_2 \otimes x_4, \\
        (f + g)^* e_1 & = f^* e_1 + g^* e_1 \\
        & = (\Sq^2 \Sq^1 x_2 \otimes x_4 + \Sq^1 x_2 \otimes \Sq^2 x_4 + x_2 \otimes \Sq^3 x_4) \\
        & \quad + (\Sq^2 \Sq^1 x_2 \otimes x_4 + \Sq^1 x_2 \otimes \Sq^2 x_4) \\
        & = x_2 \otimes \Sq^3 x_4.
    \end{align*}
    Thus $f+g$ is the desired lift of $\ko\langle 2\rangle\wedge\ko\langle 4\rangle\to\ko\langle 8\rangle$.
\end{proof}

We now suspend this lift at $k=0$ to obtain the desired lift for all $k$.

\begin{lem}\label{lem:lift to F}
    There is a lift $\ko\langle 8k + 2\rangle \wedge \ksp \to \Sigma^{8k+4}F$ of the multiplication map $\ko\langle 8k + 2\rangle \wedge \ksp \to \ksp\langle 8k + 2\rangle$ such that $e_0 \mapsto \Sq^2 x_{8k + 2} \otimes y_0$ and $e_1 \mapsto x_{8k + 2} \otimes \Sq^3 y_0$ in cohomology.
\end{lem}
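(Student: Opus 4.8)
The plan is the one announced just above the statement: reduce to the case $k=0$ and then suspend. The only wrinkle is that the ``$k=0$ case'' of the present lemma concerns a lift $\ko\langle 2\rangle\wedge\ksp\to\Sigma^4 F$, whereas Lemma~\ref{lem:lift at k=0} produces a lift out of $\ko\langle 2\rangle\wedge\ko\langle 4\rangle$, so the first task is to pass between these two using Bott periodicity.

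\emph{The $k=0$ case.} I would first record that $\ko\langle 4\rangle\simeq\Sigma^4\ksp$ as $\ko$-modules: this follows by passing to connective covers in the $\KO$-module equivalences $\Sigma^4\KO\simeq\KSp$ (Proposition~\ref{prop:ko module}) and $\Sigma^8\KO\simeq\KO$. The same reasoning gives $\ko\langle 8\rangle\simeq\Sigma^4\ksp\langle 4\rangle\simeq\Sigma^4\ksp\langle 2\rangle$ (using $\pi_2\KSp=\pi_3\KSp=0$) and $\Sigma^8 F\simeq\Sigma^4(\Sigma^4 F)$, and under $\ko\langle 4\rangle\simeq\Sigma^4\ksp$ the bottom class $x_4$ is matched with $\Sigma^4 y_0$, hence $\Sq^3 x_4$ with $\Sigma^4\Sq^3 y_0$. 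The point to check is that, under these identifications, the multiplication $\ko\langle 2\rangle\wedge\ko\langle 4\rangle\to\ko\langle 8\rangle$ becomes $\Sigma^4$ of the truncated module map $\ko\langle 2\rangle\wedge\ksp\to\ksp\langle 2\rangle$ of Diagram~\ref{eq:truncated module structure}; this is forced by the uniqueness clause of that diagram, since both maps fit into the defining square over $\KO\wedge\KSp\to\KSp$ once one knows the Bott equivalences involved are module maps. Granting this, desuspending Lemma~\ref{lem:lift at k=0} by $4$ produces a lift $\ell_0\colon\ko\langle 2\rangle\wedge\ksp\to\Sigma^4 F$ of the multiplication $\ko\langle 2\rangle\wedge\ksp\to\ksp\langle 2\rangle$ with $\ell_0^* e_0=\Sq^2 x_2\otimes y_0$ and $\ell_0^* e_1=x_2\otimes\Sq^3 y_0$ --- this is exactly the $k=0$ instance of the lemma.

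\emph{Suspending.} I would then form $\Sigma^{8k}\ell_0$. Bott periodicity identifies $\Sigma^{8k}\ko\langle 2\rangle\simeq\ko\langle 8k+2\rangle$ and $\Sigma^{8k}\ksp\langle 2\rangle\simeq\ksp\langle 8k+2\rangle$ (and $\Sigma^{8k}\Sigma^4 F=\Sigma^{8k+4}F$ trivially), so $\Sigma^{8k}\ell_0$ is a map $\ko\langle 8k+2\rangle\wedge\ksp\to\Sigma^{8k+4}F$. Arguing as in the last paragraph of the proof of Lemma~\ref{lem:8k multiplication}, the suspension of the truncated module map $\ko\langle 2\rangle\wedge\ksp\to\ksp\langle 2\rangle$ is again the truncated module map $\ko\langle 8k+2\rangle\wedge\ksp\to\ksp\langle 8k+2\rangle$ --- both fill the defining square over $\KO\wedge\KSp\to\KSp$, now using that $\Sigma^{8k}\KO\simeq\KO$ and $\Sigma^{8k}\KSp\simeq\KSp$ are module equivalences --- so $\Sigma^{8k}\ell_0$ is a lift of the asserted multiplication. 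Since suspension by $8k$ shifts cohomological degree by $8k$, it sends $x_2\mapsto x_{8k+2}$ while fixing the degree-zero class $y_0$ and the generators $e_0,e_1$, so $\Sigma^{8k}\ell_0$ realizes $e_0\mapsto\Sq^2 x_{8k+2}\otimes y_0$ and $e_1\mapsto x_{8k+2}\otimes\Sq^3 y_0$, as desired.

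The step I expect to require the most care is the bookkeeping hidden in the phrases ``fit into the defining square'' and ``module maps'': one must verify that all the Bott-periodicity equivalences in play are compatible with the truncated module structures and with the bottom cohomology classes, so that a lift of a multiplication transports to a lift of a multiplication with the claimed behaviour on $e_0$ and $e_1$. Each such verification reduces, via the uniqueness in Diagram~\ref{eq:truncated module structure}, to Proposition~\ref{prop:ko module}, so no genuinely new calculation is needed.
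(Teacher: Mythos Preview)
Your proposal is correct and follows essentially the same approach as the paper's own proof: both arguments use the Bott equivalence $\ko\langle 4\rangle\simeq\Sigma^4\ksp$ (as $\KO$-modules, via Proposition~\ref{prop:ko module}) together with the uniqueness clause in Diagram~\ref{eq:truncated module structure} to identify the multiplication $\ko\langle 2\rangle\wedge\ko\langle 4\rangle\to\ko\langle 8\rangle$ with a fourfold suspension of $\ko\langle 2\rangle\wedge\ksp\to\ksp\langle 2\rangle$, then desuspend the lift of Lemma~\ref{lem:lift at k=0} and finally suspend by $8k$. The paper presents these steps in a slightly different order (suspend the $\ksp$ multiplication up to match Lemma~\ref{lem:lift at k=0}, then desuspend the resulting lift), but the content is identical and your identification of the key bookkeeping point---that the Bott equivalences are module maps, so that lifts of multiplications transport to lifts of multiplications---is exactly what the paper relies on.
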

\begin{proof}
    Consider the product map $\ko\langle 2\rangle \wedge \ksp \to \ksp\langle 4\rangle$, which is the unique top arrow making the square
    \[
        \begin{tikzcd}
            \arrow[from=1-1, to=1-2]
            \arrow[from=1-1, to=2-1]
            \arrow[from=1-2, to=2-2]
            \arrow[from=2-1, to=2-2]
            \ko \langle 2 \rangle \wedge \ksp & \ksp \langle 4 \rangle \\
            \KO \wedge \KSp & \KSp
        \end{tikzcd}
    \]
    commute (using Diagram~\ref{eq:truncated module structure} and $\ksp\langle 2\rangle\cong\ksp\langle 4\rangle$). Suspending four times, we get the commutative diagram
    \[
        \begin{tikzcd}
            \arrow[from=1-1, to=1-2]
            \arrow[from=1-1, to=2-1]
            \arrow[from=1-2, to=2-2]
            \arrow[from=2-1, to=2-2]
            \ko\langle 2\rangle \wedge \ko\langle 4\rangle & \ko\langle 8\rangle \\
            \KO \wedge \KO & \KO.
        \end{tikzcd}
    \]
    Using the isomorphism $\KO \cong \Sigma^4 \KSp$ as $\KO$-module spectra, the bottom arrow is the $\KO$ multiplication map. Thus the top arrow is the product map $\ko \langle 2 \rangle \wedge \ko \langle 4 \rangle \to \ko \langle 8 \rangle$ appearing in Lemma~\ref{lem:lift at k=0}. Now let 
    \begin{equation}\label{eq:lift in ko}
    \begin{tikzcd}
    & \Sigma^8F\arrow[d]\\
    \ko\langle 2\rangle\wedge\ko\langle 4\rangle\arrow[r]\arrow[ur] & \ko\langle 8\rangle
    \end{tikzcd}
    \end{equation}
    be a lift such that $e_0\mapsto\Sq^2x_2\otimes x_4$ and $e_1\mapsto x_2\otimes\Sq^3x_4$ in cohomology. Then the fourfold desuspension
    \begin{equation}\label{eq:lift in ksp}
        \begin{tikzcd}
            \arrow[from=2-1, to=1-2]
            \arrow[from=1-2, to=2-2]
            \arrow[from=2-1, to=2-2]
            & \Sigma^4 F \\
            \ko \langle 2 \rangle \wedge \ksp & \ksp \langle 4 \rangle
        \end{tikzcd}
    \end{equation}
     of Diagram~\ref{eq:lift in ko} satisfies $e_0\mapsto\Sq^2x_2\otimes y_0$ and $e_1\mapsto x_2\otimes\Sq^3y_0$ in cohomology, since $y_0$ is the fourfold desuspension of $x_4$ (see Proposition~\ref{prop:cohomology of ko<k>} and Corollary~\ref{cor:cohomology of ksp}). Now we suspend Diagram~\ref{eq:lift in ksp} another $8k$ times to get the diagram
    \[
        \begin{tikzcd}
            \arrow[from=2-1, to=1-2]
            \arrow[from=1-2, to=2-2]
            \arrow[from=2-1, to=2-2]
            \arrow[from=2-1, to=3-1]
            \arrow[from=2-2, to=3-2]
            \arrow[from=3-1, to=3-2]
            & \Sigma^{8k + 4} F \\
            \ko \langle 8k + 2 \rangle \wedge \ksp & \ksp \langle 8k + 4 \rangle \\
            \Sigma^{8k} \KO \wedge \KSp & \Sigma^{8k} \KSp.
        \end{tikzcd}
    \]
    Indeed, the bottom arrow is still the product map because $\Sigma^8 \KSp \cong \KSp$ as $\KO$-modules. So the top horizontal arrow is still the product. The map in cohomology induced by $\ko \langle 8k + 2 \rangle \wedge \ksp \to \Sigma^{8k + 4} F$ is given by $e_0 \mapsto \Sq^2 x_{8k + 2} \otimes y_0$ and $e_1 \mapsto x_{8k + 2} \otimes \Sq^3 y_0$.
\end{proof}

\subsection{Defining the $\KSp$-Pontryagin and elephant classes}
Our next goal is to give maps $\MSpinh\to\ksp\langle 8k\rangle$ and $\MSpinh\to\Sigma^{8k+4}F$ that will constitute some of the summands in the 2-local splitting of $\MSpinh$. These maps arise from characteristic classes associated to $\ksp\langle 8k\rangle$ and $\Sigma^{8k+4}F$.

\begin{setup}\label{setup:ksp classes}
    For each partition $I=(i_1,\ldots,i_r)$, there is a $\KO$-Pontryagin class $\pi^I_h\in\KO^0(\BSpinh)$, obtained by pulling back the $\KO$-Pontryagin class $\pi^I\in\KO^0(\BSO)$ under $\BSpinh\to\BSO$. The associated map $\BSpinh\to\KO$ admits a lift to $\ko\langle 4|I|\rangle$ if $I\in\Part_\mr{even}$ or to $\ko\langle 4|I|-2\rangle$ if $I\in\Part_\mr{odd}$ by Proposition~\ref{prop:lift of pontryagin class}.

    Let $\vphi^h$ be the Atiyah--Bott--Shapiro map (Section~\ref{sec:abs map}), and let $\KSp\to\ksp$ denote the canonical map to the connective cover. Smashing the map $\BSpinh\to\ko\langle n\rangle$ with $\MSpinh\xrightarrow{\vphi^h}\KSp\to\ksp$, we get a map of the form
    \begin{equation}\label{eq:smash with ABS}
    \BSpinh\wedge\MSpinh\to\ko\langle n\rangle\wedge\ksp,
    \end{equation}
    where $n=4|I|$ or $4|I|-2$ (depending on whether $|I|$ is even or odd). We now precompose Equation~\ref{eq:smash with ABS} with the Thom diagonal $\MSpinh\to\BSpinh\wedge\MSpinh$ and postcompose with the multiplication $\ko\langle 4|I|\rangle\wedge\ksp\to\ksp\langle 4|I|\rangle$ (given in Lemma~\ref{lem:8k multiplication}) or with the lift $\ko\langle 4|I|-2\rangle\wedge\ksp\to\Sigma^{4|I|}F$ (given in Lemma~\ref{lem:lift to F}) of the multiplication $\ko\langle 4|I|-2\rangle\wedge\ksp\to\ko\langle 4|I|-2\rangle$. 
    
    When $I\in\Part_\mr{even}$, the composite takes the form
    \begin{equation}\label{eq:ksp class}
    \MSpinh\to\BSpinh\wedge\MSpinh\to\ko\langle 4|I|\rangle\wedge\ksp\to\ksp\langle 4|I|\rangle.
    \end{equation}
    When $I\in\Part_\mr{odd}$, the composite takes the form
    \begin{equation}\label{eq:elephant class}
    \MSpinh\to\BSpinh\wedge\MSpinh\to\ko\langle 4|I|-2\rangle\wedge\ksp\to\Sigma^{4|I|}F.
    \end{equation}
\end{setup}

\begin{defn}\label{def:ksp classes}
    Given an even partition $I$, the \textit{$I^\text{th}$ $\KSp$-Pontryagin class} is the class $\kappa^I\in\ksp\langle 4|I|\rangle^0(\MSpinh)$ determined by Equation~\ref{eq:ksp class}. Given an odd partition $I$, the \textit{$I^\text{th}$ elephant class} is the class $\eps^I\in\Sigma^{4|I|}F^0(\MSpinh)$ determined by Equation~\ref{eq:elephant class}. We refer to $\kappa^I$ and $\eps^I$ collectively as \textit{$\KSp$-characteristic classes}.
\end{defn}

\begin{rem}\label{rem:odd ksp-pontryagin}
    When $I$ is an odd partition, we still have a map
    \[\BSpinh\wedge\MSpinh\to\ko\langle 4|I|\rangle\wedge\ksp\to\ksp\langle 4|I|\rangle\]
    coming from Diagram~\ref{eq:ksp-pontryagin}. In particular, we have $I^\textsuperscript{th}$ $\KSp$-Pontryagin classes $\kappa^I$ for odd partitions as well, although we have not computed their effect on cohomology. These classes will not be needed for Theorem~\ref{thm:main}, but they will become relevant in Section~\ref{sec:characteristic classes}.
\end{rem}

We wish to compute the maps on cohomology induced by $\kappa^I$ and $\eps^I$. To this end, we need to compute $\MSpinh\to\ksp$ in cohomology.

\begin{lem}\label{lem:abs in cohomology}
    Let $\vphi^h:\MSpinh\to\KSp$ be the Atiyah--Bott--Shapiro map. Then the composite $\MSpinh\xrightarrow{\vphi^h}\KSp\to\ksp$ induces the map
    \begin{align*}
        H^*\ksp&\to H^*\MSpinh\\
        y_0&\mapsto U_h,
    \end{align*}
    where $U_h\in H^*\MSpin$ denotes the Thom class.
\end{lem}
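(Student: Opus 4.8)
The plan is to reduce the whole statement to the image of the single class $y_0$ and then pin that image down by a connectivity argument. By Corollary~\ref{cor:cohomology of ksp} (with $k=0$), the cohomology $H^*\ksp \cong \A/(\A\Sq^1 + \A\Sq^5)$ is generated as an $\A$-module by $y_0 \in H^0\ksp$, so the induced map $H^*\ksp \to H^*\MSpinh$ is completely determined by where $y_0$ goes. Since $\BSpinh$ is connected, the Thom isomorphism gives $H^0\MSpinh \cong H^0\BSpinh \cong \Z/2\Z$ with generator the Thom class $U_h$. Hence $y_0$ maps either to $0$ or to $U_h$, and the only thing to prove is that it does not map to $0$.

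For this I would pass to degree-zero homology. Since $\MSpinh$ is connective, the Atiyah--Bott--Shapiro map $\vphi^h:\MSpinh\to\KSp$ of Proposition~\ref{prop:abs map} lifts uniquely through the connective cover $\ksp\to\KSp$ (the obstructions to existence and uniqueness of such a lift live in cohomology groups of $\MSpinh$ with coefficients a spectrum whose homotopy is concentrated in negative degrees, which vanish because $\MSpinh$ is connective), and this lift is the map written $\MSpinh\xrightarrow{\vphi^h}\KSp\to\ksp$ in the statement. On $\pi_0$ this lift agrees with $\vphi^h$ itself, because $\ksp\to\KSp$ is an isomorphism on $\pi_0$; and $\vphi^h$ is an isomorphism $\pi_0\MSpinh\cong\Z\to\Z\cong\pi_0\KSp$, since $\pi_0\MSpinh$ is generated by the bordism class of a point and $\vphi^h$ sends this class to the class of the quaternionic line $\mathbb{H}$, a generator of $\pi_0\KSp\cong\KSp^0(\mathrm{pt})$ (equivalently, this is the content of \cite[Remark~3.26]{Hu22}, which is cited for the same fact in the discussion preceding Diagram~\ref{eq:truncated module structure}). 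By the Hurewicz theorem $H_0$ of a connective spectrum is $\pi_0$, so the composite induces an isomorphism on $H_0(-;\Z)$, hence on $H_0(-;\Z/2\Z)\cong\Z/2\Z$; dualizing by the universal coefficient theorem in degree zero, it induces an isomorphism $H^0\ksp\to H^0\MSpinh$. Therefore $y_0\mapsto U_h$.

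The one point requiring care — and the only genuine obstacle — is the identification of $\vphi^h$ with an isomorphism on $\pi_0$: this is \emph{not} forced by the fact that $\vphi^h$ is a $\vphi^r$-module map, since $\MSpinh$ is not free of rank one over $\MSpin$, and so must be extracted from the construction of $\vphi^h$ in \cite{Hu22,FH21}. Alternatively, one can deduce it by comparison with the complex Atiyah--Bott--Shapiro map: along the quaternionification map $\KU\to\KSp$ (sending the class of $\C$ to the class of $\mathbb{H}$) and the canonical map $\MSpinc\to\MSpinh$, both of which are isomorphisms on $\pi_0$, the square relating $\vphi^c$ and $\vphi^h$ forces $\vphi^h$ to be an isomorphism on $\pi_0$ as well. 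Everything else in the argument is formal.
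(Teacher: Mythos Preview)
Your proof is correct and follows essentially the same route as the paper's: reduce to degree zero, show the map is an isomorphism on $\pi_0$, then push through Hurewicz and dualize. The paper cites \cite[Theorem~3.23]{Hu22} for surjectivity of $\pi_0\MSpinh\to\pi_0\KSp$ and deduces the isomorphism from $\pi_0\MSpinh\cong\Z$, whereas you assert directly that the point maps to the class of $\mathbb{H}$; these are the same input, just packaged differently. Your explicit treatment of the lift through the connective cover and the alternative comparison with $\vphi^c$ are extra (the latter would need the square relating $\vphi^c$ and $\vphi^h$ to commute, which is true but not established in the paper), but the core argument is the same.
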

\begin{proof}
    By \cite[Theorem 3.23]{Hu22}, the induced map $\pi_0 \MSpinh \to \pi_0 \KSp$ is surjective. But $\pi_0 \MSpinh \cong \Z$ (see \cite[Theorem 9.97]{FH21}) and $\pi_0 \KSp \cong \Z$, so this must be an isomorphism. By the Hurewicz theorem, this means that $\MSpinh \to \ksp$ must also give an isomorphism in degree zero integral homology, and then in degree zero mod 2 homology by reduction. Dualizing, we see that the map $\MSpinh \to \ksp$ induces an isomorphism in mod 2 cohomology in degree zero, and therefore $y_0 \mapsto U_h$.
\end{proof}

\begin{rem}
    The real and complex analogs of Lemma~\ref{lem:abs in cohomology} can be proved by utilizing the fact that the real and complex Atiyah--Bott--Shapiro maps $\vphi^r$ and $\vphi^c$ are orientations and therefore preserve units. We have no such guarantee quaternionic Atiyah--Bott--Shapiro map $\vphi^h$, but it is plausible that the $\vphi^r$-module structure of $\vphi^h$ enables a more conceptually parsimonious proof than the one we found.
\end{rem}

Now we can compute the maps on cohomology induced by $\kappa^I$ and $\eps^I$.

\begin{prop}\label{prop:ksp/elephant class in cohomology}
Given a partition $I$, let $p_I\in H^*\BSpinh$ denote the corresponding Pontryagin class. Let $U_h\in H^*\MSpinh$ denote the Thom class. If $I\in\Part_\mr{even}$, then the map $H^*\ksp\langle 4|I|\rangle\to H^*\MSpinh$ induced by $\kappa^I$ is given by 
\[y_{8k}\mapsto(p_I+\Sq^3\Sq^1\alpha_I)U_h,\]
where $\alpha_I\in H^*\BSpinh$ is some class.

If $I\in\Part_\mr{odd}$, then the map $H^*\Sigma^{4|I|}F\to H^*\MSpinh$ induced by $\eps^I$ is given by
\begin{align*}
e_0&\mapsto p_IU_h,\\
e_1&\mapsto \beta_Iw_3U_h,
\end{align*}
where $\beta_I\in H^*\BSpinh$ is some class satisfying $\Sq^2\beta_I=p_I$.
\end{prop}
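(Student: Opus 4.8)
The plan is a diagram chase through the composites defining $\kappa^I$ and $\eps^I$ in Setup~\ref{setup:ksp classes}, applying to each arrow the cohomology computation already established for it. Since $H^*\ksp\langle 4|I|\rangle$ is generated over $\A$ by $y_{4|I|}$ (Corollary~\ref{cor:cohomology of ksp}), $H^*\Sigma^{4|I|}F$ is generated over $\A$ by $e_0$ and $e_1$ (Lemma~\ref{lem:cohomology of F}), and the map induced in cohomology by a map of spectra is $\A$-linear, it is enough to track the images of these generators. Throughout I would use the Künneth isomorphism $H^*X\otimes H^*Y\cong H^*(X\wedge Y)$ (valid since everything in sight is of finite type over $\Z/2\Z$) together with the fact that the smash of two maps induces the tensor product of the two pullbacks; that the Thom diagonal $\MSpinh\to\BSpinh\wedge\MSpinh$ induces the $H^*\BSpinh$-module action on $H^*\MSpinh$, which under the Thom isomorphism $H^*\MSpinh\cong H^*\BSpinh\cdot U_h$ is $x\otimes(zU_h)\mapsto xzU_h$; and the formula $\Sq^iU_h=w_iU_h$ for the Steenrod action on the Thom class.

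For $I\in\Part_{\mr{even}}$, so $4|I|=8k$, I would start from $y_{4|I|}$ and walk backwards along Equation~\ref{eq:ksp class}. Lemma~\ref{lem:8k multiplication} gives $y_{4|I|}\mapsto x_{4|I|}\otimes y_0$ under the multiplication $\ko\langle 4|I|\rangle\wedge\ksp\to\ksp\langle 4|I|\rangle$. Pulling back along $\pi^I_h\wedge\vphi^h$ sends this to the pullback of $x_{4|I|}$ along the chosen lift $\BSpinh\to\ko\langle 4|I|\rangle$, tensored with $(\vphi^h)^*(y_0)$; by Proposition~\ref{prop:lift of pontryagin class} the first factor is $p_I+\Sq^3\Sq^1\alpha_I$ for the class $\alpha_I$ attached to that lift, and by Lemma~\ref{lem:abs in cohomology} the second factor is $U_h$. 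Applying the Thom diagonal then multiplies the two tensor factors, producing $(p_I+\Sq^3\Sq^1\alpha_I)U_h$, as claimed.

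For $I\in\Part_{\mr{odd}}$, so $4|I|-2=8k+2$, I would run the same chase along Equation~\ref{eq:elephant class}, whose last arrow is the lift $\ell:\ko\langle 4|I|-2\rangle\wedge\ksp\to\Sigma^{4|I|}F$ of Lemma~\ref{lem:lift to F} built into Definition~\ref{def:ksp classes}. Lemma~\ref{lem:lift to F} gives $\ell^*(e_0)=\Sq^2x_{4|I|-2}\otimes y_0$ and $\ell^*(e_1)=x_{4|I|-2}\otimes\Sq^3y_0$. Writing $\beta_I$ for the pullback of $x_{4|I|-2}$ along the chosen lift $\BSpinh\to\ko\langle 4|I|-2\rangle$, Proposition~\ref{prop:lift of pontryagin class} gives $\Sq^2\beta_I=p_I$. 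Pulling back along $\pi^I_h\wedge\vphi^h$, using $\A$-linearity and $(\vphi^h)^*(y_0)=U_h$, I get $e_0\mapsto\Sq^2\beta_I\otimes U_h=p_I\otimes U_h$ and $e_1\mapsto\beta_I\otimes\Sq^3U_h=\beta_I\otimes w_3U_h$, the last step using $\Sq^3U_h=w_3U_h$; applying the Thom diagonal multiplies the factors to yield $e_0\mapsto p_IU_h$ and $e_1\mapsto\beta_Iw_3U_h$.

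The argument is essentially formal, so the difficulty is bookkeeping rather than anything conceptual. The points that need care are: keeping the two index conventions $4|I|=8k$ and $4|I|-2=8k+2$ straight so that the correct cases of Lemma~\ref{lem:8k multiplication}, Lemma~\ref{lem:lift to F}, and Proposition~\ref{prop:lift of pontryagin class} are invoked; remembering that $\kappa^I$ and $\eps^I$ are built from particular choices of lift, so that $\alpha_I$ and $\beta_I$ are by definition the images of the relevant generators under those lifts and the stated relations on them are precisely what Proposition~\ref{prop:lift of pontryagin class} supplies; and correctly identifying the effect of the Thom diagonal on cohomology as the $H^*\BSpinh$-module multiplication.
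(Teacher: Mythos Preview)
Your proof is correct and follows essentially the same route as the paper's: decompose $\kappa^I$ and $\eps^I$ along the composites of Setup~\ref{setup:ksp classes}, apply Lemma~\ref{lem:8k multiplication} or Lemma~\ref{lem:lift to F} for the last arrow, use the cohomological description of the lifts of $\pi^I_h$ (which you cite via Proposition~\ref{prop:lift of pontryagin class}; the paper cites Stong directly, but the content is the same) together with Lemma~\ref{lem:abs in cohomology} for the middle arrow, and finish with the Thom diagonal and $\Sq^3U_h=w_3U_h$. The only cosmetic difference is that the formulas $x_{4|I|}\mapsto p_I+\Sq^3\Sq^1\alpha_I$ and $\Sq^2\beta_I=p_I$ appear in the \emph{proof} of Proposition~\ref{prop:lift of pontryagin class} rather than its statement, so you may want to point the reader there explicitly.
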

\begin{proof}
    Recall that the Thom diagonal $\MSpinh\to\BSpinh\wedge\MSpinh$ induces $a\otimes U_h\mapsto aU_h$ in cohomology. Since we have characterized the image of $y_{8k}$ under $H^*\ksp\langle 4|I|\rangle\to H^*(\ko\langle 4|I|\rangle\wedge\ksp)$ (Lemma~\ref{lem:8k multiplication}) and the images of $e_0$ and $e_1$ under $H^*\Sigma^{4|I|}F\to H^*(\ko\langle 4|I|-2\rangle\wedge\ksp)$ (Lemma~\ref{lem:lift to F}), it suffices to show that
    \begin{align*}
        H^*(\ko\langle 4|I|\rangle\wedge\ksp)&\to H^*(\BSpinh\wedge\MSpinh)\\
        x_{4|I|}\otimes y_0&\mapsto (p_I+\Sq^3\Sq^1\alpha_I)\otimes U_h
    \end{align*}
    for $I\in\Part_\mr{even}$ and
    \begin{align*}
        H^*(\ko\langle 4|I|-2\rangle\wedge\ksp) &\to H^*(\BSpinh\wedge\MSpinh)\\
        \Sq^2x_{4|I|-2}\otimes y_0&\mapsto p_I\otimes U_h,\\
        x_{4|I|-2}\otimes\Sq^3y_0 &\mapsto \beta_Iw_3\otimes U_h
    \end{align*}
    for $I\in\Part_\mr{odd}$. These are the maps induced by lifting $\KO$-Pontryagin classes and smashing with the Atiyah--Bott--Shapiro map (see Setup~\ref{setup:ksp classes}). By \cite[p.~304]{Sto68}, these lifts of $\KO$-Pontryagin classes induce $x_{4|I|} \mapsto p_I + \delta \Sq^2 \Sq^1 \alpha_I$ and $\Sq^2x_{4|I|-2}\mapsto p_I$ in integral cohomology,\footnote{We abuse notation by denoting integral classes and their mod 2 reductions by the same symbols.} where $\delta$ is the Bockstein. Let $\rho_2:H^*(-;\Z)\to H^*(-;\Z/2\Z)$ denote mod 2 reduction. Since $\rho_2\circ\delta=\Sq^1$, we find that $\rho_2(\delta \Sq^2 \Sq^1 \alpha_I)=\Sq^3\Sq^1\alpha_I$. Thus $x_{4|I|}\mapsto p_I+\Sq^3\Sq^1\alpha_I$
    for $I\in\Part_\mr{even}$.
    
    For $I\in\Part_\mr{odd}$, we have $\Sq^2x_{4|I|-2}\mapsto p_I$ in integral cohomology and hence $x_{4|I|-2}\mapsto\beta_I\in H^*\BSpinh$. Lemma~\ref{lem:abs in cohomology} states that $\MSpinh\to\ksp$ induces $y_0\mapsto U_h$ in cohomology, so we have $\Sq^3y_0\mapsto \Sq^3U_h=w_3 U_h$ (by definition of the Stiefel--Whitney classes). Thus
    \begin{align*}
        x_{4|I|}\otimes y_0&\mapsto (p_I+\Sq^3\Sq^1\alpha_I)\otimes U_h,\\
        \Sq^2x_{4|I|-2}\otimes y_0&\mapsto p_I\otimes U_h,\\
        x_{4|I|-2}\otimes\Sq^3y_0&\mapsto\beta_I\otimes\Sq^3U_h=\beta_Iw_3\otimes U_h,
    \end{align*}
    as desired.
\end{proof}

Adding together the various $\KSp$-Pontryagin classes and elephant classes gives us the first part of our eventual 2-local splitting of $\MSpinh$.

\begin{prop}
    There exists a map
    \[
        \MSpinh \to \bigvee_{I \in \Part_{\mathrm{even}}} \ksp \langle 4 | I | \rangle \vee \bigvee_{I \in \Part_{\mathrm{odd}}} \Sigma^{4 | I |} F
    \]
    such that $y_{8k} \mapsto (p_I + \Sq^3 \Sq^1 \alpha_I) U_h$ for some $\alpha_I \in H^* \BSpinh$ (when $I\in\Part_\mr{even})$ and $e_0 \mapsto p_I U_h$ and $e_1 \mapsto \beta_I w_3 U_h$ for some $\beta_I \in H^* \BSpinh$ satisfying $\Sq^2 \beta_I = p_I$ (when $I\in\Part_\mr{odd}$).
\end{prop}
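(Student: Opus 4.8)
The plan is to assemble the $\KSp$-Pontryagin classes and elephant classes of Definition~\ref{def:ksp classes} into a single map and to read off its behavior on mod $2$ cohomology from Proposition~\ref{prop:ksp/elephant class in cohomology}. Concretely, for each even partition $I$ the class $\kappa^I$ is by definition a map of spectra $\MSpinh\to\ksp\langle 4|I|\rangle$, and for each odd partition $I$ the class $\eps^I$ is a map $\MSpinh\to\Sigma^{4|I|}F$; I would take the tuple of all of these to obtain a candidate for the desired map $\Phi^h$.

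The one point that needs an argument is that this tuple genuinely defines a map into the \emph{wedge} rather than merely the product. Here I would use that $\ksp\langle 4|I|\rangle$ and $\Sigma^{4|I|}F$ are each $(4|I|-1)$-connected and that $4|I|\to\infty$ as $|I|\to\infty$, so in any fixed degree only finitely many of the summands are nonzero. Consequently the canonical map from $\bigvee_{I\in\Part_\mr{even}}\ksp\langle 4|I|\rangle\vee\bigvee_{I\in\Part_\mr{odd}}\Sigma^{4|I|}F$ to the corresponding product is an equivalence, and a tuple of maps into the factors is the same datum as a map into the wedge. This produces $\Phi^h:\MSpinh\to\bigvee_{I\in\Part_\mr{even}}\ksp\langle 4|I|\rangle\vee\bigvee_{I\in\Part_\mr{odd}}\Sigma^{4|I|}F$.

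For the cohomological assertions I would fix a partition $I$ and compose $\Phi^h$ with the collapse map of the wedge onto that summand (the map that is the identity on the chosen factor and zero on the rest). By construction this composite is $\kappa^I$ for $I$ even and $\eps^I$ for $I$ odd, so Proposition~\ref{prop:ksp/elephant class in cohomology} gives exactly the stated formulas: $y_{8k}\mapsto(p_I+\Sq^3\Sq^1\alpha_I)U_h$ for $I\in\Part_\mr{even}$ with $|I|=2k$, and $e_0\mapsto p_IU_h$, $e_1\mapsto\beta_Iw_3U_h$ with $\Sq^2\beta_I=p_I$ for $I\in\Part_\mr{odd}$. Since the mod $2$ cohomology of a wedge is the product of the cohomologies of the summands, these composites pin down $\Phi^h$ on the relevant generators, which is precisely the claim.

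I do not expect a genuine obstacle here: the substantive work has already been carried out in the construction of the lift $\ko\langle 8k+2\rangle\wedge\ksp\to\Sigma^{8k+4}F$ (Lemma~\ref{lem:lift to F}) and in the cohomology computation of Proposition~\ref{prop:ksp/elephant class in cohomology}. The only subtlety worth flagging is the wedge-versus-product identification in the second step, which the unbounded growth of the connectivities of the summands makes routine.
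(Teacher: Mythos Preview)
Your proposal is correct and follows essentially the same approach as the paper: take the product of the $\kappa^I$ and $\eps^I$, observe that the wedge-to-product map is an equivalence because only finitely many summands have nonzero homotopy in any fixed degree, and then invoke Proposition~\ref{prop:ksp/elephant class in cohomology} for the cohomological statements. Your added remark about composing with the collapse maps to extract the behavior on each summand is a perfectly good way to make the last step explicit.
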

\begin{proof}
    Taking the product of $\kappa^I$ and $\eps^I$ over all partitions gives us a map
    \[\MSpinh\to\prod_{I\in\Part_\mr{even}}\ksp\langle 4|I|\rangle\times\prod_{I\in\Part_\mr{odd}}\Sigma^{4|I|}F.\]
    Since there are only finitely many factors of this product with non-zero homotopy groups in a given degree, the map
    \[\bigvee_{I\in\Part_\mr{even}}\ksp\langle 4|I|\rangle\vee\bigvee_{I\in\Part_\mr{odd}}\Sigma^{4|I|}F\to\prod_{I\in\Part_\mr{even}}\ksp\langle 4|I|\rangle\times\prod_{I\in\Part_\mr{odd}}\Sigma^{4|I|}F\]
    is an equivalence. This induces the desired map
    \[\MSpinh\to\bigvee_{I\in\Part_\mr{even}}\ksp\langle 4|I|\rangle\vee\bigvee_{I\in\Part_\mr{odd}}\Sigma^{4|I|}F.\]
    The effect of this map on cohomology follows from Proposition~\ref{prop:ksp/elephant class in cohomology}.
\end{proof}

\section{Margolis homology of $H^* \MSpinh$}\label{sec:margolis homology}
In the preceding section, we constructed a map
\[
    \MSpinh \to \bigvee_{I \in \Part_{\mathrm{even}}} \ksp \langle 4|I| \rangle \vee \bigvee_{I \in \Part_{\mathrm{odd}}} \Sigma^{4|I|} F.
\]
The induced map on mod 2 cohomology takes the form
\begin{equation}\label{eq:theta bar}\bar{\theta}:\bigoplus_{I\in\Part_\mr{even}}\Sigma^{4|I|}(\A\otimes_{\A_1}\udq)\oplus\bigoplus_{I\in\Part_\mr{odd}}\Sigma^{4|I|}(\A\otimes_{\A_1}E)\to H^*\MSpinh,
\end{equation}
where $\udq$ and $E$ denote the upside-down question mark (Definition~\ref{def:upside down question mark} and Corollary~\ref{cor:cohomology of ksp}) and the elephant (Definition~\ref{def:elephant} and Lemma~\ref{lem:cohomology of F}), respectively. Denote by $\bar{\theta}$ the homomorphism in Equation~\ref{eq:theta bar}. 

In this section, we will show that $\bar{\theta}$ induces isomorphisms in Margolis homology, analogous to a method used in \cite{ABP67}. This will be used as a key input in Section~\ref{sec:abp splitting}, where we will show that $\bar{\theta}$ is injective and can be augmented to an isomorphism (which induces a map of spectra realizing the desired 2-local splitting).

\begin{notn}
    If $B$ is an $\A_1$-module, we will denote the $\A$-module $B_\A:=\A\otimes_{\A_1}B$. Since $\A$ is free as a right $\A_1$-module, the functor $B\mapsto B_\A$ is exact. It follows that there is automatically an injective map of $\A_1$-modules $B \to B_\A$ given by $b \mapsto 1 \otimes b$.
\end{notn}

\begin{notn}\label{notn:theta bar}
    Because mod 2 cohomology of $\MSpin$, $\MSpinc$, and $\MSpinh$ will show up so frequently later in this section, we will use the notation 
    \begin{align*}
    M&:=H^*\MSpin,\\
    M_c&:=H^*\MSpinc,\\
    M_h&:=H^*\MSpinh.
    \end{align*}
    We will also write
    \[\overline{N}:=\bigoplus_{I\in\Part_\mr{even}}\Sigma^{4|I|}\udq_\A\oplus\bigoplus_{I\in\Part_\mr{odd}}\Sigma^{4|I|}E_\A,\]
    so that Equation~\ref{eq:theta bar} can be written as $\bar{\theta}:\overline{N}\to M_h$.
\end{notn}

\begin{setup}\label{setup:Q_i homology}
Let $Q_0 = \Sq^1$ and $Q_1 = \Sq^3 + \Sq^2 \Sq^1$. These satisfy $Q_0^2 = Q_1^2 = 0$, so we can view multiplication by $Q_0$ or $Q_1$ as a differential of a chain complex on any $\A_1$-module (and by extension, any $\A$-module). Also, $Q_0$ and $Q_1$ are primitive elements of $\A$, so they act on products $xy$ by $Q_i (xy) =(Q_i x) y + x(Q_i y)$. 

Any map of $\A_1$-modules becomes a map of chain complexes with respect to the differentials $Q_0\cdot(-)$ and $Q_1\cdot(-)$. Given an $\A_1$-module $B$, we will denote by $H_*(B;Q_i)$ the homology of $B$ with respect to $Q_i$. The usual results of homological algebra apply for computing $H_*(-;Q_i)$. In particular, short exact sequences of $\A_1$-modules induce long exact sequences in homology, and there is a K\"unneth theorem for $H_*(-;Q_i)$ \cite[Chapter 18.1, Propositions 1c and 2a]{Mar83}.\footnote{In Margolis's notation, we have $Q_0=P^0_1$ and $Q_1=P^0_1P^1_2+P^1_2P^0_1$.}
\end{setup}

\subsection{The upside-down question mark and the elephant}
We will begin by recalling a few basic computations of $Q_i$-homology, which we will then use to compute the $Q_i$-homology of $\udq_\A$ and $E_\A$. To do so, we need to introduce a little more notation.

\begin{notn}
Let $\chi : \A \to \A$ denote the antipode of the Hopf algebra $\A$. We will frequently use the following properties of $\chi$:
\begin{enumerate}[(i)]
\item $\chi(ab)=\chi(b)\chi(a)$ for all $a,b\in\A$.
\item $\chi(Q_i)=Q_i$ for all $i$.
\end{enumerate}

Let $\Delta_i=(a_1,a_2,a_3,\ldots)$,
where $a_i=1$ and $a_j=0$ for $j\neq i$. Given a sequence $R$ of natural numbers with finitely many non-zero terms, let $\Sq^R\in\A$ denote the Milnor basis vector associated to $R$. Finally, given a set $V$ of vectors in some vector space, let $\langle V\rangle$ denote the span of $V$.
\end{notn}

\begin{lem}\label{lem:Q_i homology of some quotients}
We have
\begin{align*}
    H_*(\A /(\A \Sq^1 + \A \Sq^2 ) ; Q_0) & = \left\langle \chi( \Sq^{4k}) \ \middle| \ k \in \N \right\rangle, \\
    H_*(\A /(\A \Sq^1 + \A \Sq^2) ; Q_1) & = \left\langle \chi( \Sq^{2\sum_{j=1}^k\Delta_{i_j}}) \ \middle| \ k \in \N,\ i_1 > \ldots > i_k \geq 2 \right\rangle, \\
    H_*(\A /\A \Sq^3 ; Q_0) & = \left\langle \chi(\Sq^{4k}) \Sq^2 \ \middle| \ k \in \N \right\rangle, \\
    H_*( \A / \A \Sq^3 ; Q_1) & = \left\langle \chi(\Sq^{2\sum_{j=1}^k\Delta_{i_j}}) \Sq^2 \ \middle| \ k \in \N,\ i_1 > \ldots > i_k \geq 2 \right\rangle.
\end{align*}
\end{lem}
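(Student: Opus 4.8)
The strategy is to reduce all four computations to known $Q_i$-homology of $\A$ itself and then track how the quotient by a principal left ideal changes the answer. Recall that $\A$ is free as a right module over the subalgebra $E(Q_0,Q_1)$ generated by $Q_0$ and $Q_1$, so $H_*(\A;Q_0)=H_*(\A;Q_1)=0$ is not quite what we want directly; instead the useful fact is that $\A$ is free as a left module over the exterior algebra $E(Q_i)$ for each fixed $i$, with basis given by a suitable set of $\chi(\Sq^R)$, so that $H_*(\A;Q_i)=0$ in positive degrees but the relevant quotients pick out explicit cycles. The cleaner route, and the one I would take, is to use the antipode to convert left-module statements into right-module statements: $\A/\A\Sq^k$ has the same underlying $Q_i$-complex (up to the degree-preserving involution $\chi$) as $\Sq^k\backslash\A$ via $\chi$, and $\chi(Q_i)=Q_i$ means $\chi$ is a chain map. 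So I would first establish the "building block" computation: the $Q_i$-homology of $\A$ as a right module over $E(Q_0)$ or $E(Q_1)$, identifying an explicit basis in Milnor-basis terms. This is classical (Milnor, or Margolis Ch.~18), and I would cite \cite{Mar83} for it.

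Next, for each of the four cases I would write down the short exact sequence of $\A$-modules exhibiting the quotient. For $\A/(\A\Sq^1+\A\Sq^2)$, filter by first quotienting by $\A\Sq^1$ and then by the image of $\Sq^2$; equivalently, use the two short exact sequences $0\to\A\Sq^1/(\A\Sq^1\cap\A\Sq^2)\to\A/\A\Sq^2\to\A/(\A\Sq^1+\A\Sq^2)\to 0$ and the analogous one with the roles reversed, together with the identification $\A/\A\Sq^1\cong$ (something whose $Q_i$-homology is standard). Apply the long exact sequence in $H_*(-;Q_i)$ from Setup~\ref{setup:Q_i homology}. The point is that multiplication by $Q_0=\Sq^1$ on $\A/\A\Sq^1$ is as degenerate as possible, and similarly $Q_1$ interacts with $\Sq^3$ in a controlled way, so each connecting map is either zero or an isomorphism and can be pinned down by a low-degree check plus naturality. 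For $\A/\A\Sq^3$, I would use $0\to\Sigma^3\A/(\text{ann})\to\A\to\A/\A\Sq^3\to 0$ where the left term is the left ideal $\A\Sq^3$ with its known presentation; since $Q_1=\Sq^3+\Sq^2\Sq^1$, the element $\Sq^3$ is "almost" $Q_1$, which is exactly why $H_*(\A/\A\Sq^3;Q_1)$ ends up being $H_*(\A;Q_1)$ shifted and twisted — and the extra $\Sq^2$ factor appearing in the stated answer is the image of the Milnor generator under the boundary map. I would make this precise by computing in the Milnor basis: $Q_0\cdot\chi(\Sq^R)$ and $Q_1\cdot\chi(\Sq^R)$ have explicit formulas (adding $\Delta_0$ or $\Delta_1$ to the appropriate "Milnor coordinate"), so the cycles and boundaries can be read off combinatorially.

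The main obstacle, and where I would spend the most care, is the $Q_1$ computations — specifically verifying that the surviving classes are exactly $\chi(\Sq^{2\sum_{j=1}^k\Delta_{i_j}})$ (times $\Sq^2$ in the $\A\Sq^3$ case) with the constraint $i_1>\cdots>i_k\ge 2$. The constraint $i_j\ge 2$ (rather than $\ge 1$) is the subtle point: it reflects that $\Delta_1$ and $\Delta_0$ directions are "used up" by $Q_1=P^0_1P^1_2+P^1_2P^0_1$ in Margolis's notation, so $Q_1$-cycles can only involve Milnor generators $\xi_i$ with $i\ge 2$, doubled. I would verify this by appealing to the Margolis change-of-rings / Künneth machinery: $H_*(-;Q_1)$ is a $\mathbb{Z}/2$-functor satisfying Künneth (\cite[Ch.~18.1]{Mar83}), $H_*(\A;Q_1)$ is a polynomial-type algebra on the classes dual to $\xi_i^2$ for $i\ge 2$ together with a $\xi_1$-contribution that dies in the quotient, and then the short exact sequences above strip off precisely the $\xi_1$ part. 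A low-degree sanity check (degrees $\le 8$ or so, by hand, comparing with the $\A_1$-module pictures implicit in Figure~\ref{fig:two-elephants}) would confirm the normalization of the $\chi$'s and the $\Sq^2$ twist. Once this block is in place, the $Q_0$ cases are comparatively easy: $Q_0=\Sq^1$ is the ordinary Bockstein, $H_*(\A;Q_0)$ is the subalgebra on the $\xi_i^2$-duals for all $i\ge 1$ (i.e.\ the $\chi(\Sq^{4k})$-type classes), and quotienting by $\A\Sq^1$ or $\A\Sq^3$ either leaves this alone or multiplies it by $\Sq^2$, as claimed.
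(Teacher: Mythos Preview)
The paper does not prove this lemma at all: its entire proof is the one-line citation ``This is \cite[Theorem 6.9]{ABP67}.'' So there is nothing to compare your argument against here; the authors treat these four computations as established input from Anderson--Brown--Peterson.

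That said, your plan contains an internal inconsistency that would need to be resolved before it could be carried out. Early on you correctly note that $H_*(\A;Q_i)=0$ (since $\A$ is free as a module over the exterior algebra on $Q_i$). But later you write that ``$H_*(\A;Q_1)$ is a polynomial-type algebra on the classes dual to $\xi_i^2$ for $i\ge 2$,'' and that $H_*(\A/\A\Sq^3;Q_1)$ is ``$H_*(\A;Q_1)$ shifted and twisted.'' These statements contradict the vanishing you already recorded. You may be conflating $\A$ with the dual Steenrod algebra $\A_*$, or with a basis for $\A$ as a free $E(Q_i)$-module; in either case, the argument as written does not parse, and the passage identifying the $Q_1$-cycles---which you flag as ``where I would spend the most care''---is precisely where the confusion lands.

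It is worth noting that the short-exact-sequence and connecting-homomorphism technique you outline is exactly what the paper uses in Propositions~\ref{prop:Q homology} and~\ref{prop:E homology} to compute $H_*(\udq_\A;Q_i)$ and $H_*(E_\A;Q_i)$ \emph{from} this lemma. So the machinery you describe is sound and is deployed nearby; it is just that the present lemma is the input, not something the paper re-derives.
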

\begin{proof}
This is \cite[Theorem 6.9]{ABP67}.
\end{proof}

The following is a sort of Leibniz rule for the $Q_i$-differentials.

\begin{lem}\label{lem:Q_i Leibniz rules}
For any natural number $n$ and any distinct natural numbers $i_1,\ldots,i_k$, we have
\begin{align*}
    Q_1\Sq^{2n-2}&=Q_0\Sq^{2n}+\Sq^{2n}Q_0,\\
    0&=Q_1\Sq^{2\sum_{j=1}^k\Delta_{i_j}}+\Sq^{2\sum_{j=1}^k\Delta_{i_j}}Q_1.
\end{align*}
\end{lem}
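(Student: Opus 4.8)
The plan is to verify both identities by working in the Milnor basis of $\A$, where the coproduct and products of $\Sq^R$'s are governed by explicit (if combinatorially involved) formulas. The key observation is that both $Q_0 = \Sq^1 = \Sq^{\Delta_1}$ and $Q_1 = \Sq^3 + \Sq^2\Sq^1 = \Sq^{\Delta_2}$ are themselves Milnor basis elements (indexed by $\Delta_1$ and $\Delta_2$), and $\Sq^{2n} = \Sq^{(2n,0,0,\ldots)}$, while $\Sq^{2\sum_j \Delta_{i_j}} = \Sq^{(0,\ldots,0,2,0,\ldots)}$ with the $2$'s in positions $i_1,\ldots,i_k$ (all $\geq 1$ in the second identity, but the statement as used will have the $i_j \geq 2$). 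So every term appearing is a product of two Milnor basis elements, and I would expand each such product via Milnor's product formula, which expresses $\Sq^R \Sq^S$ as a sum over matrices with prescribed row/column sums, weighted by products of multinomial coefficients mod $2$.

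First I would dispatch the first identity $Q_1\Sq^{2n-2} = Q_0\Sq^{2n} + \Sq^{2n}Q_0$. Here $Q_0\Sq^{2n} = \Sq^{\Delta_1}\Sq^{(2n,0,\ldots)}$ and $\Sq^{2n}Q_0 = \Sq^{(2n,0,\ldots)}\Sq^{\Delta_1}$; these are short sums (the matrices contributing have at most two nonzero entries), and one computes $\Sq^{\Delta_1}\Sq^{(2n,0,\ldots)} = \binom{2n+1}{1}\Sq^{(2n+1,0,\ldots)} + \Sq^{(2n-1,1,0,\ldots)}$ while $\Sq^{(2n,0,\ldots)}\Sq^{\Delta_1} = \binom{2n+1}{1}\Sq^{(2n+1,0,\ldots)}$, so the sum is $\Sq^{(2n-1,1,0,\ldots)}$ (the $\binom{2n+1}{1} = 2n+1 \equiv 1$ terms cancel). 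On the other side, $Q_1\Sq^{2n-2} = \Sq^{\Delta_2}\Sq^{(2n-2,0,\ldots)}$, which by the product formula is a sum of terms $\Sq^{(2n-2-2b, b, 1, 0,\ldots)}$ times binomial coefficients; the degree-$(2n+1)$ output forces the entries and one checks the surviving term is exactly $\Sq^{(2n-1,1,0,\ldots)}$. The identity follows. (One should note the equivalent, quicker route: since $Q_0$ and $Q_1$ are primitive, $Q_i$ acting on a product is the Leibniz rule, and the claim is really a statement about $Q_i$ acting on the module $\A/\A\Sq^1$ or just an identity $[Q_0,\Sq^{2n}] = Q_1\Sq^{2n-2}$ in $\A$; I would present whichever is shortest.)

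For the second identity, $0 = Q_1\Sq^{2\sum\Delta_{i_j}} + \Sq^{2\sum\Delta_{i_j}}Q_1$, i.e.\ $Q_1$ commutes with $\Sq^{2\sum_j\Delta_{i_j}}$ when the $i_j$ are distinct: the point is that $\Sq^{\Delta_2}\Sq^R$ and $\Sq^R\Sq^{\Delta_2}$, with $R = 2\sum_j\Delta_{i_j}$, have the same expansion mod $2$. The relevant product-formula matrices have the $\Delta_2$ contributing a single $1$ that either stays in its column or moves one column left; when $R$ has all entries $0$ or $2$, the parity of the binomial coefficients $\binom{r_j + (\text{carry})}{\text{carry}}$ works out so that the two orderings produce identical sums, because $\binom{2}{1} \equiv 0$ kills the "interference" terms and the remaining terms are symmetric in the two orders. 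I expect \textbf{this symmetry computation to be the main obstacle}: one must carefully track which matrices contribute to each product and check the mod-$2$ multinomial coefficients cancel in pairs. A cleaner alternative I would try first is to use the Milnor diagonal: $Q_1$ primitive gives $Q_1\cdot x = (Q_1\otimes 1 + 1\otimes Q_1)\Delta x$ is not directly what we want, but the fact that $\Sq^{2\Delta_i}$ is (a component of) $\xi_i^2$-dual and that $Q_1 = \Sq^{\Delta_2}$ satisfies $Q_1^2 = 0$ together with the Milnor relations $\Sq^{\Delta_i}\Sq^{\Delta_j} = \Sq^{\Delta_j}\Sq^{\Delta_i}$ for $i\neq j$ may let one reduce the general case to $k=1$, namely $Q_1\Sq^{2\Delta_i} = \Sq^{2\Delta_i}Q_1$, which is a single small product-formula check, and then iterate using distinctness of the $i_j$. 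Either way the lemma is a finite, mechanical verification once set up in the Milnor basis; I would record only the $k=1$ base computation in detail and indicate the inductive/symmetry step.
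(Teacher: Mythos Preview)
Your plan via direct Milnor-basis product computations is viable, but the paper takes a much shorter route: it simply invokes Milnor's commutator formula \cite[Theorem~4a]{Mil58},
\[
Q_i\,\Sq^{2I}+\Sq^{2I}\,Q_i=\sum_{j\ge 1} Q_{i+j}\,\Sq^{2(I-2^i\Delta_j)},
\]
valid for any sequence $I$. Setting $i=0$ and $I=(n)$ gives the first identity at once, since $n\Delta_1-\Delta_j$ has a negative entry for $j\ge 2$, so only the $j=1$ term $Q_1\Sq^{2n-2}$ survives. Setting $i=1$ and $I=\sum_j\Delta_{i_j}$ gives the second identity, since each entry of $I$ is $0$ or $1$ (the $i_j$ are distinct), so $I-2\Delta_\ell$ always has a negative entry and the right-hand side is zero. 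This single formula replaces all of your matrix enumeration and binomial bookkeeping.

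Two comments on your sketch as written. In the first identity your two products are swapped (it is $\Sq^{2n}Q_0$, not $Q_0\Sq^{2n}$, that contributes the extra Milnor-basis term), and the surviving element should be $\Sq^{(2n-2,1)}$ rather than $\Sq^{(2n-1,1)}$: check degrees, $(2n-1)+3=2n+2\ne 2n+1$. For the second identity, your proposed reduction to the case $k=1$ implicitly assumes $\Sq^{2\sum_j\Delta_{i_j}}=\prod_j\Sq^{2\Delta_{i_j}}$, which is not automatic and would itself need a product-formula verification; the commutation relation $\Sq^{\Delta_i}\Sq^{\Delta_j}=\Sq^{\Delta_j}\Sq^{\Delta_i}$ you cite concerns the $Q_i$, not the $\Sq^{2\Delta_i}$. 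Your direct symmetry computation would work, but all of this disappears once you appeal to Theorem~4a.
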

\begin{proof}
    In the Milnor basis, we have
    \[Q_i\Sq^{2I}+\Sq^{2I}Q_i=\sum_{j=1}^\infty Q_{i+j}\Sq^{2(I-2^i\Delta_j)}\]
    for any partition $I$ \cite[Theorem~4a]{Mil58}. We also have $\Sq^n=\Sq^{n\Delta_1}$ (see \cite[Section 6]{Mil58}) and $\Sq^R=0$ if any term of $R$ is negative \cite[p.~163]{Mil58}. Setting $i=0$ and $I=(n)$, we thus compute
    \begin{align*}
        Q_0\Sq^{2n}+\Sq^{2n}Q_0&=\sum_{j=1}^\infty Q_j\Sq^{2(n\Delta_1-\Delta_j)}\\
        &=Q_1\Sq^{2n-2}+\sum_{j=2}^\infty Q_j\cdot 0.
    \end{align*}
    Setting $i=1$ and $I=\sum_{j=1}^k\Delta_{i_j}$, we compute
    \begin{align*}
        Q_1\Sq^{2\sum_{j=1}^k\Delta_{i_j}}+\Sq^{2\sum_{j=1}^k\Delta_{i_j}}Q_1&=\sum_{\ell=1}^\infty Q_{\ell+1}\Sq^{2(\sum_{j=1}^k\Delta_{i_j}-2\Delta_\ell)} \\
        & = \sum_{\ell = 1}^\infty Q_{\ell + 1} \cdot 0,
    \end{align*}
    since $\sum_{j=1}^k\Delta_{i_j}-2\Delta_\ell$ always contains a negative term.
\end{proof}

Using Lemmas~\ref{lem:Q_i homology of some quotients} and~\ref{lem:Q_i Leibniz rules}, we are able to compute the homologies of both $\udq_\A$ and $E_\A$. In order to simplify the presentation of $H_*(\udq_\A;Q_i)$ and $H_*(E_\A;Q_i)$, we need another lemma.

\begin{lem}\label{lem:multiple of Sq1}
    We have $\chi( \Sq^{4k - 2}) \Sq^2 \in \A \Sq^1$ for any $k\in\N$.
\end{lem}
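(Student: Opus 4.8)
The plan is to reduce the statement to a single identity among admissible monomials and then apply the antipode. Concretely, I claim that in $\A$ one has
\[
    \Sq^2 \Sq^{4k-2} = \Sq^1 \Sq^{4k-2} \Sq^1
\]
for every $k \geq 1$ (the case $k = 0$ is vacuous, as $\Sq^{-2} = 0$). Granting this, recall that the antipode $\chi$ is a $\Z / 2\Z$-algebra anti-automorphism with $\chi^2 = \id$ and $\chi(\Sq^1) = \Sq^1$, $\chi(\Sq^2) = \Sq^2$. Anti-multiplicativity then gives $\chi(\Sq^2 \Sq^{4k-2}) = \chi(\Sq^{4k-2})\Sq^2$ and $\chi(\Sq^1 \Sq^{4k-2} \Sq^1) = \Sq^1\,\chi(\Sq^{4k-2})\,\Sq^1$, so applying $\chi$ to the displayed identity yields
\[
    \chi(\Sq^{4k-2})\Sq^2 = \Sq^1\,\chi(\Sq^{4k-2})\,\Sq^1 = \bigl(\Sq^1 \chi(\Sq^{4k-2})\bigr)\Sq^1 \in \A \Sq^1,
\]
which is exactly the assertion.

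It remains to establish the displayed identity, which is a short calculation with the Adem relations. First, for $k \geq 1$ we have $\Sq^1 \Sq^{4k-2} = \binom{4k-3}{1}\Sq^{4k-1} = \Sq^{4k-1}$, since $4k-3$ is odd, so the binomial coefficient is $1 \pmod 2$. Second, since $2 < 2(4k-2)$ for $k \geq 1$, the Adem relation reads $\Sq^2 \Sq^{4k-2} = \binom{4k-3}{2}\Sq^{4k} + \binom{4k-4}{0}\Sq^{4k-1}\Sq^1$. Here $\binom{4k-4}{0} = 1$, while $\binom{4k-3}{2} \equiv 0 \pmod 2$ by Lucas's theorem: $4k-4$ is divisible by $4$, so the last two binary digits of $4k-3$ are $01$; in particular the $2$'s digit of $4k-3$ vanishes whereas that of $2$ does not. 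Hence $\Sq^2 \Sq^{4k-2} = \Sq^{4k-1}\Sq^1 = (\Sq^1 \Sq^{4k-2})\Sq^1$, as claimed.

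The argument is essentially routine; the only place requiring any care is the mod $2$ evaluation of the Adem coefficients via Lucas's theorem, together with keeping track of the order reversal under $\chi$. (Alternatively, one may invoke the standard fact that $\A$ is free as a right module over the exterior algebra $E[\Sq^1]$, so that $\A\Sq^1 = \{a \in \A : a\Sq^1 = 0\}$, and then check $\chi(\Sq^{4k-2})\Sq^2\Sq^1 = \chi(\Sq^1 \Sq^2 \Sq^{4k-2}) = \chi(\Sq^3 \Sq^{4k-2}) = 0$ by a single Adem computation; but the route above is self-contained.)
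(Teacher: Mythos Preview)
Your proof is correct. Both your argument and the paper's rely on the same two ingredients---a short Adem computation and the anti-multiplicativity of $\chi$ together with $\chi(\Sq^1)=\Sq^1$, $\chi(\Sq^2)=\Sq^2$---but you organize the computation differently. The paper first factors $\Sq^{4k-2}=\Sq^2(\Sq^{4k-4}+\Sq^{4k-5}\Sq^1)$ (which needs $k\ge 2$, so $k=1$ is treated separately) and then applies $\chi$ so that the trailing $\Sq^2$ combines with the new $\Sq^2$ to give $\Sq^2\Sq^2=\Sq^3\Sq^1$. You instead prove the single identity $\Sq^2\Sq^{4k-2}=\Sq^1\Sq^{4k-2}\Sq^1$, valid uniformly for all $k\ge 1$, and then apply $\chi$; this yields the explicit witness $\chi(\Sq^{4k-2})\Sq^2=\bigl(\Sq^1\,\chi(\Sq^{4k-2})\bigr)\Sq^1$ without any case split. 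Your route is a little more streamlined, while the paper's factorization makes the role of $\Sq^2\Sq^2=\Sq^3\Sq^1$ more visible; in substance the two arguments are equivalent.
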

\begin{proof}
    If $k = 0$, then $\chi( \Sq^{4k - 2}) = 0$. If $k = 1$, then
    \begin{align*}
        \chi( \Sq^{4k - 2}) \Sq^2 &= \chi(\Sq^2) \Sq^2\\
        &=( \Sq^2)^2\\
        &= \Sq^3 \Sq^1 \in \A \Sq^1.
    \end{align*}
    Finally, suppose $k \geq 2$. We have the Adem relations
    \[
        \Sq^i \Sq^j = \sum_{t = 0}^{\left\lfloor \frac{i}{2} \right\rfloor} \binom{j - t - 1}{i - 2t} \Sq^{i + j - t} \Sq^t
    \]
    when $i < 2j$. We then compute
    \begin{align*}
        \Sq^2 \Sq^n & = \sum_{t = 0}^1 \binom{n - t - 1}{2 - 2t} \Sq^{n + 2 - t} \Sq^t \\
        & = \binom{n - 1}{2} \Sq^{n + 2} + \Sq^{n + 1} \Sq^1 \\
        & =
        \begin{cases}
            \Sq^{n + 2} + \Sq^{n + 1} \Sq^1 & n = 0, 3 \pmod{4}, \\
            \Sq^{n + 1} \Sq^1 & n = 1, 2 \pmod{4}
        \end{cases}
    \end{align*}
    if $2 < 2n$. This implies that
    \begin{align*}
        \Sq^2 \Sq^{4k - 4} + \Sq^2 \Sq^{4k - 5} \Sq^1 & = \Sq^{4k - 2} + \Sq^{4k - 3} \Sq^1 +( \Sq^{4k - 3} + \Sq^{4k - 4} \Sq^1) \Sq^1 \\
        & = \Sq^{4k - 2}.
    \end{align*}
    Therefore
    \begin{align*}
        \chi( \Sq^{4k - 2}) \Sq^2 & = \chi( \Sq^2( \Sq^{4k - 4} + \Sq^{4k - 5} \Sq^1)) \Sq^2 \\
        & = \chi( \Sq^{4k - 4} + \Sq^{4k - 5} \Sq^1) \chi( \Sq^2) \Sq^2 \\
        & = \chi( \Sq^{4k - 4} + \Sq^{4k - 5} \Sq^1) \Sq^2 \Sq^2 \\
        & = \chi( \Sq^{4k - 4} + \Sq^{4k - 5} \Sq^1) \Sq^3 \Sq^1.\qedhere
    \end{align*}
\end{proof}

\begin{prop}\label{prop:Q homology}
    The Margolis homology of $\udq_\A$ has the following presentation:
    \begin{align*}
        H_*( \udq_\A ; Q_0) & \cong \left\langle \chi( \Sq^{4k}) q_0 \ \middle| \ k \in \N \right\rangle, \\
        H_*( \udq_\A ; Q_1) & \cong \left\langle \chi( \Sq^{2\sum_{j=1}^k\Delta_{i_j}}) \Sq^2 q_0 \ \middle| \ k \in \N,\ i_1 > \ldots > i_k \geq 2 \right\rangle.
    \end{align*}
\end{prop}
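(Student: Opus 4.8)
The plan is to reduce the computation to the $Q_i$-Margolis homology of $\A/(\A\Sq^1+\A\Sq^2)=H^*\ko$ recorded in Lemma~\ref{lem:Q_i homology of some quotients}. Since $\Sq^5+\Sq^4\Sq^1=\Sq^2\Sq^1\Sq^2$ lies in $\A_1$ while $\Sq^4\Sq^1\in\A\Sq^1$, we have $\udq_\A=\A\otimes_{\A_1}\udq\cong\A/(\A\Sq^1+\A\Sq^5)$, with $q_0$ the image of $1$. The $\A_1$-submodule $D:=\A_1\cdot\Sq^2q_0$ of $\udq$ is two-dimensional, spanned by $\Sq^2q_0$ in degree $2$ and $\Sq^1\Sq^2q_0=\Sq^3q_0$ in degree $3$ (note $\Sq^2\Sq^2q_0=\Sq^3\Sq^1q_0=0$ because $\Sq^1q_0=0$). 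So there are short exact sequences of $\A_1$-modules $0\to D\to\udq\to\Z/2\Z\to0$ and $0\to\Sigma^3\Z/2\Z\to D\to\Sigma^2\Z/2\Z\to0$; applying the exact functor $\A\otimes_{\A_1}(-)$ produces $0\to K\to\udq_\A\to H^*\ko\to0$ and $0\to\Sigma^3H^*\ko\to K\to\Sigma^2H^*\ko\to0$, where $K=\A\otimes_{\A_1}D$ is the cyclic $\A$-submodule of $\udq_\A$ generated by $\Sq^2q_0$, concentrated in degrees $\equiv2,3\pmod4$.

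For $Q_0$: the $\A_1$-module $D$ is free over $\Z/2\Z[Q_0]/(Q_0^2)$ — its $Q_0$-complex is $\Sq^2q_0\xrightarrow{Q_0}\Sq^3q_0\to0$ — hence $K=\A\otimes_{\A_1}D$ is $Q_0$-acyclic, and the long exact sequence in $Q_0$-homology gives an isomorphism $H_*(\udq_\A;Q_0)\xrightarrow{\ \sim\ }H_*(H^*\ko;Q_0)=\langle\chi(\Sq^{4k})\mid k\in\N\rangle$. Each $\chi(\Sq^{4k})q_0$ is in fact a $Q_0$-cycle in $\udq_\A$: because $\chi(\Sq^{4k})$ is a $Q_0$-cycle modulo $\A\Sq^1+\A\Sq^2$ and $\Sq^1q_0=0$, the element $Q_0(\chi(\Sq^{4k})q_0)$ lies in $K$; but it has degree $4k+1$, and $K$ vanishes in degrees $\equiv1\pmod4$. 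These cycles map to the displayed generators under the isomorphism, giving the first formula.

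For $Q_1$: now $Q_1q_0=\Sq^3q_0\neq0$, so $q_0$ is not a cycle, the map $H_*(\udq_\A;Q_1)\to H_*(H^*\ko;Q_1)$ vanishes, and the long exact sequence identifies $H_*(\udq_\A;Q_1)$ with the cokernel of the connecting map $\delta:H_*(H^*\ko;Q_1)\to H_*(K;Q_1)$. Using the two-step filtration of $K$ together with Lemma~\ref{lem:Q_i homology of some quotients}, one computes $H_*(K;Q_1)$ — it has a class from each subquotient — and shows $\delta$ maps $H_*(H^*\ko;Q_1)$ isomorphically onto the family coming from the $\Sigma^3H^*\ko$-subquotient. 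Thus $H_*(\udq_\A;Q_1)\cong\Sigma^2H_*(H^*\ko;Q_1)$, realized through the quotient map $K\to\Sigma^2H^*\ko$ that sends $\Sq^2q_0$ to the bottom class; the surviving generators are the $\chi(\Sq^{2\sum_{j=1}^k\Delta_{i_j}})\Sq^2q_0$. These are $Q_1$-cycles because $Q_1(\Sq^2q_0)=(\Sq^5+\Sq^4\Sq^1)q_0=0$ in $\udq_\A$ and, by Lemma~\ref{lem:Q_i Leibniz rules} together with $\chi(Q_1)=Q_1$, the operator $Q_1$ commutes with $\chi(\Sq^{2\sum\Delta_{i_j}})$. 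Lemma~\ref{lem:multiple of Sq1} is what makes the presentation clean: any a priori contributions of the form $\chi(\Sq^{4k-2})\Sq^2q_0$ vanish, since $\chi(\Sq^{4k-2})\Sq^2\in\A\Sq^1$ and $\Sq^1q_0=0$.

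The main obstacle will be the $Q_1$ step: pinning down the connecting map $\delta$ and checking that it kills exactly the $\Sigma^3H^*\ko$-family in $H_*(K;Q_1)$ while the $\Sigma^2H^*\ko$-family injects into $H_*(\udq_\A;Q_1)$. This is careful Milnor-basis bookkeeping — keeping track of which $\chi(\Sq^R)$ represents which homology class and rewriting $Q_1$ of each candidate cycle via Lemma~\ref{lem:Q_i Leibniz rules} — rather than a single conceptual hurdle.
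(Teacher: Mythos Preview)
Your approach is genuinely different from the paper's, and most of it is sound, but there is one concrete error that breaks your $Q_0$ argument as written.

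\textbf{The error.} You assert that $K=\A\otimes_{\A_1}D$ is ``concentrated in degrees $\equiv 2,3\pmod4$,'' and use this to conclude that $Q_0(\chi(\Sq^{4k})q_0)\in K^{4k+1}$ vanishes. The degree claim is false: although $D$ itself sits in degrees $2$ and $3$, tensoring up to $\A$ does not preserve this congruence. For instance $\Sq^6\Sq^2q_0\in K$ is nonzero of degree $8$, and $\Sq^7\Sq^2q_0\in K$ is nonzero of degree $9$ (in fact $K^9=\udq_\A^9$). So the degree argument does not show that $\chi(\Sq^{4k})q_0$ is a $Q_0$-cycle. It \emph{is} a cycle --- one computes $Q_0(\chi(\Sq^{4k})q_0)=\chi(\Sq^{4k-2})\,Q_1q_0=\chi(\Sq^{4k-2})\Sq^3q_0$, and this does vanish in $\udq_\A$ --- but proving that requires real work (e.g.\ writing $\chi(\Sq^{4k-2})\Sq^3$ explicitly into $\A\Sq^1+\A(\Sq^5+\Sq^4\Sq^1)$), not a parity check. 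Without this, your isomorphism $H_*(\udq_\A;Q_0)\cong H_*(H^*\ko;Q_0)$ only tells you that \emph{some} cycle of the form $\chi(\Sq^{4k})q_0+(\text{element of }K)$ represents each class, which is weaker than the stated presentation.

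A smaller point: the implication ``$D$ free over $E[Q_0]\Rightarrow K$ is $Q_0$-acyclic'' is true but not immediate --- the left $Q_0$-action on $\A\otimes_{\A_1}D$ is through the $\A$-factor, not through $D$. You can justify it cleanly by running the long exact sequence of $0\to\Sigma^3H^*\ko\to K\to\Sigma^2H^*\ko\to 0$ and checking the connecting map is an isomorphism (exactly as you do for $Q_1$).

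\textbf{Comparison with the paper.} The paper instead uses the short exact sequence
\[
0\longrightarrow \Sigma^3\udq_\A\longrightarrow \A\longrightarrow \A/\A\Sq^3\longrightarrow 0,
\]
coming from the identification $\A_1\Sq^3\cong\Sigma^3\udq$. Because $H_*(\A;Q_i)=0$, the connecting homomorphism $H_*(\A/\A\Sq^3;Q_i)\to H_{*+\deg Q_i-3}(\udq_\A;Q_i)$ is an isomorphism, and the paper computes it explicitly on the generators from Lemma~\ref{lem:Q_i homology of some quotients}. This produces the representative cycles $\chi(\Sq^{4k})q_0$ and $\chi(\Sq^{2\sum\Delta_{i_j}})\Sq^2q_0$ directly, with Lemma~\ref{lem:multiple of Sq1} handling the one nontrivial vanishing. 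Your filtration $0\to K\to\udq_\A\to H^*\ko\to 0$ together with the two-step filtration of $K$ ultimately reaches the same place, but it requires tracking two long exact sequences and an extra verification that the chosen lifts are honest cycles --- precisely the step where your degree argument fails. The paper's route avoids this because the middle term $\A$ has trivial Margolis homology, so the connecting map \emph{produces} cycles rather than requiring you to check them.
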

\begin{proof}
    There is a short exact sequence
    \[0 \longrightarrow \A_1 \Sq^3 \longrightarrow \A_1 \longrightarrow \A_1 / \A_1 \Sq^3 \longrightarrow 0,
    \]
    and an isomorphism $\A_1 \Sq^3 \cong \Sigma^3\udq$, where $\Sq^3$ corresponds to $q_0$. So tensoring with $\A$ gives us a short exact sequence
    \[0 \longrightarrow \Sigma^3 \udq_\A \longrightarrow \A \longrightarrow \A / \A \Sq^3 \longrightarrow 0,
    \]
    which induces a long exact sequence in homology:
    \[H_*( \A ; Q_i)\longrightarrow H_*( \A / \A \Sq^3 ; Q_i)\longrightarrow H_{* + \deg Q_i}( \Sigma^3 \udq_\A ; Q_i) \longrightarrow H_{* + \deg Q_i}( \A ; Q_i).\]
    The homologies of $\A$ vanish \cite[p.~331, Proposition 1]{Mar83}, so the connecting homomorphism
    \begin{equation}\label{eq:connecting for question mark}
    H_*( \A / \A \Sq^3 ; Q_i) \to H_{* + \deg Q_i}( \Sigma^3 \udq_\A ; Q_i)
    \end{equation}
    is an isomorphism. Since the homology of $\Sigma^3\udq_\A$ is a degree 3 shift of the homology of $\udq_\A$, there is an isomorphism $H_*( \A / \A \Sq^3 ; Q_i) \to H_{* + \deg Q_i - 3}( \udq_\A ; Q_i)$. 
    
    A presentation of $H_*(\A/\A\Sq^3;Q_i)$ is given in Lemma~\ref{lem:Q_i homology of some quotients}. All that remains is to give a formula for the connecting homomorphism (Equation~\ref{eq:connecting for question mark}). The connecting homomorphism can be computed for a class in $H_*( \A / \A \Sq^3 ; Q_i)$ by choosing a representative cycle in $\A / \A \Sq^3$, lifting it to an element of $\A$, acting by $Q_i$ to get a cycle in $\udq_\A$, and taking the resulting homology class. 
    
    For $Q_0$, we lift $\chi(\Sq^{4k})\Sq^2$ and compute
    \begin{align*}
        Q_0 \chi( \Sq^{4k}) \Sq^2 & = \chi( Q_0) \chi( \Sq^{4k}) \Sq^2 \\
        & = \chi ( \Sq^{4k} Q_0 ) \Sq^2 \\
        & = \chi ( Q_0 \Sq^{4k} + Q_1 \Sq^{4k - 2} ) \Sq^2\tag{Lemma~\ref{lem:Q_i Leibniz rules}} \\
        & = \chi ( \Sq^{4k} ) Q_0 \Sq^2 + \chi ( \Sq^{4k - 2} ) Q_1 \Sq^2 \\
        & = \chi ( \Sq^{4k} ) \Sq^3 + \chi ( \Sq^{4k - 2} ) \Sq^2 \Sq^3.\tag{Setup~\ref{setup:Q_i homology}}
    \end{align*}
    Lemma~\ref{lem:multiple of Sq1} implies that $\chi ( \Sq^{4k - 2} ) \Sq^2 \in \A \Sq^1$, so the second term vanishes (because $\Sq^1 \Sq^3 = 0$). Finally, the isomorphism $\A \Sq^3 \cong \Sigma^3 \udq_\A$ has $\Sq^3$ in correspondence with $q_0$, so the $Q_0$-homology of $\udq_\A$ is generated by $\chi(\Sq^{4k}) q_0$. 
    
    For $Q_1$, we lift $\chi(\Sq^{2\sum_{j=1}^k\Delta_{i_j}})\Sq^2$ and compute
    \begin{align*}
        Q_1 \chi ( \Sq^{2\sum_{j=1}^k\Delta_{i_j}} ) \Sq^2 & = \chi ( \Sq^{2\sum_{j=1}^k\Delta_{i_j}} Q_1 ) \Sq^2 \\
        & = \chi ( Q_1 \Sq^{2\sum_{j=1}^k\Delta_{i_j}} ) \Sq^2\tag{Lemma~\ref{lem:Q_i Leibniz rules}} \\
        & = \chi ( \Sq^{2\sum_{j=1}^k\Delta_{i_j}} ) Q_1 \Sq^2 \\
        & = \chi ( \Sq^{2\sum_{j=1}^k\Delta_{i_j}} ) \Sq^2 \Sq^3.\tag{Setup~\ref{setup:Q_i homology}}
    \end{align*}
    Therefore the $Q_1$-homology of $\udq_\A$ is generated by $\chi ( \Sq^{2\sum_{j=1}^k\Delta_{i_j}} ) \Sq^2 q_0$.
    \end{proof}

\begin{prop}\label{prop:E homology}
    The Margolis homology of $E_\A$ has the following presentation:
    \begin{align*}
        H_*( E_\A ; Q_0) & \cong \left\langle \chi( \Sq^{4k}) e_0 + \chi( \Sq^{4k - 2}) \Sq^1 e_1 \ \middle| \ k \in \N \right\rangle, \\
        H_*( E_\A ; Q_1) & \cong \left\langle \chi( \Sq^{2\sum_{j=1}^k\Delta_{i_j}})( \Sq^2 e_0 + \Sq^1 e_1) \ \middle| \ k \in \N,\ i_1 > \ldots > i_k \geq 2 \right\rangle.
    \end{align*}
\end{prop}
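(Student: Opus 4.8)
The plan is to run the same argument as in the proof of Proposition~\ref{prop:Q homology}, now for the elephant in place of the upside-down question mark. From the proof of Lemma~\ref{lem:cohomology of F} we have the short exact sequence $0 \to \Sigma^1 E \to \A_1 \to \Z/2\Z \to 0$, in which $\Sigma^1 E$ is the augmentation ideal of $\A_1$ and the module generators $e_0, e_1$ of $E$ correspond to $\Sq^1, \Sq^2$. Applying the exact functor $(-)_\A = \A \otimes_{\A_1} (-)$ yields a short exact sequence of $\A$-modules
\[0 \to \Sigma^1 E_\A \to \A \to \A/(\A\Sq^1 + \A\Sq^2) \to 0,\]
in which $\Sigma^1 E_\A$ is identified with the kernel $\A\Sq^1 + \A\Sq^2$ of the quotient map $\A \to \A/(\A\Sq^1 + \A\Sq^2) \cong H^*\ko$. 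Since $H_*(\A; Q_i) = 0$ (\cite[p.~331, Proposition 1]{Mar83}), the long exact sequence in $Q_i$-homology forces the connecting homomorphism
\[\partial: H_*(\A/(\A\Sq^1 + \A\Sq^2); Q_i) \xrightarrow{\ \cong\ } H_{*+\deg Q_i - 1}(E_\A; Q_i)\]
to be an isomorphism, and Lemma~\ref{lem:Q_i homology of some quotients} supplies a basis of its source. So everything reduces to evaluating $\partial$ on that basis.

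As in Proposition~\ref{prop:Q homology}, I would compute $\partial$ by lifting a representative cycle to $\A$, applying $Q_i$, and reading the result off as an element of $\Sigma^1 E_\A = \A\Sq^1 + \A\Sq^2$. For $Q_0$, lifting $\chi(\Sq^{4k})$ and using $\chi(Q_0) = Q_0$, the relation $\chi(ab) = \chi(b)\chi(a)$, and the identity $\Sq^{4k}Q_0 = Q_0\Sq^{4k} + Q_1\Sq^{4k-2}$ from Lemma~\ref{lem:Q_i Leibniz rules} gives
\[Q_0\chi(\Sq^{4k}) = \chi(\Sq^{4k})\Sq^1 + \chi(\Sq^{4k-2})\Sq^3 + \chi(\Sq^{4k-2})\Sq^2\Sq^1,\]
where the last term vanishes by Lemma~\ref{lem:multiple of Sq1} (as $\chi(\Sq^{4k-2})\Sq^2 \in \A\Sq^1$ and $\Sq^1\Sq^1 = 0$); writing $\Sq^3 = \Sq^1\Sq^2$ and recalling $\Sq^1, \Sq^2$ correspond to $e_0, e_1$ then yields the class $\chi(\Sq^{4k})e_0 + \chi(\Sq^{4k-2})\Sq^1 e_1$. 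For $Q_1$, lifting $\chi(\Sq^{2\sum_{j=1}^k\Delta_{i_j}})$ and using that $Q_1$ commutes with $\Sq^{2\sum_{j=1}^k\Delta_{i_j}}$ (the second identity of Lemma~\ref{lem:Q_i Leibniz rules}) together with $\chi(Q_1) = Q_1$ gives
\[Q_1\chi(\Sq^{2\sum_{j=1}^k\Delta_{i_j}}) = \chi(\Sq^{2\sum_{j=1}^k\Delta_{i_j}})(\Sq^3 + \Sq^2\Sq^1) = \chi(\Sq^{2\sum_{j=1}^k\Delta_{i_j}})(\Sq^1 e_1 + \Sq^2 e_0),\]
the claimed generator.

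The main obstacle is not any single hard idea but the bookkeeping around these computations: one has to check that each $Q_i$-image genuinely lies in the submodule $\A\Sq^1 + \A\Sq^2$ (equivalently, that the chosen lift represents a $Q_i$-cycle modulo $\A\Sq^1 + \A\Sq^2$), keep the degree-one suspension between $\Sigma^1 E_\A$ and $E_\A$ straight, and invoke Lemma~\ref{lem:multiple of Sq1} at the right place to kill the $\Sq^2\Sq^1$ term in the $Q_0$ case. Since $\partial$ is already known to be an isomorphism and Lemma~\ref{lem:Q_i homology of some quotients} gives a basis of its source, no separate spanning or linear-independence argument is needed on the $E_\A$ side once these images have been identified.
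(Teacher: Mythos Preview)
Your proposal is correct and follows essentially the same approach as the paper's own proof: the same short exact sequence $0 \to \Sigma E_\A \to \A \to \A/(\A\Sq^1 + \A\Sq^2) \to 0$, the same appeal to $H_*(\A;Q_i)=0$ to make the connecting homomorphism an isomorphism, and the same computations of $Q_i\chi(\cdots)$ using Lemmas~\ref{lem:Q_i Leibniz rules} and~\ref{lem:multiple of Sq1}. The only cosmetic difference is that the paper writes $Q_1 = \Sq^2\Sq^1 + \Sq^1\Sq^2$ directly rather than passing through $\Sq^3 = \Sq^1\Sq^2$ as you do.
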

    \begin{proof}
    There is a short exact sequence
    \[
       0 \longrightarrow\A_1 \Sq^1 + \A_1 \Sq^2 \longrightarrow \A_1 \longrightarrow \A_1 /( \A_1 \Sq^1 + \A_1 \Sq^2) \longrightarrow 0
    \]
    and an isomorphism $\A_1 \Sq^1 + \A_1 \Sq^2\cong \Sigma E$, where $\Sq^1$ corresponds to $e_0$ and $\Sq^2$ corresponds to $e_1$. Tensoring with $\A$ thus gives us a short exact sequence
    \[
        0 \longrightarrow \Sigma E_\A \longrightarrow \A \longrightarrow \A /( \A \Sq^1 + \A \Sq^2) \longrightarrow 0.
    \]
    Since $H_*(\A;Q_i)=0$ \cite[p.~331, Proposition 1]{Mar83}, we see that the connecting homomorphism in the induced long exact sequence on $Q_i$-homology is an isomorphism
    \[H_*( \A /( \A \Sq^1 + \A \Sq^2) ; Q_i ) \to H_{* + \deg Q_i - 1}( E_\A ; Q_i).\]
    We have already calculated $H_*(\A/(\A\Sq^1+\A\Sq^2);Q_i)$ in Lemma~\ref{lem:Q_i homology of some quotients}, so we can compute the connecting homomorphism for $Q_0$-homology by
    \begin{align*}
        Q_0 \chi ( \Sq^{4k} ) & = \chi ( \Sq^{4k} Q_0 ) \\
        & = \chi ( Q_0 \Sq^{4k} + Q_1 \Sq^{4n - 2} )\tag{Lemma~\ref{lem:Q_i Leibniz rules}} \\
        & = \chi ( \Sq^{4k} ) Q_0 + \chi ( \Sq^{4n - 2} ) Q_1 \\
        & = \chi ( \Sq^{4k} ) \Sq^1 + \chi ( \Sq^{4n - 2} ) \Sq^2 \Sq^1 + \chi ( \Sq^{4n - 2} ) \Sq^1 \Sq^2.\tag{Setup~\ref{setup:Q_i homology}}
    \end{align*}
    Since $\chi ( \Sq^{4n - 2} ) \Sq^2 \in \A \Sq^1$ by Lemma~\ref{lem:multiple of Sq1}, the term $\chi ( \Sq^{4n - 2} ) \Sq^2 \Sq^1$ vanishes because $( \Sq^1 )^2 = 0$. Since the isomorphism $\A\Sq^1+\A\Sq^2\to\Sigma E_\A$ satisfies $\Sq^1\mapsto e_0$ and $\Sq^2\mapsto e_1$, we find that $H_* ( E_\A ; Q_0 )$ is generated by $\chi ( \Sq^{4k} ) e_0 + \chi ( \Sq^{4k - 2} ) \Sq^1 e_1$. 
    
    For $Q_1$, we calculate
    \begin{align*}
        Q_1 \chi ( \Sq^{2\sum_{j=1}^k\Delta_{i_j}} ) & = \chi ( \Sq^{2\sum_{j=1}^k\Delta_{i_j}} Q_1 ) \\
        & = \chi ( Q_1 \Sq^{2\sum_{j=1}^k\Delta_{i_j}} )\tag{Lemma~\ref{lem:Q_i Leibniz rules}} \\
        & = \chi ( \Sq^{2\sum_{j=1}^k\Delta_{i_j}} ) Q_1 \\
        & = \chi ( \Sq^{2\sum_{j=1}^k\Delta_{i_j}} ) ( \Sq^2 \Sq^1 + \Sq^1 \Sq^2 ),\tag{Setup~\ref{setup:Q_i homology}}
    \end{align*}
    so $H_* ( E_\A ; Q_1 )$ is generated by $\chi ( \Sq^{2\sum_{j=1}^k\Delta_{i_j}} ) ( \Sq^2 e_0 + \Sq^1 e_1 )$.
\end{proof}

\subsection{$Q_i$-homology of $H^*\MSpinh$}
Our next goal is to calculate the $Q_i$-homology of $M_h:=H^*\MSpinh$. To begin, recall the Wu formula:
\begin{equation}\label{eq:wu}
    \Sq^i w_j = \sum_{t = 0}^i \binom{j + t - i - 1}{t} w_{i - t} w_{j + t}.
\end{equation}
Evaluating the action of $\Sq^1$ and using the fact that $w_1 = 0$ in $H^*\BSpinh$, we can compute $Q_0w_j$ and $Q_1w_j$.

\begin{lem}\label{lem:Q_i times Stiefel--Whitney}
    For any $j \in \N$,
    \[
        Q_0 w_j =
        \begin{cases}
            w_{j + 1} & j \text{ is even}, \\
            0 & j \text{ is odd},
        \end{cases}
        \qquad\text{and}\qquad
        Q_1 w_j =
        \begin{cases}
            w_{j + 3} + w_3 w_j & j \text{ is even}, \\
            w_3 w_j & j \text{ is odd}
        \end{cases}
    \]
    in $H^* \BSpinh$.
\end{lem}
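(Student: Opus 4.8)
The plan is to expand everything via the Wu formula (Equation~\ref{eq:wu}), use the single relation $w_1 = 0$ valid in $H^*\BSpinh$, and then reduce the resulting binomial coefficients modulo $2$.

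First I would dispatch $Q_0 = \Sq^1$. The Wu formula with $i = 1$ gives $\Sq^1 w_j = w_1 w_j + (j-1) w_{j+1}$, and since $w_1 = 0$ this collapses to $(j-1) w_{j+1} \pmod 2$, which equals $w_{j+1}$ when $j$ is even and $0$ when $j$ is odd. This also records the identity $\Sq^1 w_j = (j-1) w_{j+1}$, which I reuse below.

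Next I would compute $Q_1 w_j = \Sq^3 w_j + \Sq^2\Sq^1 w_j$ term by term. The Wu formula with $i = 3$, after discarding the summand containing $w_1$, gives $\Sq^3 w_j = w_3 w_j + (j-3) w_2 w_{j+1} + \binom{j-1}{3} w_{j+3}$. For the second piece, I substitute $\Sq^1 w_j = (j-1) w_{j+1}$ and apply the Wu formula to $\Sq^2 w_{j+1}$ (again the $w_1$-term drops), obtaining $\Sq^2\Sq^1 w_j = (j-1)\bigl(w_2 w_{j+1} + \binom{j}{2} w_{j+3}\bigr)$. Adding the two, the coefficient of $w_2 w_{j+1}$ is $(j-3)+(j-1) = 2j-4 \equiv 0 \pmod 2$, so that term vanishes, leaving $Q_1 w_j = w_3 w_j + \bigl(\binom{j-1}{3} + (j-1)\binom{j}{2}\bigr) w_{j+3}$.

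It then remains to verify that $\binom{j-1}{3} + (j-1)\binom{j}{2} \equiv 1 \pmod 2$ for $j$ even and $\equiv 0 \pmod 2$ for $j$ odd; this is the only step needing any care. I would do it with Lucas' theorem: $\binom{j-1}{3}$ is odd exactly when $j \equiv 0 \pmod 4$, while $(j-1)\binom{j}{2}$ is odd exactly when $j$ is even and $\binom{j}{2}$ is odd, i.e. when $j \equiv 2 \pmod 4$; checking the four residues of $j$ modulo $4$ then gives the claim. (For the finitely many small $j$ where the Wu formula involves binomial coefficients with negative upper index, one uses the usual convention $\binom{n}{k} = n(n-1)\cdots(n-k+1)/k!$, under which every manipulation above still holds; alternatively these cases can be checked directly.) The only real obstacle is this last bit of mod-$2$ binomial bookkeeping; everything else is a straight substitution into the Wu formula.
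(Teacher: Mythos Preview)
Your proposal is correct and follows essentially the same route as the paper: apply the Wu formula to $\Sq^1$, $\Sq^2$, $\Sq^3$, use $w_1=0$ to kill the relevant terms, observe that the $w_2 w_{j+1}$ contributions cancel since $(j-3)+(j-1)\equiv 0\pmod 2$, and finish by determining the parity of $\binom{j-1}{3}+(j-1)\binom{j}{2}$ via a mod-$4$ case check. The only cosmetic difference is that you invoke Lucas' theorem for the binomial parities while the paper argues directly from $\binom{j}{2}=\tfrac{1}{2}j(j-1)$ and $\binom{j-1}{3}=\tfrac{1}{6}(j-1)(j-2)(j-3)$.
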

\begin{proof}
    Using Equation~\ref{eq:wu}, we find
    \begin{align*}
        Q_0 w_j & = \binom{j - 2}{0} w_1 w_j + \binom{j - 1}{1} w_0 w_{j + 1} \\
        & =( j - 1) w_{j + 1}.
    \end{align*}
    For $Q_1$, we need to evaluate $\Sq^2$ and $\Sq^3$ as well:
    \begin{align*}
        \Sq^2 w_j & = w_2 w_j + \binom{j - 1}{2} w_{j + 2}, \\
        \Sq^3 w_j & = w_3 w_j +( j - 3) w_2 w_{j + 2} + \binom{j - 1}{3} w_{j + 3}.
    \end{align*}
    Putting these together gives us
    \begin{align*}
        Q_1 w_j & =( \Sq^3 + \Sq^2 \Sq^1) w_j \\
        & = w_3 w_j+( j - 3) w_2 w_{j + 2} + \binom{j - 1}{3} w_{j + 3} +( j - 1) \Sq^2 w_{j + 1} \\
        & = w_3 w_j +( j - 3) w_2 w_{j + 2} + \binom{j - 1}{3} w_{j + 3} +( j - 1) w_2 w_{j + 1} +( j - 1) \binom{j}{2} w_{j + 3} \\
        & = w_3 w_j + \left( \binom{j - 1}{3} +( j - 1) \binom{j}{2} \right) w_{j + 3}.
    \end{align*}
    It remains to determine the parity of $\binom{j-1}{3}+(j-1)\binom{j}{2}$. Note $\binom{j}{2} = \frac{1}{2} j( j - 1)$ is even if and only if $j = 0, 1 \pmod{4}$, and $\binom{j - 1}{3} = \frac{1}{6}( j - 1)( j - 2)( j - 3)$ is even if and only if $j = 1, 2, 3 \pmod{4}$. Thus $\binom{j-1}{3}+(j-1)\binom{j}{2}$ is odd if $j$ is even and is even if $j$ is odd.
\end{proof}

As an application of Lemma~\ref{lem:Q_i times Stiefel--Whitney}, we prove the following lemma used in the proof of Proposition \ref{prop:lift of pontryagin class}.

\begin{lem}\label{lem:pontryagin class not q0 boundary}
    The class $w_{2i_1}^2 \cdots w_{2i_s}^2\in\BSpinh$ is not in the image of $\Sq^1$.
\end{lem}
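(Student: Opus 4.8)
The plan is to construct a ring homomorphism out of $H^*\BSpinh$ that is a chain map for the $\Sq^1$-differential (with trivial differential on the target) and that sends $w_{2i_1}^2\cdots w_{2i_s}^2$ to something nonzero; since such a map kills $\Sq^1$-boundaries, this finishes the proof. Recall from Proposition~\ref{prop:bspinh cohomology} that $H^*\BSpinh$ is the \emph{free} polynomial ring $\Z/2\Z[w_i \mid i \in S]$ on the classes $w_i$ with $i \in S := \{i \geq 2 : i \neq 2^{k+2}+1 \text{ for } k \geq 0\}$, and observe that every $w_{2i_j}$ appearing in the statement is among these generators, since $2i_j$ is even and $\geq 2$ (so in particular never of the form $2^{k+2}+1$). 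Let $\phi : H^*\BSpinh \to C := \Z/2\Z[w_{2k} \mid k \geq 1]$ be the degree-preserving ring homomorphism sending each even-index generator $w_{2k}$ to itself and each odd-index generator to $0$; this is well defined because the source is a free polynomial ring on the $w_i$, $i \in S$.

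Next I would check that $\phi \circ \Sq^1 = 0$. As $\Sq^1$ is a derivation and $\phi$ is a ring map, it is enough to verify $\phi(\Sq^1 w_i) = 0$ on each generator $w_i$, $i \in S$. By Lemma~\ref{lem:Q_i times Stiefel--Whitney}, $\Sq^1 w_i = 0$ when $i$ is odd, so there is nothing to check; and $\Sq^1 w_i = w_{i+1}$ when $i$ is even. In the latter case $w_{i+1}$ is a homogeneous element of \emph{odd} degree $i+1$ in $H^*\BSpinh$, while $\phi$ preserves degree and its target $C$ is concentrated in even degrees (every generator $w_{2k}$ has even degree). Hence $\phi(w_{i+1}) = 0$ automatically, and $\phi \circ \Sq^1 = 0$ follows.

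To conclude: if $w_{2i_1}^2\cdots w_{2i_s}^2 = \Sq^1 y$ for some $y \in H^*\BSpinh$, then applying $\phi$ gives $w_{2i_1}^2\cdots w_{2i_s}^2 = \phi(\Sq^1 y) = 0$ in $C$; but $w_{2i_1}^2\cdots w_{2i_s}^2$ is a nonzero monomial in the polynomial ring $C$ (a product of positive powers of generators of $C$), a contradiction. (For $s = 0$ the claim is trivial since $\Sq^1$ raises degree.) The argument has essentially no hard step; the one point to be careful about is precisely the one exploited above, namely that $\Sq^1$ of an even-index Stiefel--Whitney generator is an odd-degree class and is therefore forced into $\ker\phi$ regardless of whether $w_{i+1}$ is itself one of the chosen polynomial generators — this is where both the explicit description of $H^*\BSpinh$ in Proposition~\ref{prop:bspinh cohomology} (which pins down exactly which $w_i$ are generators) and the grading get used.
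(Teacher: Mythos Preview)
Your proof is correct and takes a genuinely different route from the paper's. The paper appeals to the computation of the Margolis homology $H_*(H^*\BSpinh;Q_0)\cong\Z/2\Z[w_2^2,w_{2k}^2,\nu_{2^r}]$ from \cite{Hu22}, then rewrites each factor $w_{2^r}^2$ in terms of $\nu_{2^r}^2$ and $Q_0$-boundaries to see that the product represents a nonzero Margolis homology class. Your argument bypasses this entirely: you build a degree-preserving ring map $\phi$ onto a polynomial ring concentrated in even degrees by killing the odd-index polynomial generators, observe that $\phi\circ\Sq^1$ is a $\phi$-derivation vanishing on generators (since $\Sq^1 w_{2k}=w_{2k+1}$ lands in odd degree, where $\phi$ is forced to vanish), and conclude that $\phi$ annihilates all $\Sq^1$-boundaries while preserving the monomial $w_{2i_1}^2\cdots w_{2i_s}^2$. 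This is more elementary and self-contained---it needs only the polynomial description of $H^*\BSpinh$ and the Wu-formula computation of $\Sq^1 w_i$, not the full Margolis homology---whereas the paper's approach has the advantage of fitting into the broader Margolis-homology framework already being developed in that section.
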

\begin{proof}
    Let $B = H^* \BSpinh$. By \cite[Corollary 2.35]{Hu22},
    \[
        H_*(B ; Q_0) = \Z / 2\Z[w_2^2, w_{2k}^2, \nu_{2^r} \ | \ k \neq 2^r, r > 1],
    \]
    where $\nu_i$ is the $i$th Wu class. For our purposes, the only fact about the Wu classes we need is that $\nu_{2^r}$ is $w_{2^r}$ plus products of lower degree classes \cite[p. 316]{Sto68}. In particular, we can write
    \[
        \nu_{2^r} = w_{2^r} + q_r (w_2, \ldots, w_{2^r - 2}) + \sum_{j = 1}^{2^r - 1} w_j x_{r, j}
    \]
    for some polynomial $q_r$ and some $x_{r, j}$. Since we are working over $\Z/2\Z$, the freshman's dream gives us
    \begin{equation}\label{eq:w^2}
    \begin{aligned}
        w_{2^r}^2 & = \nu_{2^r}^2 + q_r(w_2^2, \ldots, w_{2^r - 2}^2) + \sum_{j = 1}^{2^r - 1} w_j^2 x_{r, j}^2 \\
        & = \nu_{2^r}^2 + q_r(w_2^2, \ldots, w_{2^r - 2}^2) + \sum_{j = 1}^{2^r - 1} Q_0 (w_{j - 1} w_j x_{r, j}^2).
    \end{aligned}
    \end{equation}
    Expanding out $w = w_{2i_1}^2 \cdots w_{2i_s}^2$ (using Equation~\ref{eq:w^2} if necessary), we see that $w$ cannot be in the image of $\Sq^1$. Indeed, the expansion of $w$ is a sum of monomials in $R:=\Z / 2\Z[w_2^2, w_{2k}^2, \nu_{2^r}]$ and products of monomials of $R$ and terms of the form
    \begin{equation}\label{eq:Q_0 terms}
    Q_0 (w_{j - 1} w_j x_{r, j}^2).
    \end{equation}
    Modulo terms of the form in Equation~\ref{eq:Q_0 terms} (which lie in the image of $Q_0=\Sq^1$), the class $w$ is a non-zero sum of linearly independent monomials that generate $H_*(B;Q_0)$. As generators of $_*(B;Q_0)$, such monomials do not lie in the image of $Q_0\cdot -$, so $w\not\in\mr{im}(Q_0\cdot -)$.
\end{proof}

To calculate $H_*(M_h;Q_i)$, we will use methods similar to those of \cite{ABP67}.\footnote{One could alternatively use the shearing map (Lemma~\ref{lem:shearing}) for this calculation.} The difficult part of this computation is managing the Stiefel--Whitney classes in $H^* \BSpinh$ that hit a decomposable Stiefel--Whitney class after applying the $Q_i$-differential. The resolution is that there is always a way to replace these classes. To construct our replacement generators, we need another lemma.

\begin{lem}
The map
\begin{align*}
    \A&\to M_h\\
    1&\mapsto U_h
\end{align*}
factors through $\udq_\A \cong \A /( \A \Sq^1 + \A( \Sq^5 + \Sq^4 \Sq^1))$.
\end{lem}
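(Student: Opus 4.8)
The plan is to show that the map $\A \to M_h$ sending $1 \mapsto U_h$ kills the left ideal $\A\Sq^1 + \A(\Sq^5 + \Sq^4\Sq^1)$, since then it factors through the quotient $\A/(\A\Sq^1 + \A(\Sq^5+\Sq^4\Sq^1)) \cong \udq_\A$. Because the map is a homomorphism of left $\A$-modules, it suffices to check that $\Sq^1 U_h = 0$ and $(\Sq^5 + \Sq^4\Sq^1)U_h = 0$ in $M_h$. Recall that the Steenrod action on the Thom class is governed by $\Sq^i U_h = w_i U_h$, where $w_i$ is the $i^{\text{th}}$ Stiefel--Whitney class of the canonical oriented bundle $\BSpinh \to \BSO$, pulled into $H^*\MSpinh$ via the Thom isomorphism.

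First I would treat $\Sq^1 U_h$. We have $\Sq^1 U_h = w_1 U_h$, and $w_1 = 0$ in $H^*\BSpinh$ since it vanishes on all $\SO(n)$-bundles (as noted in Setup~\ref{setup:shearing}); hence $\Sq^1 U_h = 0$. Next I would handle $(\Sq^5 + \Sq^4\Sq^1)U_h$. Since $\Sq^1 U_h = w_1 U_h = 0$, the term $\Sq^4\Sq^1 U_h$ vanishes as well, so it remains to check $\Sq^5 U_h = 0$. But $\Sq^5 U_h = w_5 U_h$, and by Proposition~\ref{prop:bspinh cohomology} the Stiefel--Whitney class $w_5$ vanishes in $H^*\BSpinh$. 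Therefore $(\Sq^5 + \Sq^4\Sq^1)U_h = 0$, and the map $\A \to M_h$ kills both generators of the ideal.

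Finally, having shown that $\A\Sq^1 + \A(\Sq^5 + \Sq^4\Sq^1)$ lies in the kernel, the universal property of the quotient gives a factorization $\A \to \A/(\A\Sq^1 + \A(\Sq^5+\Sq^4\Sq^1)) \to M_h$, and the first of these is precisely $\udq_\A$ by Definition~\ref{def:upside down question mark} together with Corollary~\ref{cor:cohomology of ksp} (which identifies $H^*\ksp$, and hence its base-class behavior, with this quotient). There is essentially no obstacle here: the only thing to be careful about is invoking the correct facts, namely the vanishing of $w_1$ on oriented bundles and the vanishing of $w_5$ in $H^*\BSpinh$ from Proposition~\ref{prop:bspinh cohomology}; everything else is the formal statement that a left-module map is determined by, and factors through the annihilator of, the image of the generator.
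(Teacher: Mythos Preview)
Your proof is correct and follows essentially the same approach as the paper: both check that the two generators $\Sq^1$ and $\Sq^5+\Sq^4\Sq^1$ of the ideal annihilate $U_h$, using $w_1=0$ and $w_5=0$ in $H^*\BSpinh$. Your computation of $(\Sq^5+\Sq^4\Sq^1)U_h$ is in fact slightly more direct than the paper's, which first rewrites $\Sq^5+\Sq^4\Sq^1=\Sq^2\Sq^3$ via an Adem relation and then expands $\Sq^2(w_3U_h)$ with the Cartan and Wu formulas; you avoid this by treating the two summands separately.
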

\begin{proof}
This is true because $\Sq^1 U_h = w_1 U_h = 0$ and
\begin{align*}
    ( \Sq^5 + \Sq^4 \Sq^1) U_h & = \Sq^2 \Sq^3 U_h \\
    & = \Sq^2( w_3 U_h) \\
    & = ( \Sq^2 w_3) U_h + ( \Sq^1 w_3 ) ( \Sq^1 U_h) + w_3( \Sq^2 U_h) \\
    & = ( w_2 w_3 + w_5) U_h + w_3 w_2 U_h\tag{Equation~\ref{eq:wu}} \\
    & = w_2 w_3 U_h + w_2 w_3 U_h\tag{$w_5\in H^5\BSpinh$ vanishes} \\
    & = 0.\qedhere
\end{align*}
\end{proof}

\begin{cor}\label{cor:Q_i cycles}
    Each cycle in $H_*(\udq_\A;Q_i)$ maps to a cycle in $H_*(M_h;Q_i)$.
\end{cor}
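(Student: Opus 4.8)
The plan is to read this off immediately from the preceding lemma. That lemma produces a morphism of $\A$-modules $f \colon \udq_\A \to M_h$ sending the bottom class $q_0$ to the Thom class $U_h$; it is the factorization of the map $1 \mapsto U_h$ through $\udq_\A \cong \A/(\A\Sq^1 + \A(\Sq^5 + \Sq^4\Sq^1))$. As recorded in Setup~\ref{setup:Q_i homology}, any map of $\A$-modules is in particular a map of chain complexes for the differentials $Q_0 \cdot (-)$ and $Q_1 \cdot (-)$, since $Q_0, Q_1 \in \A$; concretely $Q_i f(x) = f(Q_i x)$ for every $x$. Hence $f$ carries $Q_i$-cycles to $Q_i$-cycles (and $Q_i$-boundaries to $Q_i$-boundaries), which is precisely the claim, and in particular $f$ descends to a map $H_*(\udq_\A; Q_i) \to H_*(M_h; Q_i)$.

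It is worth spelling out the images of the generators computed in Proposition~\ref{prop:Q homology}, since these will serve as the ``replacement generators'' used in the computation of $H_*(M_h; Q_i)$ in the next subsection: the generator $\chi(\Sq^{4k}) q_0$ of $H_*(\udq_\A; Q_0)$ maps to $\chi(\Sq^{4k}) U_h \in M_h$, and the generator $\chi(\Sq^{2\sum_{j=1}^k\Delta_{i_j}}) \Sq^2 q_0$ of $H_*(\udq_\A; Q_1)$ maps to $\chi(\Sq^{2\sum_{j=1}^k\Delta_{i_j}}) \Sq^2 U_h$. Both of these are genuine $Q_i$-cycles in $M_h$ because $q_0$ is a $Q_i$-cycle in $\udq_\A$ and $f$ commutes with the action of $\A$.

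There is essentially no obstacle here: once the preceding lemma is available, the corollary is a formal consequence of the fact that the comparison map lives in the category of modules over the Steenrod algebra and $Q_0, Q_1$ are elements of that algebra. The substantive work is instead the computation of $H_*(M_h; Q_i)$ itself and the verification that $\bar\theta$ induces an isomorphism on Margolis homology, both of which occupy the remainder of Section~\ref{sec:margolis homology}.
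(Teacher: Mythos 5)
Your proof is correct and takes essentially the same route as the paper: the preceding lemma gives an $\A$-module map $\udq_\A \to M_h$, and since $Q_0, Q_1 \in \A$, any $\A$-module map commutes with $Q_i$ and hence carries cycles to cycles. The extra paragraph spelling out images of the Margolis homology generators is accurate and helpful context, but not needed for the corollary itself.
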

\begin{proof}
    This follows by applying $H_*(-;Q_i)$ to the factorization $\A\to\udq_\A\to M_h$.
\end{proof}

The final ingredient we need before computing $H_*(M_h;Q_0)$ is an alternative presentation of $H^*\BSpinh$.

\begin{lem}\label{lem:alternative cohomology of bspinh}
    For each $k\geq 2$, there are cohomology classes $f_{2^k} \in H^{2^k} \BSpinh$ such that $Q_0 f_{2^k} = 0$ and
    \[
        H^* \BSpinh \cong \Z / 2\Z \left[ w_i, f_{2^k} \ \middle| \ k \geq 2, i \geq 2, i \neq 2^r\text{ or } 2^r + 1 \text{ for } r \geq 2 \right].
    \]
\end{lem}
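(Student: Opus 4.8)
The plan is to take $f_{2^k}$ to be the Wu class $\nu_{2^k}$ for each $k\geq 2$, and then to show that this is a legitimate change of polynomial generators of $H^*\BSpinh$. Two properties of $\nu_{2^k}$ make this work. First, $\nu_{2^k} = w_{2^k} + (\text{a sum of products of positive-degree Stiefel--Whitney classes})$ by \cite[p.~316]{Sto68}; so $\nu_{2^k} - w_{2^k}$ is a decomposable class of degree $2^k$, which, after rewriting each $w_{2^r+1}$ with $2^r+1 < 2^k$ in terms of the genuine generators of $H^*\BSpinh$ from Proposition~\ref{prop:bspinh cohomology}, becomes a polynomial in generators of degree strictly less than $2^k$. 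Second, $Q_0\nu_{2^k} = 0$ in $H^*\BSpinh$; this is the assertion that $\nu_{2^k}$ is a $Q_0$-cycle, which is part of the description $H_*(H^*\BSpinh;Q_0)\cong\Z/2\Z[w_2^2, w_{2k}^2, \nu_{2^r}\mid k\neq 2^r,\ r>1]$ of \cite[Corollary~2.35]{Hu22} already invoked in the proof of Lemma~\ref{lem:pontryagin class not q0 boundary}. Alternatively, since $Q_0 w_{2^k} = w_{2^k+1}$ by Lemma~\ref{lem:Q_i times Stiefel--Whitney} and $w_5$ vanishes, one can check $Q_0\nu_{2^k}=0$ directly from the Wu formula; for $k=2$ this is immediate from $\nu_4 = w_4 + w_2^2$.

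Granting these, I would verify the presentation as follows. Consider the graded $\Z/2\Z$-algebra map $\varphi$ from $\Z/2\Z[w_i, f_{2^k}\mid k\geq 2,\ i\geq 2,\ i\neq 2^r\text{ or }2^r+1\text{ for }r\geq 2]$ to $H^*\BSpinh$ with $w_i\mapsto w_i$ and $f_{2^k}\mapsto\nu_{2^k}$. It is surjective: the source generators $w_i$ map to the corresponding generators of $H^*\BSpinh$, and $w_{2^k} = \varphi(f_{2^k}) + (\nu_{2^k}-w_{2^k})$ lies in the image by induction on degree, since the correction term is a polynomial in generators of degree $<2^k$ and, together with the $w_{2^r+1}$, the $w_{2^k}$ are the only generators of $H^*\BSpinh$ not directly in the image. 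For injectivity, both source and target are polynomial $\Z/2\Z$-algebras, so it suffices to compare Poincaré series, i.e., to check that in each degree both have the same number of algebra generators: for the target this is Proposition~\ref{prop:bspinh cohomology}, giving one generator in each degree of $\{i\geq 2 : i\neq 2^r+1\text{ for }r\geq 2\}$; on the source side the degrees of the $w_i$ form $\{i\geq 2 : i\neq 2^r\text{ and }i\neq 2^r+1\}$ and those of the $f_{2^k}$ form $\{2^r : r\geq 2\}$, and these disjoint sets partition that same set of degrees, again with one generator each. A degreewise surjection between graded vector spaces of equal finite dimension in each degree is an isomorphism, so $\varphi$ is an isomorphism and the lemma follows.

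I do not expect a serious obstacle: given Proposition~\ref{prop:bspinh cohomology}, the classical fact that $\nu_{2^k}$ equals $w_{2^k}$ modulo decomposables, and the $Q_0$-homology computation already used for Lemma~\ref{lem:pontryagin class not q0 boundary}, the remaining content is bookkeeping in the change-of-generators step. The one point that genuinely needs care is confirming $Q_0\nu_{2^k}=0$ on the nose rather than merely that $\nu_{2^k}$ represents a $Q_0$-homology class; if one prefers not to lean on \cite{Hu22} for this, the cleanest self-contained route is an induction on $k$ using the Wu formula, the relation $w_5=0$, and the fact that $Q_0$ is a derivation (hence annihilates squares), and this induction is where the bulk of a fully written-out proof would go.
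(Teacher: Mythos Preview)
Your proof is correct, and it follows the same underlying strategy as the paper: replace each generator $w_{2^k}$ by a class that (i) equals $w_{2^k}$ modulo decomposables and (ii) is a $Q_0$-cycle, then check that this is a legitimate change of polynomial generators. The difference is only in the specific choice of replacement class. The paper defines $f_{2^k}$ via the Thom isomorphism by $\chi(\Sq^{2^k})U_h = f_{2^k}U_h$, and verifies property (ii) using its own computation of $H_*(\udq_\A;Q_0)$ (Proposition~\ref{prop:Q homology} via Corollary~\ref{cor:Q_i cycles}): since $\chi(\Sq^{4m})q_0$ is a $Q_0$-cycle in $\udq_\A$ and $q_0\mapsto U_h$, the class $f_{2^k}U_h$ is a $Q_0$-cycle, whence $Q_0 f_{2^k}=0$ because $Q_0U_h=0$. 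You instead take $f_{2^k}=\nu_{2^k}$ and lean on \cite[Corollary~2.35]{Hu22} for (ii). These are in general \emph{different} classes (for instance the paper's $f_4$ is just $w_4$, since $\chi(\Sq^4)=\Sq^4+\Sq^3\Sq^1$ and $w_1=0$, whereas $\nu_4=w_4+w_2^2$), but both satisfy (i) and (ii), so either choice proves the lemma. Your route is a bit quicker given the cited input; the paper's route is more self-contained and has the advantage of running in exact parallel with the $Q_1$ analogue in Lemma~\ref{lem:another alternative cohomology of bspinh}, where the replacement generators $g_{2^r-2}$ are again extracted from $\chi(\Sq^R)$ acting on the Thom class. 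Your caveat about needing $Q_0\nu_{2^k}=0$ on the nose is well taken, but the worry is unfounded: a polynomial generator of $H_*(-;Q_0)$ is in particular a $Q_0$-cycle, so Hu's statement already gives this.
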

\begin{proof}
    There are classes $f_{2^k} \in H^{2^k} \BSpinh$ such that $\chi( \Sq^{2^k}) U_h = \chi( \Sq^{4 \cdot 2^{k - 2}}) U_h = f_{2^k} U_h$, as the Thom isomorphism theorem implies every element of $H^* \MSpinh$ can be written in the form $f U_h$ for some $f \in H^* \BSpinh$. Since $Q_0 U_h = 0$ by Lemma~\ref{lem:Q_i times Stiefel--Whitney}, we have
    \begin{align*}
        0 & = Q_0( f_{2^k} U_h) \\
        & = ( Q_0 f_{2^k}) U_h + f_{2^k}( Q_0 U_h) \\
        & =( Q_0 f_{2^k}) U_h.
    \end{align*}
    This implies $Q_0 f_{2^k} = 0$. From the usual presentation of $H^*\BSpinh$ (Proposition~\ref{prop:bspinh cohomology}), it remains to show that $f_{2^k}\equiv w_{2^k}\mod{(w_1,\ldots,w_{2^k-1})}$. To prove this, write $\chi(\Sq^{2^k})=\Sq^{2^k}+a$ for some $a\in\A$, so that
    \begin{align*}
        f_{2^k}U_h&=(\Sq^{2^k}+a)U_h\\
        &=w_{2^k}U_h+aU_h.
    \end{align*}
    If we write $aU_h$ in the monomial basis, we want to show that the coefficient of $w_{2^k}U_h$ is zero. Since $\chi( \Sq^{2^k})\equiv\Sq^{2^k}\mod{(\Sq^1,\ldots,\Sq^{2^k-1})}$ \cite[Section 7]{Mil58}, we know that $a\in(\Sq^1,\ldots,\Sq^{2^k-1})$. Now expand out the terms of $aU_h$ by using the action of the Steenrod squares on $U_h$ and the Wu formula. None of the resulting monomials can have degree equal to that of $w_{2^k}$, so $a \in (w_1, \ldots, w_{2^k - 1})$.
\end{proof}

We are now set to compute $H_*(M_h;Q_0)$.

\begin{lem}\label{lem:Q0 homology of M_h}
    Let $f_{2^k}\in H^{2^k}\BSpinh$ be as in Lemma~\ref{lem:alternative cohomology of bspinh}. Let
    \[
        R:=\Z / 2\Z \left[ w_{2i}^2, f_{2^k} \ \middle| \ k \geq 2, i
        \geq 3, i \neq 2^{r - 1} \text{ for } r \geq 2 \right].
    \]
    Then $H_*(M_h;Q_0)$ is the free $R$-module generated by $U_h\in M_h$.
\end{lem}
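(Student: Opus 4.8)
The plan is to transfer the computation to $H^*\BSpinh$ via the Thom isomorphism and then exploit the presentation of Lemma~\ref{lem:alternative cohomology of bspinh}, in which $Q_0$ acts transparently. Since $Q_0U_h=\Sq^1U_h=w_1U_h=0$ by Lemma~\ref{lem:Q_i times Stiefel--Whitney}, the Thom isomorphism $x\mapsto xU_h$ is an isomorphism of $Q_0$-chain complexes $(H^*\BSpinh,Q_0)\xrightarrow{\sim}(M_h,Q_0)$. So it suffices to show $H_*(H^*\BSpinh;Q_0)\cong R$ via the evident map: the generators $w_{2i}^2$ and $f_{2^k}$ of $R$ are $Q_0$-cycles, so $R\subseteq\ker Q_0$, and once the isomorphism is in hand, multiplying by $U_h$ gives the stated description of $H_*(M_h;Q_0)$, with freeness over $R$ automatic since $x\mapsto xU_h$ is injective.

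Next I would use the presentation $H^*\BSpinh\cong\Z/2\Z[\,w_i,f_{2^k}\mid k\geq2,\ i\geq2,\ i\neq2^r,2^r+1\text{ for }r\geq2\,]$ of Lemma~\ref{lem:alternative cohomology of bspinh}. The surviving Stiefel--Whitney generators fall into consecutive pairs $(w_{2m},w_{2m+1})$ with $m\geq1$ and $m\neq2^{r-1}$ for $r\geq2$, since the only odd index $2m+1$ for which $w_{2m+1}$ is not a polynomial generator is one with $2m\in\{2^r:r\geq2\}$, and exactly these even generators have been replaced by the $f_{2m}$. By Lemma~\ref{lem:Q_i times Stiefel--Whitney}, together with $Q_0f_{2^k}=0$, the derivation $Q_0$ acts by $Q_0w_{2m}=w_{2m+1}$, $Q_0w_{2m+1}=0$, and $Q_0f_{2^k}=0$. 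Thus, as a $Q_0$-complex, $H^*\BSpinh$ is the tensor product of the polynomial DGAs $\Z/2\Z[w_{2m},w_{2m+1}]$, one for each admissible $m$, together with the complexes $\Z/2\Z[f_{2^k}]$ carrying zero differential.

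Then I would apply the Künneth theorem for $Q_0$-homology (Setup~\ref{setup:Q_i homology}). A direct check shows $H_*(\Z/2\Z[a,b];Q_0)\cong\Z/2\Z[a^2]$ when $Q_0a=b$ and $Q_0b=0$: the cycles are spanned by the monomials $a^ib^j$ with $i$ even, and the boundaries by those additionally having $j\geq1$. Since the differential on $\Z/2\Z[f_{2^k}]$ vanishes, that factor contributes itself. Taking the tensor product of all these contributions recovers the polynomial ring on the classes $w_{2m}^2$ (for admissible $m$) and the $f_{2^k}$, which is $R$; transporting back along the Thom isomorphism identifies $H_*(M_h;Q_0)$ with the free $R$-module on $U_h$.

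The step I expect to be the main obstacle is the second one: checking carefully that, in the presentation of Lemma~\ref{lem:alternative cohomology of bspinh}, every remaining polynomial generator is carried by $Q_0$ onto $0$ or onto another polynomial generator --- never onto a decomposable class --- so that the decomposition of $H^*\BSpinh$ into a tensor product of $Q_0$-subcomplexes is valid. This is exactly where the earlier substitution of $f_{2^r}$ for $w_{2^r}$ pays off, and it is the analogue of the difficulty flagged, and handled in the same spirit, in \cite{ABP67}.
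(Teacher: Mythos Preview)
Your proposal is correct and follows essentially the same route as the paper: reduce to $H^*\BSpinh$ via the Thom isomorphism (using $Q_0U_h=0$), invoke the alternative presentation of Lemma~\ref{lem:alternative cohomology of bspinh} to split $H^*\BSpinh$ as a tensor product of factors $\Z/2\Z[f_{2^k}]$ and $\Z/2\Z[w_{2m},w_{2m+1}]$ on which $Q_0$ acts transparently, compute each factor's $Q_0$-homology, and reassemble via K\"unneth. Your identification of the ``main obstacle''---that the substitution of $f_{2^r}$ for $w_{2^r}$ is precisely what guarantees $Q_0$ sends generators to generators or zero, so the tensor splitting respects the differential---is exactly the point the paper's proof hinges on.
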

\begin{proof}
    We first use the K\"unneth theorem to break up the calculation into manageable pieces. By Lemma~\ref{lem:alternative cohomology of bspinh}, we can decompose $H^* \BSpinh$ the following tensor product:
    \[
        H^* \BSpinh \cong \bigotimes_{k \geq 2} \Z / 2\Z[ f_{2^k}] \otimes \bigotimes_{i \neq 2^{r - 1}, r \geq 2} \Z / 2\Z[ w_{2i}, w_{2i + 1}].
    \]
    Each factor is well-defined as a module over the exterior algebra generated by $Q_0$, since $Q_0 f_{2^k} = Q_0 w_{2i+1}= 0$ and $Q_0 w_{2i} = w_{2i + 1}$. Every monomial in $\Z / 2\Z[ f_{2^k}]$ is a cycle, so $H_*( \Z / 2\Z[ f_{2^k}] ; Q_0) \cong \Z / 2\Z \left[ f_{2^k} \right]$. For the other factors, we have
    \[
        Q_0( w_{2i}^a w_{2i + 1}^b) = a w_{2i}^{a - 1} w_{2i + 1}^{b + 1}.
    \]
    It follows that $\ker(Q_0\cdot-)$ is the subspace generated by those monomials having an even number of $w_{2i}$ factors, and that $\mr{im}(Q_0\cdot-)$ is the subspace generated by those monomials having an even number of $w_{2i}$ factors and at least one $w_{2i + 1}$. Hence the homology is generated by monomials having an even number of $w_{2i}$ factors and no factors of $w_{2i + 1}$, so $H_*( \Z / 2\Z \left[ w_{2i}, w_{2i + 1} \right] ; Q_0) \cong \Z / 2\Z \left[ w_{2i}^2 \right]$. Using the K\"unneth theorem over a field, we see that
    \[H_*(H^* \BSpinh;Q_0)\cong R.\] Since the Thom class $U_h$ satisfies $Q_0 U_h = 0$, the Thom isomorphism $x \mapsto x U_h$ is a map of chain complexes. It follows that $H_*( M_h ; Q_0)\cong R\cdot U_h$, as desired.
\end{proof}

To compute $H_*(M_h;Q_1)$, we again need an alternative presentation of $H^*\BSpinh$.

\begin{lem}\label{lem:another alternative cohomology of bspinh}
There are classes $t_{2j+1}\in H^{2j+1}\BSpinh$ for $j\geq 3$ and $j\neq 2^m$, and classes $g_{2^k-2}\in H^{2^k-2}\BSpinh$ for $k\geq 3$, such that $Q_1t_{2j+1}=Q_1g_{2^k-2}=0$ and
\[H^*\BSpinh\cong\Z/2\Z\left[w_3,w_{2i},t_{2j+1},g_{2^r-2}\ \middle|\ i\neq 2^{r-1}-1,r\geq 3,j\geq 3,j\neq 2^m\right].\]
\end{lem}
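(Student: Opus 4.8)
The plan is to adapt the proof of Lemma~\ref{lem:alternative cohomology of bspinh} from $Q_0$ to $Q_1$: replace each Stiefel--Whitney generator of $H^*\BSpinh$ whose $Q_1$-image is a nonzero decomposable by a nearby class that is an honest $Q_1$-cycle, then check by a triangular change of variables that the new classes still freely generate. For the odd generators this is easy. By Lemma~\ref{lem:Q_i times Stiefel--Whitney}, $Q_1 w_{2j-2} = w_{2j+1} + w_3 w_{2j-2}$ whenever $2j-2$ is even, so we set $t_{2j+1} := Q_1 w_{2j-2} = w_{2j+1} + w_3 w_{2j-2}$. Since $Q_1^2 = 0$ this is automatically a $Q_1$-cycle, and $t_{2j+1} \equiv w_{2j+1}$ modulo decomposables. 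The conditions $j \geq 3$ and $j \neq 2^m$ describe exactly the $j$ for which $2j+1$ is the degree of an odd polynomial generator of $H^*\BSpinh$ (the odd non-generator degrees being $5, 9, 17, \dots$); in particular $w_3$ is not replaced, as there is no even generator of degree zero.

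For the even generators $w_{2^k-2}$ ($k \geq 3$) the situation is more delicate. Here $Q_1 w_{2^k-2} = w_{2^k+1} + w_3 w_{2^k-2}$, and since $k \geq 3$ the class $w_{2^k+1}$ is a nonzero decomposable determined by the defining relations of $H^*\BSpinh$, so $w_{2^k-2}$ is not itself a $Q_1$-cycle. We seek a decomposable class $c_{2^k-2}$ with $Q_1 c_{2^k-2} = Q_1 w_{2^k-2}$ and set $g_{2^k-2} := w_{2^k-2} + c_{2^k-2}$; this is then a $Q_1$-cycle with $g_{2^k-2} \equiv w_{2^k-2}$ modulo decomposables. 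Equivalently, in the short exact sequence of $Q_1$-complexes $0 \to D \to \bar{H} \to \bar{H}/D \to 0$, where $\bar{H}$ is the augmentation ideal of $H^*\BSpinh$ and $D$ is its ideal of decomposables, one checks that the image of $w_{2^k-2}$ in $\bar{H}/D$ (a $Q_1$-cycle, as $w_{2^k+1}+w_3 w_{2^k-2}\in D$) is sent by the connecting map to zero in $H_{2^k+1}(D;Q_1)$, i.e.\ that $w_{2^k+1} + w_3 w_{2^k-2}$ lies in the image of $Q_1$ on decomposables. For $k=3$ this is immediate from the relation $w_9 = w_3 w_6 + w_2 w_7$: one gets $w_9 + w_3 w_6 = w_2 w_7 = Q_1(w_2 w_4)$ using Lemma~\ref{lem:Q_i times Stiefel--Whitney} and the fact that $Q_1$ is a derivation, so $g_6 = w_6 + w_2 w_4$. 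For larger $k$ we proceed by induction on $k$, substituting the expression for $w_{2^k+1}$ obtained from the relation for $w_{2^{k-1}+1}$ by applying $\Sq^{2^{k-1}}$ and the Wu formula, and rewriting each resulting monomial as $Q_1$ of a decomposable via the derivation rule, the vanishing $Q_1(x^2)=0$, and identities such as $Q_1(w_{2a}w_{2b}) = w_{2a+3}w_{2b} + w_{2a}w_{2b+3}$ and $Q_1(w_3^2 x) = w_3^2 Q_1 x$; the cycles $t$ and $g$ already built in lower degrees may be reused.

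Finally, the classes $w_3$, the $w_{2i}$ with $i \neq 2^{r-1}-1$ ($r \geq 3$), the $t_{2j+1}$, and the $g_{2^k-2}$ are in degree-preserving bijection with the polynomial generators $w_i$ ($i \geq 2$, $i \neq 2^{k+2}+1$) of Proposition~\ref{prop:bspinh cohomology}, and each new class equals the corresponding $w_i$ plus a polynomial in strictly lower-degree Stiefel--Whitney classes. Hence the homomorphism from the polynomial algebra on the new classes to $H^*\BSpinh$ that is the identity on each new class is a degree-preserving surjection of polynomial algebras with identical Poincaré series, hence an isomorphism; this is the asserted presentation, and $Q_1 t_{2j+1} = Q_1 g_{2^k-2} = 0$ by construction.

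The main obstacle is the construction of the $g_{2^k-2}$: verifying, for every $k \geq 3$, that $Q_1 w_{2^k-2}$ is a $Q_1$-boundary of a decomposable class. The case $k=3$ is a one-line computation, but the inductive step requires tracking the progressively longer decomposable expression for $w_{2^k+1}$ coming from iterated Wu relations and splitting it, term by term, into $Q_1$-boundaries; the treatment of the $t_{2j+1}$ and the final change-of-variables argument are routine.
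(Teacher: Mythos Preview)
Your treatment of the classes $t_{2j+1}$ is identical to the paper's: set $t_{2j+1}=Q_1 w_{2j-2}=w_{2j+1}+w_3 w_{2j-2}$, observe $Q_1^2=0$, and note that $t_{2j+1}\equiv w_{2j+1}$ modulo decomposables. The final triangular change-of-variables argument is also the same.

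The substantive difference is in the construction of the $g_{2^k-2}$, and here your proposal has a real gap. You reduce the problem to showing that $Q_1 w_{2^k-2}=w_{2^k+1}+w_3 w_{2^k-2}$ is the $Q_1$-image of a decomposable, verify this explicitly for $k=3$, and then outline an induction in which one applies $\Sq^{2^{k-1}}$ to the relation for $w_{2^{k-1}+1}$ and attempts to rewrite the result term by term as $Q_1$-boundaries. You correctly identify this inductive step as ``the main obstacle,'' but you do not carry it out, and it is not evident that the suggested identities suffice to handle the increasingly complicated decomposable expressions for $w_{2^k+1}$ in a uniform way. As written, the argument is incomplete for $k\geq 4$.

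The paper sidesteps this obstacle entirely. Rather than searching for explicit $Q_1$-preimages, it imports ready-made $Q_1$-cycles from Proposition~\ref{prop:Q homology}: the element $\chi(\Sq^{2\Delta_{r-1}})\Sq^2 U_h\in M_h$ is a $Q_1$-cycle by Corollary~\ref{cor:Q_i cycles}. Writing this class as $a\,w_2 U_h+b\,U_h$, where no monomial of $b$ (in the inductively produced basis) is divisible by $w_2$, one checks that the $w_2$-free part of $Q_1$ annihilates $b$ and $b w_3$, forcing $(Q_1 a)w_2 U_h=0$ and hence $Q_1 a=0$; then $g_{2^r-2}:=a$. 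The identification $a\equiv w_{2^r-2}$ modulo decomposables comes from $\chi(\Sq^{2\Delta_{r-1}})\equiv \Sq^{2^r-2}$ modulo longer admissibles. This argument is uniform in $r$ and requires no case-by-case boundary hunting; that is what your approach is missing.
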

\begin{proof}
    Define $t_{2j + 1}:=w_{2j + 1} + w_3 w_{2j - 2}$. Since $j \neq 2^m$ for $m\geq 1$, we have $2j + 1 \neq 2^{m+1} + 1$ and hence $w_{2j + 1}$ is one of the polynomial generators of $H^* \BSpinh$. It follows from Proposition~\ref{prop:bspinh cohomology} that
    \[
        H^* \BSpinh \cong \Z / 2\Z \left[ w_{2i}, w_3, t_{2j + 1} \ \middle| \ j \neq 2^{r - 1} \right].
    \]
    To see that $Q_1 t_{2j + 1} = 0$, note that $Q_1 w_{2j - 2} = w_{2j + 1} + w_3 w_{2j - 2} = t_{2j + 1}$ and recall that $Q_1^2=0$. 

    Next, we employ similar tactics as in Lemma~\ref{lem:alternative cohomology of bspinh} to construct the classes $g_{2^k - 2}$, although slightly more work is needed due to the fact that $U_h$ is not a $Q_1$-cycle. To define the $g_{2^r - 2}$, we induct on $r \geq 3$, using the same argument for the base case and inductive step. Specifically, we assume that the $g_{2^q - 2}$ have been constructed for $q < r$, that $Q_1g_{2^q-2}=0$, and that 
    \begin{equation}\label{eq:induction}
    H^*\BSpinh\cong\Z/2\Z\left[w_3,w_{2i},t_{2j+1},g_{2^q-2} \ \middle|\ i\neq 2^{q-1}-1,3\leq q<r,j\geq 3,j\neq 2^m\right].
    \end{equation}
    By our computation of $H_*(\udq_\A;Q_1)$ (Proposition~\ref{prop:Q homology}), we see that $\chi(\Sq^{2\Delta_{r-1}})\Sq^2U_h$ is a $Q_1$-cycle. To extract a replacement of $w_{2^r-2}$ from this, first write $\chi(\Sq^{2\Delta_{r-1}})\Sq^2U_h=aw_2U_h+bU_h$, where $aw_2$ and $b$ are classes of degree $2^r$ and no monomials of $b$ (in the basis given by Equation~\ref{eq:induction}) are multiples of $w_2$. We will set $a:=g_{2^r-2}$.
    
    We first need to verify that $Q_1g_{2^r-2}=0$. To this end, we compute
    \begin{align*}
        0&=Q_1(\chi(\Sq^{2\Delta_{r-1}})\Sq^2U_h)\tag{$Q_1$-cycle}\\
        &=Q_1(aw_2U_h+bU_h)\\
        &=(Q_1a)w_2U_h+(Q_1b)U_h+bw_3U_h.\tag{$Q_1=\Sq^3+\Sq^2\Sq^1$}
    \end{align*}
    Note that in the monomial basis, no term of $(Q_1b)U_h$ are divisible by $w_2$. Indeed, no terms of $b$ are divisible by $w_2$, and the images under $Q_1$ of the basis elements are
    \begin{align*}
        Q_1w_{2i}&=t_{2i+3}=w_{2i+3}+w_3w_{2i},\\
        Q_1w_3&=w_3^2,\\
        Qt_{2j+1}&=0,\\
        Qg_{2^q-2}&=0,
    \end{align*}
    none of which are divisible by $w_2$. It follows that no terms of $bw_3U_h$ are divisible by $w_2$, so $(Q_1a)w_2U_h=0$ and therefore $Q_1a=0$.
    
    It remains to show that $a$ is $w_{2^r - 2}$ plus products of lower generators. We will revert to the Stiefel--Whitney generators of $H^* \BSpinh$ for this step, since rewriting the Stiefel--Whitney generators in terms of the new generators will not introduce monomials with $w_{2^r - 2}$ for degree reasons. By \cite[Proposition 6.2]{ABP67}, $\chi(\Sq^{2 \Delta_{r - 1}})$ is $\Sq^{2^r - 2}$ modulo admissible sequences with two or more factors. By expanding out the action of these other admissible sequences using the Wu formula, we find that the monomial $w_{2^r - 2}$ cannot arise from these admissible sequences. It follows that $a$ is indeed $w_{2^r - 2}$ modulo products of lower degree terms.
\end{proof}

Now we compute $H_*(M_h;Q_1)$.

\begin{lem}\label{lem:Q1 homology of M_h}
    Let $g_{2^k-2}\in H^{2^k-2}\BSpinh$ be as in Lemma~\ref{lem:another alternative cohomology of bspinh}. Let
    \[
        S:=\Z / 2\Z \left[ w_{2i}^2, g_{2^r - 2} \ \middle| \ i \neq 2^{r - 1} - 1, r \geq 3 \right] w_2 U.
    \]
    Then $H_*(M_h;Q_1)$ is the free $S$-module generated by $w_2U_h$.
\end{lem}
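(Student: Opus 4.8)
The plan is to imitate the proof of Lemma~\ref{lem:Q0 homology of M_h}, accounting for the one genuine difference between the $Q_0$ and $Q_1$ computations: the Thom class is no longer a cycle. Since $w_1 = 0$ on $\BSpinh$ we have $\Sq^1 U_h = 0$, hence $Q_1 U_h = \Sq^3 U_h = w_3 U_h$; and since $w_5 = 0$ on $\BSpinh$ (Proposition~\ref{prop:bspinh cohomology}), Lemma~\ref{lem:Q_i times Stiefel--Whitney} gives $Q_1 w_2 = w_5 + w_3 w_2 = w_3 w_2$. Therefore $Q_1(w_2 U_h) = w_2 w_3 U_h + w_2 w_3 U_h = 0$, so $w_2 U_h$ is a $Q_1$-cycle, and it is this class that will generate the answer. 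The rest of the argument is a Künneth computation, as in the $Q_0$ case, but carried out using the presentation of $H^*\BSpinh$ from Lemma~\ref{lem:another alternative cohomology of bspinh} in place of the one from Lemma~\ref{lem:alternative cohomology of bspinh}.

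The key point is that in this presentation $Q_1$ acts \emph{internally} to the polynomial generators. For an even generator $w_{2i}$ with $i \geq 2$ and $i \neq 2^{r-1}-1$, Lemma~\ref{lem:Q_i times Stiefel--Whitney} gives $Q_1 w_{2i} = w_{2i+3} + w_3 w_{2i}$, and since $t_{2i+3} = w_{2i+3} + w_3 w_{2i}$ by definition, the $w_3$-term is absorbed: $Q_1 w_{2i} = t_{2i+3}$. Together with $Q_1 w_3 = w_3^2$, $Q_1 t_{2j+1} = 0$, $Q_1 g_{2^r-2} = 0$, and the fact that $Q_1$ is a derivation (Setup~\ref{setup:Q_i homology}), this shows that $Q_1$ preserves each factor of
\[
H^*\BSpinh \cong \Z/2\Z[w_2, w_3] \otimes \bigotimes_{\substack{i \geq 2 \\ i \neq 2^{r-1}-1}} \Z/2\Z[w_{2i}, t_{2i+3}] \otimes \bigotimes_{r \geq 3} \Z/2\Z[g_{2^r-2}],
\]
where the pairs $(w_{2i}, t_{2i+3})$ run bijectively over the even Stiefel--Whitney generators with $i \geq 2$ and the generators $t_{2j+1}$. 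Multiplying through by $U_h$ and regrouping $\Z/2\Z[w_2, w_3]\cdot U_h$ as a single Künneth factor is legitimate because $Q_1 U_h = w_3 U_h$ stays inside that factor. I would then compute the three kinds of factors: $H_*(\Z/2\Z[w_{2i}, t_{2i+3}];Q_1) \cong \Z/2\Z[w_{2i}^2]$ by the same monomial argument as in Lemma~\ref{lem:Q0 homology of M_h}; $H_*(\Z/2\Z[g_{2^r-2}];Q_1) \cong \Z/2\Z[g_{2^r-2}]$ since everything is a cycle and nothing is a boundary; and $H_*(\Z/2\Z[w_2,w_3]\cdot U_h;Q_1) \cong \Z/2\Z[w_2^2]\cdot w_2 U_h$, from the formula $Q_1(w_2^a w_3^b U_h) = (a+b+1)\,w_2^a w_3^{b+1} U_h$ whose cycles are the monomials with $a + b$ odd and whose boundaries are those with moreover $b \geq 1$, leaving the classes $w_2^a U_h$ with $a$ odd.

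Combining these via the Künneth theorem for $H_*(-;Q_1)$ (Setup~\ref{setup:Q_i homology}) gives
\[
H_*(M_h;Q_1) \cong \Z/2\Z\bigl[w_2^2,\ w_{2i}^2,\ g_{2^r-2} \ \big|\ i \geq 2,\ i \neq 2^{r-1}-1,\ r \geq 3\bigr]\cdot w_2 U_h,
\]
and since $\{w_2^2\} \cup \{w_{2i}^2 \mid i \geq 2,\ i \neq 2^{r-1}-1\} = \{w_{2i}^2 \mid i \neq 2^{r-1}-1,\ r \geq 3\}$, this is precisely the free $S$-module on $w_2 U_h$. I expect the main obstacle to be the combinatorial bookkeeping: verifying that in the presentation of Lemma~\ref{lem:another alternative cohomology of bspinh} each $t_{2j+1}$ is $Q_1$ of a \emph{unique} even generator $w_{2i}$ (so that the $w_3$-cancellation really makes the Künneth splitting honest), and that the resulting index sets reassemble into the polynomial ring defining $S$. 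An alternative, as hinted in the footnote preceding Lemma~\ref{lem:Q0 homology of M_h}, would be to transport the computation through the shearing equivalence $\Sigma^{-3}\MSpin\wedge\MSO(3)\simeq\MSpinh$ of Lemma~\ref{lem:shearing} and Margolis's Künneth theorem; but the direct approach above keeps the argument parallel to the $Q_0$ case.
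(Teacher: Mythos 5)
Your proof is correct and follows essentially the same route as the paper: the same change of generators from Lemma~\ref{lem:another alternative cohomology of bspinh}, the same Künneth decomposition into $\Z/2\Z[w_2,w_3]U_h$, the pairs $\Z/2\Z[w_{2i},t_{2i+3}]$, and the $\Z/2\Z[g_{2^r-2}]$ factors, and the same monomial computation $Q_1(w_2^a w_3^b U_h)=(a+b+1)w_2^a w_3^{b+1}U_h$ for the Thom-class factor. The upfront check that $w_2 U_h$ is a $Q_1$-cycle and the explicit remark that $t_{2i+3}=Q_1 w_{2i}$ absorbs the $w_3 w_{2i}$ term are small elaborations, but they do not change the structure of the argument, and the bookkeeping on index sets that you flag as a concern works out exactly as in the paper.
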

\begin{proof}
    By Lemma~\ref{lem:another alternative cohomology of bspinh}, the Thom isomorphism, and the K\"unneth formula, $M_h$ can be written as the tensor product
    \[\Z/2\Z[w_2,w_3]U_h\otimes\bigotimes_{\substack{i\geq 2\\ i\neq 2^m-1}}\Z/2\Z[w_{2i},t_{2i+3}]\otimes\bigotimes_{r\geq 3}\Z/2\Z[g_{2^r-2}].\]
    Moreover, each of these factors is closed under the action of $Q_1$ because
    \begin{equation}\label{eq:Q_1 products}
    \begin{split}
        Q_1 U_h &= w_3 U_h,\\
        Q_1 w_2 &= w_2 w_3,\\
        Q_1 w_3 &= w_3^2,\\
    \end{split}
    \qquad\qquad
    \begin{split}
        Q_1 w_{2i} &= t_{2i + 3},\\
        Q_1 t_{2i + 3} &= 0,\\
        Q_1 g_{2^r - 2} &= 0.
    \end{split}
    \end{equation}
    In order to determine $H_*(M_h;Q_1)$, it thus suffices to compute the $Q_1$-homology of each factor individually. For $\Z / 2\Z [w_2, w_3] U_h$, the action of $Q_1$ on the monomial $w_2^a w_3^b U_h$ is
    \begin{align*}
        Q_1( w_2^a w_3^b U_h) & = a w_2^a w_3^{b + 1} U_h + b w_2^a w_3^{b + 1} U_h + w_2^a w_3^{b + 1} U_h \\
        & =( a + b + 1) w_2^a w_3^{b + 1} U_h.
    \end{align*}
    Thus $\ker(Q_1\cdot-)$ is the subspace generated by all the monomials $w_2^a w_3^b U_h$ where $a + b$ is odd, and $\mr{im}(Q_1\cdot -)$ is the subspace generated by all the monomials $w_2^a w_3^{b + 1} U_h$ where $a + b$ is even. Rephrased, $\mr{im}(Q_1\cdot -)$ is the subspace generated by all the monomials $w_2^a w_3^b U_h$ where $a + b$ is odd and $b \geq 1$. It follows that $H_*(\Z/2\Z[w_2,w_3]U_h;Q_1)\cong\Z / 2\Z[ w_2^2] w_2 U_h$. 
    
    Equation~\ref{eq:Q_1 products} implies that the image of $Q_1\cdot -$ on $\Z/2\Z[g_{2^r-2}]$ is trivial, so
    \[H_*(\Z/2\Z[g_{2^r-2}];Q_1)\cong\Z/2\Z[g_{2^r-2}].\]
    Similarly, Equation~\ref{eq:Q_1 products} implies that $t_{2i+3}\in\mr{im}(Q_1\cdot -)$ on $\Z/2\Z[w_{2i},t_{2i+3}]$, while the kernel of $Q_1\cdot -$ is generated by $w_{2i}^2$ and $t_{2i+3}$. It follows that
    \[H_*(\Z/2\Z[w_{2i},t_{2i+3}];Q_1)\cong\Z/2\Z[w_{2i}^2],\]
    and we are done.
\end{proof}

\subsection{$\bar{\theta}$ induces isomorphisms on $Q_i$-homology}
Recall the map $\bar{\theta}:\overline{N}\to M_h$ from Notation~\ref{notn:theta bar}. Our next goal is to prove that $\bar{\theta}$ induces isomorphisms $H_*(\overline{N};Q_i)\to H_*(M_h;Q_i)$ for $i=0$ and 1. We will do so by comparing $\bar{\theta}$ to the analogous map coming from the Anderson--Brown--Peterson splitting of $\MSpinc$.\footnote{A posteriori, what makes this approach work is that we have a natural bijection of non-Eilenberg--Mac Lane summands in the 2-local splittings of $\MSpinc$ and $\MSpinh$. For another consequence of this observation, see Corollary~\ref{cor:rank spinc and spinh}.} For this, we need the following lemma relating $H_*(\udq_\A;Q_i)$ and $H_*(E_\A;Q_i)$ to $H_*(C_\A;Q_i)$, where $C$ is the $\A_1$-module defined in Definition~\ref{def:C}. 

\begin{notn}
    Let $e_0:=\Sq^1$ and $e_1:=\Sq^2$ be the generators of $E_\A$. Let $q_0:=\Sq^1$ and $c_0:=\Sq^1$ denote the generators of $\udq_\A$ and $C_\A$, respectively.
\end{notn}

\begin{lem}
    There are unique non-trivial maps $\udq_\A \to C_\A$ and $E_\A \to C_\A$. Moreover, these maps induce monomorphisms on $H_*( - ; Q_i)$.
\end{lem}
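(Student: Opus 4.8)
The plan is to build both maps at the level of $\A_1$-modules, extend scalars along the exact functor $\A \otimes_{\A_1}(-)$, and then prove the Margolis-homology statement by identifying the kernels and running a parity argument in the associated long exact sequences.

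\emph{Constructing and pinning down the maps.} For the upside-down question mark I would use the Adem relation $\Sq^2\Sq^3 = \Sq^5 + \Sq^4\Sq^1$ to see that the defining ideal of $\udq$ lies inside that of $C$, i.e. $\A_1\Sq^1 + \A_1(\Sq^5 + \Sq^4\Sq^1) \subseteq \A_1\Sq^1 + \A_1\Sq^3$, so that the identity of $\A_1$ descends to a surjection $\udq \to C$ with $q_0 \mapsto c_0$. For the elephant I would compute the submodule $\A_1 e_1 \subseteq E$: viewing $E \subseteq \Sigma^{-1}\A_1$ and using $\Sq^3\Sq^2 = 0$ one checks $\A_1 e_1 \cong \Sigma(\A_1/\A_1\Sq^3)$, that $\Sq^3 e_0 = \Sq^2 e_1$ in $E$, and that $E/\A_1 e_1$ is the cyclic $\A_1$-module on a degree-$0$ class annihilated by $\Sq^1$ and $\Sq^3$, hence $E/\A_1 e_1 \cong C$; the composite $E \twoheadrightarrow E/\A_1 e_1 \xrightarrow{\ \sim\ } C$ then sends $e_0 \mapsto c_0$, $e_1 \mapsto 0$. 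Applying $\A\otimes_{\A_1}(-)$ produces the desired $\A$-module maps $\udq_\A \to C_\A$ and $E_\A \to C_\A$. Uniqueness of a non-trivial map is a degree count: such a map is determined by the images of the generators $q_0$ (resp. $e_0,e_1$), and $(C_\A)_0 = \langle c_0\rangle$ while $(C_\A)_1 = 0$ (because $\Sq^1 \otimes c_0 = 1 \otimes \Sq^1 c_0 = 0$), so a nonzero map must carry $q_0$, resp. $e_0$, to $c_0$ and $e_1$ to $0$.

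\emph{Reducing the Margolis-homology claim.} Since $e_1 \mapsto 0$, both maps send the explicit generators of $H_*(-;Q_i)$ from Propositions~\ref{prop:Q homology} and \ref{prop:E homology} to the classes $\chi(\Sq^{4k})c_0$ (for $Q_0$) and $\chi(\Sq^{2\sum_{j=1}^k\Delta_{i_j}})\Sq^2 c_0$ (for $Q_1$) of $H_*(C_\A;Q_i)$; the $\chi(\Sq^{4k-2})\Sq^1 e_1$- and $\Sq^1 e_1$-contributions from $E_\A$ simply vanish. So it suffices to show each of these maps is injective on $Q_i$-homology, which is what I would now establish via short exact sequences. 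Because $\A\otimes_{\A_1}(-)$ is exact, the $\A_1$-level kernels computed above give
\begin{align*}
0 &\to \Sigma^3 H^*\ko \to \udq_\A \to C_\A \to 0,\\
0 &\to \Sigma(\A/\A\Sq^3) \to E_\A \to C_\A \to 0,
\end{align*}
using $\ker(\udq\to C) = \langle \Sq^3 q_0\rangle \cong \Sigma^3\Z/2\Z$, $\A\otimes_{\A_1}\Z/2\Z = H^*\ko$, and $\A\otimes_{\A_1}\Sigma(\A_1/\A_1\Sq^3) = \Sigma(\A/\A\Sq^3)$. In the resulting long exact sequences in $Q_i$-homology, injectivity of the map to $H_*(C_\A;Q_i)$ is equivalent to the vanishing of the degree-preserving map $H_*(\text{kernel};Q_i) \to H_*(\text{middle};Q_i)$.

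\emph{The parity argument and main obstacle.} Here I would simply read off degrees from the presentations already in hand (Lemma~\ref{lem:Q_i homology of some quotients}, which computes $H_*(H^*\ko;Q_i) = H_*(\A/(\A\Sq^1+\A\Sq^2);Q_i)$ and $H_*(\A/\A\Sq^3;Q_i)$, together with Propositions~\ref{prop:Q homology}–\ref{prop:E homology}): for $Q_0$, the middle terms $H_*(\udq_\A;Q_0)$ and $H_*(E_\A;Q_0)$ are concentrated in degrees $\equiv 0 \pmod 4$ while $H_*(\Sigma^3 H^*\ko;Q_0)$ and $H_*(\Sigma(\A/\A\Sq^3);Q_0)$ are concentrated in degrees $\equiv 3 \pmod 4$; for $Q_1$, the middle terms sit in even degrees and the kernels in odd degrees. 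In every case the degree supports are disjoint, so the map out of the kernel's Margolis homology is zero and the two maps are monomorphisms on $H_*(-;Q_i)$. The only genuinely non-formal steps are the $\A_1$-module identifications $\A_1 e_1 \cong \Sigma(\A_1/\A_1\Sq^3)$ and $E/\A_1 e_1 \cong C$ (a small amount of Adem-relation and degree bookkeeping inside $\A_1$) and the observation that the kernels' Margolis homologies land in degrees of the wrong parity; I expect the former to be the more delicate of the two, though both are routine.
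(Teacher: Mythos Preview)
Your proposal is correct and follows essentially the same strategy as the paper: build the maps at the $\A_1$-level (the paper does this by explicit assignment, you by ideal containment for $\udq$ and a submodule/quotient description for $E$, but these amount to the same thing), identify the kernels as $\Sigma^3\bigl(\A/(\A\Sq^1+\A\Sq^2)\bigr)$ and $\Sigma(\A/\A\Sq^3)$ after extending scalars, and then run exactly the same degree-parity argument in the long exact sequences using Lemma~\ref{lem:Q_i homology of some quotients} and Propositions~\ref{prop:Q homology}--\ref{prop:E homology}. The only cosmetic difference is that you phrase the kernel of $\udq\to C$ as $\Sigma^3\Z/2\Z$ (hence $\Sigma^3 H^*\ko$ after tensoring), whereas the paper writes it as $\Sigma^3\A_1/(\A_1\Sq^1+\A_1\Sq^2)$; these are of course the same $\A_1$-module.
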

\begin{proof}
    Note that the only non-zero element of $C_\A$ of degree zero is $c_0$, and that $C_\A$ has no non-zero elements of degree one. Thus if non-trivial maps $\udq_\A \to C_\A$ and $E_\A\to C_\A$ exist, they must be given by $q_0 \mapsto c_0$ and
    \begin{align*}
        e_0&\mapsto c_0,\\
        e_1&\mapsto 0,
    \end{align*}
    respectively. We will show that these determine maps of $\A_1$-modules, and tensoring with $\A$ will give the desired maps of $\A$-modules. Consider the map $\udq \to C$ given by
    \begin{align*}
    q_0&\mapsto c_0,\\
    \Sq^2 q_0 &\mapsto \Sq^2 c_0,\\
    \Sq^3 q_0 &\mapsto 0.
    \end{align*}
    This map commutes with the action of $\A_1$ on $\udq$ and $C$, so this is a map of $\A_1$-modules and therefore induces a map of $\A$-modules $\udq_\A \to C_\A$. 
    
    Next, consider the map $E \to C$ given by
    \begin{align*}
        e_0 &\mapsto c_0,\\
        e_1 &\mapsto 0,\\
        \Sq^1 e_1 &\mapsto 0,\\
        \Sq^2 e_0 &\mapsto \Sq^2 c_0, 
    \end{align*}
    and sending all elements of higher degree to zero. As before, this map commutes with the action of $\A_1$ on $E$ and $C$, so this determines a map of $\A_1$-modules and hence induces the desired $\A$-module map $E_\A \to C_\A$.

    To see that these maps induce injections in homology, notice that both maps $Q \to C$ and $E \to C$ are surjective. The only element in the kernel of $Q \to C$ is $\Sq^3 q_0$, so the kernel is isomorphic to $\Sigma^3 \A_1 /(\A_1 \Sq^1 + \A_1 \Sq^2)$. This yields a short exact sequence
    \begin{equation}\label{eq:ses for Q to C}
        0 \longrightarrow \Sigma^3 \A_1 /( \A_1 \Sq^1 + \A_1 \Sq^2) \longrightarrow \udq \longrightarrow C \longrightarrow 0.
    \end{equation}
    Tensoring Equation~\ref{eq:ses for Q to C} with $\A$ gives us a short exact sequence
    \begin{equation}\label{eq:ses for Q_A to C_A}
        0 \longrightarrow \Sigma^3 \A /( \A \Sq^1 + \A \Sq^2) \longrightarrow \udq_\A \longrightarrow C_\A \longrightarrow 0.
    \end{equation}
    Equation~\ref{eq:ses for Q_A to C_A} induces a long exact sequence in $Q_i$-homology. The relevant part is
    \[
        H_j( \Sigma^3 \A /( \A \Sq^1 + \A \Sq^2) ; Q_i) \longrightarrow H_j( \udq_\A ; Q_i) \longrightarrow H_j( C_\A ; Q_i).
    \]
    Showing that the map $H_j( \udq_\A ; Q_i) \to H_j( C_\A ; Q_i)$ is injective is equivalent to showing that
    \begin{equation}\label{eq:map from quotient to Q}
    H_j( \Sigma^3 \A /( \A \Sq^1 + \A \Sq^2) ; Q_i) \to H_j( \udq_\A ; Q_i)
    \end{equation}
    is zero. Since there is an isomorphism $H_j( \Sigma^3 \A /( \A \Sq^1 + \A \Sq^2) ; Q_i) \cong H_{j - 3}( \A /( \A \Sq^1 + \A \Sq^2) ; Q_i)$ and the homology $H_*( \A /( \A \Sq^1 + \A \Sq^2) ; Q_i)$ is only nonzero in even degrees (Lemma~\ref{lem:Q_i homology of some quotients}), the map in Equation~\ref{eq:map from quotient to Q} can only be nonzero for $j$ odd. But $H_j( \udq_\A ; Q_i)\cong 0$ for $j$ odd (Proposition~\ref{prop:Q homology}), so $H_j( \udq_\A ; Q_i) \to H_j( C_\A ; Q_i)$ is injective.

    The argument for $E$ is similar. The kernel of $E \to C$ is isomorphic to $\Sigma \A_1 / \A_1 \Sq^3$, with the inclusion $\Sigma \A_1 / \A_1 \Sq^3 \to E$ given by $1 \mapsto e_1$. Tensoring by $\A$ gives us a short exact sequence
    \[
        0 \longrightarrow \Sigma \A / \A \Sq^3 \longrightarrow E_\A \longrightarrow C_\A \longrightarrow 0,
    \]
    which induces an exact sequence
    \[
        H_j( \Sigma \A / \A \Sq^3 ; Q_i) \longrightarrow H_j( E_\A ; Q_i) \longrightarrow H_j( C_\A ; Q_i).
    \]
    Again, it will suffice to show that the the map $H_j( \Sigma \A / \A \Sq^3 ; Q_i) \to H_j( E_\A ; Q_i)$ is zero. There is an isomorphism $H_j( \Sigma \A / \A \Sq^3 ; Q_i) \cong H_{j - 1}( \A / \A \Sq^3 ; Q_i)$ and $H_*( \A / \A \Sq^3 ; Q_i)$ is nonzero only in even degrees (Lemma~\ref{lem:Q_i homology of some quotients}), so this map has a non-zero domain only when $j$ is odd. But $H_*( E_\A ; Q_i)$ is zero in odd degrees (Proposition~\ref{prop:E homology}), so the codomain is trivial if $j$ is odd. Hence the map is always zero.
\end{proof}

\begin{setup}
    We now explain the comparison to $\MSpinc$ that we will use to compute $\bar{\theta}$ on $Q_i$-homology. Let $\overline{N}_c = \bigoplus_{I \in \Part} \Sigma^{4| I|} C_\A$. Then we have the commutative diagram
\[
    \begin{tikzcd}
        \arrow[from=1-1, to=2-1]
        \arrow["{\bar{\theta}_c}", from=1-1, to=2-3]
        \arrow["\psi"',from=2-1, to=2-3]
        \overline{N}_c \\
        \bigoplus_{I \in \Part} \Sigma^{4| I|} C_\A \oplus \bigoplus_{z \in Z_c} \Sigma^{\deg z} \A & & M_c,
    \end{tikzcd}
\]
where the vertical map is the inclusion of the summands on the left and $\psi$ is the isomorphism in cohomology induced by the Anderson--Brown--Peterson splitting of $\MSpinc$. Since $\psi$ is an isomorphism, it induces isomorphisms on $Q_i$-homology. The vertical map induces isomorphisms on $Q_i$-homology because the $Q_i$-homology of each $\Sigma^{\deg z} \A$ summand vanishes \cite[p.~331, Proposition 1]{Mar83}. It follows that $\bar{\theta}_c$ induces isomorphisms on $H_*(-;Q_i)$. Moreover, $\bar{\theta}_c$ takes the generator $c_0$ of the $C_\A$ summand corresponding to a partition $I \in \Part$ to $p_I U_c \in M_c$, where $U_c\in M_c:=H^*\MSpinc$ is the Thom class. 

Altogether, we have a diagram
\begin{equation}\label{eq:partial square}
    \begin{tikzcd}
        \arrow["{\bar{\theta}}", from=1-1, to=2-1]
        \arrow["{\bar{\theta}_c}", from=1-2, to=2-2]
        \arrow[from=2-1, to=2-2]
        \overline{N} & \overline{N}_c \\
        M_h & M_c
    \end{tikzcd}
\end{equation}
in the category of $\A$-modules. If we could fill this in to make a commuting square, then understanding the map $M_h \to M_c$ would give us control over $\bar{\theta}$. Unfortunately, there is no obvious way to do this. Instead, we will fill in Diagram~\ref{eq:partial square} to a non-commutative diagram that yields a commutative diagram on $Q_i$-homology.

We define a map $\overline{N} \to \overline{N}_c$ by treating even partition summands and odd partition summands separately. For $I\in\Part_\mr{even}$, set
\begin{align*}
    \Sigma^{4|I|}\udq_\A&\to\Sigma^{4|I|}C_\A\\
    q_0&\mapsto c_0.
\end{align*}
For $I\in\Part_\mr{odd}$, set
\begin{align*}
    \Sigma^{4|I|}E_\A&\to\Sigma^{4|I|}C_\A\\
    e_0&\mapsto c_0,\\
    e_1&\mapsto 0.
\end{align*}
Combined with Diagram~\ref{eq:partial square}, this gives us a non-commuting square
\begin{equation}\label{eq:filled square}
    \begin{tikzcd}
        \arrow[from=1-1, to=1-2]
        \arrow["{\bar{\theta}}", from=1-1, to=2-1]
        \arrow["{\bar{\theta}_c}", from=1-2, to=2-2]
        \arrow[from=2-1, to=2-2]
        \overline{N} & \overline{N}_c \\
        M_h & M_c.
    \end{tikzcd}
\end{equation}
\end{setup}

\begin{rem}
Note that Diagram~\ref{eq:filled square} does commute when we restrict $\overline{N}$ to the submodule generated by all the $E_\A$ summands. Indeed, the top arrow followed by $\bar{\theta}_c$ takes $e_0\mapsto p_I U_c$ and $e_1\mapsto 0$, and $\bar{\theta}$ followed by the bottom arrow takes $e_0\mapsto p_I U_c$ and $e_1$ to the product of the image of $\beta_I$ and $w_3 U_c$. Since $w_3$ vanishes in $H^* \BSpinc$, we find that $e_1\mapsto 0$.
\end{rem}

Next up, we show that $\bar{\theta}$ induces an injection on $Q_0$-homology.

\begin{lem}\label{lem:Q0 inj}
    The map $\bar{\theta} : \overline{N} \to M_h$ induces an injection on $Q_0$-homology.
\end{lem}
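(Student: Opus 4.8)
The plan is to bootstrap off the already-established statement that the analogous map $\bar\theta_c$ for $\MSpinc$ is an isomorphism on $Q_i$-homology, via the comparison square of Diagram~\ref{eq:filled square}. The top arrow $\overline{N}\to\overline{N}_c$ is, summand by summand, a shift of one of the monomorphisms $\udq_\A\to C_\A$ or $E_\A\to C_\A$ constructed just above; since $H_*(-;Q_0)$ commutes with direct sums, this top arrow is injective on $Q_0$-homology, and hence so is the composite $\overline{N}\to\overline{N}_c\xrightarrow{\bar\theta_c}M_c$ (using that $\bar\theta_c$ is an isomorphism on $Q_0$-homology). So if I can show that Diagram~\ref{eq:filled square} commutes after applying $H_*(-;Q_0)$, then the other composite $\overline{N}\xrightarrow{\bar\theta}M_h\to M_c$ is injective on $Q_0$-homology, and since it factors through $\bar\theta$, this forces $\bar\theta$ itself to be injective on $Q_0$-homology.

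Everything therefore reduces to showing Diagram~\ref{eq:filled square} commutes on $Q_0$-homology. Let $d\colon\overline{N}\to M_c$ denote the difference of the two composites. As a difference of $\A$-module maps, $d$ is a chain map for the $Q_0$-differential, so I only need $d$ to annihilate a generating set of $H_*(\overline{N};Q_0)$. On the $E_\A$-summands $d$ vanishes identically, by the remark following Diagram~\ref{eq:filled square}. On the summand $\Sigma^{4|I|}\udq_\A$ with $I\in\Part_\mr{even}$, tracing the diagram and using that $M_h\to M_c$ is an $\A$-module map sending $U_h\mapsto U_c$ and Stiefel--Whitney classes to Stiefel--Whitney classes, I get $d(q_0)=\Sq^3\Sq^1\bar\alpha_I U_c$, where $\bar\alpha_I\in H^*\BSpinc$ is the image of the class $\alpha_I$ from Proposition~\ref{prop:ksp/elephant class in cohomology}. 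By Proposition~\ref{prop:Q homology}, $H_*(\udq_\A;Q_0)$ is generated by the classes $\chi(\Sq^{4k})q_0$, so it suffices to check that $d(\chi(\Sq^{4k})q_0)=\chi(\Sq^{4k})\,\Sq^3\Sq^1\,\bar\alpha_I U_c$ is a $Q_0$-boundary for every $k\geq 0$.

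This is the step where an actual computation is needed, and it hinges on Lemma~\ref{lem:Q_i Leibniz rules}. Writing $\Sq^3\Sq^1=Q_1Q_0$ and using $\chi(Q_0)=Q_0$, $\chi(Q_1)=Q_1$, together with the identity $Q_1\Sq^{4k}=Q_0\Sq^{4k+2}+\Sq^{4k+2}Q_0$ (Lemma~\ref{lem:Q_i Leibniz rules} with $n=2k+1$), I compute in $\A$
\[
\chi(\Sq^{4k})\,\Sq^3\Sq^1 \;=\; \chi(\Sq^{4k})\,Q_1Q_0 \;=\; \bigl(\chi(\Sq^{4k+2})Q_0+Q_0\chi(\Sq^{4k+2})\bigr)Q_0 \;=\; Q_0\,\chi(\Sq^{4k+2})\,Q_0,
\]
the last equality because $Q_0^2=0$. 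Hence $d(\chi(\Sq^{4k})q_0)=Q_0\bigl(\chi(\Sq^{4k+2})Q_0\bar\alpha_I U_c\bigr)$ is a $Q_0$-boundary. So $d$ induces the zero map on $Q_0$-homology, Diagram~\ref{eq:filled square} commutes on $Q_0$-homology, and the argument of the first paragraph completes the proof.

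I expect the main obstacle to be exactly this last point: the correction term $\Sq^3\Sq^1\alpha_I$ has no counterpart in the $\MSpinc$ splitting, so it is not obvious a priori that the comparison square is usable; what saves the day is that, once one passes to $Q_0$-homology of $\udq_\A$ (whose generators are hit by the conjugated Steenrod operations $\chi(\Sq^{4k})$), this term becomes a $Q_0$-boundary by the $Q_i$-Leibniz rule. Everything else is formal diagram-chasing.
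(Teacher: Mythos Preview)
Your proof is correct and follows essentially the same route as the paper: show Diagram~\ref{eq:filled square} commutes on $Q_0$-homology by checking on the $\chi(\Sq^{4k})q_0$ generators, then deduce injectivity of $\bar\theta_*$ from injectivity of the top arrow and the isomorphism $\bar\theta_{c*}$. Your explicit identity $\chi(\Sq^{4k})\Sq^3\Sq^1 = Q_0\,\chi(\Sq^{4k+2})\,Q_0$ is in fact a sharpening of the paper's argument, which only shows this element is a $Q_0$-cycle in $\A$ and then invokes $H_*(\A;Q_0)=0$ to conclude it is a boundary.

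One step you pass over silently: when you write $d(\chi(\Sq^{4k})q_0)=\chi(\Sq^{4k})\,\Sq^3\Sq^1\,\bar\alpha_I U_c$ and then apply your identity in $\A$, you are treating $(\Sq^3\Sq^1\bar\alpha_I)U_c$ as $\Sq^3\Sq^1(\bar\alpha_I U_c)$. These agree in $M_c$ because $w_1=w_3=0$ in $H^*\BSpinc$ (so $\Sq^1 U_c=\Sq^3 U_c=0$ and the Cartan cross-terms vanish), but this deserves a word of justification. The paper handles this same issue by working first in $M_h$ and splitting off a term involving $w_3 U_h$ via Equation~\eqref{eq:steenrod action thom class}, then killing it upon passage to $M_c$; your direct approach in $M_c$ is cleaner once this point is made explicit.
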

\begin{proof}
    Applying $H_*(-;Q_0)$ to Diagram~\ref{eq:filled square} gives us the diagram
    \begin{equation}\label{eq:Q_0 square}
        \begin{tikzcd}
            \arrow[from=1-1, to=1-2]
            \arrow["{\bar{\theta}}_*", from=1-1, to=2-1]
            \arrow["{\bar{\theta}_{c*}}", from=1-2, to=2-2]
            \arrow[from=2-1, to=2-2]
            H_* ( \overline{N} ; Q_0 ) & H_* ( \overline{N}_c ; Q_0 ) \\
            H_* ( M_h ; Q_0 ) & H_* ( M_c ; Q_0 )
        \end{tikzcd}
    \end{equation}
    of $\Z/2\Z$-vector spaces. We claim that Diagram~\ref{eq:Q_0 square} commutes, which we will prove by checking commutativity for each generator of $H_*( \overline{N} ; Q_0)$. We will then use our understanding of $\bar{\theta}_{c*}$ to prove that $\bar{\theta}_*$ is an isomorphism.
    
    If $I$ is an even partition, then the generator $\chi(\Sq^{4k})q_0\in H_*( \udq_\A ; Q_0)$ satisfies
    \[\begin{tikzcd}[row sep=0pt]
        H_*(\overline{N};Q_0)\arrow[r] & H_*(\overline{N}_c;Q_0)\arrow[r,"\bar{\theta}_{c*}"] & H_*(M_c;Q_0)\\
        \chi(\Sq^{4k})q_0\arrow[r,maps to] & \chi(\Sq^{4k})c_0\arrow[r,maps to] & \chi(\Sq^{4k})p_IU_c.
    \end{tikzcd}\]
    Taking the other path around Diagram~\ref{eq:Q_0 square}, Proposition~\ref{prop:ksp/elephant class in cohomology} implies
    \[\begin{tikzcd}[row sep=0pt]
        H_*(\overline{N};Q_0)\arrow[r,"\bar{\theta}_*"] & H_*(M_h;Q_0)\arrow[r] & H_*(M_c;Q_0)\\
        \chi(\Sq^{4k})q_0\arrow[r,maps to] & \chi(\Sq^{4k})(p_I+\Sq^3\Sq^1\alpha_I)U_h. &
    \end{tikzcd}\]
    We need to show that $\chi(\Sq^{4k})(p_I+\Sq^3\Sq^1\alpha_I)U_h$ maps to $\chi( \Sq^{4k})p_I U_c$. We first calculate
    \begin{align*}
        \Delta( \Sq^3 \Sq^1) & = ( \Delta \Sq^3)( \Delta \Sq^1) \\
        & = ( \Sq^3 \otimes 1 + \Sq^2 \otimes \Sq^1 + \Sq^1 \otimes \Sq^2 + 1 \otimes \Sq^3 ) ( \Sq^1 \otimes 1 + 1 \otimes \Sq^1 ) \\
        & = \Sq^3 \Sq^1 \otimes 1 + \Sq^2 \Sq^1 \otimes \Sq^1 + \Sq^1 \Sq^1 \otimes \Sq^2 + \Sq^1 \otimes \Sq^3 \\
        & \quad + \Sq^3 \otimes \Sq^1 + \Sq^2 \otimes \Sq^1 \Sq^1 + \Sq^1 \otimes \Sq^2 \Sq^1 + 1 \otimes \Sq^3 \Sq^1 \\
        & = \Sq^3 \Sq^1 \otimes 1 + 1 \otimes \Sq^3 \Sq^1 + \Sq^3 \otimes \Sq^1 + \Sq^1 \otimes \Sq^3 \\
        & \quad + \Sq^2 \Sq^1 \otimes \Sq^1 + \Sq^1 \otimes \Sq^2 \Sq^1.
    \end{align*}
    From this, we calculate
    \begin{align*}
        \Sq^3 \Sq^1 (\alpha_I U_h ) & = (\Sq^3 \Sq^1 \alpha_I ) U_h + (\Sq^1 \alpha_I ) w_3 U_h \\
        & = (\Sq^3 \Sq^1 \alpha_I ) U_h + \Sq^1 (\alpha_I w_3 U_h ),
    \end{align*}
    so
    \begin{equation}
        \label{eq:steenrod action thom class}
        (\Sq^3 \Sq^1 \alpha_I ) U_h = \Sq^3 \Sq^1 (\alpha_I U_h ) + \Sq^1 (\alpha_I w_3 U_h ).
    \end{equation}
    It follows that $\chi (\Sq^{4k} ) q_0$ maps to $\chi (\Sq^{4k} ) (p_I U_h + \Sq^3 \Sq^1 (\alpha_I U_h ) + \Sq^1 (\alpha_I w_3 U_h ) )$. 
    
    To deal with the $\chi(\Sq^{4k}\Sq^3\Sq^1(\alpha_IU_h)$ term, we check that $\chi(\Sq^{4k}\Sq^3\Sq^1)$ is a $Q_0$-cycle in $\A$:
    \begin{align*}
        \Sq^1 \chi (\Sq^{4k} ) \Sq^3 \Sq^1 & = \chi (\Sq^1 ) \chi (\Sq^{4k} ) \Sq^3 \Sq^1 \\
        & = \chi (\Sq^{4k} \Sq^1 ) \Sq^3 \Sq^1 \\
        & = \chi (\Sq^1 \Sq^{4k} + Q_1 \Sq^{4k - 2} ) \Sq^3 \Sq^1 \\
        & = \chi (\Sq^{4k} ) \Sq^1 \Sq^3 \Sq^1 + \chi (\Sq^{4k - 2} ) Q_1 \Sq^3 \Sq^1 \\
        & = 0.
    \end{align*}
    Since the $Q_0$-homology of $\A$ vanishes, there is some $a \in \A$ such that $\chi (\Sq^{4k} ) \Sq^3 \Sq^1 = \Sq^1 a$. Thus $\chi (\Sq^{4k} ) \Sq^3 \Sq^1 (\alpha_I U_h ) = \Sq^1 a (\alpha_I U_h )$, and this term vanishes in $Q_0$-homology.
    
    At this point, we have deduced that $\chi (\Sq^{4k} ) q_0$ maps to the element $\chi (\Sq^{4k} ) (p_I U_h + \Sq^1 (\alpha_I w_3 U_h ) )$. To deal with the $\chi(\Sq^{4k})\Sq^1(\alpha_Iw_3U_h)$ term, recall that $w_3$ vanishes in $H^* \BSpinc$. Thus $\chi (\Sq^{4k} ) \Sq^1 (\alpha_I w_3 U_h )$ maps to zero in $M_c$, and we find that $\chi(\Sq^{4k})q_0$ maps to $\chi (\Sq^{4k} ) (p_I U_c )$, as desired. Thus Diagram~\ref{eq:Q_0 square} commutes for each $\udq_\A$ summand. Since Diagram~\ref{eq:filled square} commutes for the $E_\A$ summands, Diagram~\ref{eq:Q_0 square} commutes for the $E_\A$ summands. Thus Diagram~\ref{eq:Q_0 square} commutes.

    Because $\bar{\theta}_{c*}$ is an isomorphism by Theorem~\ref{thm:spinc splitting}, and since the map
    \[H_* (\overline{N} ; Q_0 ) \to H_* (\overline{N}_c ; Q_0 )\]
    is the direct sum of injective maps (and is hence injective), the composite $H_* (\overline{N} ; Q_0 ) \to H_* (M_c ; Q_0 )$ is injective. This implies that $\bar{\theta}_*$ is injective too.
    \end{proof}

Now we strengthen Lemma~\ref{lem:Q0 inj} by showing that $\bar{\theta}_*$ is in fact an isomorphism.
    
\begin{lem}\label{lem:Q0 iso}
    The map $\bar{\theta} : \overline{N} \to M_h$ induces an isomorphism on $Q_0$-homology.
\end{lem}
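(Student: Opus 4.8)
The plan is to upgrade the injectivity already established in Lemma~\ref{lem:Q0 inj} to an isomorphism by a dimension count. Both $\overline N$ and $M_h$ are of finite type, so $H_*(\overline N;Q_0)$ and $H_*(M_h;Q_0)$ are finite-dimensional in each degree; and an injective, degree-preserving linear map between graded vector spaces of finite type is an isomorphism as soon as the two sides have the same Poincaré series. Thus it suffices to compute both Poincaré series and observe that they agree.

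For the source, I would use Propositions~\ref{prop:Q homology} and~\ref{prop:E homology}: $H_*(\udq_\A;Q_0)$ is one-dimensional in each degree $4k$ with $k\ge 0$ and vanishes otherwise, and $H_*(E_\A;Q_0)$ has exactly the same Poincaré series (with basis element $\chi(\Sq^{4k})e_0+\chi(\Sq^{4k-2})\Sq^1 e_1$ in degree $4k$). Since $Q_0$-homology commutes with suspensions and direct sums, each summand $\Sigma^{4|I|}\udq_\A$ or $\Sigma^{4|I|}E_\A$ of $\overline N$ contributes Poincaré series $t^{4|I|}(1-t^4)^{-1}$, and summing over all $I\in\Part$ (every partition being even or odd) gives
\[
\sum_d\dim_{\Z/2\Z}H_d(\overline N;Q_0)\,t^d=\frac{1}{1-t^4}\sum_{I\in\Part}t^{4|I|}=\frac{1}{1-t^4}\prod_{j\ge 1}\frac{1}{1-t^{4j}},
\]
so $H_d(\overline N;Q_0)=0$ unless $4\mid d$ and $\dim_{\Z/2\Z}H_{4n}(\overline N;Q_0)=\sum_{m=0}^n p(m)$, where $p$ is the partition-counting function. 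For the target, Lemma~\ref{lem:Q0 homology of M_h} presents $H_*(M_h;Q_0)$ as a free module on the degree-zero class $U_h$ over a polynomial algebra $R$ whose generators — the $w_{2i}^2$ of degree $4i$, the low-degree generator $w_2^2$, and the $f_{2^k}$ of degree $2^k$ (Lemma~\ref{lem:alternative cohomology of bspinh}) — all lie in degrees divisible by $4$; a direct tally of these generators by degree identifies $\operatorname{PS}(R)$, hence $\operatorname{PS}(H_*(M_h;Q_0))$, with exactly $(1-t^4)^{-1}\prod_{j\ge1}(1-t^{4j})^{-1}$. This matches the series found for $H_*(\overline N;Q_0)$, so Lemma~\ref{lem:Q0 inj} forces $\bar\theta$ to be an isomorphism on $Q_0$-homology.

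The only delicate point is the bookkeeping in the last step: one must check that the excluded indices in the description of $R$ (the $i$ that are powers of two, and the interleaving of the $w_{2i}^2$ with the $f_{2^k}$), together with the extra low-degree generator, conspire to produce precisely the partition-generating-function pattern $\frac{1}{1-t^4}\prod_{j\ge1}\frac{1}{1-t^{4j}}$ — equivalently $\dim_{\Z/2\Z}H_{4n}(M_h;Q_0)=\sum_{m=0}^n p(m)$. An alternative to the dimension count is to prove surjectivity directly: the proof of Lemma~\ref{lem:Q0 inj} already shows that $\bar\theta$ carries $\chi(\Sq^{4k})q_0$ (for an even partition $I$) and the generator $\chi(\Sq^{4k})e_0+\chi(\Sq^{4k-2})\Sq^1 e_1$ (for an odd partition $I$) to $[\chi(\Sq^{4k})p_I U_h]$ modulo $Q_0$-boundaries, and one can then argue, by a triangularity argument with respect to the monomial basis of $R$ coming from $p_i\equiv w_{2i}^2$, that these images span $R\cdot U_h$; I expect the dimension count to be the cleaner route.
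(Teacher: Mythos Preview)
Your proposal is correct and follows the same route as the paper: invoke the injectivity of Lemma~\ref{lem:Q0 inj} and then match Poincar\'e series. The paper computes $\operatorname{PS}(H_*(M_h;Q_0))$ slightly differently---rather than counting polynomial generators of $R$ by degree as you do, it writes each monomial of $R$ as $AB$ with $A$ a monomial in the $w_{2i}^2$ and $f_{2^k}^2$ (so one generator in every degree $4j$, hence $p(m)$ monomials in degree $4m$) and $B$ a squarefree product of the $f_{2^k}$ (exactly one such $B$ per multiple of $4$, via binary expansion)---but this yields the same series $(1-t^4)^{-1}\prod_{j\ge1}(1-t^{4j})^{-1}$ you obtain, and your ``delicate point'' is thereby resolved.
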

    \begin{proof}
    Since $\bar{\theta}_*$ is injective, we just need to show that dimensions of $H_*(\overline{N};Q_0)$ and $H_*(M_h;Q_0)$ in each degree are equal. We will prove that these dimensions are equal in each degree by showing that $H_*(\overline{N};Q_0)$ and $H_*(M_h;Q_0)$ have the same Hilbert--Poincar\'e series. 
    
    First, we compute the Hilbert--Poincar\'e series of $H_*(M_h;Q_0)$. By Lemma~\ref{lem:Q0 homology of M_h}, we can write any monomial in $H_* (M_h ; Q_0 )$ as $ABU_h$, where $A$ is a monomial in the $w_{2i}^2$ and $f_{2^k}^2$ and $B$ is a product of $f_{2^k}$ with each factor occurring at most once. The number of monomials of the form $A$ in degree $4n$ is the number of partitions of $n$, and there are no monomials of this form in degrees not divisible by four. Since each $f_{2^k}$ has degree $2^k$ (ranging over $k \geq 2$), the degree of $B$ is the number whose binary expansion has a $1$ in the $k^\textsuperscript{th}$ place for every $f_{2^k}$ factor. There is one such $B$ for every number divisible by four, and no others. Hence the Hilbert--Poincar\'e series of $H_* (M_h ; Q_0 )$ is $\sum_{I \in \Part} t^{4|I|} (1 - t^4 )^{-1}$.
    
    The Hilbert--Poincar\'e series for $H_* (\overline{N} ; Q_0 )$ is simple to compute: the Hilbert--Poincar\'e series of both $H_* (\udq_\A ; Q_0 )$ and $H_* (E_\A ; Q_0 )$ is $(1 - t^4 )^{-1}$ (Propositions~\ref{prop:Q homology} and~\ref{prop:E homology}), and for each partition $I$, there is a single summand of $\udq_\A$ or $E_\A$ shifted by degree $4|I|$. Thus the Hilbert--Poincar\'e series of $H_*(\overline{N};Q_0)$ is also $\sum_{I\in\Part}t^{4|I|}(1-t^4)^{-1}$.
\end{proof}

Now we turn to the effect of $\bar{\theta}$ on $Q_1$-homology, employing the same strategies as before. We will again see that $\bar{\theta}_*$ is injective and even an isomorphism.

\begin{lem}\label{lem:Q1 inj}
    The map $\bar{\theta} : \overline{N} \to M_h$ induces an injection on $Q_1$-homology.
\end{lem}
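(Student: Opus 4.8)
The plan is to mimic the proof of Lemma~\ref{lem:Q0 inj} verbatim, now with the differential $Q_1$. First I would apply $H_*(-;Q_1)$ to Diagram~\ref{eq:filled square}, producing a square of $\Z/2\Z$-vector spaces whose vertical maps are $\bar{\theta}_*$ and $\bar{\theta}_{c*}$ and whose horizontal maps are $H_*(\overline{N};Q_1)\to H_*(\overline{N}_c;Q_1)$ and $H_*(M_h;Q_1)\to H_*(M_c;Q_1)$. The whole statement reduces to showing this square commutes: once that is known, $\bar{\theta}_{c*}$ is an isomorphism by Theorem~\ref{thm:spinc splitting}, and the top horizontal map is the direct sum of the monomorphisms $H_*(\udq_\A;Q_1)\to H_*(C_\A;Q_1)$ and $H_*(E_\A;Q_1)\to H_*(C_\A;Q_1)$, hence injective; therefore the composite $H_*(\overline{N};Q_1)\to H_*(M_c;Q_1)$ is injective, and since it factors through $\bar{\theta}_*$, the map $\bar{\theta}_*$ is injective.

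To check commutativity I would treat the $E_\A$-summands and the $\udq_\A$-summands separately. On the submodule generated by the $E_\A$-summands, Diagram~\ref{eq:filled square} already commutes by the remark following it (the bottom route sends $e_1\mapsto\beta_Iw_3U_c=0$ since $w_3$ vanishes in $H^*\BSpinc$), so the $Q_1$-homology square commutes there. For the $\udq_\A$-summand attached to an even partition $I$, the group $H_*(\udq_\A;Q_1)$ is generated by the classes $\sigma\Sq^2q_0$ with $\sigma=\chi(\Sq^{2\sum_{j=1}^{k}\Delta_{i_j}})$ (Proposition~\ref{prop:Q homology}). Going around the top, $\A$-linearity of $\bar{\theta}_c$ (with $c_0\mapsto p_IU_c$) gives $\sigma\Sq^2q_0\mapsto\sigma\Sq^2(p_IU_c)$; going around the bottom, $\A$-linearity of $\bar{\theta}$ and Proposition~\ref{prop:ksp/elephant class in cohomology} give $\sigma\Sq^2q_0\mapsto\sigma\Sq^2\bigl((p_I+\Sq^3\Sq^1\alpha_I)U_h\bigr)$, whose image in $M_c$ is $\sigma\Sq^2\bigl((p_I+\Sq^3\Sq^1\alpha_I)U_c\bigr)$ because the map $M_h\to M_c$ preserves Stiefel--Whitney classes, the Thom class, and Steenrod operations. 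Thus the difference of the two composites is $\sigma\Sq^2\bigl((\Sq^3\Sq^1\alpha_I)U_c\bigr)$, and I must show this is a $Q_1$-boundary in $M_c$.

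For that, I would reduce Equation~\ref{eq:steenrod action thom class} modulo the map to $M_c$, where $w_3=0$, to obtain $(\Sq^3\Sq^1\alpha_I)U_c=\Sq^3\Sq^1(\alpha_IU_c)$; by associativity of the module action the difference then equals $(\sigma\Sq^2\Sq^3\Sq^1)(\alpha_IU_c)$. A short Adem computation shows $\Sq^2\Sq^3\Sq^1$ is a $Q_1$-cycle in $\A$ (using $Q_1\Sq^2=\Sq^2\Sq^3$, $\Sq^3\Sq^3=\Sq^5\Sq^1$, and $\Sq^1\Sq^1=0$), and $\sigma$ commutes with $Q_1$ by Lemma~\ref{lem:Q_i Leibniz rules} together with $\chi(ab)=\chi(b)\chi(a)$ and $\chi(Q_1)=Q_1$; hence $\sigma\Sq^2\Sq^3\Sq^1$ is a $Q_1$-cycle in $\A$, and since $H_*(\A;Q_1)=0$ we may write $\sigma\Sq^2\Sq^3\Sq^1=Q_1a$ for some $a\in\A$. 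Therefore the difference is $(Q_1a)(\alpha_IU_c)=Q_1\bigl(a(\alpha_IU_c)\bigr)$, a $Q_1$-boundary, so the two composites agree in $H_*(M_c;Q_1)$. This establishes commutativity of the $Q_1$-homology square and, by the first paragraph, finishes the proof.

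I expect the main obstacle to be exactly this $\udq_\A$-summand bookkeeping: confirming that the term contributed by $\Sq^3\Sq^1\alpha_I$ dies in $H_*(M_c;Q_1)$, which requires simultaneously using the vanishing of $w_3$ in $H^*\BSpinc$ and recognizing $\Sq^2\Sq^3\Sq^1$ as a $Q_1$-cycle. Everything else is parallel to Lemma~\ref{lem:Q0 inj}.
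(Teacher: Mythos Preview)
Your proposal is correct and follows essentially the same route as the paper's proof: verify that the $Q_1$-homology square commutes by checking on generators, use the remark for the $E_\A$-summands, and for the $\udq_\A$-summands show that $\sigma\Sq^2\Sq^3\Sq^1$ is a $Q_1$-cycle in $\A$ (hence a $Q_1$-boundary) so that the $\Sq^3\Sq^1\alpha_I$ contribution dies, while $w_3=0$ in $H^*\BSpinc$ kills the remaining discrepancy. The only cosmetic difference is ordering---you pass to $M_c$ before invoking Equation~\ref{eq:steenrod action thom class}, whereas the paper applies that identity in $M_h$ and then maps the two resulting terms to $M_c$ separately---but the content is identical.
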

\begin{proof}
    As in Lemma~\ref{lem:Q0 inj}, we will show that Diagram~\ref{eq:filled square} commutes after applying the functor $H_* (- ; Q_1 )$ by checking on generators. If $I$ is an even partition, then the generator $\chi(\Sq^{2\sum_{\ell=1}^k\Delta_{i_\ell}})\Sq^2q_0\in H_*( \udq_\A ; Q_1)$ satisfies
    \[\begin{tikzcd}[row sep=0pt]
        H_*(\overline{N};Q_1)\arrow[r] & H_*(\overline{N}_c;Q_1)\arrow[r,"\bar{\theta}_{c*}"] & H_*(M_c;Q_1)\\
        \chi(\Sq^{2\sum_{\ell=1}^k\Delta_{i_\ell}})\Sq^2q_0\arrow[r,maps to] & \chi (\Sq^{2 \sum_{\ell=1}^k\Delta_{i_\ell}} ) \Sq^2 c_0\arrow[r,maps to] & \chi (\Sq^{2 \sum_{\ell=1}^k\Delta_{i_\ell}} ) \Sq^2 (p_I U_c ).
    \end{tikzcd}\]
    For the other path (in Diagram~\ref{eq:Q1 square}), we get
    \[\bar{\theta}_*(\chi(\Sq^{2\sum_{\ell=1}^k\Delta_{i_\ell}})\Sq^2q_0)=\chi (\Sq^{2 \sum_{\ell=1}^k\Delta_{i_\ell}} ) \Sq^2 (p_I U_h + (\Sq^3 \Sq^1 \alpha_I ) U_h ).\] 
    By Equation \ref{eq:steenrod action thom class}, this maps to
    \[
        \chi (\Sq^{2 \sum_{\ell=1}^k\Delta_{i_\ell}} ) (\Sq^2 (p_I U_h ) + \Sq^2 \Sq^3 \Sq^1 (\alpha_I U_h ) + \Sq^2 \Sq^1 (\alpha_I w_3 U_h ) ).
    \]
    We then compute
    \begin{align*}
        Q_1 \chi ( \Sq^{2 \sum_{\ell=1}^k\Delta_{i_\ell}} ) \Sq^2 \Sq^3 \Sq^1 & = \chi ( Q_1 ) \chi ( \Sq^{2 \sum_{\ell=1}^k\Delta_{i_\ell}} ) \Sq^2 \Sq^3 \Sq^1 \\
        & = \chi ( \Sq^{2 \sum_{\ell=1}^k\Delta_{i_\ell}} Q_1 ) \Sq^2 \Sq^3 \Sq^1 \\
        & = \chi ( Q_1 \Sq^{2 \sum_{\ell=1}^k\Delta_{i_\ell}} ) \Sq^2 \Sq^3 \Sq^1 \\
        & = \chi ( \Sq^{2 \sum_{\ell=1}^k\Delta_{i_\ell}} ) Q_1 \Sq^2 \Sq^3 \Sq^1 \\
        & = 0.
    \end{align*}
    Since $H_* ( \A ; Q_1 ) \cong 0$, this implies there is some $a \in A$ with $\chi ( \Sq^{2 \sum_{\ell=1}^k\Delta_{i_\ell}} ) \Sq^3 \Sq^1 = Q_1 a$. Hence the term $\chi ( \Sq^{2 \sum_{\ell=1}^k\Delta_{i_\ell}} ) \Sq^2 \Sq^3 \Sq^1 ( \alpha_I U_h )$ is a boundary and vanishes in $Q_1$-homology, and therefore our generator maps to
    \[
        \chi ( \Sq^{2 \sum_{\ell=1}^k\Delta_{i_\ell}} ) ( \Sq^2 ( p_I U_h ) + \Sq^2 \Sq^1 ( \alpha_I w_3 U_h ) ) \in H_* ( M_h ; Q_1 ).
    \]
    As $w_3$ vanishes in $H^* \BSpinc$, this maps to
    \[
        \chi ( \Sq^{2 \sum_{\ell=1}^k\Delta_{i_\ell}} ) \Sq^2 ( p_I U_c ) \in H_* ( M_c ; Q_1 ).
    \]
    Hence the diagram 
    \begin{equation}\label{eq:Q1 square}
        \begin{tikzcd}
            \arrow[from=1-1, to=1-2]
            \arrow["{\bar{\theta}_*}", from=1-1, to=2-1]
            \arrow["\bar{\theta}_{c*}", from=1-2, to=2-2]
            \arrow[from=2-1, to=2-2]
            H_*( \overline{N} ; Q_1) & H_*( \overline{N}_c ; Q_1) \\
            H_*( M_h ; Q_1) & H_*( M_c ; Q_1 )
        \end{tikzcd}
    \end{equation}
    commutes for the $\udq_\A$ summands. Diagram~\ref{eq:filled square} commutes on the $E_\A$ summands, so Diagram~\ref{eq:Q1 square} commutes for the $E_\A$ summands. Thus Diagram~\ref{eq:Q1 square} commutes in general. Theorem~\ref{thm:spinc splitting} implies that $\bar{\theta}_{c*}$ is an isomorphism, and $H_*( \overline{N} ; Q_1) \to H_*( \overline{N}_c ; Q_1)$ is a direct sum of injective maps (and is hence injective), so $\bar{\theta}_*$ must be injective as well.
\end{proof}

\begin{lem}\label{lem:Q1 iso}
    The map $\bar{\theta} : \overline{N} \to M_h$ induces an isomorphism on $Q_1$-homology.
\end{lem}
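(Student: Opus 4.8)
The plan is to run the same argument as in the proof of Lemma~\ref{lem:Q0 iso}. Lemma~\ref{lem:Q1 inj} already gives that $\bar{\theta}$ induces an \emph{injection} on $Q_1$-homology, and every module in sight is finite-dimensional in each degree, so it suffices to check that $H_*(\overline{N};Q_1)$ and $H_*(M_h;Q_1)$ have the same Hilbert--Poincar\'e series: an injection between graded vector spaces of the same finite dimension in each degree is automatically an isomorphism.

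First I would compute the series of the target. By Lemma~\ref{lem:Q1 homology of M_h}, $H_*(M_h;Q_1)$ is the free module on the degree-$2$ class $w_2U_h$ over the polynomial ring $S=\Z/2\Z[\,w_{2i}^2,\,g_{2^r-2}\,]$, where $w_{2i}^2$ has degree $4i$ and runs over $i\ge 1$ with $i\neq 2^{r-1}-1$ $(r\ge 3)$, while $g_{2^r-2}$ has degree $2^r-2$ for $r\ge 3$. Hence its Hilbert--Poincar\'e series is
\[
t^2\Bigl(\prod_{\substack{i\ge 1\\ i\neq 2^{r-1}-1}}(1-t^{4i})^{-1}\Bigr)\prod_{r\ge 3}(1-t^{2^r-2})^{-1}.
\]
The key simplification is that the omitted exponents satisfy $4(2^{r-1}-1)=2^{r+1}-4=2(2^r-2)$, so writing $\prod_{i\ge 1,\,i\neq 2^{r-1}-1}(1-t^{4i})^{-1}=\prod_{i\ge 1}(1-t^{4i})^{-1}\cdot\prod_{r\ge 3}(1-t^{2^{r+1}-4})$ and applying $1-x^2=(1-x)(1+x)$ collapses the whole expression into $t^2\prod_{k\ge 1}(1-t^{4k})^{-1}\prod_{r\ge 3}(1+t^{2^r-2})$.

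Next I would compute the series of the source. Propositions~\ref{prop:Q homology} and~\ref{prop:E homology} give explicit $\Z/2\Z$-bases for $H_*(\udq_\A;Q_1)$ and $H_*(E_\A;Q_1)$, both indexed by the decreasing sequences $i_1>\dots>i_k\ge 2$; since $\chi(\Sq^{2\sum_j\Delta_{i_j}})$ has degree $\sum_j(2^{i_j+1}-2)$ and the remaining $\Sq^2q_0$ (resp.\ $\Sq^2e_0+\Sq^1e_1$) factor has degree $2$, both modules have Hilbert--Poincar\'e series $t^2\prod_{r\ge 3}(1+t^{2^r-2})$ (the achievable degrees $\sum_j(2^{i_j+1}-2)$ range over all finite subsets of $\{2^r-2:r\ge 3\}$). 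Because $\overline{N}$ contains exactly one such summand, shifted by $4|I|$, for each partition $I$, and $\sum_{I\in\Part}t^{4|I|}=\prod_{k\ge 1}(1-t^{4k})^{-1}$, the series of $H_*(\overline{N};Q_1)$ is $t^2\prod_{k\ge 1}(1-t^{4k})^{-1}\prod_{r\ge 3}(1+t^{2^r-2})$, which matches the target. This forces $\bar{\theta}_*$ to be an isomorphism.

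The only real obstacle is the combinatorial bookkeeping in the second step: one has to recognize that the $g_{2^r-2}$ generators of $S$ are precisely what make the product over $\{i\ge 1:i\neq 2^{r-1}-1\}$ collapse to the clean form, via the factorization $1-t^{2(2^r-2)}=(1-t^{2^r-2})(1+t^{2^r-2})$ --- the direct analogue of the role played by the classes $f_{2^k}$ in the $Q_0$ computation of Lemma~\ref{lem:Q0 iso}.
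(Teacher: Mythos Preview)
Your proposal is correct and follows essentially the same approach as the paper: use Lemma~\ref{lem:Q1 inj} for injectivity, then match Hilbert--Poincar\'e series. The only cosmetic difference is that the paper splits each $g_{2^r-2}$ into a square part (absorbed with the $w_{2i}^2$ to fill in the missing degrees $4(2^{r-1}-1)$) and an exterior part, whereas you express the same cancellation via the identity $(1-t^{2(2^r-2)})/(1-t^{2^r-2})=1+t^{2^r-2}$; these are two ways of writing the same counting argument.
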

\begin{proof}
    As in Lemma~\ref{lem:Q0 iso}, it suffices to show that $H_*(M_h;Q_1)$ and $H_*(\overline{N};Q_1)$ have the same Hilbert--Poincar\'e series. By Lemma~\ref{lem:Q1 homology of M_h}, we can write any monomial in $H_* ( M_h ; Q_1 )$ as $ABw_2 U_h$, where $A$ is a monomial in the $w_{2i}^2$ and $g_{2^r - 2}^2$ and $B$ is a product of $g_{2^r - 2}$ with each factor occurring at most once. The number of monomials of the form $A$ in degree $4n$ is the number of partitions of $n$, and there are no monomials of this form in degrees not divisible by four. Let $s$ be the Hilbert--Poincar\'e series for the exterior algebra $\bigwedge \left[ g_{2^r - 2} \ \middle| \ r \geq 3 \right]$. Then the Hilbert--Poincar\'e series of $H_* ( M_h ; Q_1 )$ is $t^2 \sum_{I \in \Part} t^{4 | I |} s.$
    
    For the Hilbert--Poincar\'e series of $H_* ( \overline{N} ; Q_1 )$, note that the degree of $\chi ( \Sq^{2 \sum_{\ell=1}^k\Delta_{i_\ell}} )$ for $i_1 > \ldots > i_k \geq 2$ is
    \[2\sum_{\ell=1}^k(2^{i_\ell}-1)=\sum_{\ell=1}^k(2^{i_\ell+1}-2),\]
    which is also the degree of the exterior product $g_{2^{i_1 + 1} - 2} \cdots g_{2^{i_k + 1} - 2}$. So the Hilbert--Poincar\'e series of $H_* ( \udq_\A ; Q_1 )$ and $H_* ( E_\A ; Q_1 )$ are both $t^2 s$, and hence the Hilbert--Poincar\'e series of $H_* ( \overline{N} ; Q_1 )$ is $\sum_{I \in \Part} t^{4 | I |} t^2 s$, as desired.
\end{proof}

\begin{cor}\label{cor:iso on Q_i homology}
    The map $\bar{\theta} : \overline{N} \to M_h$ induces isomorphisms on $Q_i$-homology.
\end{cor}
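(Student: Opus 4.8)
The plan is almost nothing: throughout the paper only the differentials $Q_0 = \Sq^1$ and $Q_1 = \Sq^3 + \Sq^2\Sq^1$ occur (Setup~\ref{setup:Q_i homology}), so the assertion for general $i$ is just the conjunction of the $i=0$ and $i=1$ cases, which are Lemmas~\ref{lem:Q0 iso} and~\ref{lem:Q1 iso}. For orientation I will recall the common shape of the argument behind those two lemmas, since that is where the real work sits.

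In each case I would proceed in two stages. First, inject: to see that $\bar{\theta}$ induces an injection on $H_*(-;Q_i)$ (Lemmas~\ref{lem:Q0 inj} and~\ref{lem:Q1 inj}), I would factor $\bar{\theta}$ through the comparison with $\MSpinc$. One fills in the square of Diagram~\ref{eq:filled square} relating $\overline{N} \to \overline{N}_c$ and $M_h \to M_c$; this square does not commute on the nose, but one checks it becomes commutative after applying $H_*(-;Q_i)$. Given that, injectivity of $\bar{\theta}_*$ is formal: $\bar{\theta}_{c*}$ is an isomorphism by the Anderson--Brown--Peterson splitting of $\MSpinc$ (Theorem~\ref{thm:spinc splitting}), and $H_*(\overline{N};Q_i)\to H_*(\overline{N}_c;Q_i)$ is a direct sum of the monomorphisms $H_*(\udq_\A;Q_i)\hookrightarrow H_*(C_\A;Q_i)$ and $H_*(E_\A;Q_i)\hookrightarrow H_*(C_\A;Q_i)$. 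Second, surject: I would upgrade to an isomorphism by a dimension count, using Propositions~\ref{prop:Q homology} and~\ref{prop:E homology} for the Margolis homology of the summands of $\overline{N}$, Lemmas~\ref{lem:Q0 homology of M_h} and~\ref{lem:Q1 homology of M_h} for that of $M_h$, and comparing Hilbert--Poincar\'e series degree by degree.

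The main obstacle --- now folded into those lemmas --- is the commutativity of the comparison square on $Q_i$-homology. The point is that $\bar{\theta}$ does not send the generators of $\udq_\A$ and $E_\A$ to literal Pontryagin products: by Proposition~\ref{prop:ksp/elephant class in cohomology} it sends them to $(p_I + \Sq^3\Sq^1\alpha_I)U_h$ and to $\beta_I w_3 U_h$. One must therefore show the correction terms $\Sq^3\Sq^1(\alpha_I U_h)$ and $\Sq^1(\alpha_I w_3 U_h)$ (together with the $Q_1$-versions produced by Equation~\ref{eq:steenrod action thom class}) vanish in $Q_i$-homology: the first because $\chi(\Sq^{4k})\Sq^3\Sq^1$ and its $Q_1$-analogue are $Q_i$-boundaries in $\A$ (as $H_*(\A;Q_i)=0$), the second because $w_3 = 0$ in $H^*\BSpinc$, so these terms already die under $M_h \to M_c$. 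The residual bookkeeping is the Hilbert--Poincar\'e comparison, whose only genuine input is the numerical coincidence that $\chi(\Sq^{2\sum_{\ell=1}^k\Delta_{i_\ell}})$ has the same degree as $g_{2^{i_1+1}-2}\cdots g_{2^{i_k+1}-2}$, which matches the $Q_1$-homologies term for term. Once the two lemmas are in hand, the corollary follows by simply combining them.
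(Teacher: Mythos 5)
Your proof matches the paper's exactly: the corollary is precisely the conjunction of Lemmas~\ref{lem:Q0 iso} and~\ref{lem:Q1 iso}, and your recap of the two-stage argument behind those lemmas (injectivity via the non-commuting comparison square to $\MSpinc$ made commutative on $Q_i$-homology, then a Hilbert--Poincar\'e series count) is an accurate summary of what the paper does.
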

\begin{proof}
    This is just the combination of Lemma~\ref{lem:Q0 iso} and Lemma~\ref{lem:Q1 iso}.
\end{proof}

\section{Anderson--Brown--Peterson splitting of $\MSpinh$}\label{sec:abp splitting}
Using the $Q_i$-homology isomorphisms given in Section~\ref{sec:margolis homology}, we now prove Theorem~\ref{thm:main} (which we restate here for convenience).

\begin{thm}\label{thm:spinh splitting}
    There is a set of homogeneous classes $Z \subset H^* \MSpinh$ and a map
    \[
        \MSpinh \to \bigvee_{I \in \Part_{\mathrm{even}}} \ksp \langle 4 | I | \rangle \vee \bigvee_{I \in \Part_{\mathrm{odd}}} \Sigma^{4 | I |} F \vee \bigvee_{z \in Z} \Sigma^{\deg z} H\Z / 2\Z
    \]
    that is a $2$-local homotopy equivalence.
\end{thm}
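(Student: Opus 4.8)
The plan is to upgrade the half-built map from Section~\ref{sec:ksp classes} into an equivalence by adjoining Eilenberg--Mac Lane summands, following the overall strategy of \cite{ABP67}. Write
\[
W:=\bigvee_{I\in\Part_\mr{even}}\ksp\langle4|I|\rangle\ \vee\ \bigvee_{I\in\Part_\mr{odd}}\Sigma^{4|I|}F,
\]
so that the last proposition of Section~\ref{sec:ksp classes} gives a map $\Phi^h\colon\MSpinh\to W$ inducing on mod~$2$ cohomology the map $\bar\theta\colon\overline{N}\to M_h$, which by Corollary~\ref{cor:iso on Q_i homology} is an isomorphism on $Q_0$- and $Q_1$-Margolis homology. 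Since $H^*\Sigma^nH\Z/2\Z\cong\Sigma^n\A$ has vanishing $Q_i$-Margolis homology, for \emph{any} set $Z$ of homogeneous classes in $M_h$ the resulting map
\[
\Psi^h\colon\MSpinh\longrightarrow W\ \vee\ \bigvee_{z\in Z}\Sigma^{\deg z}H\Z/2\Z
\]
still induces a $Q_i$-Margolis homology isomorphism on cohomology. The plan is then: choose $Z$ so that $(\Psi^h)^*$ is surjective, upgrade the Margolis isomorphism to injectivity of $(\Psi^h)^*$, and pass from a mod~$2$ cohomology isomorphism to a $2$-local equivalence.

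For surjectivity I would take $Z$ to be a set of homogeneous $\A$-module generators of $\operatorname{coker}(\bar\theta)$ — computable degree by degree from Propositions~\ref{prop:bspinh cohomology} and~\ref{prop:ksp/elephant class in cohomology} — so that $(\Psi^h)^*$ is onto by construction; this is the easy half. The technical heart is injectivity of $(\Psi^h)^*$, hence an isomorphism of $\A$-modules. Here I would adapt the filtering argument of \cite{ABP67}: filter the source $\overline{N}\oplus\bigoplus_z\Sigma^{\deg z}\A$ by the lowest degrees in which its summands ($\udq_\A$, $E_\A$, and the $\Sigma^{\deg z}\A$) are supported, filter $M_h$ by degree, and show by induction up these filtrations that $(\Psi^h)^*$ is injective in each degree. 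The engine of the inductive step is Corollary~\ref{cor:iso on Q_i homology}, together with standard facts about bounded-below $\A_1$-modules and their Margolis homology \cite{Mar83}: a hypothetical lowest-degree element of $\ker(\Psi^h)^*$ would, after comparison with $\overline{N}$, produce a nonzero $Q_0$- or $Q_1$-Margolis class dying in $M_h$, contradicting the corollary. Carrying this out uses the explicit cohomology formulas of Proposition~\ref{prop:ksp/elephant class in cohomology} — $y_{8k}\mapsto(p_I+\Sq^3\Sq^1\alpha_I)U_h$ for $I\in\Part_\mr{even}$ and $e_0\mapsto p_IU_h$, $e_1\mapsto\beta_Iw_3U_h$ with $\Sq^2\beta_I=p_I$ for $I\in\Part_\mr{odd}$ — the alternative presentations of $H^*\BSpinh$ from Lemmas~\ref{lem:alternative cohomology of bspinh} and~\ref{lem:another alternative cohomology of bspinh}, and the vanishing of $w_5$ in $H^*\BSpinh$.

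I expect this injectivity step to be the main obstacle, precisely because $\MSpinh$ is only an $\MSpin$-module spectrum: $M_h$ is not cyclic over $\A$ and $U_h$ is not a unit, so adjoining a low-degree Eilenberg--Mac Lane generator also drags in the free $\A$-submodule it generates in higher degrees, and one must rule out that this ever creates a kernel. The control comes from the Margolis homology isomorphism, but the $w_3$-twists visible in Proposition~\ref{prop:ksp/elephant class in cohomology} force the induction to be organized more carefully than in the $\Spin$ or $\Spinc$ cases treated in \cite{ABP67}. Once $(\Psi^h)^*$ is a mod~$2$ cohomology isomorphism, $\Psi^h$ is an equivalence after $2$-completion; and since $\pi_*\MSpinh$ is finitely generated in each degree by Proposition~\ref{prop:no odd torsion} while the homotopy of the target is patently finitely generated in each degree, it follows as in the general remarks of Section~\ref{sec:abp} that $\Psi^h$ is a $2$-local homotopy equivalence.
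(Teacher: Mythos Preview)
Your overall architecture matches the paper's exactly: build $\Psi^h$ from $\Phi^h$ by adjoining $H\Z/2\Z$ summands indexed by a set $Z$ lifting a basis of the cokernel, prove surjectivity by a Nakayama-type argument, prove injectivity by a filtration argument powered by the Margolis isomorphism, then pass to a $2$-local equivalence via finite generation. Your choice of $Z$ and your surjectivity and endgame steps are fine.

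The gap is in your injectivity heuristic. You write that a lowest-degree element of $\ker(\Psi^h)^*$ would ``produce a nonzero $Q_0$- or $Q_1$-Margolis class dying in $M_h$.'' This is not how the argument works, and in fact cannot work as stated: the kernel of a surjective Margolis isomorphism has \emph{trivial} $Q_i$-homology, so its bottom class need not be a $Q_i$-cycle at all (think of a free $\A$-summand). The paper's filtration is by $N^{[n]}$ (summands nonzero in degree $\leq n$) and $M_h^{[n]}:=\theta(N^{[n]})$, not by raw degree, and the inductive step splits into two genuinely different pieces. First (Lemma~\ref{lem:P injective}), one shows $\lambda_n$ is injective on the $\A_1$-submodule $P_n\subset N^{[n]}/N^{[n-1]}$; this is where the Margolis isomorphism and the five lemma do their work, together with a short case check in degrees $n$ through $n+6$. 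Second (Lemma~\ref{lem:lambda_n}), one extends injectivity from $P_n$ to all of $N^{[n]}/N^{[n-1]}=\A\otimes_{\A_1}P_n$, and \emph{this} is where the $\MSpin$-module structure enters: the comultiplication $\mu\colon M_h\to M\otimes M_h$ satisfies $\mu m=U\otimes m+\text{(lower)}$, and since $\Sq^1U=\Sq^2U=0$ one gets $(\id_M\otimes p_n)\mu\theta v=U\otimes\lambda_n v$ for $v\in P_n$, which lets you peel off the $\A$-coefficients. The $w_3$-twists you flag are not really the obstruction here; the obstruction is exactly that $N^{[n]}/N^{[n-1]}$ is $\A\otimes_{\A_1}P_n$ rather than $P_n$, and the comodule trick is what bridges that gap.
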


To begin, we need to construct our class $Z\subset H^*\MSpinh$ of homogeneous classes. As before, we will use the notation $M_h:=H^*\MSpinh$.

\begin{setup}\label{setup:Z}
Let $\A_+ \subset \A$ be the (left) submodule generated by all elements of positive degree. Now form the composition
\[
    \begin{tikzcd}
        \arrow["{\bar{\theta}}", from=1-1, to=1-2]
        \arrow["{\rho}", from=1-2, to=1-3]
        \overline{N} & M_h & M_h / \A_+ M_h,
    \end{tikzcd}
\]
where $\bar{\theta}$ is the map given in Notation~\ref{notn:theta bar} and $\rho : M_h \to M_h / \A_+ M_h$ is the quotient map. Take the cokernel $c : M_h / \A_+ M_h \to R$ of $\rho\circ\bar{\theta}$. Let $Z \subset M_h$ be any collection of homogeneous elements such $c\circ\rho(Z)$ is a basis for $R$.
\end{setup}

We will show that $Z$ is (an instance of) the desired set of homogeneous classes. In order to prove Theorem~\ref{thm:spinh splitting}, we first need to expand $\overline{N}$ to include the cohomology of $\bigvee_{z\in Z}\Sigma^{\deg{z}}H\Z/2\Z$.

\begin{notn}\label{notn:theta and N}
    Define
\[
    N := \overline{N} \oplus \bigoplus_{z \in Z} \Sigma^{\deg z} \A,
\]
and let $\theta : N \to M_h$ be the map defined by $\bar{\theta}$ on $\overline{N}$ and the maps
\begin{align*}
\Sigma^{\deg z} \A &\to M_h\\
1 &\mapsto z
\end{align*}
for each $z \in Z$.
\end{notn}

Surjectivity of $\theta$ is relatively straightforward.

\begin{lem}\label{lem:theta surjective}
    The map $\theta : N \to M_h$ is surjective.
\end{lem}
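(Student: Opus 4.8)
The plan is to reduce this to a graded Nakayama-type argument, the substance having already been arranged in Setup~\ref{setup:Z}. First I would observe that $\rho\circ\theta : N \to M_h/\A_+ M_h$ is surjective. Indeed, $R$ is by definition the cokernel of $\rho\circ\bar{\theta}$, so the image of $\rho\circ\bar{\theta}$ together with any lift of a basis of $R$ spans $M_h/\A_+ M_h$; the elements of $Z$ were chosen precisely so that $c\circ\rho(Z)$ is such a basis. Since $\theta$ restricts to $\bar{\theta}$ on $\overline{N}$ and sends the generator of $\Sigma^{\deg z}\A$ to $z$, we conclude $\operatorname{im}(\rho\circ\theta) = M_h/\A_+ M_h$, equivalently $\operatorname{im}\theta + \A_+ M_h = M_h$.

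Next I would promote this to surjectivity of $\theta$ itself by induction on cohomological degree. The relevant facts are that $M_h = H^*\MSpinh$ is connective — it vanishes in negative degrees and is one-dimensional in degree zero, spanned by the Thom class $U_h$, by the Thom isomorphism and Proposition~\ref{prop:bspinh cohomology} — that $\A_+$ strictly raises degree, and that $\theta$ is a map of $\A$-modules, so $\operatorname{im}\theta$ is an $\A$-submodule of $M_h$. In degree zero we have $(\A_+ M_h)_0 = 0$, hence $(\operatorname{im}\theta)_0 = (M_h)_0$. For the inductive step, assume $\operatorname{im}\theta \supseteq (M_h)_{<n}$. Given $x \in (M_h)_n$, write $x = y + w$ with $y \in \operatorname{im}\theta$ and $w \in (\A_+ M_h)_n$; expanding $w = \sum_i a_i m_i$ with $a_i \in \A_+$ forces $\deg m_i < n$, so each $m_i \in \operatorname{im}\theta$ by the inductive hypothesis, and since $\operatorname{im}\theta$ is an $\A$-submodule, $w = \sum_i a_i m_i \in \operatorname{im}\theta$. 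Therefore $x \in \operatorname{im}\theta$, completing the induction.

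I do not expect a genuine obstacle here. The only points needing care are verifying that $\theta$ is $\A$-linear (so that $\operatorname{im}\theta$ is a submodule), that $M_h$ is connective with one-dimensional bottom, and that everything is finite-dimensional in each degree (so the induction is well-posed) — the last holding because there are only finitely many partitions $I$ with $4|I| \le n$ and because $\A$, $\udq_\A$, and $E_\A$ are finite-dimensional in each degree. All of this is immediate from the structural results already established. In effect, this lemma is just the standard observation that a map of $\A$-modules into a connective module which is surjective on indecomposables is surjective, combined with the fact that $Z$ was built to supply exactly the indecomposables that $\bar{\theta}$ misses.
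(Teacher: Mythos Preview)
Your proposal is correct and takes essentially the same approach as the paper: both arguments establish that $\operatorname{im}\theta + \A_+ M_h = M_h$ from the construction of $Z$, then use that $M_h$ is bounded below and $\A_+$ raises degree to conclude surjectivity. The paper phrases this as a downward iteration on a given element rather than an upward induction on degree, but the content is identical.
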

\begin{proof}
    Suppose $x \in M_h$. Then there are $z_1, \ldots z_n \in Z$ such that $c\rho x = c\rho z_1 + \ldots + c\rho z_n$, since $c\rho(Z)$ forms a basis of $R$. Thus there exists $y \in \overline{N}$ such that $\rho x = \rho z_1 + \ldots + \rho z_n + \rho \bar{\theta} y$, and there is some $a \in \A_+$ and $x' \in M_h$ such that $x = z_1 + \ldots + z_n + \bar{\theta} y + ax'$. In particular, $x'$ has lower degree than $x$. Since $M_h$ is bounded below (in degree), we can repeat this procedure until $x$ is written as a sum of Steenrod squares of elements of $Z$ and elements of $\bar{\theta}(\overline{N})$. Hence $x$ is in the image of $\theta$.
\end{proof}

Showing that $\theta$ is injective requires more work. The idea is to filter $N$ and $M_h$ and show that $\theta$ induces an isomorphism at each step in the filtration.

\begin{notn}\label{notn:filtered modules}
    For $n \in \Z$, let
    \[N^{[n]} \subset N\]
    be the submodule given by the direct sum of all the $\udq_\A$, $E_\A$, and $\A$ summands that are non-zero in degrees less than or equal to $n$. Let 
    \[M_h^{[n]} := \theta( N^{[n]}).\]
    Denote the restriction of $\theta$ by $\theta_n : N^{[n]} \to M_h^{[n]}$, and let $\lambda_n : N / N^{[n - 1]}\to M_h / M_h^{[n - 1]}$ be the induced map on quotients. 
\end{notn}

Note that by our definition of $N^{[n - 1]}$, the module $N / N^{[n - 1]}$ is the direct sum of those summands of $N$ that are zero in degrees less than $n$, and $N^{[n]} / N^{[n - 1]}$ is the direct sum of those summands that are zero in degrees less than $n$ but nonzero in degree $n$. Also, each summand of $N$ is of the form $B_\A$ for some $\A_1$-module $B$ (e.g.~the free summands of $N$ take the form $\A \cong ( \A_1 )_\A$). 

\begin{defn}
    Define $P_n \subset N^{[n]} / N^{[n - 1]}$ to be the $\A_1$-submodule given by the direct sum of $B$ for each summand $B_\A$ of $N^{[n]} / N^{[n - 1]}$.
\end{defn}

\begin{lem}\label{lem:P injective}
    If $\theta_{n - 1} : N^{[n - 1]} \to M_h^{[n - 1]}$ is an isomorphism, then the restriction of $\lambda_n$ to $P_n$ is injective.
\end{lem}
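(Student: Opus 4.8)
The plan is to run an Anderson--Brown--Peterson style filtration argument, splitting the work between the bottom degree $n$ and the degrees above it. First I would unwind the definitions: $P_n=P_n^{\mathrm{EM}}\oplus P_n^{K}$, where $P_n^{\mathrm{EM}}=\bigoplus_{z\in Z,\ \deg z=n}\Sigma^n\A_1$ gathers the summands attached to the Eilenberg--Mac Lane classes and $P_n^{K}$ gathers the $\Sigma^n\udq$-summands (one per even partition with $4|I|=n$) or the $\Sigma^nE$-summands (one per odd partition with $4|I|=n$); by parity only one of $\udq$ or $E$ can occur in a given degree, and $P_n^K=0$ unless $4\mid n$. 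Since $\lambda_n|_{P_n}$ is a map of $\A_1$-modules its kernel is an $\A_1$-submodule of $P_n$, so it suffices to rule out nonzero homogeneous kernel elements, which I would do by induction on internal degree. The standing hypothesis that $\theta_{n-1}$ is an isomorphism identifies $M_h^{[n-1]}$ with $N^{[n-1]}$ as $\A$-modules via $\theta$; since every summand generator of $N^{[n-1]}$ sits in degree strictly less than $n$, this yields the crucial containment $(M_h^{[n-1]})_n\subseteq(\A_+M_h)_n$. The two further inputs are the explicit description of $\theta$ on the generators of $P_n$ (Proposition~\ref{prop:ksp/elephant class in cohomology} and Notation~\ref{notn:theta and N}: $q_0\mapsto(p_I+\Sq^3\Sq^1\alpha_I)U_h$, $e_0\mapsto p_IU_h$, $e_1\mapsto\beta_Iw_3U_h$ with $\Sq^2\beta_I=p_I$, and $1\mapsto z$ on each free summand) and the $Q_i$-homology isomorphism $\bar\theta_*\colon H_*(\overline N;Q_i)\xrightarrow{\sim}H_*(M_h;Q_i)$ of Corollary~\ref{cor:iso on Q_i homology}.

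For the bottom degree, suppose a linear combination of the bottom generators of $P_n$ maps into $(M_h^{[n-1]})_n\subseteq(\A_+M_h)_n$. Applying the quotient $\rho\colon M_h\to M_h/\A_+M_h$ turns this into a relation $\sum_I\rho(p_IU_h)+\sum_z\rho(z)\equiv0$; here one uses that $\Sq^3\Sq^1\alpha_IU_h=\Sq^1\bigl((\Sq^2\Sq^1\alpha_I)U_h\bigr)\in\A_+M_h$ because $w_1=0$ in $H^*\BSpinh$, so the $\udq$-generators and the $E$-generators both contribute $\rho(p_IU_h)$. Applying the cokernel map $c\colon M_h/\A_+M_h\to R$ of Setup~\ref{setup:Z} annihilates the $\rho(p_IU_h)$-terms and leaves $\sum_zc\rho(z)=0$; since $c\rho(Z)$ is a basis of $R$ the coefficients on the $z$'s vanish, and then $\sum_I\rho(p_IU_h)=0$ forces the remaining coefficients to vanish by linear independence of the Pontryagin monomials $p_I$ modulo $\A_+M_h$ (a standard fact, visible for instance from the description of $H_*(M_h;Q_0)$ in Lemma~\ref{lem:Q0 homology of M_h}, where the classes $p_IU_h$ reduce to distinct polynomial monomials). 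Hence $\lambda_n|_{P_n}$ is injective in degree $n$.

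For the inductive step above degree $n$, take a nonzero homogeneous $x\in\ker(\lambda_n|_{P_n})$ of minimal degree; by the previous paragraph $\deg x>n$, so $x$ lies in the span of the positive-degree $\A_1$-multiples of the generators of $P_n$. The idea is to dispose of the finitely many possibilities afforded by the small explicit $\A_1$-modules $\udq$, $E$, and $\A_1$. The classes representing $Q_i$-homology of $\udq$ and $E$ ($q_0,e_0$ for $Q_0$ and $\Sq^2q_0$, $\Sq^2e_0+\Sq^1e_1$ for $Q_1$, by Propositions~\ref{prop:Q homology} and~\ref{prop:E homology}) cannot lie in the kernel: applying $H_*(-;Q_i)$ to the map of short exact sequences $0\to N^{[n-1]}\to N\to N/N^{[n-1]}\to0$ and its analogue for $M_h$, and using that $\theta_{n-1}$ is an isomorphism, that $\theta$ induces an isomorphism on $Q_i$-homology (Corollary~\ref{cor:iso on Q_i homology}, noting the free summands are $Q_i$-acyclic), and the five lemma, shows $\lambda_n$ is an isomorphism on $Q_i$-homology, which together with the module relations (e.g.\ $\Sq^2e_1=\Sq^3e_0$ in $E$) eliminates all of $P_n^K$. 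The remaining $Q_i$-invisible classes---$\Sq^3q_0=Q_0(\Sq^2q_0)$ in $\udq$, the top class of $E$, and, for the free summands $\Sigma^n\A_1$, every positive-degree class, since $\A_1$ is both $Q_0$- and $Q_1$-acyclic---have to be handled by a direct computation: one checks in $H^*\BSpinh$, using the Wu formula (Equation~\ref{eq:wu}) and the explicit formulas for $\theta$ above, that their $\theta$-images are not already contained in $M_h^{[n-1]}$, using the identification $M_h^{[n-1]}\cong N^{[n-1]}$ to control that target in the bounded range of degrees immediately above $n$. I expect this last point---pinning down the classes not detected by Margolis homology, and in particular showing the free $\A_1$-summands inject---to be the main obstacle, since it cannot be read off from the $Q_i$-homology isomorphism and instead rests on the explicit structure of $H^*\MSpinh$ together with the defining property of $Z$.
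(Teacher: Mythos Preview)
Your plan matches the paper's in outline: both first use the five lemma on the long exact sequences in $Q_i$-homology to show that $\lambda_n$ induces isomorphisms on Margolis homology, and then argue degree by degree inside $P_n$. In degree $n$ the paper's route is cleaner than yours. After the $c\rho$-step kills the $Z_n$-component exactly as you describe, the remaining element lies in $X_n$ or $Y_n$ at the very bottom of $N/N^{[n-1]}$ and is therefore automatically a nonzero $Q_0$-homology class there (nothing sits below it to bound it), so $\lambda_n v=0$ contradicts the Margolis isomorphism you have already established. Your detour through ``linear independence of the $p_IU_h$ in $M_h/\A_+M_h$'' is not what Lemma~\ref{lem:Q0 homology of M_h} supplies---that lemma computes the $Q_0$-homology of $M_h$, not its module of $\A$-indecomposables---and in any case is unnecessary once you use the $Q_0$-argument directly.

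The genuine gap is in your treatment of degrees $n+1,\dots,n+6$. Your assertion that the Margolis isomorphism ``together with the module relations eliminates all of $P_n^K$'' is false as stated: the top class $\Sq^3 q_0\in\udq$ and the degree-$3$ class $\Sq^2 e_1=\Sq^3 e_0\in E$ are simultaneously $Q_0$- and $Q_1$-boundaries, so the five-lemma Margolis argument detects nothing there, and no module relation rescues this. And your fallback of a ``direct computation in $H^*\BSpinh$'' for the free $\A_1$-summands cannot be executed, because the elements $z\in Z$ are chosen non-constructively as lifts of a basis of a cokernel and carry no explicit formula you could feed into the Wu relations; an argument that ``rests on the explicit structure of $H^*\MSpinh$'' for these classes is not available. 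The paper does not attempt any such computation: it asserts that the two-step recipe of the $s=0$ case (first isolate the $Z_n$-contribution, then invoke Margolis homology on what remains) repeats in each degree $s=1,\dots,6$, writing out only $s=0$. Filling that in is a short case analysis using the explicit $\A_1$-module pictures of $\udq$, $E$, and $\A_1$ in each internal degree---not a single uniform computation and not the upward minimal-degree induction you set up, which goes the wrong way (applying $Q_i$ raises degree, so one cannot appeal to injectivity at a lower degree to control $Q_i v$).
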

\begin{proof}
    First, notice that $P_n$ can be written as $X_n \oplus Y_n \oplus Z_n$, where $X_n = \bigoplus_{\alpha \in A_X} \Sigma^n \udq$, $Y_n = \bigoplus_{\alpha \in A_Y} \Sigma^n E$, and $Z_n = \bigoplus_{\alpha \in A_Z} \Sigma^n \A_1$, and $A_X$, $A_Y$, and $A_Z$ are finite sets. Moreover, $A_X$ is non-empty only if $n = 0 \pmod{8}$ and $A_Y$ is non-empty only if $n = 4 \pmod{8}$. In particular, one or the other is empty. We have short exact sequences fitting in commutative diagrams
    \[
        \begin{tikzcd}
            \arrow[from=1-1, to=1-2]
            \arrow[from=1-2, to=1-3]
            \arrow[from=1-3, to=1-4]
            \arrow[from=1-4, to=1-5]
            \arrow[from=1-1, to=2-1]
            \arrow["{\theta_{n - 1}}", from=1-2, to=2-2]
            \arrow["{\theta}", from=1-3, to=2-3]
            \arrow["{\lambda_n}", from=1-4, to=2-4]
            \arrow[from=1-5, to=2-5]
            \arrow[from=2-1, to=2-2]
            \arrow[from=2-2, to=2-3]
            \arrow[from=2-3, to=2-4]
            \arrow[from=2-4, to=2-5]
            0 & N^{[n - 1]} & N & N / N^{[n - 1]} & 0 \\
            0 & M_h^{[n - 1]} & M_h & M_h / M_h^{[n - 1]} & 0,
        \end{tikzcd}
    \]
    so we get long exact sequences in $Q_i$-homology:
    \[
        \begin{tikzcd}
            \arrow[from=1-1, to=1-2]
            \arrow[from=1-2, to=1-3]
            \arrow[from=1-3, to=1-4]
            \arrow["{{\theta_{n - 1}}_*}", from=1-1, to=2-1]
            \arrow["{\theta_*}", from=1-2, to=2-2]
            \arrow["{{\lambda_n}_*}", from=1-3, to=2-3]
            \arrow[from=2-1, to=2-2]
            \arrow[from=2-2, to=2-3]
            \arrow[from=2-3, to=2-4]
            \arrow[from=3-1, to=3-2]
            \arrow[from=3-2, to=3-3]
            \arrow["{{\theta_{n - 1}}_*}", from=3-2, to=4-2]
            \arrow["{\theta_*}", from=3-3, to=4-3]
            \arrow[from=4-1, to=4-2]
            \arrow[from=4-2, to=4-3]
            H_j ( N^{[n - 1]} ; Q_i ) & H_j ( N ; Q_i ) & H_j ( N / N^{[n - 1]} ; Q_i ) & {} \\
            H_j ( M_h^{[n - 1]} ; Q_i ) & H_j ( M_h ; Q_i ) & H_j ( M_h / M_h^{[n - 1]} ; Q_i ) & {} \\
            {} & H_{j + \deg Q_i} ( N^{[n - 1]} ; Q_i ) & H_{j + \deg Q_i} ( N ; Q_i ) \\
            {} & H_{j + \deg Q_i} ( M_h^{[n - 1]} ; Q_i ) & H_{j + \deg Q_i} ( M_h ; Q_i ).
        \end{tikzcd}
    \]
    Each ${\theta_{n - 1}}_*$ is an isomorphism because $\theta_{n - 1}$ is an isomorphism by hypothesis. Each $\theta_*$ is an isomorphism because $\bar{\theta}_*$ is an isomorphism by Corollary~\ref{cor:iso on Q_i homology}, and the inclusion $\overline{N} \to N$ induces isomorphisms on $Q_i$-homology because the $Q_i$-homology of each free (i.e.~$\A$) summand vanishes. So by the five lemma, ${\lambda_n}_* : H_i ( N / N^{[n - 1]} ; Q_i ) \to H_i ( M_h / M_h^{[n - 1]} ; Q_i )$ is an isomorphism.

    Now, to show that the restriction of $\lambda_n$ to $P_n$ is injective, note that the modules $\udq$, $E$, and $\A_1$ are concentrated in degrees $0$ through $6$, so $P_n$ is concentrated in degrees $n$ through $n+6$. It thus suffices to show that if $v \in P_n$ is homogeneous of degree $n+s$ for $0\leq s\leq 6$, and if $\lambda_n v = 0$, then $v=0$. We will describe the proof for $s=0$. The proofs for $1\leq s\leq 6$ are similar.
    
    Suppose $v$ has degree $n$. Then we can write $v = x + y + z$ for $x \in X_n$, $y \in Y_n$, and $z \in Z_n$. Setup~\ref{setup:Z} gives us a diagram
    \begin{equation}\label{eq:diagram for deg n lemma}
        \begin{tikzcd}
            \arrow["{\theta}", from=1-1, to=1-2]
            \arrow["{\rho}", from=1-2, to=1-3]
            \arrow["{c}", from=1-3, to=1-4]
            N & M_h & M_h / \A_+ M_h & R
        \end{tikzcd}
    \end{equation}
    in which the element $v \in N$ maps to zero in $M_h$. But $\theta z$ is a linear combination of elements of $Z$ that map to basis vectors for $R$ in Equation~\ref{eq:diagram for deg n lemma}. Moreover, $c(x)=c(y)=0$, so $z = 0$ (otherwise $\theta z$ would give a relation among the basis vectors of $R$). 
    
    So $v = x + y$ and $x = 0$ or $y = 0$. In either case, if $v \neq 0$, then $v$ represents a nonzero class in $Q_0$-homology, so the assumption $\lambda_n v = 0$ contradicts our previous conclusion that $\lambda_n$ induces an isomorphism on $Q_i$-homology. Thus $v = 0$, and hence $\lambda_n$ is a monomorphism on the degree $n$ part of $P_n$.
\end{proof}

Using the structure of $\MSpinh$ as a module over $\MSpin$, we can strengthen Lemma~\ref{lem:P injective} by extending the submodule on which $\lambda_n$ is injective.

\begin{lem}\label{lem:lambda_n}
    If $\theta_{n - 1}$ is an isomorphism, then the restriction of $\lambda_n$ to $N^{[n]} / N^{[n - 1]}$ is injective.
\end{lem}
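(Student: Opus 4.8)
Here the plan is to upgrade Lemma~\ref{lem:P injective}, which gives injectivity of $\lambda_n$ on the $\A_1$-submodule $P_n$ of module generators, to injectivity on the whole $\A$-module $N^{[n]}/N^{[n-1]}$. Since $\A$ is free as a right $\A_1$-module, $N^{[n]}/N^{[n-1]}$ is canonically the induced module $\A\otimes_{\A_1}P_n$, and the inclusion $P_n\to N^{[n]}/N^{[n-1]}$ of Lemma~\ref{lem:P injective} is the canonical map $v\mapsto 1\otimes v$. The one new ingredient, and the whole reason the $\MSpin$-module structure is invoked, is that $M_h$ is itself an induced $\A$-module: using the shearing equivalence $\MSpinh\simeq\Sigma^{-3}\MSpin\wedge\MSO(3)$ of Lemma~\ref{lem:shearing}, the Anderson--Brown--Peterson splitting of $\MSpin$ (Theorem~\ref{thm:spin splitting}), whose wedge summands all have cohomology of the form $\A\otimes_{\A_1}(-)$, and the projection formula for induction along $\A_1\subseteq\A$, one gets $M_h\cong\A\otimes_{\A_1}L$ for an $\A_1$-module $L$, compatibly with the filtration $M_h^{[\bullet]}$, say $M_h^{[m]}\cong\A\otimes_{\A_1}L^{[m]}$. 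This is the substitute for the ring-spectrum self-maps that Anderson--Brown--Peterson exploit, and is exactly the point where $\MSpinh$ not being a ring spectrum forces a modification.

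First I would record the purely formal structure: $N^{[n]}/N^{[n-1]}\cong\A\otimes_{\A_1}P_n$, that its $\A$-indecomposables $(N^{[n]}/N^{[n-1]})\otimes_{\A}\Z/2\Z\cong P_n\otimes_{\A_1}\Z/2\Z$ are concentrated in degree $n$ (because $\udq$, $E$ and $\A_1$ are each generated in degree $0$), and that the analogous statements hold for the associated graded of $M_h^{[\bullet]}$ after the identification above. I would also note, via surjectivity of $\theta_n$ onto $M_h^{[n]}$, that $\lambda_n$ carries $N^{[n]}/N^{[n-1]}$ onto $M_h^{[n]}/M_h^{[n-1]}$, so it suffices to prove $\lambda_n\colon\A\otimes_{\A_1}P_n\to\A\otimes_{\A_1}(L^{[n]}/L^{[n-1]})$ is injective.

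Next I would show this reduces to an injectivity statement over $\A_1$. A degree-$n$ generator of $P_n$ maps under $\lambda_n$ into the degree-$n$ part of $M_h^{[n]}/M_h^{[n-1]}\cong\A\otimes_{\A_1}(L^{[n]}/L^{[n-1]})$, which lies in $1\otimes(L^{[n]}/L^{[n-1]})$; by $\A$-linearity and the fact that $P_n$ is generated in degree $n$, the whole map $\lambda_n$ is the induced map $\A\otimes_{\A_1}f$ of an $\A_1$-module homomorphism $f\colon P_n\to L^{[n]}/L^{[n-1]}$. Since $1\otimes(-)$ is injective and $\lambda_n|_{P_n}$ is injective by Lemma~\ref{lem:P injective}, $f$ is injective; and because $\A$ is free (hence flat) over $\A_1$, induction $\A\otimes_{\A_1}(-)$ is exact, so $\A\otimes_{\A_1}f=\lambda_n|_{N^{[n]}/N^{[n-1]}}$ is injective, which is the claim. (As a safety net, one can instead argue that $K:=\ker(\lambda_n|_{N^{[n]}/N^{[n-1]}})$ has vanishing $Q_0$- and $Q_1$-Margolis homology — this follows from the five-lemma computation already made inside the proof of Lemma~\ref{lem:P injective}, the hypothesis that $\theta_{n-1}$ is an isomorphism, Corollary~\ref{cor:iso on Q_i homology}, and a Hilbert--Poincaré-series count as in Lemmas~\ref{lem:Q0 iso} and~\ref{lem:Q1 iso} — so $K$ is free by Margolis's theorem, and then the "indecomposables in degree $n$" observation together with Lemma~\ref{lem:P injective} forces $K=0$.)

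The main obstacle is establishing the induced-module presentation $M_h\cong\A\otimes_{\A_1}L$ in a way compatible with the filtration $M_h^{[\bullet]}$ and with $\lambda_n$; everything else is formal once that is in hand. Concretely one must check that the shearing isomorphism of Lemma~\ref{lem:shearing} (or, if one prefers to avoid it, the comodule structure of $M_h$ over $H^*\MSpin$ together with Theorem~\ref{thm:spin splitting}) is natural enough that the two inputs used above — the induced-module form of $M_h^{[n]}/M_h^{[n-1]}$ and the placement of its $\A$-indecomposables in degree $n$ — survive passage to the subquotients. This is precisely the role played by $\MSpinh$ being a module over the ring spectrum $\MSpin$.
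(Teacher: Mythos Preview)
There is a factual slip in what you call the ``purely formal'' part: the elephant $E$ is \emph{not} generated in degree $0$ as an $\A_1$-module. It has two independent generators, $e_0$ in degree $0$ and $e_1$ in degree $1$ (note $\Sq^1 e_0=0$, so $e_1\notin\A_1\cdot e_0$). Hence the $\A$-indecomposables of $N^{[n]}/N^{[n-1]}$ live in degrees $n$ and $n+1$, and the reason you give for $\lambda_n|_{P_n}$ landing in $1\otimes(L^{[n]}/L^{[n-1]})$ fails for $e_1$. This is in principle patchable---$\A/\!/\A_1$ vanishes in degrees $1$ through $3$, so if $L'$ vanishes below degree $n$ then $(\A\otimes_{\A_1}L')^{n+1}=1\otimes(L')^{n+1}$---but that patch leans entirely on the filtration compatibility you have not established. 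The safety-net argument inherits the same problem, since it also invokes the false ``indecomposables in degree $n$'' claim.

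The real gap is the one you flag but do not resolve: there is no reason the filtration $M_h^{[m]}:=\theta(N^{[m]})$ should have the form $\A\otimes_{\A_1}L^{[m]}$, even granting $M_h\cong\A\otimes_{\A_1}L$. The filtration is defined as the image of an $\A$-map built from $\KO$-Pontryagin classes and $\vphi^h$, not from anything tied to a chosen induced-module presentation of $M_h$, and images of $\A$-submodules have no such structure in general. The paper bypasses this entirely. Rather than seeking an induced-module form, it uses the comodule map $\mu:M_h\to M\otimes M_h$ coming from the $\MSpin$-action, checks directly that $\mu$ descends to $\mu_n$ on $M_h/M_h^{[n-1]}$, and proves the identity $(\id_M\otimes p_n)\,\mu\,\theta\,w=U\otimes\lambda_n w$ for each generator $w\in P_n$; for the degree-$(n+1)$ generator $e_1$ this uses precisely that $M=H^*\MSpin$ vanishes in degree $1$, which is the correct replacement for your ``generated in degree $0$'' claim. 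One then writes an arbitrary $v\in N^{[n]}/N^{[n-1]}$ as $\sum a_iv_i$ with $a_i\notin\A\Sq^1+\A\Sq^2$, examines the top-$M$-degree terms of $\mu_n\lambda_n v$, and uses that the $\A$-span of the Thom class $U\in M$ is $\A/(\A\Sq^1+\A\Sq^2)$ to force $v=0$.
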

\begin{proof}
    Since $\MSpinh$ is a module spectrum over $\MSpin$, taking cohomology gives $M_h:=H^*\MSpinh$ the structure of a comodule over the coalgebra $M:=H^*\MSpin$. Specifically, the comultiplication $\mu : M_h \to M \otimes M_h$ is induced by the multiplication map $\MSpin \wedge \MSpinh \to \MSpinh$. The identity axiom for a comodule states that the diagram 
    \[
        \begin{tikzcd}
            \arrow["{\mu}", from=1-1, to=1-2]
            \arrow["{\id_{M_h}}", from=1-1, to=2-1]
            \arrow["{\epsilon \otimes \id_{M_h}}", from=1-2, to=2-2]
            \arrow[from=2-2, to=2-1]
            M_h & M \otimes M_h \\
            M_h & \Z / 2\Z \otimes M_h
        \end{tikzcd}
    \]
    commutes, where $\Z / 2\Z \otimes M_h \to M_h$ is the canonical isomorphism and $\epsilon : M \to \Z / 2\Z$ is the map induced by the unit map $\mathbb{S} \to \MSpin$. Since the Thom class $U \in M$ is the only nonzero element of degree $0$, we see that $\epsilon (U) = 1$ and $\epsilon (x) = 0$ if $x$ has degree greater than zero. It follows that for any homogeneous $m \in M_h$, we have
    \begin{equation}
        \label{eq:comultiplication}
        \mu m = U \otimes m + \sum_{i = 1}^\alpha \ell_i \otimes m_i.
    \end{equation}
    Here $m_i$ has degree strictly less than that of $m$, as $\ell_i$ has degree strictly greater than zero. Indeed, if $\mu m = \sum_{i = 1}^{\alpha'} \ell_i' \otimes m_i'$, then the diagram above implies
    \[
        m = \sum_{i = 1}^\alpha \epsilon (\ell_i') m_i'.
    \]
    As an $\A_1$-module, $P_n$ is generated by the $\Sigma^nq_0 \in \Sigma^n \udq$, $\Sigma^ne_0,\Sigma^ne_1 \in \Sigma^n E$, and $\Sigma^n1 \in \Sigma^n \A_1$ of each summand. If $w$ is one of these generators, we have
    \[
        (\id_M \otimes p_n)\mu\theta w = U \otimes \lambda_n w,
    \]
    where $p_n:M_h\to M_h/M_h^{[n-1]}$ is the quotient map. To see this, we can separate the degree $n$ (i.e.~$\Sigma^nq_0$, $\Sigma^ne_0$, and $\Sigma^n1$) and $n+1$ (i.e.~$\Sigma^ne_1$) cases and check that the $m_i$ summands of $\mu w$ (from Equation~\ref{eq:comultiplication}) are killed by $p_n$.
    \begin{enumerate}[(i)]
    \item If $w$ has degree $n$, then each $m_i$ has degree less than $n$ and is killed by $p_n$. 
    \item If $w$ has degree $n + 1$, then because $M$ vanishes in degree one \cite[Theorem 8.1]{ABP67}, there are no terms in Equation~\ref{eq:comultiplication} where $m_i$ has degree $n$. So each $m_i$ has degree less than $n$ and is killed by $p_n$, as claimed.
    \end{enumerate}
    Since we are working with $\A_1$-modules and $U \in M$ is annihilated by $\Sq^1$ and $\Sq^2$, the Cartan formula for the action of $\A_1$ on the tensor product implies that
    \[
        (\id_M \otimes p_n)\mu\theta v = U \otimes \lambda_n v
    \]
    for all $v \in P_n$ (rather than just for the generators).
    
    Next, we want to show that there is a map $\mu_n : M_h / M_h^{[n - 1]} \to M \otimes M_h / M_h^{[n - 1]}$ such that the diagram
    \begin{equation}\label{eq:mu_n}
        \begin{tikzcd}
            \arrow["{\theta}", from=1-1, to=1-2]
            \arrow["{\mu}", from=1-2, to=1-3]
            \arrow[from=1-1, to=2-1]
            \arrow["{p_n}", from=1-2, to=2-2]
            \arrow["{\id_M \otimes p_n}", from=1-3, to=2-3]
            \arrow["{\lambda_n}", from=2-1, to=2-2]
            \arrow["{\mu_n}", from=2-2, to=2-3]
            N & M_h & M \otimes M_h \\
            N / N^{[n - 1]} & M_h / M_h^{[n - 1]} & M \otimes M_h / M_h^{[n - 1]}
        \end{tikzcd}
    \end{equation}
    commutes. To prove that such a $\mu_n$ exists, it suffices to show that $(\id_M \otimes p_n)\mu y = 0$ for each $y \in M_h^{[n - 1]}$. To this end, let $y\in M_h^{[n-1]}$. Since $M_h^{[n-1]}=\theta(N^{[n-1]})$ (Notation~\ref{notn:filtered modules}), there exists $x\in N^{[n-1]}$ such that $y = \theta x$. Write $x = \sum_{i = 1}^\alpha a_i x_i$, where $a_i \in \A$ and $x_i$ are generators for the summands that constitute $N^{[n - 1]}$. In particular, each $x_i$ has degree less than or equal to $n - 1$. By Equation \ref{eq:comultiplication}, we have
    \begin{align*}
        \mu y & = \mu\theta\left( \sum_{i = 1}^\alpha a_i x_i\right) \\
        & = \sum_{i = 1}^\alpha a_i \mu\theta x_i \\
        & = \sum_{i = 1}^\alpha a_i\left( U \otimes \theta x_i + \sum_{j = 1}^{\beta_i} \ell_{i, j} \otimes m_{i, j}\right),
    \end{align*}
    where $m_{i, j}$ has degree less than or equal to $n - 2$ (as $\deg(x_i)\leq n-1$). Thus $m_{i,j}\in M_h^{[n - 1]}$ for all $i,j$, so $(\id_M \otimes p_n)\mu y = 0$ and therefore the map $\mu_n$ exists and Diagram~\ref{eq:mu_n} commutes.

    We are finally read to show that $\lambda_n|_{N^{[n]}/N^{[n-1]}}$ is injective, which we do by contradiction. Suppose that $v\in N^{[n]}/N^{[n-1]}$ is non-zero and satisfies $\lambda_n v=0$. Let $\left\{ v_i \right\}_{i \in I}$ be a homogeneous basis of $P_n$ as a $\Z / 2\Z$-vector space. Since $N^{[n]} / N^{[n - 1]}$ is generated as an $\A$-module by $P_n$, we can write $v = \sum_{i \in I} a_i v_i$ for some homogeneous $a_i \in \A$. 
    
    If $a_i \in \A \Sq^1 + \A \Sq^2$, then we have $a_i = a_i' \Sq^1 + a_i'' \Sq^2$ for some $a_i', a_i'' \in \A$. We can then write $a_i v_i = (a_i' \Sq^1 + a_i'' \Sq^2) v_i = a_i' w_i' + a_i'' w_i''$, where $w_i' = \Sq^1 v_i$ and $w_i'' = \Sq^2 v_i$. Since $w_i',w_i''\in P_n$, we can rewrite $w_i'$ and $w_i''$ as linear combinations of $\{v_j\}_{j\in I}$. Since $\Sq^1$ and $\Sq^2$ increase degree, the basis elements in the linear combination for $a_i'w_i'+a_i''w_i''$ have greater degree than that of $v_i$. If $a_i'$ or $a_i''$ is an element of $\A\Sq^1+\A\Sq^2$, repeat this procedure. Since $\udq$, $E$, $\A_1$, and hence $P_n$ are all bounded above, this procedure eventually stabilizes. It follows that we can always write $v = \sum_{i \in I} a_i v_i$ with $a_i \notin \A \Sq^1 + \A \Sq^2$. 
    
    Let $k:=\max_{i\in I}\{\deg(a_i)\}$, which exists since all but finitely many $a_i$ must be zero. Let $i_1, \ldots, i_\alpha$ be the indices such that $\deg(a_{i_j})=k$. By Diagram~\ref{eq:mu_n}, our assumption $\lambda_n v = 0$ implies that $(\id_M \otimes p_n)\mu\theta v = 0$. (Here we conflate $v\in N^{[n]}/N^{[n-1]}$ with any choice of lift $v\in N^{[n]}$, since Diagram~\ref{eq:mu_n} commutes.) Then
    \begin{equation}\label{eq:linear combo}
    \begin{aligned}
        0 & = (\id_M \otimes p_n) \mu \theta v \\
        & = (\id_M \otimes p_n) \mu \theta \left( \sum_{i \in I} a_i v_i \right) \\
        & = \sum_{i \in I} a_i (\id_M \otimes p_n) \mu \theta v_i \\
        & = \sum_{i \in I} a_i (U \otimes \lambda_n v_i) \\
        & = \sum_{j = 1}^\alpha a_{i_j} U \otimes \lambda_n v_{i_j} + x,
    \end{aligned}
    \end{equation}
    where $x$ is a sum of terms belonging to $M^\beta \otimes M_h / M_h^{[n - 1]}$ for $\beta < k$. Recall that $\lambda_n|_{P_n}$ is injective (Lemma~\ref{lem:P injective}), so $\lambda_n v_{i_1},\ldots,\lambda_n v_{i_\alpha}$ are linearly independent. It thus follows from Equation~\ref{eq:linear combo} that $a_{i_1} U = \ldots = a_{i_\alpha} U = 0$. But the submodule of $M$ generated by $U$ is isomorphic to $\A / (\A \Sq^1 + \A \Sq^2)$, and we chose $a_{i_j}\not\in\A \Sq^1 + \A \Sq^2$, which yields the desired contradiction. Hence $v = 0$.
\end{proof}

We are now ready to show that our extension $\theta:N\to M_h$ of $\bar{\theta}:\overline{N}\to M_h$ (Notation~\ref{notn:theta and N}) is indeed an isomorphism. The general idea is to use Lemma~\ref{lem:lambda_n} to inductively show that $\theta$ is injective. Paired with Lemma~\ref{lem:theta surjective}, we will find that $\theta$ is an isomorphism.

\begin{prop}
    There exists a set of homogeneous elements $Z \subset M_h$ and an isomorphism $\theta : N \to M_h$ extending $\bar{\theta} : \overline{N} \to M_h$ along the inclusion $\overline{N} \to N$, where
    \[
        N = \overline{N} \oplus \bigoplus_{z \in Z} \Sigma^{\deg z} \A.
    \]
\end{prop}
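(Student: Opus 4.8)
The plan is to take $Z$ to be precisely the set of homogeneous classes constructed in Setup~\ref{setup:Z} and $\theta : N \to M_h$ the map defined in Notation~\ref{notn:theta and N}, which by construction restricts to $\bar\theta$ on the summand $\overline{N}$, i.e.\ extends $\bar\theta$ along $\overline{N}\to N$. Surjectivity of $\theta$ is already Lemma~\ref{lem:theta surjective}, so the entire content is to prove $\theta$ is injective. For this I would run an induction on the filtration degree $n$, proving that each restriction $\theta_n : N^{[n]} \to M_h^{[n]}$ from Notation~\ref{notn:filtered modules} is an isomorphism.

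The base case is immediate: every $\udq_\A$, $E_\A$, and $\A$ summand of $N$ is concentrated in nonnegative degrees (the lowest summand being the $\udq_\A$ attached to the empty partition, in degree $0$), so $N^{[n]}=0$ and hence $M_h^{[n]}=\theta(N^{[n]})=0$ for $n<0$, and $\theta_n$ is vacuously an isomorphism. For the inductive step, suppose $\theta_{n-1}$ is an isomorphism. Since $M_h^{[n]}$ is defined as $\theta(N^{[n]})$, the map $\theta_n$ is automatically surjective, so it suffices to prove injectivity. I would compare the two short exact sequences
\[
0 \to N^{[n-1]} \to N^{[n]} \to N^{[n]}/N^{[n-1]} \to 0, \qquad
0 \to M_h^{[n-1]} \to M_h^{[n]} \to M_h^{[n]}/M_h^{[n-1]} \to 0
\]
(the inclusions $N^{[n-1]}\subset N^{[n]}$ and $M_h^{[n-1]}=\theta(N^{[n-1]})\subset\theta(N^{[n]})=M_h^{[n]}$ being clear from the definitions), equipped with the vertical maps $\theta_{n-1}$, $\theta_n$, and the map $N^{[n]}/N^{[n-1]}\to M_h^{[n]}/M_h^{[n-1]}$ induced by $\theta_n$, which is exactly the restriction of $\lambda_n$ to $N^{[n]}/N^{[n-1]}$. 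The left-hand vertical is an isomorphism by the inductive hypothesis, and the right-hand vertical is injective by Lemma~\ref{lem:lambda_n} (whose hypothesis, that $\theta_{n-1}$ is an isomorphism, is precisely what we are assuming). The short five lemma then forces $\theta_n$ to be injective, hence an isomorphism, completing the induction.

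Finally, to deduce that $\theta$ itself is injective, I would note that any $x\in N$ involves only finitely many of the summands of $N$, each concentrated in degrees $\geq d$ for its own shift $d$; taking $n$ to be the largest such shift gives $x\in N^{[n]}$, so $\ker\theta=\bigcup_n\ker\theta_n=0$. Combining with Lemma~\ref{lem:theta surjective} shows $\theta$ is an isomorphism, and since $\theta|_{\overline{N}}=\bar\theta$ by construction, the proposition follows. I expect the only real subtlety to be bookkeeping: making sure the filtration indices align so that the short five lemma applies at each stage and that the union argument legitimately exhausts $N$; the genuinely substantive input—injectivity of $\lambda_n$ on $N^{[n]}/N^{[n-1]}$, which is where the $\MSpin$-module structure of $\MSpinh$ and the $Q_i$-homology isomorphisms are used—has already been isolated as Lemma~\ref{lem:lambda_n}.
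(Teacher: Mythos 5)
Your proposal is correct and follows the same strategy as the paper's proof: induct on the filtration, use Lemma~\ref{lem:theta surjective} for surjectivity, Lemma~\ref{lem:lambda_n} for the key injectivity step, and a five-lemma (the paper phrases it as the four lemma) argument to close the induction before exhausting $N$ by the filtration. The only cosmetic difference is that the paper fixes the base case at $n=-1$, while you observe more generally that $N^{[n]}=0$ for all $n<0$; the content is identical.
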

\begin{proof}
    Let $Z$ and $\theta$ be as in Setup~\ref{setup:Z} and Notation~\ref{notn:theta and N}. We will induct on $n$, with our induction hypothesis the statement that $\theta_n : N^{[n]} \to M_h^{[n]}$ is an isomorphism. To simplify, our base case is $n = -1$, so that $N^{[n]}$ and $M_h^{[n]}$ are both trivial and there is nothing to check. 
    
    Now, assuming that $\theta_{n - 1}$ is an isomorphism, it suffices to show that $\theta_n$ is injective by Lemma~\ref{lem:theta surjective}. To this end, consider the diagram
    \[
        \begin{tikzcd}
            \arrow[from=1-1, to=1-2]
            \arrow[from=1-2, to=1-3]
            \arrow[from=1-3, to=1-4]
            \arrow[from=1-1, to=2-1]
            \arrow["{\theta_{n - 1}}", from=1-2, to=2-2]
            \arrow["{\theta_n}", from=1-3, to=2-3]
            \arrow["{\lambda_n}", from=1-4, to=2-4]
            \arrow[from=2-1, to=2-2]
            \arrow[from=2-2, to=2-3]
            \arrow[from=2-3, to=2-4]
            0 & N^{[n - 1]} & N^{[n]} & N^{[n]} / N^{[n - 1]} \\
            0 & M_h^{[n - 1]} & M_h^{[n]} & M_h^{[n]} / M_h^{[n - 1]}.
        \end{tikzcd}
    \]
    The rows are exact and $0 \to 0$ is an epimorphism. By the induction hypothesis $\theta_{n - 1}$ is a monomorphism, and $\lambda_n$ is a monomorphism by Lemma~\ref{lem:lambda_n}. The four lemma implies $\theta_n$ is a monomorphism. So $\theta_n$ is injective and hence an isomorphism.

    So by induction, each $\theta_n$ is an isomorphism. If $v \in N$ is homogeneous, then $v\in N^{[n]}$ for some $n$. If $\theta v = 0$, then $\theta_n v = 0$, and therefore $v = 0$. Therefore $\theta$ is injective. It now follows from Lemma~\ref{lem:theta surjective} that $\theta$ is an isomorphism.
\end{proof}

The proof of the main theorem now follows formally.

\begin{proof}[Proof of Theorem \ref{thm:spinh splitting}]
    For each $z \in Z$, let $\MSpinh \to \Sigma^{\deg z} H\Z / 2\Z$ be the map classifying $z \in H^* \MSpinh$. Together with the $\KSp$-Pontryagin and elephant classes, we get a map
    \begin{equation}\label{eq:splitting}
        \MSpinh \to \bigvee_{I \in \Part_{\mathrm{even}}} \ksp \langle 4| I| \rangle \vee \bigvee_{I \in \Part_{\mathrm{odd}}} \Sigma^{4|I|} F \vee \bigvee_{z \in Z} \Sigma^{\deg z} H\Z / 2\Z
    \end{equation}
    inducing $\theta$ in cohomology. Since $H^* \MSpinh$ is finitely generated in each degree (by Proposition~\ref{prop:bspin cohomology} and the Thom isomorphism), we can dualize to see that Equation~\ref{eq:splitting} induces an isomorphism in homology with coefficients in $\Z / 2\Z$. Hence this map is a $2$-complete equivalence. Since $\MSpinh$ and our wedge sum both have finitely generated homotopy groups in each degree (by Proposition~\ref{prop:no odd torsion}, Lemma~\ref{lem:homotopy of F}, and Bott periodicity), Equation~\ref{eq:splitting} is a $2$-local equivalence.
\end{proof}

\section{Calculating $\Spinh$ cobordism groups}\label{sec:computing groups}
According to Milnor, calculating Spin cobordism groups is a ``formidable computation'' \cite[p.~202]{Mil63}. The $\Spin$, $\Spinc$, and $\Spinh$ cobordism groups are all 2-primary, so their splitting at $p=2$ is sufficient to compute these groups. The formidable computation arises from two calculations: (i) the combinatorics of partitions that characterize the Anderson--Brown--Peterson splitting in the real and complex cases \cite{ABP67} and Theorem~\ref{thm:spinh splitting} in the quaternionic case, and (ii) counting the Eilenberg--Mac Lane summands. We provide code at \cite{git} that performs these manipulations for us, as well as tables of $\pi_*\MSpin$ (Table~\ref{table:spin bordism groups}), $\pi_*\MSpinc$ (Table~\ref{table:spinc bordism groups}), and $\pi_*\MSpinh$ (Table~\ref{table:spinh bordism groups}) for $0\leq *\leq 99$. Tables for $0\leq *\leq 19999$ are also available at \cite{git}.

\begin{rem}
A table of $\pi_*\MSpin$ for $0\leq *\leq 127$ (with an extra column recording additional information about the torsion) appears in~\cite[Section 10]{BN14}. Nevertheless, we include Table~\ref{table:spin bordism groups} for the reader's convenience. A table of $\pi_*\MSpinc$ for $0\leq *\leq 59$ is given in~\cite[p.~5]{BG87}. Values of $\pi_*\MSpinh$ are given for $0\leq *\leq 6$ in \cite[\S 3.5]{Hu22} and for $0\leq *\leq 30$ in \cite[\S 4]{Mil23}.
\end{rem}

\subsection{Computing rank and torsion}
We used code to generate Tables~\ref{table:spin bordism groups}, \ref{table:spinc bordism groups}, and~\ref{table:spinh bordism groups}. In this section, we will explain the math behind this code.

\subsubsection{Rank}
Theorems~\ref{thm:spin splitting}, \ref{thm:spinc splitting}, and~\ref{thm:spinh splitting} tell us that the ranks of $\pi_*\MSpin$, $\pi_*\MSpinc$, and $\pi_*\MSpinh$ are determined by the combinatorics of partitions and the homotopy groups of various connective covers of $\KO$, $\KU$, and $\KSp$, respectively. Putting this all together, we can derive formulas for the ranks of these bordism groups.

\begin{notn}
Let $p(i)=|\Part(i)|$ and $p_1(i)=|\Part_1(i)|$ denote the number of partitions of $i$ and the number of partitions of $i$ not containing 1, respectively. 
\end{notn}

\begin{lem}\label{lem:rank spin}
We have 
\[\rank\pi_n\MSpin=\begin{cases} p(m) & n=4m\geq 0,\\
0 & \text{otherwise}.
\end{cases}\]
\end{lem}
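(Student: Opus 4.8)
The plan is to read off the rank of $\pi_n\MSpin$ directly from the Anderson--Brown--Peterson splitting (Theorem~\ref{thm:spin splitting}) and then reduce to a classical identity about partitions. First I would observe that a $2$-local equivalence induces an isomorphism on $\pi_*(-)\otimes\Q$, since $\Z_{(2)}\otimes_\Z\Q\cong\Q$; hence $\rank\pi_n\MSpin$ equals the rank of $\pi_n$ of the wedge sum appearing in Theorem~\ref{thm:spin splitting}. Only finitely many summands have nonzero homotopy in any fixed degree, so this rank is a finite sum of the ranks of $\pi_n$ over the individual summands. The Eilenberg--Mac Lane summands $\Sigma^{\deg z}H\Z/2\Z$ have torsion homotopy and contribute nothing; and by Bott periodicity (Table~\ref{table:bott periodicity}), for a connective cover $\ko\langle d\rangle$ one has $\rank\pi_n\ko\langle d\rangle=1$ when $n\geq d$ and $n\equiv 0\pmod 4$, and $0$ otherwise.

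This disposes of the case $n\not\equiv 0\pmod 4$ (in particular $n<0$): every summand has vanishing rank in degree $n$, so $\rank\pi_n\MSpin=0$. For $n=4m$ with $m\geq 0$, I would count summands according to their connectivity. The summand $\ko\langle 8k\rangle$ occurs $p_1(2k)$ times and contributes to $\rank\pi_{4m}\MSpin$ precisely when $8k\leq 4m$, that is, $2k\leq m$; the summand $\ko\langle 8k+2\rangle$ occurs $p_1(2k+1)$ times and contributes precisely when $8k+2\leq 4m$, which, because $2k+1$ is an integer, is equivalent to $2k+1\leq m$. Setting $i=2k$ in the first family and $j=2k+1$ in the second, the admissible even indices $0\leq i\leq m$ together with the admissible odd indices $1\leq j\leq m$ run over each of $0,1,\dots,m$ exactly once, whence
\[
    \rank\pi_{4m}\MSpin=\sum_{i=0}^{m}p_1(i).
\]

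The final step is the identity $\sum_{i=0}^{m}p_1(i)=p(m)$, which I would prove by the bijection that strips off the parts equal to $1$: deleting the $\ell$ ones from a partition of $m$ gives a partition of $m-\ell$ having no part equal to $1$, and conversely every partition of $m-\ell$ with no part $1$ arises in this way for a unique $\ell$; summing over $\ell$ yields $p(m)=\sum_{\ell=0}^{m}p_1(m-\ell)=\sum_{i=0}^{m}p_1(i)$. I do not expect a genuine obstacle in any of this; the only point requiring care is translating the connectivity bounds $8k\leq 4m$ and $8k+2\leq 4m$ into the parity-restricted index ranges, in particular verifying that the degree-$2$ shift in the $\ko\langle 8k+2\rangle$ summands still yields the clean range $1\leq j\leq m$ and does not introduce an off-by-one error.
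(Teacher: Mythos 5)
Your argument is correct and follows essentially the same route as the paper: read the rank off the Anderson--Brown--Peterson wedge decomposition together with Bott periodicity, reduce to the sum $\sum_{i=0}^m p_1(i)$, and then identify that with $p(m)$. The only cosmetic difference is in the last step: you prove $\sum_{i=0}^m p_1(i)=p(m)$ by the bijection that strips the parts equal to $1$, whereas the paper writes $p_1(i)=p(i)-p(i-1)$ and telescopes; both are standard and one-line.
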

\begin{proof}
By Theorem~\ref{thm:spin splitting} and Bott periodicity for $\ko$ (see Table~\ref{table:bott periodicity}), we find that 
\begin{align}\label{eq:rank mspin}
\rank\pi_{8m}\MSpin&=\sum_{i=0}^{2m}p_1(i),\\
\rank\pi_{8m+4}\MSpin&=\sum_{i=0}^{2m+1}p_1(i).\nonumber
\end{align}
Partitions of $i$ containing 1 are sums of the form $1+s$ for $s$ a partition of $i-1$, so we have $p(i)=p_1(i)+p(i-1)$. That is, $p_1(i)=p(i)-p(i-1)$. By expanding the sums in Equation~\ref{eq:rank mspin} in terms of $p(i)$, we have 
\begin{align*}
\sum_{i=0}^kp_1(i)&=p_1(0)+\sum_{i=1}^k(p(i)-p(i-1))\\
&=p(k)-p(0)+p_1(0)\\
&=p(k).
\end{align*}
Thus $\rank\pi_{8m}\MSpin=p(2m)$ and $\rank\pi_{8m+4}\MSpin=p(2m+1)$, or more simply 
\[\rank\pi_{4m}\MSpin=p(m).\]Since the free part of $\pi_*\KO$ is concentrated in degrees $4m\geq 0$, it follows that $\pi_*\MSpin$ is torsion in all other degrees.   
\end{proof}

\begin{lem}\label{lem:rank spinc}
We have
\[\rank\pi_n\MSpinc=\begin{cases} \sum_{i=0}^m p(i) & n=4m\geq 0,\\
\sum_{i=0}^m p(i) & n=4m+2\geq 0,\\
0 & \text{otherwise}.
\end{cases}\]
\end{lem}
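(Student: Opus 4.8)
The plan is to read the rank directly off the Anderson--Brown--Peterson splitting of $\MSpinc$ (Theorem~\ref{thm:spinc splitting}), in the same spirit as the proof of Lemma~\ref{lem:rank spin}. Since a $2$-local equivalence induces an isomorphism on rational homotopy groups, it preserves the ranks of the homotopy groups; and the Eilenberg--Mac Lane summands $\Sigma^{\deg z}H\Z/2\Z$ have torsion homotopy, so they contribute nothing. Hence
\[\rank\pi_n\MSpinc=\sum_{I\in\Part}\rank\pi_n\ku\langle 4|I|\rangle.\]

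Next I would use the connective-cover structure of $\ku$: by Bott periodicity $\pi_k\ku\cong\Z$ for $k\ge 0$ even and $0$ otherwise, so $\pi_k\ku\langle 4|I|\rangle\cong\Z$ exactly when $k$ is even and $k\ge 4|I|$, and vanishes otherwise. Therefore $\rank\pi_n\ku\langle 4|I|\rangle$ equals $1$ if $n$ is even and $4|I|\le n$, and $0$ otherwise, which gives $\rank\pi_n\MSpinc=\#\{I\in\Part:4|I|\le n\}$ for $n$ even and $0$ for $n$ odd (in particular $0$ in negative degrees).

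Finally comes the partition bookkeeping. Because $4|I|$ is always divisible by $4$, the inequality $4|I|\le 4m$ is equivalent to $|I|\le m$, and likewise $4|I|\le 4m+2$ is equivalent to $|I|\le m$; so in both the $n=4m$ and $n=4m+2$ cases the count is $\sum_{i=0}^m|\Part(i)|=\sum_{i=0}^m p(i)$, while for odd $n$ it is $0$, which is exactly the asserted formula. There is no substantial obstacle here; the only points needing a word of care are that the $2$-local splitting suffices for computing ranks and that the two residue classes $n\equiv 0,2\pmod 4$ collapse to the same count, both of which are immediate once stated.
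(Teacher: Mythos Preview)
Your proof is correct and follows essentially the same approach as the paper: read off the rank from the Anderson--Brown--Peterson splitting of $\MSpinc$, discard the torsion $H\Z/2\Z$ summands, use that $\pi_k\ku\langle 4|I|\rangle\cong\Z$ exactly for even $k\ge 4|I|$, and count partitions with $|I|\le m$. Your write-up is in fact more explicit than the paper's about why the $n=4m$ and $n=4m+2$ cases give the same count.
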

\begin{proof}
Recall that the free part of $\pi_*\ku\langle i\rangle$ is concentrated in degrees $2j\geq i$, and that each non-trivial free summand has rank 1. Thus by Theorem~\ref{thm:spinc splitting}, the rank of $\pi_{4m}\MSpinc$ is given by the sum $\sum_{i=0}^m p(i)$. The same argument holds for $\rank\pi_{4m+2}\MSpinc$, as the connective covers in Theorem~\ref{thm:spinc splitting} proceed in multiples of 4.   
\end{proof}

\begin{lem}\label{lem:rank spinh}
We have
\[\rank\pi_n\MSpinh=\begin{cases}
    \sum_{i=0}^m p(i) & n=4m\geq 0,\\
    0 & \text{otherwise}.
\end{cases}\]
\end{lem}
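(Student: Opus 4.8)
The plan is to read the rank off directly from the splitting in Theorem~\ref{thm:spinh splitting}, just as in the proofs of Lemmas~\ref{lem:rank spin} and~\ref{lem:rank spinc}: a wedge decomposition of spectra induces a direct sum decomposition of homotopy groups, so $\pi_*\MSpinh$ is the direct sum of the homotopy of the summands $\ksp\langle 4|I|\rangle$ ($I\in\Part_\mr{even}$), $\Sigma^{4|I|}F$ ($I\in\Part_\mr{odd}$), and the $\Sigma^{\deg z}H\Z/2\Z$. The Eilenberg--Mac Lane summands contribute only $2$-torsion and may be discarded when computing ranks, and every summand is connective (for $F$ this is Lemma~\ref{lem:homotopy of F}, and each $\deg z\geq 0$), so there is nothing in negative degrees. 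It therefore remains to identify the free part of $\pi_*$ of the two families of $K$-theoretic summands.

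First I would handle $\ksp\langle 4|I|\rangle$. By Bott periodicity (Table~\ref{table:bott periodicity}) the free part of $\pi_*\KSp$ is a rank-one $\Z$ in each degree $\equiv 0,4\pmod 8$; passing to the $4|I|$-connective cover, the free part of $\pi_*\ksp\langle 4|I|\rangle$ is a single $\Z$ in every degree $4j$ with $j\geq|I|$ and vanishes in all degrees not divisible by four. Next I would treat $\Sigma^{4|I|}F$ for $I$ odd, so that $4|I|\equiv 4\pmod 8$. By Lemma~\ref{lem:homotopy of F}, $\pi_kF\cong\Z$ exactly for $k\equiv 0,4\pmod 8$ with $k\geq 0$ (and is torsion otherwise); since $4\cdot(\text{odd})\equiv 4\pmod 8$, shifting up by $4|I|$ places these two families of $\Z$'s into residues $4$ and $0$ mod $8$, i.e.\ again into a single $\Z$ in each degree $4j$ with $j\geq|I|$ and nothing in degrees not divisible by four.

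Combining, each partition $I$ --- even or odd --- contributes exactly one $\Z$ to $\pi_{4j}\MSpinh$ for every $j\geq|I|$, and no free part in any other degree. Hence for $n=4m\geq 0$,
\[
\rank\pi_{4m}\MSpinh=\#\{I\in\Part : |I|\leq m\}=\sum_{i=0}^m|\Part(i)|=\sum_{i=0}^m p(i),
\]
and $\rank\pi_n\MSpinh=0$ otherwise. The one point worth double-checking is the bookkeeping for the $\Sigma^{4|I|}F$ summands: one must confirm that shifting the homotopy of $F$ up by $4|I|$ with $|I|$ odd really lands all of its free classes in degrees divisible by four rather than in degrees $\equiv 2\pmod 4$, which is immediate from $4\cdot(\text{odd})\equiv 4\pmod 8$ together with Lemma~\ref{lem:homotopy of F}. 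Everything else is the same partition-counting as in Lemma~\ref{lem:rank spinc}.
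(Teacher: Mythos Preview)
Your proof is correct and follows essentially the same approach as the paper: both read the rank off from the splitting of Theorem~\ref{thm:spinh splitting}, discard the Eilenberg--Mac Lane summands, observe that the free parts of $\pi_*\ksp\langle 4|I|\rangle$ and $\pi_*\Sigma^{4|I|}F$ each contribute one $\Z$ in every degree $4j$ with $j\geq|I|$, and then count partitions. The only difference is cosmetic: the paper first writes the count as $\sum_{i=0}^{\lfloor m/2\rfloor}p(2i)+\sum_{i=0}^{\lceil m/2\rceil-1}p(2i+1)$ before collapsing it to $\sum_{i=0}^m p(i)$, whereas you go directly to $\#\{I\in\Part:|I|\leq m\}$.
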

\begin{proof}
The free part of $\pi_*\ksp\langle i\rangle$ is concentrated in degrees $4j\geq i$, and each non-trivial free summand has rank 1. The same is true of the spectra $\Sigma^{8k+4}F$, since $\pi_*\ksp\cong\pi_*F$ (Lemma~\ref{lem:homotopy of F}). Theorem~\ref{thm:spinh splitting} thus implies that the rank of $\pi_{4m}\MSpinh$ is given by the sum 
\[\sum_{i=0}^{\lfloor m/2\rfloor}p(2i)+\sum_{i=0}^{\lceil m/2\rceil-1}p(2i+1)=\sum_{i=0}^mp(i),\]
as desired.
\end{proof}

Note that we have just shown that $\rank\pi_{4n}\MSpinh=\rank\pi_{4n}\MSpinc$.

\begin{cor}\label{cor:rank spinc and spinh}
We have $\rank\pi_{4n}\MSpinh=\rank\pi_{4n}\MSpinc=\rank\pi_{4n+2}\MSpinc$ for all $n$.
\end{cor}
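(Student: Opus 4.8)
The plan is to deduce this directly by comparing the closed forms already computed in Lemmas~\ref{lem:rank spinc} and~\ref{lem:rank spinh}. First I would record that Lemma~\ref{lem:rank spinc} expresses both $\rank\pi_{4n}\MSpinc$ and $\rank\pi_{4n+2}\MSpinc$ as the same partial sum $\sum_{i=0}^{n}p(i)$ of the partition function, the equality of the two arising because the connective covers appearing in Theorem~\ref{thm:spinc splitting} are all of the form $\ku\langle 4|I|\rangle$, whose free homotopy is concentrated in degrees divisible by $4$ but equidistributed between residues $0$ and $2$ modulo $4$ after suspension. Next, Lemma~\ref{lem:rank spinh} gives $\rank\pi_{4n}\MSpinh=\sum_{i=0}^{n}p(i)$ as well: the two families of free summands in Theorem~\ref{thm:spinh splitting}, namely the $\ksp\langle 8k\rangle$ indexed by even partitions and the $\Sigma^{8k+4}F$ indexed by odd partitions (which have the same rational homotopy as $\ksp\langle 8k+4\rangle$ by Lemma~\ref{lem:homotopy of F}), contribute exactly $\sum_{i=0}^{\lfloor m/2\rfloor}p(2i)+\sum_{i=0}^{\lceil m/2\rceil-1}p(2i+1)=\sum_{i=0}^{m}p(i)$. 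Setting $m=n$ and comparing the three expressions yields the claimed chain of equalities.

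There is essentially no obstacle: the corollary is a bookkeeping consequence of the two preceding rank lemmas, all of whose content (assembling the splittings, tracking Bott periodicity of $\ko$, $\ku$, and $\ksp$, and the identity $p_1(i)=p(i)-p(i-1)$) has already been established. The only mildly substantive point worth isolating is \emph{why} the quaternionic and complex free homotopy line up degree-by-degree, and the conceptual reason is the bijection between the non--Eilenberg--Mac Lane summands of the $2$-local splittings of $\MSpinc$ and $\MSpinh$: both are indexed by a single integer partition $I$ with associated connective cover in degree $4|I|$, and $\ku\langle 4|I|\rangle$ has the same free homotopy in degrees $\equiv 0 \pmod 4$ as its quaternionic counterpart (either $\ksp\langle 4|I|\rangle$ or $\Sigma^{4|I|}F$). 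I would give the short computational proof quoting Lemmas~\ref{lem:rank spinc} and~\ref{lem:rank spinh}, and mention this bijective viewpoint as an aside, since it is the same observation already flagged in the footnote preceding Diagram~\eqref{eq:partial square}.
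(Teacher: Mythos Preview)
Your proposal is correct and takes essentially the same approach as the paper: both simply cite Lemmas~\ref{lem:rank spinc} and~\ref{lem:rank spinh} and compare the identical formulas $\sum_{i=0}^n p(i)$. One small expository slip worth cleaning up: you write that the free homotopy of $\ku\langle 4|I|\rangle$ is ``concentrated in degrees divisible by $4$,'' but in fact it lies in all even degrees $\geq 4|I|$---this does not affect the argument, since the actual content is already encapsulated in Lemma~\ref{lem:rank spinc}.
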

\begin{proof}
This follows directly from Lemmas~\ref{lem:rank spinc} and~\ref{lem:rank spinh}.
\end{proof}

\subsubsection{Torsion}
Besides the partition numbers, one needs to count the Eilenberg--Mac Lane summands in order to determine these groups. To do this, we can use Hilbert--Poincar\'e series representing the dimension of various $\A$-modules in each degree. If $M$ is an $\A$-module, let $P(M)$ denote its Hilbert--Poincar\'e series.
\begin{prop}
    We have the following Hilbert--Poincar\'e series:
    \begin{align*}
        P(H^* \MSpinh) & = \prod_{n\geq 2}(1-t^n)^{-1}\cdot\prod_{r\geq 2}(1-t^{2^r+1}), \\
        P(\A) & = \prod_{n \geq 1}( 1 - t^{2^n - 1})^{-1}, \\
        P(H^*\ksp\langle 8k\rangle) & = \prod_{n\geq 3}(1-t^{2^n-1})^{-1}\cdot\frac{t^{8k}(1+t^2+t^3)}{(1-t^4)(1-t^6)}, \\
        P(H^* \Sigma^{8k + 4} F) & = \prod_{n\geq 3}(1-t^{2^n-1})^{-1}\cdot \frac{t^{8k + 4}(1 + t + 2t^2 + t^3 + t^4 + t^5)}{(1 - t^4)(1 - t^6)}.
    \end{align*}
\end{prop}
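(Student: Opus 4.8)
The plan is to compute each Hilbert--Poincar\'e series from a presentation of the corresponding $\A$-module that the paper has already established. For $P(H^*\MSpinh)$, I would invoke the Thom isomorphism together with Proposition~\ref{prop:bspinh cohomology}: as a graded vector space $H^*\MSpinh \cong H^*\BSpinh$, which is a polynomial ring on generators $w_i$ for $i\geq 2$ with $i\neq 2^{k+2}+1$ ($k\geq 0$). A polynomial ring on generators in degrees $\{n_\alpha\}$ has Hilbert--Poincar\'e series $\prod_\alpha (1-t^{n_\alpha})^{-1}$, so the full product over all $n\geq 2$ overcounts exactly by the missing generators in degrees $2^r+1$ for $r\geq 3$; dividing out those factors (equivalently, multiplying by $\prod_{r\geq 2}(1-t^{2^r+1})$, noting $2^r+1$ with $r\geq 2$ is $2^{k+2}+1$ with $k\geq 0$) yields the stated formula. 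For $P(\A)$, I would use the Milnor basis: $\A$ is (as a graded vector space) the free graded-commutative algebra on the $\xi_n$ of degree $2^n-1$, $n\geq 1$, over $\Z/2\Z$ a genuine polynomial algebra, giving $\prod_{n\geq1}(1-t^{2^n-1})^{-1}$ directly.

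For the two connective-cover series, the key structural input is that $\A$ is free as a right $\A_1$-module, so for any $\A_1$-module $B$ one has $P(\A\otimes_{\A_1}B) = P(\A/\!/\A_1)\cdot P(B)$, where $\A/\!/\A_1 = \A\otimes_{\A_1}\Z/2\Z$. By Milnor's computation, $\A/\!/\A_1$ is polynomial on classes of degrees $2^n-1$ for $n\geq 3$ (the $\xi_1,\xi_2$ being absorbed into $\A_1$), so $P(\A/\!/\A_1) = \prod_{n\geq3}(1-t^{2^n-1})^{-1}$; this accounts for the common prefactor in both formulas. It then remains to compute $P(H^*\ksp\langle 8k\rangle)$ and $P(H^*\Sigma^{8k+4}F)$ as $\A_1$-module Poincar\'e series. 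By Corollary~\ref{cor:cohomology of ksp}, $H^*\ksp\langle 8k\rangle \cong \Sigma^{8k}\,\A/(\A\Sq^1+\A\Sq^5)$, which as an $\A_1$-module is $\Sigma^{8k}(\A\otimes_{\A_1}\udq)$ (Definition~\ref{def:upside down question mark}), so $P(H^*\ksp\langle8k\rangle) = t^{8k}P(\A/\!/\A_1)\cdot P(\udq)$; since $\udq$ has one class each in degrees $0,2,3$ with the rest accounted for by... no, rather $\udq = \A_1/(\A_1\Sq^1+\A_1(\Sq^5+\Sq^4\Sq^1))$, whose Poincar\'e series I would read off its explicit $\A_1$-module structure (Figure~\ref{fig:question mark}) --- a short computation gives $P(\udq) = (1+t^2+t^3)/((1-t^4)(1-t^6))$ after recognizing the periodicity built into $\A_1$. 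Similarly, $H^*\Sigma^{8k+4}F \cong \Sigma^{8k+4}(\A\otimes_{\A_1}E)$ by Lemma~\ref{lem:cohomology of F}, so $P(H^*\Sigma^{8k+4}F) = t^{8k+4}P(\A/\!/\A_1)\cdot P(E)$ with $P(E) = (1+t+2t^2+t^3+t^4+t^5)/((1-t^4)(1-t^6))$ computed from the explicit description of the elephant (Definition~\ref{def:elephant}, Figure~\ref{fig:elephant-steenrod}).

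The main obstacle is pinning down $P(\udq)$ and $P(E)$ correctly, i.e. verifying that the finite-dimensional-looking modules $\udq$ and $E$ in fact have the claimed \emph{rational} Poincar\'e series with denominator $(1-t^4)(1-t^6)$. This is because these are $\A_1$-modules, not $\A$-modules, and $\A_1$ itself is finite-dimensional (dimension $8$, top degree $6$); the denominator $(1-t^4)(1-t^6)$ does \emph{not} reflect a periodicity of $\udq$ or $E$ but rather is the artifact of how I choose to bookkeep --- so I would instead compute $P(\udq)$ and $P(E)$ as honest polynomials (they are finite-dimensional: $\udq$ has dimension $3$ in degrees $0,2,3$ once one accounts for all the relations, and $E$ is $6$-dimensional in degrees $0,1,2,2,3,4$ by Figure~\ref{fig:elephant-steenrod}), and then observe the purely formal identities $1+t^2+t^3 = (1+t^2+t^3)$ trivially and check that the stated rational expressions, when expanded as power series, reproduce these polynomials --- but they do \emph{not}, so in fact the intended reading must be that $\ksp\langle 8k\rangle$ and $\Sigma^{8k+4}F$ are \emph{not} finite spectra and the rational series genuinely records their infinitely many cells. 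Resolving this requires care: I would go back to $\A/(\A\Sq^1+\A\Sq^5)$ directly rather than through $\udq$, using that $H^*\ksp\langle8k\rangle$, being $\A/\!/E(Q_0,Q_1)$-like in a suitable range, has Poincar\'e series $t^{8k}P(\A)/((1-t^3)P(\text{something}))$, and then matching denominators; the cleanest route is probably to use the fiber sequence $\Sigma^{8k+4}F\to\ksp\langle8k+4\rangle\to\Sigma^{8k+4}H\Z/2\Z$ of Equation~\ref{eq:fiber sequence for F} to get $P(H^*\Sigma^{8k+4}F) = P(H^*\ksp\langle8k+4\rangle) - t^{-1}P(H^*\Sigma^{8k+4}H\Z/2\Z)$ (using the long exact sequence and that the connecting maps behave as in Lemma~\ref{lem:cohomology of F}), reducing the second computation to the first, and to compute $P(H^*\ksp\langle8k\rangle)$ from Corollary~\ref{cor:cohomology of ksp} by a direct dimension count in $\A/(\A\Sq^1+\A\Sq^5)$ organized by the Milnor basis. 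I would carry out the four computations in the order: $P(H^*\MSpinh)$, $P(\A)$, $P(H^*\ksp\langle8k\rangle)$, $P(H^*\Sigma^{8k+4}F)$, with the last deduced from the third via the fiber sequence.
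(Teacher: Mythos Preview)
Your overall strategy for the last two series---using freeness of $\A$ over $\A_1$ to factor $P(\A\otimes_{\A_1}B)=P(\A/\!/\A_1)\cdot P(B)$ and then reading off the finite polynomials $P(\udq)$ and $P(E)$---is sound and in fact cleaner than what the paper does. But you have made a concrete error in computing $P(\A/\!/\A_1)$, and this error is the source of all the confusion in the second half of your proposal. You claim $\A/\!/\A_1$ is polynomial on classes in degrees $2^n-1$ for $n\geq 3$, ``the $\xi_1,\xi_2$ being absorbed into $\A_1$.'' That is not right: the dual of $\A_1$ is $\Z/2\Z[\xi_1,\xi_2]/(\xi_1^4,\xi_2^2)$, so passing to $\A/\!/\A_1$ kills only $\xi_1,\xi_1^2,\xi_1^3,\xi_2$ and leaves $\xi_1^4$ (degree $4$) and $\xi_2^2$ (degree $6$) as new polynomial generators alongside $\xi_n$ for $n\geq 3$. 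Hence
\[
P(\A/\!/\A_1)=\frac{1}{(1-t^4)(1-t^6)}\prod_{n\geq 3}(1-t^{2^n-1})^{-1},
\]
and the mysterious denominator $(1-t^4)(1-t^6)$ that you could not locate lives here, not in $P(\udq)$ or $P(E)$. With this correction, $P(\udq)=1+t^2+t^3$ and $P(E)=1+t+2t^2+t^3+t^4+t^5$ are honest polynomials (you also miscounted $E$ as $6$-dimensional; it is $7$-dimensional, with a top class in degree $5$), and multiplying through gives the stated formulas immediately. Your detour through fiber sequences and ``$\A/\!/E(Q_0,Q_1)$-like'' heuristics is unnecessary.

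For comparison, the paper does not use the factorization through $\A/\!/\A_1$ at all. Instead it invokes the short exact sequences $0\to\Sigma^3\udq_\A\to\A\to\A/\A\Sq^3\to 0$ and $0\to\Sigma E_\A\to\A\to\A/(\A\Sq^1+\A\Sq^2)\to 0$ (already used in the Margolis homology section), which give $P(\udq_\A)=t^{-3}\bigl(P(\A)-P(\A/\A\Sq^3)\bigr)$ and $P(E_\A)=t^{-1}\bigl(P(\A)-P(\A/(\A\Sq^1+\A\Sq^2))\bigr)$, and then cites Anderson--Brown--Peterson for the Poincar\'e series of the two cyclic quotients. Your corrected approach is arguably more transparent, since it explains structurally why both series share the prefactor $\prod_{n\geq 3}(1-t^{2^n-1})^{-1}\cdot(1-t^4)^{-1}(1-t^6)^{-1}$; the paper's approach has the advantage of reusing exact sequences already established and avoiding any discussion of $\A/\!/\A_1$.
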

\begin{proof}
    For $P(H^*\MSpinh)=P(H^*\BSpinh)$, recall that the cohomology of $\BSpinh$ is a polynomial ring, so its Hilbert--Poincar\'e series is a product with a factor of $( 1 - t^n )^{-1}$ for each generator of degree $n$. There is a generator in degrees $i\geq 2$ such that $i\neq 2^{k+2}+1$ (for $k\geq 0$) by Proposition~\ref{prop:bspinh cohomology}.

    The series for $\A$ is given in \cite[Theorem 1.11]{ABP66}.

    Since $H^* \ksp\langle 8k\rangle \cong \Sigma^{8k} H^* \ksp \cong \Sigma^{8k} \udq_\A$, we can use the exact sequence
    \[
        0 \longrightarrow \Sigma^3 \udq_\A \longrightarrow \A \longrightarrow \A / \A \Sq^3 \longrightarrow 0,
    \]
    which implies $t^3 P(\udq_\A) = P(\A) - P(\A/\A\Sq^3)$. From \cite[Theorem 1.11]{ABP67}, we know
    \[
        P(\A/\A\Sq^3) = \prod_{n \geq 3} (1 - t^{2^n - 1})^{-1} \cdot (1 - t^4)^{-1} (1 - t^6)^{-1} (1 + t + t^2 + t^3 + t^4). 
    \]
    It follows that
    \begin{align*}
        P(H^*\ksp\langle 8k\rangle)&=t^{8k}P(\udq_\A)\\
        & = \frac{t^{8k}}{t^3} \prod_{n \geq 3} (1 - t^{2^n - 1})^{-1} \cdot \left( \frac{1}{(1 - t)(1 - t^3)} - \frac{1+t+t^2+t^3+t^4}{(1 - t^4)(1 - t^6)} \right) \\
        & = \prod_{n \geq 3} (1 - t^{2^n - 1})^{-1} \cdot \frac{t^{8k}(1 + t^2 + t^3)}{(1 - t^4)(1 - t^6)}.
    \end{align*}

    Finally, for $E_\A$ we use the exact sequence
    \[
        0 \longrightarrow \Sigma E_\A \longrightarrow \A \longrightarrow \A / (\A\Sq^1 + \A\Sq^2) \longrightarrow 0
    \]
    to get the equation $t P(E_\A) = P(\A) - P(\A/(\A\Sq^1 + \A\Sq^2))$. From \cite[Theorem 1.11]{ABP66}, we have
    \[
        P(\A/(\A\Sq^1 + \A\Sq^2)) = \prod_{n \geq 3} (1 - t^{2^n - 1})^{-1} \cdot (1 - t^4)^{-1} (1 - t^6)^{-1}.
    \]
    Therefore
    \begin{align*}
        P(E_\A) & = \frac{1}{t} \prod_{n \geq 3} (1 - t^{2^n - 1})^{-1} \cdot \left( (1 - t)^{-1} (1 - t^3)^{-1} - (1 - t^4)^{-1} (1 - t^6)^{-1} \right) \\
        & = \prod_{n \geq 3} (1 - t^{2^n - 1})^{-1} \cdot \frac{1 + t + 2t^2 + t^3 + t^4 + t^5}{(1 - t^4)(1 - t^6)}.\qedhere
    \end{align*}
\end{proof}

We can now describe the generating function for the number of $H\Z/2\Z$ summands in each degree.

\begin{cor}\label{cor:em generating series}
    Let $R(t):=\sum_{k\geq 0}r_kt^k$, where $r_k$ is the number of $\Sigma^k H\Z/2\Z$ summands of $\MSpinh$. Then
    \begin{align*}
        R(t)&=(1-t)\prod_{\substack{n\geq 2\\ n\neq 2^r\pm 1}}(1-t^n)^{-1}-\frac{1}{(1+t)(1+t^2)(1+t^3)}\sum_{k\geq 0}t^{8k}\bigg(p(2k)(1+t^2+t^3)\\
        &\quad+p(2k+1)(t^4+t^5+2t^6+t^7+t^8+t^9)\bigg).
    \end{align*}
\end{cor}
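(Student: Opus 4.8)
The plan is to read off Hilbert--Poincar\'e series from the $\A$-module isomorphism $\theta : N \to H^*\MSpinh$ produced by the $2$-local splitting of Theorem~\ref{thm:spinh splitting}, where $N = \overline{N} \oplus \bigoplus_{z \in Z}\Sigma^{\deg z}\A$ and $\overline{N} = \bigoplus_{I\in\Part_\mr{even}}\Sigma^{4|I|}\udq_\A \oplus \bigoplus_{I\in\Part_\mr{odd}}\Sigma^{4|I|}E_\A$. Since the summands of $\overline{N}$ are bounded below with connectivities tending to infinity, each degree of $N$ is a finite direct sum; in particular $r_k < \infty$, so $R(t) = \sum_k r_k t^k$ is a well-defined element of $\Z[[t]]$, and Poincar\'e series are additive over $N$. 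Using $P(\Sigma^{4|I|}\udq_\A) = P(H^*\ksp\langle 4|I|\rangle)$, $P(\Sigma^{4|I|}E_\A) = P(H^*\Sigma^{4|I|}F)$, $P(\Sigma^{\deg z}\A) = t^{\deg z}P(\A)$, and $\sum_{z\in Z}t^{\deg z} = R(t)$, this gives
\[
P(H^*\MSpinh) = \sum_{I\in\Part_\mr{even}}P(H^*\ksp\langle 4|I|\rangle) + \sum_{I\in\Part_\mr{odd}}P(H^*\Sigma^{4|I|}F) + R(t)\,P(\A).
\]

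Next I would regroup by $|I|$: an even partition with $|I| = 2k$ contributes a copy of $H^*\ksp\langle 8k\rangle$ and there are $p(2k)$ of them, while an odd partition with $|I| = 2k+1$ contributes a copy of $H^*\Sigma^{8k+4}F$ and there are $p(2k+1)$ of them. Substituting the four closed forms from the preceding proposition, writing $t^{8k+4}(1+t+2t^2+t^3+t^4+t^5) = t^{8k}(t^4+t^5+2t^6+t^7+t^8+t^9)$, and solving for $R(t)$ yields
\[
R(t) = \frac{P(H^*\MSpinh)}{P(\A)} - \frac{\prod_{n\geq 3}(1-t^{2^n-1})^{-1}}{P(\A)\,(1-t^4)(1-t^6)}\sum_{k\geq 0}t^{8k}\Big(p(2k)(1+t^2+t^3) + p(2k+1)(t^4+t^5+2t^6+t^7+t^8+t^9)\Big).
\]

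It then remains to simplify the two prefactors, which is routine formal-power-series bookkeeping. From $P(\A)^{-1} = \prod_{n\geq 1}(1-t^{2^n-1})$ one gets $\prod_{n\geq 3}(1-t^{2^n-1})^{-1}/P(\A) = (1-t)(1-t^3)$, and then $(1-t)(1-t^3)/\big((1-t^4)(1-t^6)\big) = 1/\big((1+t)(1+t^2)(1+t^3)\big)$ using $1-t^4 = (1-t)(1+t)(1+t^2)$ and $1-t^6 = (1-t^3)(1+t^3)$, which puts the second term in the stated shape. For the first term, $P(H^*\MSpinh) = \prod_{n\geq 2}(1-t^n)^{-1}\prod_{r\geq 2}(1-t^{2^r+1})$ and $P(\A)^{-1} = (1-t)\prod_{r\geq 2}(1-t^{2^r-1})$; the exponents $2^r\pm 1$ for $r\geq 2$ are distinct and all at least $2$, so the factors $(1-t^{2^r\pm 1})$ cancel the corresponding reciprocals in $\prod_{n\geq 2}(1-t^n)^{-1}$, leaving $(1-t)\prod_{\substack{n\geq 2\\ n\neq 2^r\pm 1}}(1-t^n)^{-1}$, as claimed. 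The only step worth a sentence of justification is the finiteness of $r_k$, which legitimizes both the additivity of Poincar\'e series and the definition of $R(t)$; beyond that I anticipate no genuine obstacle.
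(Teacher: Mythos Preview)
Your proposal is correct and follows essentially the same route as the paper: write the Poincar\'e series of $H^*\MSpinh$ as the sum over the pieces of the splitting, solve for $R(t)$, and simplify. If anything, you are more explicit than the paper, which simply writes down the unsimplified expression for $R(t)$ and asserts that ``the result follows from simplifying this expression''; your factorizations $(1-t)(1-t^3)/((1-t^4)(1-t^6))=1/((1+t)(1+t^2)(1+t^3))$ and the cancellation of the $(1-t^{2^r\pm1})$ factors are exactly what that simplification amounts to.
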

\begin{proof}
Theorem~\ref{thm:spinh splitting} implies that
\begin{align*}
    P(H^* \MSpinh) & = \sum_{k \geq 0}\left( \sum_{\Part(2k)} P(H^* \ksp\langle 8k\rangle) + \sum_{\Part(2k+1)} P(H^* \Sigma^{8k + 4} F) \right)+ R\cdot P(\A).
\end{align*}
Solving for $R$, we obtain
\begin{align*}
    R(t)&=\prod_{n\geq 2}(1-t^n)^{-1}\cdot\prod_{r\geq 2}(1-t^{2^r+1})\cdot\prod_{r\geq 1}(1-t^{2^r-1})\\
    &\quad-\prod_{r=1}^2(1-t^{2^r-1})\sum_{k\geq 0}\Bigg(\frac{t^{8k}}{(1-t^4)(1-t^6)}\Bigg(\sum_{\Part(2k)}(1+t^2+t^3)\\
    &\quad+\sum_{\Part(2k+1)}t^3((1+t)(1+t^2)(1+t^3)-1)\Bigg)\Bigg).
\end{align*}
The result follows from simplifying this expression.
\end{proof}

To give the generating function for the torsion part of $\pi_*\MSpinh$, it remains to add the torsion contributions from the $\ksp\langle 4(2k)\rangle$ and $\Sigma^{4(2k+1)}F$ summands. Bott periodicity for $\KSp$ and Lemma~\ref{lem:homotopy of F} give us the torsion, which we restate here for convenience.

\begin{lem}\label{lem:periodic torsion}
    Let $k\geq 0$. Then
    \[(\pi_*\ksp\langle 8k\rangle)_\mr{tors}\cong(\pi_*\Sigma^{8k+4}F)_\mr{tors}\cong\begin{cases}
        \Z/2 & * = 8n+5\text{ with }n\geq k,\\
        \Z/2 & * = 8n+6\text{ with }n\geq k,\\
        0 & \text{otherwise}.
    \end{cases}\]
\end{lem}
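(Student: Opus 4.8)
The plan is to read off both sides from facts already established: for $\ksp\langle 8k\rangle$ I will use Bott periodicity (Table~\ref{table:bott periodicity}), and for $\Sigma^{8k+4}F$ I will use Lemma~\ref{lem:homotopy of F}, and then check that the two resulting case expressions coincide with the claimed one.

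First I would recall that $\ksp=\KSp\langle 0\rangle$, so by the defining property of connective covers $\pi_n\ksp\langle 8k\rangle\cong\pi_n\KSp$ for $n\geq 8k$ and $\pi_n\ksp\langle 8k\rangle\cong 0$ for $n<8k$. By Table~\ref{table:bott periodicity}, the torsion subgroup of $\pi_n\KSp$ is $\Z/2$ when $n\equiv 5,6\pmod 8$ and is $0$ otherwise. Hence $(\pi_n\ksp\langle 8k\rangle)_\mr{tors}\cong\Z/2$ exactly when $n\geq 8k$ and $n\equiv 5,6\pmod 8$. Writing $n=8m+5$, the inequality $8m+5\geq 8k$ is equivalent to $m\geq k$ (since $m$ is an integer), and the analogous statement holds for $n=8m+6$; this is precisely the asserted formula. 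Next I would compute $\pi_n\Sigma^{8k+4}F\cong\pi_{n-8k-4}F$ and invoke Lemma~\ref{lem:homotopy of F}, which gives that $\pi_jF$ has torsion $\Z/2$ for $j\equiv 1,2\pmod 8$ with $j\geq 0$ and torsion $0$ otherwise. Therefore $(\pi_n\Sigma^{8k+4}F)_\mr{tors}\cong\Z/2$ exactly when $n-8k-4\geq 0$ and $n-8k-4\equiv 1,2\pmod 8$, i.e. when $n\equiv 8k+5\equiv 5\pmod 8$ or $n\equiv 8k+6\equiv 6\pmod 8$, together with $n\geq 8k+4$; writing $n=8m+5$ or $n=8m+6$, the connectivity bound again reduces to $m\geq k$. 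Comparing the two computations finishes the proof.

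There is no genuine obstacle here; the only point that requires attention is keeping the congruence classes and connectivity bounds consistent through the two suspensions (by $8k$ on the $\ksp$ side and by $8k+4$ on the $F$ side), which is exactly the bookkeeping carried out in the two steps above.
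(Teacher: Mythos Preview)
Your proof is correct and follows exactly the approach the paper indicates: the paper gives no explicit proof of this lemma, merely stating that ``Bott periodicity for $\KSp$ and Lemma~\ref{lem:homotopy of F} give us the torsion, which we restate here for convenience,'' and your argument is precisely the spelling-out of that sentence.
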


\begin{cor}\label{cor:torsion generating function}
    Let $R(t)$ be the generating series given in Corollary~\ref{cor:em generating series}. Let $S(t):=\sum_{k\geq 0}s_kt^k$, where $(\pi_k\MSpinh)_\mr{tors}\cong(\Z/2)^{s_k}$. Then
    \[S(t)=R(t)+(t^5+t^6)\sum_{k\geq 0}t^{8k}(p(2k)+p(2k+1)).\]
\end{cor}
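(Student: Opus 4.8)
The plan is to read the torsion of $\pi_*\MSpinh$ directly off the $2$-local splitting of Theorem~\ref{thm:spinh splitting}. First I would observe that the torsion subgroups are insensitive to $2$-localization: by Proposition~\ref{prop:no odd torsion} the group $\pi_*\MSpinh$ is finitely generated with no odd torsion, and likewise each wedge summand has no odd torsion (Bott periodicity for the spectra $\ksp\langle n\rangle$, Lemma~\ref{lem:homotopy of F} for $F$, and $H\Z/2\Z$ trivially). Since homotopy groups of spectra carry coproducts to direct sums, and since in each degree only finitely many summands of the splitting are nonzero (everything in sight is bounded below), we obtain
\begin{align*}
(\pi_n\MSpinh)_{\mathrm{tors}}\cong{}&\bigoplus_{I\in\Part_{\mathrm{even}}}(\pi_n\ksp\langle 4|I|\rangle)_{\mathrm{tors}}\oplus\bigoplus_{I\in\Part_{\mathrm{odd}}}(\pi_n\Sigma^{4|I|}F)_{\mathrm{tors}}\\
&\oplus\bigoplus_{z\in Z}\pi_n\Sigma^{\deg z}H\Z/2\Z.
\end{align*}
Every torsion summand appearing here is a copy of $\Z/2\Z$, so $s_n$ is just the total number of such copies in degree $n$, and $S(t)$ is the sum of the corresponding generating series.

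Next I would split the count according to the three families of summands. Each summand $\Sigma^{\deg z}H\Z/2\Z$ contributes one $\Z/2\Z$ in degree $\deg z$, so this family contributes $\sum_{z\in Z}t^{\deg z}$, which is exactly the series $R(t)=\sum_k r_k t^k$ of Corollary~\ref{cor:em generating series}, by the very definition of $r_k$ as the number of $\Sigma^k H\Z/2\Z$ summands. For the other two families I would group partitions by size: there are $p(2k)$ even partitions $I$ with $|I|=2k$, each giving a summand $\ksp\langle 8k\rangle$, and $p(2k+1)$ odd partitions $I$ with $|I|=2k+1$, each giving a summand $\Sigma^{8k+4}F$. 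By Lemma~\ref{lem:periodic torsion}, each of $\ksp\langle 8k\rangle$ and $\Sigma^{8k+4}F$ contributes exactly one $\Z/2\Z$ in each of the degrees $8n+5$ and $8n+6$ with $n\geq k$, and nothing else. Hence the non-Eilenberg--Mac Lane summands contribute the series
\[
\sum_{k\geq 0}\bigl(p(2k)+p(2k+1)\bigr)\sum_{n\geq k}\bigl(t^{8n+5}+t^{8n+6}\bigr),
\]
and adding $R(t)$ to this and simplifying gives the asserted formula for $S(t)$; this last step is a purely formal generating-function manipulation.

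This argument is essentially organized bookkeeping, so I do not expect a conceptual obstacle once Theorem~\ref{thm:spinh splitting} and Lemma~\ref{lem:periodic torsion} are in hand. The one point that warrants a moment's attention is that the torsion of each $\ksp\langle 8k\rangle$ and $\Sigma^{8k+4}F$ is Bott-periodic and so appears in infinitely many degrees (all degrees $\geq 8k$ congruent to $5$ or $6$ modulo $8$), not merely in the bottom such degree; this is precisely what Lemma~\ref{lem:periodic torsion} records, so it causes no real difficulty. One should also keep in mind the initial reduction to the $2$-local category, justified by the absence of odd torsion throughout, so that the splitting of Theorem~\ref{thm:spinh splitting} genuinely computes the torsion subgroups in question.
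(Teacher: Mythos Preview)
Your overall strategy is identical to the paper's: split via Theorem~\ref{thm:spinh splitting}, observe that the Eilenberg--Mac Lane summands contribute exactly $R(t)$, and invoke Lemma~\ref{lem:periodic torsion} for the remaining torsion. The reduction to the $2$-local situation and the bookkeeping over partitions are handled just as in the paper.

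There is, however, a genuine gap at your final step. You correctly write the non-Eilenberg--Mac Lane contribution as
\[
\sum_{k\geq 0}\bigl(p(2k)+p(2k+1)\bigr)\sum_{n\geq k}\bigl(t^{8n+5}+t^{8n+6}\bigr)
=\frac{t^5+t^6}{1-t^8}\sum_{k\geq 0}t^{8k}\bigl(p(2k)+p(2k+1)\bigr),
\]
and then assert that this ``simplifies'' to $(t^5+t^6)\sum_{k\geq 0}t^{8k}\bigl(p(2k)+p(2k+1)\bigr)$. It does not: the two expressions differ by the factor $(1-t^8)^{-1}$, and no formal manipulation removes it. You even flag the underlying point yourself---each $\ksp\langle 8k\rangle$ and $\Sigma^{8k+4}F$ contributes torsion in \emph{every} degree $\equiv 5,6\pmod 8$ above $8k$, not just in degrees $8k+5$ and $8k+6$---so your intermediate series is the one faithfully reflecting Lemma~\ref{lem:periodic torsion}. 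The paper's own proof records only the single weight $t^{8k}(t^5+t^6)$ per summand, which matches the displayed formula but undercounts the Bott-periodic torsion; your more careful accounting exposes that the stated formula is missing the factor $(1-t^8)^{-1}$. In short, the ``purely formal generating-function manipulation'' you defer cannot be completed, because the identity it would need to establish is false.
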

\begin{proof}
    It suffices to show that the generating series for the torsion groups coming from $\ksp\langle 8k\rangle$ and $\Sigma^{8k+4}F$ is $(t^5+t^6)\sum_{k\geq 0}t^{8k}(p(2k)+p(2k+1))$. By Theorem~\ref{thm:spinh splitting}, the $\ksp\langle 8k\rangle$ terms are indexed over $\Part(2k)$, while the $\Sigma^{8k+4}F$ terms are indexed over $\Part(2k+1)$. By Lemma~\ref{lem:periodic torsion}, the coefficients $p(2k)$ and $p(2k+1)$ are each weighted by $t^{8k}(t^5+t^6)$.
\end{proof}

\begin{table}[p]
\centering
\caption{$\pi_n\MSpin\cong\mb{Z}^r\times(\mb{Z}/2\mb{Z})^t$.}\label{table:spin bordism groups}
\begin{tabular}{|r|rr|}
\hline
$n$ & $r$ & $t$\\
\hline
0 & 1 & 0\\
1 & 0 & 1\\
2 & 0 & 1\\
3 & 0 & 0\\
4 & 1 & 0\\
5 & 0 & 0\\
6 & 0 & 0\\
7 & 0 & 0\\
8 & 2 & 0\\
9 & 0 & 2\\
10 & 0 & 3\\
11 & 0 & 0\\
12 & 3 & 0\\
13 & 0 & 0\\
14 & 0 & 0\\
15 & 0 & 0\\
16 & 5 & 0\\
17 & 0 & 5\\
18 & 0 & 7\\
19 & 0 & 0\\
\hline
\end{tabular}
\hfill
\begin{tabular}{|r|rr|}
\hline
$n$ & $r$ & $t$\\
\hline
20 & 7 & 1\\
21 & 0 & 0\\
22 & 0 & 1\\
23 & 0 & 0\\
24 & 11 & 0\\
25 & 0 & 11\\
26 & 0 & 15\\
27 & 0 & 0\\
28 & 15 & 2\\
29 & 0 & 1\\
30 & 0 & 3\\
31 & 0 & 0\\
32 & 22 & 1\\
33 & 0 & 23\\
34 & 0 & 31\\
35 & 0 & 0\\
36 & 30 & 6\\
37 & 0 & 2\\
38 & 0 & 7\\
39 & 0 & 1\\
\hline
\end{tabular}
\hfill
\begin{tabular}{|r|rr|}
\hline
$n$ & $r$ & $t$\\
\hline
40 & 42 & 4\\
41 & 0 & 45\\
42 & 0 & 60\\
43 & 0 & 2\\
44 & 56 & 14\\
45 & 0 & 6\\
46 & 0 & 17\\
47 & 0 & 4\\
48 & 77 & 11\\
49 & 0 & 86\\
50 & 0 & 114\\
51 & 0 & 7\\
52 & 101 & 31\\
53 & 0 & 15\\
54 & 0 & 38\\
55 & 0 & 13\\
56 & 135 & 29\\
57 & 0 & 159\\
58 & 0 & 210\\
59 & 0 & 22\\
\hline
\end{tabular}
\hfill
\begin{tabular}{|r|rr|}
\hline
$n$ & $r$ & $t$\\
\hline
60 & 176 & 67\\
61 & 0 & 38\\
62 & 0 & 80\\
63 & 0 & 36\\
64 & 231 & 70\\
65 & 0 & 290\\
66 & 0 & 379\\
67 & 0 & 58\\
68 & 297 & 142\\
69 & 0 & 90\\
70 & 0 & 169\\
71 & 0 & 92\\
72 & 385 & 158\\
73 & 0 & 521\\
74 & 0 & 676\\
75 & 0 & 143\\
76 & 490 & 291\\
77 & 0 & 205\\
78 & 0 & 347\\
79 & 0 & 219\\
\hline
\end{tabular}
\hfill
\begin{tabular}{|r|rr|}
\hline
$n$ & $r$ & $t$\\
\hline
80 & 627 & 343\\
81 & 0 & 931\\
82 & 0 & 1196\\
83 & 0 & 330\\
84 & 792 & 589\\
85 & 0 & 448\\
86 & 0 & 698\\
87 & 0 & 494\\
88 & 1002 & 721\\
89 & 0 & 1658\\
90 & 0 & 2103\\
91 & 0 & 729\\
92 & 1255 & 1171\\
93 & 0 & 952\\
94 & 0 & 1385\\
95 & 0 & 1068\\
96 & 1575 & 1472\\
97 & 0 & 2948\\
98 & 0 & 3689\\
99 & 0 & 1550\\
\hline
\end{tabular}
\end{table}

\begin{table}[p]
\centering
\caption{$\pi_n\MSpinc\cong\mb{Z}^r\times(\mb{Z}/2\mb{Z})^t$.}\label{table:spinc bordism groups}
\begin{tabular}{|r|rr|}
\hline
$n$ & $r$ & $t$\\
\hline
0 & 1 & 0\\
1 & 0 & 0\\
2 & 1 & 0\\
3 & 0 & 0\\
4 & 2 & 0\\
5 & 0 & 0\\
6 & 2 & 0\\
7 & 0 & 0\\
8 & 4 & 0\\
9 & 0 & 0\\
10 & 4 & 1\\
11 & 0 & 0\\
12 & 7 & 0\\
13 & 0 & 0\\
14 & 7 & 1\\
15 & 0 & 0\\
16 & 12 & 0\\
17 & 0 & 0\\
18 & 12 & 3\\
19 & 0 & 0\\
\hline
\end{tabular}
\hfill
\begin{tabular}{|r|rr|}
\hline
$n$ & $r$ & $t$\\
\hline
20 & 19 & 1\\
21 & 0 & 0\\
22 & 19 & 5\\
23 & 0 & 0\\
24 & 30 & 2\\
25 & 0 & 0\\
26 & 30 & 9\\
27 & 0 & 0\\
28 & 45 & 4\\
29 & 0 & 1\\
30 & 45 & 14\\
31 & 0 & 1\\
32 & 67 & 8\\
33 & 0 & 2\\
34 & 67 & 24\\
35 & 0 & 2\\
36 & 97 & 15\\
37 & 0 & 4\\
38 & 97 & 37\\
39 & 0 & 5\\
\hline
\end{tabular}
\hfill
\begin{tabular}{|r|rr|}
\hline
$n$ & $r$ & $t$\\
\hline
40 & 139 & 26\\
41 & 0 & 8\\
42 & 139 & 59\\
43 & 0 & 10\\
44 & 195 & 44\\
45 & 0 & 16\\
46 & 195 & 90\\
47 & 0 & 20\\
48 & 272 & 72\\
49 & 0 & 29\\
50 & 272 & 138\\
51 & 0 & 36\\
52 & 373 & 116\\
53 & 0 & 51\\
54 & 373 & 207\\
55 & 0 & 64\\
56 & 508 & 183\\
57 & 0 & 88\\
58 & 508 & 311\\
59 & 0 & 110\\
\hline
\end{tabular}
\hfill
\begin{tabular}{|r|rr|}
\hline
$n$ & $r$ & $t$\\
\hline
60 & 684 & 284\\
61 & 0 & 148\\
62 & 684 & 458\\
63 & 0 & 184\\
64 & 915 & 434\\
65 & 0 & 243\\
66 & 915 & 676\\
67 & 0 & 301\\
68 & 1212 & 658\\
69 & 0 & 391\\
70 & 1212 & 987\\
71 & 0 & 483\\
72 & 1597 & 985\\
73 & 0 & 619\\
74 & 1597 & 1436\\
75 & 0 & 762\\
76 & 2087 & 1462\\
77 & 0 & 967\\
78 & 2087 & 2074\\
79 & 0 & 1186\\
\hline
\end{tabular}
\hfill
\begin{tabular}{|r|rr|}
\hline
$n$ & $r$ & $t$\\
\hline
80 & 2714 & 2152\\
81 & 0 & 1490\\
82 & 2714 & 2986\\
83 & 0 & 1820\\
84 & 3506 & 3145\\
85 & 0 & 2268\\
86 & 3506 & 4273\\
87 & 0 & 2762\\
88 & 4508 & 4564\\
89 & 0 & 3418\\
90 & 4508 & 6095\\
91 & 0 & 4147\\
92 & 5763 & 6583\\
93 & 0 & 5099\\
94 & 5763 & 8651\\
95 & 0 & 6167\\
96 & 7338 & 9440\\
97 & 0 & 7540\\
98 & 7338 & 12237\\
99 & 0 & 9090\\
\hline
\end{tabular}
\end{table}

\begin{table}[p]
\centering
\caption{$\pi_n\MSpinh\cong\mb{Z}^r\times(\mb{Z}/2\mb{Z})^t$.}\label{table:spinh bordism groups}
\begin{tabular}{|r|rr|}
\hline
$n$ & $r$ & $t$\\
\hline
0 & 1 & 0\\
1 & 0 & 0\\
2 & 0 & 0\\
3 & 0 & 0\\
4 & 2 & 0\\
5 & 0 & 2\\
6 & 0 & 2\\
7 & 0 & 0\\
8 & 4 & 0\\
9 & 0 & 1\\
10 & 0 & 1\\
11 & 0 & 0\\
12 & 7 & 0\\
13 & 0 & 7\\
14 & 0 & 8\\
15 & 0 & 2\\
16 & 12 & 1\\
17 & 0 & 3\\
18 & 0 & 3\\
19 & 0 & 1\\
\hline
\end{tabular}
\hfill
\begin{tabular}{|r|rr|}
\hline
$n$ & $r$ & $t$\\
\hline
20 & 19 & 2\\
21 & 0 & 21\\
22 & 0 & 25\\
23 & 0 & 7\\
24 & 30 & 5\\
25 & 0 & 10\\
26 & 0 & 11\\
27 & 0 & 7\\
28 & 45 & 10\\
29 & 0 & 55\\
30 & 0 & 64\\
31 & 0 & 22\\
32 & 67 & 20\\
33 & 0 & 31\\
34 & 0 & 35\\
35 & 0 & 27\\
36 & 97 & 36\\
37 & 0 & 132\\
38 & 0 & 156\\
39 & 0 & 66\\
\hline
\end{tabular}
\hfill
\begin{tabular}{|r|rr|}
\hline
$n$ & $r$ & $t$\\
\hline
40 & 139 & 65\\
41 & 0 & 87\\
42 & 0 & 100\\
43 & 0 & 86\\
44 & 195 & 111\\
45 & 0 & 307\\
46 & 0 & 360\\
47 & 0 & 180\\
48 & 272 & 188\\
49 & 0 & 232\\
50 & 0 & 269\\
51 & 0 & 249\\
52 & 373 & 310\\
53 & 0 & 689\\
54 & 0 & 804\\
55 & 0 & 465\\
56 & 508 & 503\\
57 & 0 & 592\\
58 & 0 & 685\\
59 & 0 & 662\\
\hline
\end{tabular}
\hfill
\begin{tabular}{|r|rr|}
\hline
$n$ & $r$ & $t$\\
\hline
60 & 684 & 803\\
61 & 0 & 1514\\
62 & 0 & 1755\\
63 & 0 & 1154\\
64 & 915 & 1267\\
65 & 0 & 1445\\
66 & 0 & 1663\\
67 & 0 & 1659\\
68 & 1212 & 1972\\
69 & 0 & 3273\\
70 & 0 & 3767\\
71 & 0 & 2746\\
72 & 1597 & 3039\\
73 & 0 & 3402\\
74 & 0 & 3891\\
75 & 0 & 3968\\
76 & 2087 & 4636\\
77 & 0 & 6971\\
78 & 0 & 7962\\
79 & 0 & 6315\\
\hline
\end{tabular}
\hfill
\begin{tabular}{|r|rr|}
\hline
$n$ & $r$ & $t$\\
\hline
80 & 2714 & 7010\\
81 & 0 & 7757\\
82 & 0 & 8808\\
83 & 0 & 9121\\
84 & 3506 & 10510\\
85 & 0 & 14645\\
86 & 0 & 16609\\
87 & 0 & 14094\\
88 & 4508 & 15640\\
89 & 0 & 17174\\
90 & 0 & 19367\\
91 & 0 & 20280\\
92 & 5763 & 23104\\
93 & 0 & 30368\\
94 & 0 & 34201\\
95 & 0 & 30607\\
96 & 7338 & 33906\\
97 & 0 & 37043\\
98 & 0 & 41508\\
99 & 0 & 43818\\
\hline
\end{tabular}
\end{table}

\subsection{Growth rates}
Since $\Spin$, $\Spinc$, and $\Spinh$ bordism groups are combinatorially determined, we can analyze their growth combinatorially as well. The asymptotic growth of partitions is due to Hardy and Ramanujan~\cite{HR1,HR2}, which allows us to prove the asymptotic growth of the ranks of these bordism groups. 

\begin{prop}\label{prop:rank growth}
Let $r_nM:=\rank\pi_nM$, where $M$ is any spectrum. Then 
\begin{align*}
\frac{\exp(\pi\sqrt{2n/3})}{4n\sqrt{3}}&\sim r_{4n}\MSpin,\\
\frac{\exp(\pi\sqrt{2n/3})}{2\pi\sqrt{2n}}&\sim r_{4n}\MSpinc=r_{4n+2}\MSpinc=r_{4n}\MSpinh.
\end{align*}
\end{prop}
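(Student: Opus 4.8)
The plan is to reduce the proposition to the Hardy--Ramanujan asymptotic
\[
p(n)\sim\frac{1}{4n\sqrt{3}}\exp\!\Bigl(\pi\sqrt{2n/3}\Bigr),
\]
combined with the rank formulas already in hand. By Lemma~\ref{lem:rank spin} we have $r_{4n}\MSpin=p(n)$, so the first assertion is just a restatement of the Hardy--Ramanujan formula. By Lemmas~\ref{lem:rank spinc} and~\ref{lem:rank spinh}, all three of $r_{4n}\MSpinc$, $r_{4n+2}\MSpinc$, and $r_{4n}\MSpinh$ equal the partial sum $A(n):=\sum_{i=0}^{n}p(i)$, so it remains only to prove that $A(n)\sim\frac{1}{2\pi\sqrt{2n}}\exp\!\bigl(\pi\sqrt{2n/3}\bigr)$. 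The rank identities themselves are immediate from the cited lemmas, so the entire content of the proposition is this asymptotic for $A(n)$.

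To estimate $A(n)$, set $c:=\pi\sqrt{2/3}$, so that $p(m)\sim\frac{e^{c\sqrt{m}}}{4m\sqrt{3}}$, and reindex $A(n)=\sum_{j=0}^{n}p(n-j)$. First I would establish, uniformly for $0\le j\le n^{2/3}$, an estimate of the shape
\[
\frac{p(n-j)}{p(n)}=\exp\!\Bigl(-\tfrac{c}{2\sqrt{n}}\,j+O\bigl(n^{-1/6}\bigr)\Bigr),
\]
using $\sqrt{n}-\sqrt{n-j}=j/(\sqrt{n}+\sqrt{n-j})=\frac{j}{2\sqrt{n}}+O(j n^{-4/3})$ together with the (refined) Hardy--Ramanujan estimate $p(m)=\frac{e^{c\sqrt m}}{4m\sqrt3}(1+O(m^{-1/2}))$. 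Summing the geometric main term gives $\sum_{j\ge0}e^{-cj/(2\sqrt n)}=\bigl(1-e^{-c/(2\sqrt n)}\bigr)^{-1}\sim\frac{2\sqrt{n}}{c}$, while the tail $\sum_{j>n^{2/3}}p(n-j)\le n\,p\bigl(n-n^{2/3}\bigr)$ is smaller than $p(n)$ by a factor $n\,e^{-\Theta(n^{1/6})}\to 0$ and hence negligible. This yields $A(n)\sim\frac{2\sqrt{n}}{c}\,p(n)$, and substituting $p(n)\sim\frac{e^{c\sqrt{n}}}{4n\sqrt{3}}$, together with $c\sqrt{n}=\pi\sqrt{2n/3}$ and $\sqrt{3}\,c=\pi\sqrt{2}$, collapses the constant to $\frac{1}{2\pi\sqrt{2n}}$, as required.

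The main obstacle is making the partial-sum estimate genuinely rigorous: since $p(n-1)/p(n)\to1$, no crude ratio comparison applies, and one must fix the cutoff (here $n^{2/3}$, though its precise value is unimportant) and control the error in $p(n-j)/p(n)\approx e^{-cj/(2\sqrt n)}$ \emph{uniformly} over the whole range of $j$ that contributes, not merely pointwise. An alternative that sidesteps this bookkeeping is to apply a Tauberian theorem of Ingham type directly to the generating series $\sum_{n\ge0}A(n)x^{n}=(1-x)^{-1}\prod_{k\ge1}(1-x^{k})^{-1}$, reading off the asymptotics of $A(n)$ from the behaviour of this function as $x\to1^{-}$; either route completes the proof.
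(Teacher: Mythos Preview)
Your proposal is correct and arrives at the same conclusion, but the route differs from the paper's. Both arguments begin identically: Lemma~\ref{lem:rank spin} gives $r_{4n}\MSpin=p(n)$, and Lemmas~\ref{lem:rank spinc}--\ref{lem:rank spinh} reduce the second line to finding the asymptotics of $A(n)=\sum_{i=0}^n p(i)$. At that point the paper simply quotes Gupta's 1946 formula $\sum_{i=0}^{n-1}p(i)\sim p(n)\sqrt{6n}/\pi$ (together with the easy observation $p(n+1)\sqrt{n+1}\sim p(n)\sqrt n$) and substitutes the Hardy--Ramanujan asymptotic for $p(n)$. You instead re-derive Gupta's formula from scratch via a cutoff argument, estimating $p(n-j)/p(n)$ uniformly for $j\le n^{2/3}$ and summing the resulting approximate geometric series; note that your conclusion $A(n)\sim(2\sqrt n/c)\,p(n)$ with $c=\pi\sqrt{2/3}$ is exactly $A(n)\sim(\sqrt{6n}/\pi)\,p(n)$, i.e.\ Gupta's formula. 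Your approach is more self-contained and would be appropriate if Gupta's result were not available to cite, though it does require the refined Hardy--Ramanujan estimate with an explicit $O(m^{-1/2})$ error (the bare asymptotic $p(m)\sim e^{c\sqrt m}/(4m\sqrt3)$ is not quite enough to make the uniformity in $j$ work cleanly). The paper's approach is shorter because it outsources exactly this analytic work to the literature.
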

\begin{proof}
Hardy--Ramanujan \cite{HR1,HR2} proved that 
\[p(i)\sim\frac{\exp(\pi\sqrt{2i/3})}{4i\sqrt{3}}.\]
Thus $r_{4n}\MSpin\sim\frac{\exp(\pi\sqrt{2n/3})}{4n\sqrt{3}}$ by Lemma~\ref{lem:rank spin}. We proved the equality $\rank\pi_{4n}\MSpinc=\rank\pi_{4n}\MSpinh=\sum_{i=0}^n p(i)$ in Corollary~\ref{cor:rank spinc and spinh}. It thus suffices to calculate
\[\sum_{i=0}^n p(i)\sim\frac{\exp(\pi\sqrt{2n/3})}{2\pi\sqrt{2n}},\]
which follows from Hardy--Ramanujan's asymptotic formula for $p(n)$, Gupta's formula \cite{Gup46}
\[\sum_{i=0}^{n-1}p(i)\sim\frac{p(n)\sqrt{6n}}{\pi},\]
and the calculation $\displaystyle\lim_{n\to\infty}\frac{p(n+1)\sqrt{n+1}}{p(n)\sqrt{n}}=1$.
\end{proof}

\begin{rem}
The asymptotic growth of partitions (and hence the ranks of $\pi_*\MSpin$, $\pi_*\MSpinc$, and $\pi_*\MSpinh$) are calculated using the circle method. This same method could be used to calculate the growth of the torsion parts as well. For example, the growth of the torsion part of $\pi_*\MSpinh$ is given by the growth of the coefficients of $S(t)$, whose poles all lie on the unit circle. We will not investigate the asymptotics of the torsion parts of $\pi_*\MSpinh$ here.
\end{rem}

\section{Characterizing $\Spinh$ cobordism classes}\label{sec:characteristic classes}
In Section \ref{sec:computing groups}, we saw that we can explicitly compute $\Spinh$ bordism groups up to any desired degree (contingent upon having enough computational power). However, these computations only describe the $\Spinh$ bordism groups abstractly. What we really want from $\pi_*\MSpinh$ is an understanding of the geometry of Spin$^h$ manifolds up to cobordism. 

Theorem~\ref{thm:main} implies that the $\KSp$-characteristic classes given in Definition~\ref{def:ksp classes}, together with $H\Z/2\Z$-characteristic classes, can be used to distinguish $\Spinh$ cobordism classes (by evaluating on an appropriate homology class). In this section, we will show that instead of using the elephant classes for odd partitions, it suffices to use $\KSp$-Pontryagin classes for all partitions.

\begin{setup}
Recall that a $\Spinh$ manifold is a smooth compact manifold without boundary, equipped with a $\Spinh$ structure on its stable normal bundle $\nu$. If $M$ is a smooth compact $n$-manifold without boundary, the Pontryagin--Thom construction gives a map of spectra $\theta : \Sigma^n \Sph \to \Th(\nu)$, where $\Th(\nu)$ is the Thom spectrum of the stable normal bundle of $M$.

The unit map $\Sph \to \KO \cong \Sph \wedge \KO$ induces a $\KO$-homology class $1 \in \KO_0 \Sph$. Shifting and then transferring along $\theta$ gives us a class $\theta_* 1 \in \KO_n \Th(\nu)$. We also have a class $a \in \KSp^0 \Th(\nu)$ given by the composition of $\hw : \Th(\nu) \to \MSpinh$ (coming from the $\Spinh$ structure on $M$) and the Atiyah--Bott--Shapiro map $\vphi^h : \MSpinh \to \KSp$ (Proposition~\ref{prop:abs map}). The $\KSp$-homology class $\theta_*1\frown a$ can be thought of as a sort of $\KSp$-fundamental class of $M$ as a $\Spinh$ manifold.
\end{setup}

\begin{defn}
    Let $I$ be a partition and $M$ a $\Spinh$-manifold. The \textit{$I^\textsuperscript{th}$ $\KSp$- characteristic number} of $M$ is $\langle \pi^I_h(\nu), \theta_* 1 \frown a \rangle \in \KSp_n$, where $\pi^I_h\in\KO^0(\BSpinh)$ is the $\KO$-Pontryagin class.
\end{defn}

Diagramatically, the $\KSp$-homology class $\theta_* 1 \frown a$ on $M$ is given by
\[
    \begin{tikzcd}
        \arrow["{\theta_* 1}", from=2-1, to=1-1]
        \arrow["{\delta \wedge \id}", from=1-1, to=1-2]
        \arrow["{\id \wedge a \wedge \id}", from=1-2, to=1-3]
        \arrow["{\id \wedge \mu}", from=1-3, to=2-3]
        \Th(\nu) \wedge \KO & M_+ \wedge \Th(\nu) \wedge \KO & M_+ \wedge \KSp \wedge \KO\\
        \Sigma^n \Sph &&  M_+ \wedge \KSp,
    \end{tikzcd}
\]
where $\delta$ is the Thom diagonal and $\mu:\KSp\wedge\KO\to\KSp$ is the $\KO$-module structure. The $I^\textsuperscript{th}$ $\KSp$-characteristic number of $M$ is then given by
\[
    \Sigma^n \Sph \xrightarrow{\theta_*1\frown a} M_+ \wedge \KSp \xrightarrow{\pi^I_h(\nu)\wedge\id} \KO \wedge \KSp \xrightarrow{\ \mu\ } \KSp.
\]

The main lemma of this section is that $\KSp$-characteristic numbers are indeed related to our $\KSp$-characteristic classes.

\begin{lem}\label{lem:char numbers}
    If $M$ is a $\Spinh$-manifold, then the $I^\textsuperscript{th}$ $\KSp$-characteristic number can be computed as the composite
    \[
        \begin{tikzcd}
            \arrow["{\theta}", from=1-1, to=1-2]
            \arrow["\hw", from=1-2, to=1-3]
            \arrow["\kappa^I",from=1-3, to=1-4]
            \Sigma^n \Sph & \Th(\nu) & \MSpinh & \KSp,
        \end{tikzcd}
    \]
    where $\kappa^I$ is the $I^\textsuperscript{th}$ $\KSp$-Pontryagin class (see Remark~\ref{rem:odd ksp-pontryagin}).
\end{lem}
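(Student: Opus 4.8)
The plan is to unwind both composites into maps of spectra and observe that they coincide, the only substantive inputs being the naturality of the Thom diagonal and the unit axiom for the $\KO$-module structure on $\KSp$.

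I would first record that, viewed as a $\KSp$-valued class, $\kappa^I$ is by construction the composite
\[
\MSpinh \xrightarrow{\delta_u} \BSpinh \wedge \MSpinh \xrightarrow{\pi^I_h \wedge \vphi^h} \KO \wedge \KSp \xrightarrow{\mu} \KSp,
\]
where $\delta_u$ is the Thom diagonal of the universal bundle and $\mu$ is the module multiplication; this follows from Diagram~\ref{eq:ksp-pontryagin} and Remark~\ref{rem:odd ksp-pontryagin}, together with the observation that composing the truncated multiplication of Diagram~\ref{eq:truncated module structure} with $\ksp\langle 4|I|\rangle \to \KSp$ recovers $\mu$. The $\Spinh$-structure on $M$ amounts to a classifying map $g_\nu : M_+ \to \BSpinh$ of $\nu$ together with the induced map $\hw : \Th(\nu) \to \MSpinh$ of Thom spectra. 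Naturality of the Thom diagonal gives $\delta_u \circ \hw = (g_\nu \wedge \hw) \circ \delta_\nu$, where $\delta_\nu : \Th(\nu) \to M_+ \wedge \Th(\nu)$ is the Thom diagonal of $\nu$. Since $\pi^I_h \circ g_\nu = \pi^I_h(\nu)$ and $\vphi^h \circ \hw = a$, precomposing with the Pontryagin--Thom map $\theta$ yields
\[
\kappa^I \circ \hw \circ \theta \;=\; \mu \circ \bigl( \pi^I_h(\nu) \wedge a \bigr) \circ \delta_\nu \circ \theta.
\]

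Then I would simplify the $\KSp$-homology class $\theta_* 1 \frown a$. In the composite defining it, the last arrow $\id \wedge \mu$ post-composes the $\KO$-factor introduced by the unit $1 : \Sph \to \KO$ with $\mu$; by the unit axiom $\mu \circ (\id_{\KSp} \wedge 1) = \id_{\KSp}$, so that factor cancels and $\theta_* 1 \frown a = (\id_{M_+} \wedge a) \circ \delta_\nu \circ \theta$. Consequently the $I^\text{th}$ $\KSp$-characteristic number, which by definition is $\mu \circ (\pi^I_h(\nu) \wedge \id) \circ (\theta_* 1 \frown a)$, equals $\mu \circ ( \pi^I_h(\nu) \wedge a ) \circ \delta_\nu \circ \theta$, matching the composite displayed above. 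This proves the lemma.

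The only real work is careful bookkeeping: one must keep the reduced-versus-unreduced (basepoint) conventions for the various Thom diagonals consistent, and one must verify that the universal $\KSp$-Pontryagin class used to build the splitting --- with its truncated module structure and its factorization through a connective cover --- genuinely agrees, after passing to $\KSp$, with the honest module-theoretic composite $\mu \circ (\pi^I_h \wedge \vphi^h) \circ \delta_u$. I expect no deeper obstacle; once those identifications are in place the argument is a diagram chase.
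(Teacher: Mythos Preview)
Your proposal is correct and follows essentially the same route as the paper: both arguments reduce to the naturality of the Thom diagonal (your $\delta_u \circ \hw = (g_\nu \wedge \hw)\circ \delta_\nu$, the paper's Diagram~\ref{eq:4x2}) together with the unit axiom for the $\KO$-module structure on $\KSp$ (your cancellation of the $\KO$-factor in $\theta_*1\frown a$, the paper's Diagrams~\ref{eq:3x2} and~\ref{eq:pentagon}). The paper merely spells out each commutativity as an explicit diagram rather than arguing in prose, but the content is identical.
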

\begin{proof}
    As before, let $\delta:\Th(\nu)\to M_+\wedge\Th(\nu)$ denote the Thom diagonal. Let $e:\Sph\to\KO$ denote the unit map and $\mu:\KSp\wedge\KO\to\KSp$ the $\KO$-module multiplication of $\KSp$. The diagram
    \begin{equation}\label{eq:3x3}
        \begin{tikzcd}
            \arrow["{\theta \wedge \id}", from=1-1, to=1-2]
            \arrow["{\delta \wedge \id}", from=1-2, to=1-3]
            \arrow["{\id \wedge e}", from=2-1, to=1-1]
            \arrow["{\id \wedge e}", from=2-2, to=1-2]
            \arrow["{\id \wedge \id \wedge e}", from=2-3, to=1-3]
            \arrow["{\theta \wedge \id}", from=2-1, to=2-2]
            \arrow["{\delta \wedge \id}", from=2-2, to=2-3]
            \arrow["{\rho}", from=3-1, to=2-1]
            \arrow["{\rho}", from=3-2, to=2-2]
            \arrow["{\rho}", from=3-3, to=2-3]
            \arrow["{\theta}", from=3-1, to=3-2]
            \arrow["{\delta}", from=3-2, to=3-3]
            \Sigma^n \Sph \wedge \KO & \Th(\nu) \wedge \KO & M_+ \wedge \Th(\nu) \wedge \KO \\
            \Sigma^n \Sph \wedge \Sph & \Th(\nu) \wedge \Sph & M_+ \wedge \Th(\nu) \wedge \Sph \\
            \Sigma^n \Sph & \Th(\nu) & M_+ \wedge \Th(\nu)
        \end{tikzcd}
    \end{equation}
    commutes, because the unit isomorphisms in a symmetric monoidal category are natural. Next, the diagram
    \begin{equation}\label{eq:4x2}
        \begin{tikzcd}
            \arrow["{\delta}", from=1-1, to=1-2]
            \arrow["{\pi^I_h(\nu) \wedge a}", from=1-2, to=1-3]
            \arrow["{\mu}", from=1-3, to=1-4]
            \arrow["{\hw}", from=1-1, to=2-1]
            \arrow["{\nu\wedge\hw}", from=1-2, to=2-2]
            \arrow["{\id}", from=1-3, to=2-3]
            \arrow["{\id}", from=1-4, to=2-4]
            \arrow["{\delta}", from=2-1, to=2-2]
            \arrow["{\pi^I_h \wedge \varphi^h}", from=2-2, to=2-3]
            \arrow["{\mu}", from=2-3, to=2-4]
            \Th(\nu) & M_+ \wedge \Th(\nu) & \KO \wedge \KSp & \KSp \\
            \MSpinh & \BSpinh \wedge \MSpinh & \KO \wedge \KSp & \KSp
        \end{tikzcd}
    \end{equation}
    commutes by naturality of the Thom diagonal. Finally, consider the diagram
    \begin{equation}\label{eq:3x2}
        \begin{tikzcd}
            \arrow["{\id \wedge a \wedge \id}", from=1-1, to=1-2]
            \arrow["{\id \wedge \id \wedge e}", from=2-1, to=1-1]
            \arrow["{\id \wedge \mu}", from=1-2, to=2-2]
            \arrow["{\rho}", from=3-1, to=2-1]
            \arrow["{\pi^I_h (\nu) \wedge \id}", from=2-2, to=3-2]
            \arrow["{\pi^I_h (\nu) \wedge a}", from=3-1, to=3-2]
            M_+ \wedge \Th(\nu) \wedge \KO & M_+ \wedge \KSp \wedge \KO \\
            M_+ \wedge \Th(\nu) \wedge \Sph & M_+ \wedge \KSp \\
            M_+ \wedge \Th(\nu) & \KO \wedge \KSp.
        \end{tikzcd}
    \end{equation}
    To see that Diagram~\ref{eq:3x2} commutes, we use the identity axiom for $\KSp$ as a $\KO$-module, which states that the diagram
    \[
        \begin{tikzcd}
            \arrow["{\id \wedge e}", from=1-1, to=1-2]
            \arrow["{\rho}", from=2-1, to=1-1]
            \arrow["{\mu}", from=1-2, to=2-2]
            \arrow["{\id}", from=2-1, to=2-2]
            \KSp \wedge \Sph & \KSp \wedge \KO \\
            \KSp & \KSp
        \end{tikzcd}
    \]
    commutes. Therefore the diagram
    \[
        \begin{tikzcd}
            \arrow["{a \wedge \id}", from=1-1, to=1-2]
            \arrow["{\id \wedge e}", from=1-2, to=1-3]
            \arrow["{\rho}", from=2-1, to=1-1]
            \arrow["{\rho}", from=2-2, to=1-2]
            \arrow["{\mu}", from=1-3, to=2-3]
            \arrow["{a}", from=2-1, to=2-2]
            \arrow["{\id}", from=2-2, to=2-3]
            \Th(\nu) \wedge \Sph & \KSp \wedge \Sph & \KSp \wedge \KO \\
            \Th(\nu) & \KSp & \KSp
        \end{tikzcd}
    \]
    commutes. Since $(\id \wedge e) \circ (a \wedge \id) = (a \wedge \id) \circ (\id \wedge e)$, the diagram
    \begin{equation}\label{eq:swapped}
        \begin{tikzcd}
            \arrow["{\id \wedge e}", from=1-1, to=1-2]
            \arrow["{a \wedge \id}", from=1-2, to=1-3]
            \arrow["{\rho}", from=2-1, to=1-1]
            \arrow["{\mu}", from=1-3, to=2-3]
            \arrow["{a}", from=2-1, to=2-2]
            \arrow["{\id}", from=2-2, to=2-3]
            \Th(\nu) \wedge \Sph & \Th(\nu) \wedge \KO & \KSp \wedge \KO \\
            \Th(\nu) & \KSp & \KSp
        \end{tikzcd}
    \end{equation}
    commutes. Smashing Diagram~\ref{eq:swapped} on the left with $M_+$, we find that
    \begin{equation}\label{eq:pentagon}
        \begin{tikzcd}
            \arrow["{\id \wedge a \wedge \id}", from=1-1, to=1-2]
            \arrow["{\id \wedge \id \wedge e}", from=2-1, to=1-1]
            \arrow["{\id \wedge \mu}", from=1-2, to=2-2]
            \arrow["{\rho}", from=3-1, to=2-1]
            \arrow["{\id \wedge a}", from=3-1, to=2-2]
            M_+ \wedge \Th(\nu) \wedge \KO & M_+ \wedge \KSp \wedge \KO \\
            M_+ \wedge \Th(\nu) \wedge \Sph & M_+ \wedge \KSp \\
            M_+ \wedge \Th(\nu)
        \end{tikzcd}
    \end{equation}
    commutes. To complete the commutativity of Diagram~\ref{eq:3x2}, we observe that
    \[\begin{tikzcd}
            \arrow["{\id \wedge a}", from=2-1, to=1-2]
            \arrow["\pi^I_h(\nu)\wedge\id", from=1-2, to=2-2]
            \arrow["\pi^I_h(\nu)\wedge a", from=2-1, to=2-2]
            & M_+ \wedge \KSp \\
            M_+ \wedge \Th(\nu) & \KO\wedge\KSp
        \end{tikzcd}\]
    commutes. To conclude the lemma, we stitch together Diagrams~\ref{eq:3x3}, \ref{eq:4x2}, and \ref{eq:3x2} and take two different routes $\Sigma^n\Sph\to\KSp$. For the reader's convenience, we depict these routes in Figure~\ref{fig:char number}.
\end{proof}

\begin{figure}
    \begin{tikzpicture}[scale=1.75]
        \foreach \i in {(0,1),(0,2),(0,3),(1,0),(1,1),(1,2),(1,3),(2,0),(2,1),(2,2),(2,3),(3,0),(3,1),(3,2),(3,3),(4,0),(4,1)}{
        \draw[black!30,fill=black!30] \i circle (1pt);}
        \draw[black!30] (0,1) -- (0,2) -- (0,3) -- (1,3) -- (1,2) -- (1,1) -- (1,0) -- (2,0) -- (2,1) -- (2,2) -- (2,3) -- (3,3) -- (3,2) -- (3,1) -- (3,0) -- (4,0) -- (4,1) -- (3,1) -- (2,1) -- (1,1) -- (0,1);
        \draw[black!30] (1,3) -- (2,3);
        \draw[black!30] (0,2) -- (2,2);
        \draw[black!30] (2,0) -- (3,0);
        \node[fill=white] (A) at (0,1) {$\Sigma^n\Sph$};
        \node[fill=white] (B) at (1,1) {$\Th(\nu)$};
        \node[fill=white] (C) at (1,0) {$\MSpinh$};
        \node[fill=white] (D) at (4,0) {$\KSp$};
        \node[fill=white] (E) at (1,3) {$\Th(\nu)\wedge\KSp$};
        \node[fill=white] (F) at (3,2) {$M_+\wedge\KSp$};
        \node[fill=white] (G) at (3,1) {$\KO\wedge\KSp$};
        \draw[thick,->] (A) -- (B) node[midway,above] {\tiny$\theta$};
        \draw[thick,->] (B) -- (C) node[midway,left] {\tiny$\hw$};
        \draw[thick,->] (C) -- (D) node[midway,above] {\tiny$\kappa^I$};
        \draw[thick,->] (A) -- (0,2) --node[midway,above] {\tiny$\theta_*1$} (1,2) -- (E);
        \draw[thick,->] (E) -- (2,3) -- node[midway,above] {\tiny$(-)\frown a$}(3,3) -- (F);
        \draw[thick,->] (F) -- (G) node[midway,right] {\tiny$\pi^I_h(\nu)\wedge\id$};
        \draw[thick,->] (G) -- (4,1) -- (D) node[midway,right] {\tiny$\mu$};
    \end{tikzpicture}
    \caption{Two ways of computing $\KSp$-characteristic numbers}\label{fig:char number}
\end{figure}

We are now ready to prove Theorem~\ref{thm:cnumber condition}, which is an analog of \cite[Corollary 1]{Wal60} and \cite[Corollary 2.3]{ABP67}.

\begin{thm}\label{thm:cnumber condition with proof}
    Two $\Spinh$-manifolds are $\Spinh$-cobordant if and only if their $\KSp$- and $\Z / 2\Z$-characteristic numbers agree.
\end{thm}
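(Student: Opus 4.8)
The plan is to relate both families of characteristic numbers to the Pontryagin--Thom class $[M]\in\pi_n\MSpinh$ and then to play them off against the splitting of Theorem~\ref{thm:spinh splitting}. For the ``only if'' direction I would use that, under the Pontryagin--Thom construction, the composite $\Sigma^n\Sph\xrightarrow{\theta}\Th(\nu)\xrightarrow{\hw}\MSpinh$ represents the $\Spinh$-cobordism class $[M]\in\pi_n\MSpinh$, which depends only on the cobordism class of $M$. By Lemma~\ref{lem:char numbers}, the $I^\text{th}$ $\KSp$-characteristic number of $M$ is the image of $[M]$ under the homomorphism $\pi_n\MSpinh\to\pi_n\KSp$ induced by the $\KSp$-Pontryagin class $\kappa^I$, and for $x\in H^*\MSpinh$ the corresponding $\Z/2\Z$-characteristic number $\langle x,[M]\rangle$ is $x$ evaluated on the mod $2$ Hurewicz image of $[M]$; both are functions of $[M]$ alone, hence cobordism invariants.

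For the ``if'' direction I would observe that each characteristic number is the value on $[M]$ of a group homomorphism out of $\pi_n\MSpinh$, so it suffices to prove that the assembled homomorphism is injective: then two manifolds with equal characteristic numbers have $[M]-[M']$ in its kernel, which is trivial. By Proposition~\ref{prop:no odd torsion} the group $\pi_n\MSpinh$ has no odd torsion and hence embeds in its $2$-localization, which by Theorem~\ref{thm:spinh splitting} is identified, through the summand projections, with
\[
\bigoplus_{I\in\Part_{\mathrm{even}}}\big(\pi_n\ksp\langle 4|I|\rangle\big)_{(2)}\ \oplus\ \bigoplus_{I\in\Part_{\mathrm{odd}}}\big(\pi_n\Sigma^{4|I|}F\big)_{(2)}\ \oplus\ \bigoplus_{z\in Z}\big(\pi_n\Sigma^{\deg z}H\Z/2\Z\big)_{(2)} ,
\]
the three families of projections being $\pi_n(\kappa^I)$ for even $I$, $\pi_n(\eps^I)$ for odd $I$, and evaluation against $z\in Z$. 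So it remains to recover each projection of $[M]$ from the characteristic numbers.

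The even and Eilenberg--Mac Lane summands are easy. For even $I$ the map $\pi_n\ksp\langle 4|I|\rangle\to\pi_n\KSp$ is injective (an isomorphism for $n\geq 4|I|$, trivial source otherwise), so $\pi_n(\kappa^I)([M])$ is the unique preimage of the $\KSp$-characteristic number $\kappa^I(M)$; for $z\in Z$ with $\deg z=n$ the projection is literally the $\Z/2\Z$-characteristic number $\langle z,[M]\rangle$ (and the target vanishes when $\deg z\neq n$). The odd case is the one to watch: here I would use that the elephant class $\eps^I$ was built (Proposition~\ref{prop:mult lifts to F}, Lemma~\ref{lem:lift to F}, Setup~\ref{setup:ksp classes}) as a lift, along the fiber inclusion $\iota\colon\Sigma^{4|I|}F\to\ksp\langle 4|I|\rangle$ of Equation~\ref{eq:fiber sequence for F}, of the $\ksp\langle 4|I|\rangle$-valued $\KSp$-Pontryagin class $\kappa^I$ of Remark~\ref{rem:odd ksp-pontryagin}, so that $\kappa^I=\iota\circ\eps^I$; and that $\pi_n\iota$ is injective, since by Lemma~\ref{lem:homotopy of F} and Bott periodicity the fiber sequence is $\Sigma^{4|I|}$ of $F\to\ko\to H\Z/2\Z$, whence $\pi_n\iota$ has trivial source for $n<4|I|$ and is $\pi_{n-4|I|}$ of $F\to\ko$ --- either an isomorphism or the inclusion $2\Z\hookrightarrow\Z$ --- for $n\geq 4|I|$. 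Composing with $\pi_n\ksp\langle 4|I|\rangle\to\pi_n\KSp$ then recovers $\pi_n(\eps^I)([M])$ from the odd-partition $\KSp$-characteristic number $\kappa^I(M)$.

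The hard part will thus be the odd-partition bookkeeping: verifying that replacing the finer elephant class $\eps^I$ (which the splitting uses) by the coarser $\KSp$-Pontryagin class $\kappa^I$ (whose value is a genuine $\KSp$-characteristic number) loses no information. This reduces, as above, to the factorization $\kappa^I=\iota\circ\eps^I$, which is immediate from the construction of $\eps^I$ as a lift along $\iota$, together with the injectivity of $\pi_*F\to\pi_*\ko$, which is Lemma~\ref{lem:homotopy of F}. Once these are in place, assembling the projections shows $[M]$ is determined by its $\KSp$- and $\Z/2\Z$-characteristic numbers, and applying this to $[M]-[M']$ completes the proof.
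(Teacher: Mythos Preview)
Your proposal is correct and follows essentially the same route as the paper. The paper packages the argument by composing the splitting map $\phi$ with a map $\psi$ that is the identity on the $\ksp\langle 4|I|\rangle$ and $H\Z/2\Z$ summands and the fiber inclusion $\Sigma^{4|I|}F\to\ksp\langle 4|I|\rangle$ on the odd summands, then observes that $(\psi\circ\phi)_*$ is injective (being a sum of identities and copies of $2\Z\hookrightarrow\Z$) and invokes Lemma~\ref{lem:char numbers}; you do the same bookkeeping summand by summand, making the injectivity of $\pi_*\iota$ and of $\pi_*\ksp\langle 4|I|\rangle\to\pi_*\KSp$ explicit. One small point: your equality $\kappa^I=\iota\circ\eps^I$ at the level of maps to $\ksp\langle 4|I|\rangle$ depends on how literally one reads Remark~\ref{rem:odd ksp-pontryagin} (which writes $\ko\langle 4|I|\rangle$ rather than $\ko\langle 4|I|-2\rangle$), but your argument only needs their agreement after composing to $\KSp$, which holds because both are $\pi^I_h\cdot\vphi^h$ by the commuting square of Equation~\ref{eq:truncated module structure}.
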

\begin{proof}
    First, we form the composition
    \[
        \begin{tikzcd}
            \arrow[from=2-1, to=1-2,"\phi",bend left,end anchor=west]
            \arrow[from=1-2, to=2-2,"\psi"]
            \arrow[from=2-1, to=2-2,"\psi\circ\phi"]
            & \bigvee_{I \in \Part_{\mathrm{even}}} \ksp \langle 4| I| \rangle \vee \bigvee_{I \in \Part_{\mathrm{odd}}} \Sigma^{4| I|} F \vee \bigvee_{z \in Z} \Sigma^{\deg z} H\Z / 2\Z \\
            \MSpinh & \bigvee_{I \in \Part} \ksp \langle 4| I| \rangle \vee \bigvee_{z \in Z} \Sigma^{\deg z} H\Z / 2\Z,
        \end{tikzcd}
    \]
    where $\psi$ is given by the identity maps on the $\ksp\langle 4|I|\rangle$ (when $I\in\Part_\mr{even}$) and $\Sigma^{\deg z}H\Z / 2\Z$ summands, and by the map $\Sigma^{4|I|} F \to \ksp\langle 4|I|\rangle$ for $I\in\Part_\mr{odd}$. The map $\phi$ is the splitting of Theorem~\ref{thm:main}, so $\phi$ is a $2$-local equivalence. Taking homotopy groups, we find that 
    \[\ker\left((\psi\circ\phi)_*:\pi_*\MSpinh\to\bigoplus_{I\in\Part}\pi_*\ksp\langle 4|I|\rangle\oplus\bigoplus_{z\in Z}\pi_*\Sigma^{\deg{z}}H\Z/2\Z\right)\]
    is trivial. To see this, note that $\phi$ induces an isomorphism (in particular, an injection) on homotopy groups. Similarly, $\psi$ induces an injection on homotopy groups, since $\psi_*$ is a direct sum of identity maps and copies of the inclusion $2\Z \to \Z$. Thus $(\psi\circ\phi)_*=\psi_*\circ\phi_*$ is an injection.

    The previous paragraph suggests that $\psi\circ\phi$ can separate $\Spinh$-cobordism classes. Indeed, two $\Spinh$ manifolds $M_1$ and $M_2$ are $\Spinh$-cobordant if and only if the class of $M = M_1 - M_2$ corresponds to $0\in\pi_*\MSpinh$ (under Pontryagin--Thom). Since $(\psi\circ\phi)_*$ is injective, $[M]$ corresponds to $0\in\pi_*\MSpinh$ if and only if $[M]$ maps to zero in each $\pi_*\ksp \langle 4|I| \rangle$ and each $\pi_*\Sigma^{\deg z} H\Z / 2\Z$. 
    
    It remains to show that $(\psi\circ\phi)_*$ is the direct sum of the $\KSp$- and $\Z/2\Z$-characteristic numbers. If $I$ is a partition, then the element in $\pi_*\ksp \langle 4| I| \rangle$ determined by $[M]$ is
    \[
        \Sigma^n \Sph \longrightarrow \Th(\nu) \longrightarrow \MSpinh \longrightarrow \ksp \langle 4|I| \rangle,
    \]
    which is precisely the $I^\textsuperscript{th}$ $\KSp$-characteristic number of $M$ by Lemma~\ref{lem:char numbers}. Similarly, for $z \in Z$, the element of $\pi_*\Sigma^{\deg z} H\Z / 2\Z$ corresponding to $[M]$ is the sum of various ordinary $\Z / 2\Z$-characteristic numbers of $M$ arising from the expression of $z$ in the polynomial basis of the Stiefel--Whitney classes. So if all the $\KSp$- and $\Z / 2\Z$-characteristic numbers of $M_1$ and $M_2$ agree, then they vanish for $M$, and therefore the element of $\pi_*\MSpinh$ determined by $M$ is zero. 
    
    Conversely, if $M_1$ and $M_2$ are $\Spinh$-cobordant, then the $\KSp$-characteristic numbers of $M$ all vanish. Moreover, $M_1$ and $M_2$ being $\Spinh$-cobordant implies that their underlying unoriented manifolds are cobordant, and two unoriented manifolds are cobordant if and only if their Stiefel--Whitney numbers agree \cite{Tho54}. It follows that the Stiefel--Whitney numbers of $M$ are all zero as well.
\end{proof}

\begin{rem}
    Theorem~\ref{thm:cnumber condition with proof} can be summarized by saying that two $\Spinh$ manifolds are $\Spinh$-cobordant if and only if their underlying unoriented manifolds are cobordant and all of their $\KSp$-characteristic numbers agree.
\end{rem}

\section{Potential applications}\label{sec:applications}
In this section, we list a few more problems of interest in Spin$^h$ cobordism theory.

\subsection{Explicit representatives of generators}
As seen in Section~\ref{sec:computing groups}, we can now calculate the bordism groups $\Omega^{\Spinh}_*$ in any degree (within the bounds of time and computational power). It would be desirable to have explicit Spin$^h$ manifolds whose classes are generators in $\Omega^{\Spinh}_*$.

\begin{prob}\label{prob:representatives}
    Write $\pi_n\MSpinh\cong\mb{Z}^{r_n}\times(\mb{Z}/2\mb{Z})^{t_n}$. Given a dimension $n$, find $n$-dimensional Spin$^h$ manifolds $M_1,\ldots,M_{r_n},N_1,\ldots,N_{t_n}$ such that $[M_1],\ldots,[N_{t_n}]$ generate $\Omega^{\Spinh}_n$.
\end{prob}

\begin{ex}
    Since $\pi_n\MSpinh$ is trivial for $n\in\{1,2,3,7,11\}$, Problem~\ref{prob:representatives} is trivial in these dimensions. We can also make a few remarks in some small non-trivial dimensions.
    \begin{enumerate}[(i)]
    \item In dimension 0, $\pi_0 \MSpinh \cong \Z$ is generated by a point with a choice of one of two $\Spinh$ structures.

    \item In dimension 4, one can use the Adams spectral sequence for the cofiber of the map $\MSpinc \to \MSpinh$ to show that the map $\pi_4 \MSpinc \to \pi_4 \MSpinh$ is injective. However, we do not know how to characterize this injection in terms of $\Spinc$ and $\Spinh$ manifolds.

    \item In dimension 5, $\pi_5\MSpinh\cong(\Z/2\Z)^2$ is generated by the Wu manifold $W = \SU(3) / \SO(3)$ and $S^1\times S^4$ with a non-bounding $\Spinh$ structure \cite[p.~37]{Hu22}. 
    
    Recall that $W$ admits a $\Spinh$-structure \cite[Theorem 1.4]{AM21}, while $W$ does not admit a $\Spinc$-structure \cite[p.~393]{MR1989}. Moreover, $H^5(W;\Z/2\Z)$ is generated by $w_2 w_3$ of the stable normal bundle \cite[p.~393]{MR1989}, so we are able to detect one of its $\KSp$-characteristic numbers using ordinary cohomology. 
    
    The class $w_2 w_3 U_h \in H^* \MSpinh$ comes from the lowest elephant class $\MSpinh \to \Sigma^4 F$. Indeed, there are no $\Sigma^4 H\Z / 2\Z$ summands in the splitting (see Table~\ref{table:spinh bordism groups}), and the only nonvanishing degree four cohomology class of $\ksp$ is $\Sq^4 y_0$, which maps to $w_4 U_h$. Because the Pontryagin--Thom map $\Sigma^5 \Sph \to \Th(\nu)$ maps the generator of $H^5 \Sigma^5 \Sph$ to $[W]\smile U_h$, the map $\Sigma^5 \Sph \to \Th(\nu)$ must send $w_2 w_3 U$ to the generator of $H^5 \Sigma^5 \Sph$ in cohomology. We can thus conclude that the map $\Sigma^5 \Sph \to \KSp$ is nontrivial, so the $\KSp$-characteristic number of $W$ determined by the partition $(1)$ is $1$.

    This determines one of the components of $[W] \in \Omega_5^{\Spinh} \cong (\Z / 2\Z)^2$. Determining the other component would likely require us to understand the K-theory of $W$.

    \item In dimension 6, the Adams spectral sequence for the cofiber of $\MSpinc \to \MSpinh$ can be used to show that $\pi_6 \MSpinc \to \pi_6 \MSpinh$ is surjective. As in dimension 4, we do not know how to characterize this surjection in terms of $\Spinc$ and $\Spinh$ manifolds.

    In private communication to the authors, Hu suggested $\U(3)/\SO(3)$ and $S^1\times S^1\times S^4$ as natural candidates for generators of $\pi_6\MSpinh\cong(\Z/2\Z)^2$. By Theorem~\ref{thm:cnumber condition}, one could verify or refute this suggestion by computing the $\KSp$-characteristic numbers of these two manifolds.
    \end{enumerate}
\end{ex}

One question related to Problem~\ref{prob:representatives} is about the relationship between free $\Spinc$ and $\Spinh$ cobordism classes.

\begin{ques}\label{ques:spinc and spinh}
    We saw in Corollary~\ref{cor:rank spinc and spinh} that $\rank\pi_{4n}\MSpinh=\rank\pi_{4n}\MSpinc$. Is there a geometric explanation of this fact? In other words, is there a procedure for producing generators of the free part of $\Omega^{\MSpinh}_{4n}$ from generators of the free part of $\Omega^{\MSpinc}_{4n}$, and vice versa?
\end{ques}

\begin{rem}
    Question~\ref{ques:spinc and spinh} is related to the injection $\pi_4\MSpinc\to\pi_4\MSpinh$ and surjection $\pi_6\MSpinc\to\pi_6\MSpinh$ coming from the Adams spectral sequence. Neither of these maps are isomorphisms, but they both have $(\Z/2\Z)^2$ as their (co)kernel.
\end{rem}

\begin{rem}
Debray and Krulewski have shown that the inclusion $\Spinc_n\hookrightarrow\Spinh_n$ induces a map $\Omega^{\Spinc}_{4k}\to\Omega^{\Spinh}_{4k}$ that is an isomorphism after tensoring with $\mb{Z}[1/2]$ \cite{DK25}. This gives a geometric explanation for Corollary~\ref{cor:rank spinc and spinh}, thereby answering the first part of Question~\ref{ques:spinc and spinh}. This also suggests that constructing generators of the free part of $\Omega^{\Spinh}_{4k}$ from generators of the part of $\Omega^{\Spinc}_{4k}$ would be quite difficult.
\end{rem}

\subsection{$\MSpin$-module structure of $\MSpinh$}
Since $\MSpinh$ is an $\MSpin$-module in the category of spectra, $\pi_*\MSpinh$ is a $\pi_*\MSpin$-module in the category of rings. One can ask to characterize this module structure explicitly.

\begin{prob}\label{prob:module structure}
Calculate the module structure of $\pi_*\MSpinh$ over the ring $\pi_*\MSpin$.
\end{prob}

Problem~\ref{prob:module structure} should be quite difficult, as even the ring structure of $\pi_*\MSpin$ is not completely understood \cite{Lau03}. However, the ring structure of $\pi_*\MSpin$ is known modulo torsion \cite{Sto66} (see also \cite[Theorem 2.8]{ABP67}). This suggests a suitable weakening of Problem~\ref{prob:module structure}.

\begin{prob}
    Determine the structure of $\pi_*\MSpinh/\text{torsion}$ as a module over the ring $\pi_*\MSpin/\text{torsion}$.
\end{prob}

\subsection{Calculating Pin$^h$ bordism groups}
Shortly after proving the 2-local splitting of $\MSpin$, Anderson--Brown--Peterson computed the additive structure of $\Omega^{\Pin^-}_*$ using the isomorphism $\Omega^{\Pin^-}_n\cong\tilde{\Omega}^\Spin_{n+1}(\mb{RP}^\infty)$ \cite{ABP69}. The additive structure of $\Omega^{\Pin^+}_*$ was computed by Kirby and Taylor \cite{KT90}.

The quaternionic pin groups $\Pin^{h\pm}:=\Pin^{\pm}\times_{\{\pm 1\}}\Sp(1)$ were introduced by Freed and Hopkins under the notation $G^{\pm}$ \cite[Proposition 9.16]{FH21}. Using Theorem~\ref{thm:spinh splitting} as a starting point, computing the additive structure of $\Omega^{\Pin^{h\pm}}_*$ might be an accessible problem.

\begin{prob}
    Compute the additive structure of $\Omega^{\Pin^{h\pm}}_*$.
\end{prob}

For the $\Pin^{h-}$ case, one can try to construct a Smith isomorphism connecting $\Spinh$ and $\Pin^{h-}$ cobordism.

\begin{ques}\label{ques:spinh smith}
    Is there is an isomorphism $\Omega^{\Pin^{h-}}_n\cong\tilde{\Omega}^{\Spinh}_{n+1}(\mb{HP}^\infty)$ for each $n$?
\end{ques}

\begin{rem}
    A natural candidate for the morphism $\sigma:\tilde{\Omega}^{\Spinh}_{n+1}(\mb{HP}^\infty)\to\Omega^{\Pin^{h-}}_n$ is as follows. Let $M$ be a manifold representing a class in $\tilde{\Omega}^{\Spinh}_{n+1}(\mb{HP}^\infty)$. Then there exists some $k\gg 0$ and a classifying map $f:M\to\mb{HP}^k$. Moreover, we can take $f$ to be transverse to $\mb{HP}^{k-1}\subset\mb{HP}^k$. Set $\sigma(M):=f^{-1}(\mb{HP}^{k-1})$.

    The candidate manifold $\sigma(M)$ is constructed in the same manner as Bahri--Gilkey's Smith isomorphism for $\Spinc$ and $\Pin^{c-}$ cobordism \cite[Lemma 3.1(a)]{BG87}. In the $\Spinc$ setting, checking that $\sigma(M)$ is a $\Pin^{c-}$ manifold is a single characteristic class computation. We do not have an analogous result for determining the existence of $\Pin^{h-}$ structure, so new ideas are needed to continue this approach.
\end{rem}

\begin{rem}
    Question~\ref{ques:spinh smith} has been answered (in a corrected form) by Debray and Krulewski \cite{DK25}.
\end{rem}

\subsection{Conner--Floyd surjection}
One important application of the Anderson--Brown--Peterson splitting of $\MSpin$ and $\MSpinc$ is in the work of Hopkins and Hovey \cite{HH92}, who proved that $\MSpin_*(-)$ and $\MSpinc_*(-)$ satisfy Conner--Floyd isomorphisms with respect to $\KO_*(-)$ and $\KU_*(-)$.

\begin{thm}[Hopkins--Hovey]
    The Atiyah--Bott--Shapiro orientations $\vphi^r:\MSpin\to\KO$ and $\vphi^c:\MSpinc\to\KU$ induce maps
    \begin{align*}
        \MSpin_*(X)\otimes_{\MSpin_*}\KO_*&\to\KO_*(X),\\
        \MSpinc_*(X)\otimes_{\MSpinc_*}\KU_*&\to\KU_*(X)
    \end{align*}
    that are natural isomorphisms of $\KO_*$- and $\KU_*$-modules, respectively, for all spectra $X$.
\end{thm}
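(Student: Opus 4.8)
The plan is to reduce the assertion to a flatness statement and then invoke a formal Conner--Floyd principle. I will describe the real case; the complex case is identical after replacing $\MSpin$, $\KO$, $\vphi^r$ by $\MSpinc$, $\KU$, $\vphi^c$, and the connective covers $\ko\langle 8k\rangle$ by $\ku\langle 4k\rangle$. Since $\vphi^r$ is a ring map, $\KO$ is an $\MSpin$-algebra, and the Conner--Floyd map $\alpha_X\colon\MSpin_*(X)\otimes_{\MSpin_*}\KO_*\to\KO_*(X)$ is the natural transformation assembled from $\vphi^r\wedge\id_X\colon\MSpin\wedge X\to\KO\wedge X$ together with the $\KO_*$-module structure on $\KO_*(X)$; it is visibly an isomorphism when $X=\Sph$. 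The target $X\mapsto\KO_*(X)$ is a homology theory, and so is the source provided
\[
    (\star)\qquad \KO_*\ \text{is a flat }\MSpin_*\text{-module}\quad(\text{resp.\ }\KU_*\ \text{is a flat }\MSpinc_*\text{-module}),
\]
since flatness gives exactness and both $-\otimes_{\MSpin_*}\KO_*$ and $\MSpin_*(-)$ commute with wedges and filtered colimits. A natural transformation of homology theories on spectra that is an isomorphism on $\Sph$ is an isomorphism on all finite spectra (induction over cells and the five lemma) and hence on all spectra (passage to filtered colimits); this would complete the proof, so everything comes down to establishing $(\star)$.

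To prove $(\star)$ I would smash with $\KO$ and feed in the Anderson--Brown--Peterson splitting. Away from $2$ one argues classically: $\MSpin$ and $\MSpinc$ have no odd torsion, $\MSpin_{(p)}\simeq\MSO_{(p)}$ is a wedge of suspensions of $BP$, and $\KO_{(p)}$ is a retract of $\KU_{(p)}$, which is Landweber exact over $\MU_{(p)}$, so the classical Conner--Floyd isomorphism for $\MU\to\KU$ gives the $p$-local statement for odd $p$. At the prime $2$, Theorem~\ref{thm:spin splitting} gives $\MSpin_{(2)}\simeq\bigvee_a\Sigma^{n_a}\ko\langle d_a\rangle\vee\bigvee_b\Sigma^{m_b}H\Z/2\Z$. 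The structural point is that $\KO$ is $K(1)$-local, so smashing with $\KO$ annihilates every Eilenberg--Mac Lane summand (so the $H\Z/2\Z$ pieces contribute nothing) and collapses the connective covers, $\KO\wedge\ko\langle d\rangle\simeq\KO\wedge\KO$, since the fiber of $\ko\langle d\rangle\to\ko$ has bounded homotopy and the cofiber of $\ko\to\KO$ is $K(1)$-acyclic. Hence $\KO\wedge\MSpin\simeq\bigvee_a\Sigma^{n_a}(\KO\wedge\KO)$. Because $(\KO,\KO_*\KO)$ is a flat Hopf algebroid, $\pi_*(\KO\wedge\MSpin\wedge X)\cong\bigoplus_a\Sigma^{n_a}\bigl(\KO_*\KO\otimes_{\KO_*}\KO_*X\bigr)$ for every $X$; in particular $\KO_*\MSpin=\pi_*(\KO\wedge\MSpin)$ is $\KO_*$-flat. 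Running this through the change-of-rings (Künneth) spectral sequence for the base change $\KO\wedge_{\MSpin}(-)$ along $\vphi^r$, and comparing with $X=\Sph$ where the $E_2$-page is concentrated in homological degree zero, one deduces that $\MSpin_*\to\KO_*$ is flat, which is $(\star)$.

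The main obstacle is precisely $(\star)$: controlling $\KO_*$ as a module over the complicated, highly non-regular ring $\MSpin_*$. The delicacy is that the Anderson--Brown--Peterson splitting is a splitting of \emph{spectra}, not of $\MSpin$-modules, so $\MSpin_*(X)$ does \emph{not} decompose as an $\MSpin_*$-module along the summands, and one cannot compute $\mathrm{Tor}^{\MSpin_*}_*(\MSpin_*(X),\KO_*)$ piece by piece; the splitting can only be used after smashing with $\KO$, where the Eilenberg--Mac Lane pieces vanish and the connective-cover pieces become copies of $\KO\wedge\KO$. Making the final comparison of spectral sequences precise — identifying the relevant $\MSpin_*$-module structures and checking that the collapse is compatible with the Conner--Floyd map $\alpha$ — is the genuine work, and is exactly what Hopkins--Hovey carry out~\cite{HH92}. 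Once $(\star)$ is in place, the conclusion, including naturality in $X$ and the stated $\KO_*$- and $\KU_*$-linearity, is formal.
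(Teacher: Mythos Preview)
The paper does not contain a proof of this theorem. It is stated in Section~\ref{sec:applications} (Potential applications) as a known result attributed to Hopkins--Hovey, cited from \cite{HH92}, and serves only as background motivation for the subsequent discussion of the $\Spinh$ analog. There is therefore no proof in the paper against which to compare your proposal.

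As a side remark on your sketch itself: the reduction to flatness of $\KO_*$ over $\MSpin_*$ is a reasonable strategy in principle, but the step where you pass from ``$\KO_*\MSpin$ is $\KO_*$-flat'' to ``$\KO_*$ is $\MSpin_*$-flat'' via a change-of-rings spectral sequence is not a routine deduction, and you essentially concede this by deferring to \cite{HH92} at exactly that point. In fact Hopkins--Hovey do not argue by first establishing $(\star)$; their route is to show the Conner--Floyd map is an isomorphism more directly, using the Anderson--Brown--Peterson splitting together with a Landweber-exactness style argument, rather than by proving module-theoretic flatness over $\MSpin_*$. So while your outline correctly identifies the Anderson--Brown--Peterson splitting and the vanishing of $\KO\wedge H\Z/2\Z$ as key inputs, the logical architecture you propose (flatness first, then formal homology-theory comparison) is not the one actually used, and the gap you flag is real.
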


It is natural to wonder whether an analog holds for $\MSpinh_*(-)$ with respect to $\KSp_*(-)$. One obvious wrinkle is that $\MSpinh_*$ is not itself a ring, but rather a module over $\MSpin_*$. It turns out that we get a Conner--Floyd surjection, but not an isomorphism \cite[Theorem 6.1.1]{Hu23}:

\begin{thm}[Hu]
The Atiyah--Bott--Shapiro map $\vphi^h:\MSpinh\to\KSp$ induces a surjection
\[\MSpinh_*(X)\otimes_{\MSpin_*}\KO_*\to\KSp_*(X)\]
for all spectra $X$. Moreover, this surjection admits a canonical splitting that is natural in $X$.
\end{thm}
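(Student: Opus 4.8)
The plan is to deduce the theorem from the Freed--Hopkins shearing equivalence (Lemma~\ref{lem:shearing}) together with the Hopkins--Hovey Conner--Floyd isomorphism for $\MSpin$ and $\KO$, thereby reducing the whole statement to the assertion that one explicit map of $\KO$-modules is a split surjection. First I would unwind the Conner--Floyd transformation $\eta_X\colon\MSpinh_*(X)\otimes_{\MSpin_*}\KO_*\to\KSp_*(X)$: since $\vphi^h$ is a $\vphi^r$-module map (Proposition~\ref{prop:abs map}), the map $\vphi^h\wedge\id_X$ extends along $\vphi^r$ to a $\KO$-linear map, and $\eta_X$ is what it induces on homotopy. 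Because the shearing equivalence $\MSpinh\simeq\Sigma^{-3}\MSpin\wedge\MSO(3)$ is an equivalence of $\MSpin$-modules, essentially by construction of the $\MSpin$-module structure on $\MSpinh$ (see the discussion around Setup~\ref{setup:shearing}), base change along $\vphi^r$ turns this $\KO$-linear extension into $f\wedge\id_X$, where
\[f\colon\KO\wedge\Sigma^{-3}\MSO(3)\longrightarrow\KSp\]
is the $\KO$-linear extension of $\vphi^h$ restricted along the canonical map $\Sigma^{-3}\MSO(3)\to\MSpinh$; equivalently, $f$ is classified by $\tau:=\vphi^h|_{\Sigma^{-3}\MSO(3)}\in\KSp^3(\MSO(3))$.

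Next I would invoke Hopkins--Hovey for the spectrum $Y=\Sigma^{-3}\MSO(3)\wedge X$: identifying $\MSpin_*(Y)$ with $\MSpinh_*(X)$ via shearing yields a natural isomorphism $\MSpinh_*(X)\otimes_{\MSpin_*}\KO_*\cong\pi_*\!\big(\KO\wedge\Sigma^{-3}\MSO(3)\wedge X\big)$, and a diagram chase identifies $\eta_X$ under this isomorphism with $(f\wedge\id_X)_*$. Thus the theorem reduces to the single statement that $f\colon\KO\wedge\Sigma^{-3}\MSO(3)\to\KSp$ is a split surjection of spectra, indeed of $\KO$-modules: a $\KO$-linear section of $f$ produces a splitting of each $\eta_X$ by smashing with $X$, and this is natural in $X$ precisely because the section is a fixed map not depending on $X$.

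Finally, to prove that $f$ is a split surjection I would use $\KSp\simeq\Sigma^4\KO$ as $\KO$-modules (Proposition~\ref{prop:ko module}), which turns the problem into the assertion that the class $f'\in\KO^7(\MSO(3))$ corresponding to $f$ is primitive, i.e.\ pairs to a unit against some class in $\KO_7(\MSO(3))$ under the $\KO$-module pairing $\KO^7(\MSO(3))\otimes\KO_7(\MSO(3))\to\KO_0=\Z$. The input available from this paper is that $\vphi^h$ is an isomorphism on $\pi_0$ (shown inside the proof of Lemma~\ref{lem:abs in cohomology}); tracing through shearing, this says exactly that $\tau$ restricts on the bottom cell $S^3\hookrightarrow\MSO(3)$ to a generator of $\KSp^3(S^3)\cong\pi_0\KSp$, hence $f'$ generates $\KO^7(S^3)$. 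This alone does not yet give primitivity of $f'$---the $\KO$-pairing identifies the bottom-cell contribution with $\pm 4$, which is the shadow of $\KSp\not\simeq\KO$---so one must also bring in the next cells of $\MSO(3)$, namely those arising from $w_2$, $w_3$ and the Pontryagin class $p_1\in H^4(\BSO(3))$, and compute $f'$ on them. Concretely I would run the Atiyah--Hirzebruch spectral sequences for $\KO_*(\MSO(3))$ and $\KO^*(\MSO(3))$ (with $E_2$-pages $H_*(\BSO(3);\pi_*\KO)$ and $H^*(\BSO(3);\pi_*\KO)$ via the Thom isomorphism) and identify $f'$ with the $\KO$-theoretic class underlying $\vphi^h$. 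The hard part will be exactly this: pinning $f'$ down explicitly enough to exhibit a homology class against which it pairs to a unit; everything else---the reduction via shearing, the appeal to Hopkins--Hovey, and the naturality of the splitting---is formal. (As a sanity check, $2$-locally the $I=\emptyset$ summand of Theorem~\ref{thm:main} already exhibits the connective ABS map $\vphi^h\colon\MSpinh\to\ksp$ as a split surjection.)
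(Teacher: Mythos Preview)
The paper does not prove this theorem. It is stated in Section~\ref{sec:applications} (``Potential applications'') as a result of Hu, with attribution and a citation to \cite[Theorem 6.1.1]{Hu23}, and no argument is given here. There is therefore no proof in the paper against which to compare your proposal.

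As for the proposal itself: the reduction via shearing plus Hopkins--Hovey is a sound strategy and gets you cleanly to the core statement that the $\KO$-linear map $f\colon\KO\wedge\Sigma^{-3}\MSO(3)\to\KSp$ is a split surjection of $\KO$-modules. You correctly identify that this is where the actual content lies, and you are honest that your outline for this step---running AHSS for $\KO_*(\MSO(3))$ and $\KO^*(\MSO(3))$ and finding a homology class pairing with $f'$ to a unit---is not complete. That really is the crux: the bottom-cell observation only gives you $\pm 4$, as you note, so one must genuinely analyze the contribution of the $p_1$-cell (degree $7$ in $\MSO(3)$) to get a unit. Until that computation is carried out, the proposal is a plausible plan rather than a proof. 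If you want to see how Hu handles it, consult \cite{Hu23} directly; the paper under review offers no further help on this point.
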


Since the splitting of $\MSpin_*(X)\otimes_{\MSpin_*}\KO_*\to\KSp_*(X)$ is natural in $X$, one might hope to characterize the kernel in terms of $X$.

\begin{prob}
    Characterize the kernel of $\MSpinh_*(X)\otimes_{\MSpin_*}\KO_*\to\KSp_*(X)$.
\end{prob}

\bibliography{spinh}{}
\bibliographystyle{alpha}
\end{document}